\theoremstyle{plain}
\newtheorem{theorem}{Theorem}[section]
\newtheorem{lem}[theorem]{Lemma}
\newtheorem{lemma}[theorem]{Lemma}
\newtheorem{proposition}[theorem]{Proposition}
\newtheorem{corollary}[theorem]{Corollary}
\theoremstyle{definition}
\newtheorem{definition}[theorem]{Definition}
\theoremstyle{remark}
\newtheorem{remark}[theorem]{Remark}
\numberwithin{equation}{section}
\newcommand{%
	
	\import{./}{.pdf_tex}
}[1]{%
	
	\import{./}{#1.pdf_tex}
}
\newcommand{\Cb}  {{\mathbb C}}
\newcommand{\Db}  {{\mathbb D}}
\newcommand{\Nb}  {{\mathbb N}}
\newcommand{\PP}  {{\mathbb P}}
\newcommand{\As} {{\mathcal A}}
\newcommand{\Cs} {{\mathcal C}}
\newcommand{\Ds} {{\mathcal D}}
\newcommand{\Ls} {{\mathcal L}}
\newcommand{\Ps} {{\mathcal P}}
\newcommand{\Rs} {{\mathcal R}}
\newcommand{\Ts} {{\mathcal T}}
\newcommand{\Ae} {{\mathscr{A}}}
\newcommand{\Ce} {{\mathscr{C}}}
\newcommand{\dd}{\, \mathrm{d}}
\newcommand{\ee}{\mathrm{e}}
\newcommand{\trans}{\raisebox{1pt}{$\scriptscriptstyle\mathsf{T}$}}
\newcommand{\stirlingii}{\genfrac{\{}{\}}{0pt}{}}
\renewcommand{\P}{\mathbb P}
\newcommand{\E}{\mathbb E}
\newcommand{\R}{\mathbb R}
\newcommand{\N}{\mathbb N}
\newcommand{\ind}{1\!\kern-1pt \mathrm{I}}
\newcommand\1{{1\!\kern-1pt \mathrm{I}}}
\newcommand\cA{{\mathcal A}}
\newcommand\Ys{{\mathcal Y}}
\newcommand{\scO}{{\scriptstyle\mathcal{O}}}
\newcommand{\sscO}{{\scriptscriptstyle\mathcal{O}}}
\newcommand{\defeq}{\mathrel{\mathop:}=}
\newcommand{\eqdef}{=\mathrel{\mathop:}}
\definecolor{darkgreen}{rgb}{0.0, 0.2, 0.13}
\definecolor{lightbrown}{rgb}{0.71, 0.4, 0.11}
\def \globalscale {1.000000}
\newcommand*{\faf}[2]{
	{#1}^{\mspace{2mu}\underline{\mspace{-2mu}\smash{#2}\mspace{-2mu}}\mspace{2mu}}%
}
\newcommand*{\exdesc}[4]{
	M^{#1}_{{#2},{#3},{#4}}
}
\newcommand{\imu}{\mathfrak{i}}
\newcommand{\narrow}{\scalebox{1.3}{\begin{tikzpicture}\draw[-{Latex[length=1.5mm,fill=black]}] (0,0) -- (.2,0);
		\end{tikzpicture}}}
\newcommand{\sarrow}{\scalebox{1.3}{\begin{tikzpicture}
		\draw[-{Latex[length=1.5mm,fill=white]}] (0,0) -- (.2,0);
		\end{tikzpicture}}}
\title{Lines of descent in a Moran model with frequency-dependent selection and mutation}
\author{E. Baake$^{1}$}
\address{$^1$Faculty of Technology, Bielefeld University, Postbox 100131, 33501 Bielefeld, Germany}
\email{ebaake@techfak.uni-bielefeld.de}
\author{L. Esercito$^1$}
\email{lesercito@techfak.uni-bielefeld.de}
\author{S. Hummel$^{1,2}$}
\email{shummel@berkeley.edu}
\address{$^2$Department of Statistics, University of California, 367 Evans Hall, Berkeley, CA 94720-3860, U.S.A.}
\date{\today}%
\begin{document}

\begin{abstract}
We study ancestral structures for the two-type Moran model with  mutation and frequency-dependent selection under the nonlinear dominance or fittest-type-wins scheme. Under appropriate conditions, both  lead, in distribution, to the same type-frequency process. Reasoning through the mutations  on the ancestral selection graph (ASG), we develop the corresponding killed and pruned lookdown ASG and  use them to determine the present and ancestral type distributions. To this end, we establish factorial moment dualities to the Moran model and a relative. We extend the results to the diffusion limit and present applications for finite population size as well as   moderate and  weak selection. 
\end{abstract}
\maketitle
\bigskip { \footnotesize
\noindent{\slshape\bfseries MSC 2020.} Primary:\, 60K35, 92D15  \ Secondary:\, 60J25, 60J27 

\medskip 
\noindent{\slshape\bfseries Keywords.} duality, frequency-dependent selection, Moran model, Wright--Fisher diffusion,  ancestral selection graph,  descendant process, ancestral type distribution}
\setcounter{tocdepth}{1}
%\tableofcontents

\medskip

\noindent{\slshape\bfseries Declarations of interest:}  none

\section{Introduction}
The \emph{ancestral selection graph (ASG)} is a branching-coalescing random graph and  a classical tool to describe ancestries and genealogies in population-genetic models under selection. It goes back to Neuhauser and Krone \cite{NeKro97} and  has mostly been used in the context of haploid populations (where each individual carries one copy of the genetic information), or diploid populations (where every individual is composed of two copies, the \emph{gametes}) under so-called genic selection. The latter means that each copy of the genetic information contributes to the reproduction rate of the individual in an independent, additive fashion, which implies that one may, to a good approximation, work with the population of haploid gametes and ignore their pairing into diploid individuals.

\smallskip

If, however, the contribution of the gametes to the reproduction rate of a diploid individual is not additive, or if the contribution depends on the genetic composition of the entire population (that is, if there is \emph{frequency-dependent selection}), the standard version of the ASG does not suffice. The extension to  frequency-dependent selection has been sketched by Neuhauser \cite{Ne99}; however, it is difficult to handle, and obtaining explicit results is conceptually and technically difficult. Recent examples of such endeavours are \cite{casanova2018duality,cordero2019general,MachSturmSwart20,BCH2018interaction}. \citep{casanova2018duality} describes the ancestry of a sample from the present population in a (discrete-time) $\Lambda$-Wright--Fisher model and its diffusion limit, under a specific kind of frequency-dependent (so-called \emph{fittest-type-wins}) selection, and \citep{cordero2019general} analyses frequency-dependent selection described via a general polynomial drift vanishing  at the boundary in the corresponding stochastic differential equation; but both works exclude mutation. \cite{MachSturmSwart20} and \cite{BCH2018interaction}  consider models with mutation and determine the ancestry of a single individual in a  law of large numbers regime, where the type-frequency process satisfies an ordinary differential equation, and the ASG reduces to a tree due to the absence of coalescence events.  \cite{MachSturmSwart20}  presents a general formalism to treat frequency-dependent selection and mutation, whereas  \cite{BCH2018interaction}  works out in detail the  \emph{recessive} case (where  the contributions of two gametes to the reproduction rate of a diploid individual is subadditive) including mutation, and computes many quantities of interest explicitly. 

\smallskip

In the finite-population case and the diffusion limit (and in contrast to the law of large numbers regime), the graph contains coalescence events and no longer reduces to a tree. In the recessive setting including mutation, this seems to render the approach of \cite{BCH2018interaction} infeasible. In this paper, we tackle \emph{nonlinear dominance} and \emph{fittest-type-wins} selection schemes, again with mutation. We take \emph{nonlinear dominance} to mean a  kind of frequency-dependent selection where an individual of the beneficial type reproduces  when a   random number  of uniformly chosen partner individuals are of the deleterious type; it is a generalisation of the dominant case in a diploid population, where the contributions of the two gametes to the reproduction rate of an individual are superadditive. In contrast, \emph{fittest-type-wins} selection means that a group of individuals jointly try to produce an offspring; reproduction  occurs if at least one of the individuals in the group is fit. Both schemes will turn out to be equivalent in distribution in a certain parameter regime. The models can be analysed via the ASG, which is constructed on the basis of the graphical representation of the Moran model. Mutation events on the ASG contain information that then allow us to reduce the graph to the parts that are informative for the type distribution of an individual at present --- the resulting object goes under the name of \emph{killed ASG}.

\smallskip

The type of an individual's ancestor can differ from the individual's type because of mutations. It is a challenging problem to find a tractable representation of its distribution. In the Moran model, all individuals at present share a common ancestor in the sufficiently distant past. 
The type distribution of this common ancestor was first described and analysed in the diffusion limit  in the case of genic selection by~\citet{fearnhead2002common}, and later for general frequency-dependent selection by~\citet{Taylor2007}. In \cite{Kluth2013},  similar results were derived for the (finite-population) Moran model with genic selection. All these investigations mainly relied on analytical methods. A probabilistic approach to  the common ancestor type distribution under genic selection  was introduced by~\citet{LKBW15} based on the ASG. To recover \citeauthor{fearnhead2002common}'s results, they construct the \emph{pruned lookdown ASG} in the diffusion limit. Its name appeals to the underlying idea of pruning certain lines upon mutations, and of ordering them in a way inspired by the lookdown constructions of~\citet{DK99}. The approach was later extended to the $\Lambda$-Wright--Fisher process with genic selection~\citep{BLW16}, to the Wright--Fisher diffusion with selection in random environments~\citep{fern2019moran}, and to the mutation--selection differential equation with a specific form of pairwise interaction~\citep{BCH18jmath,BCH2018interaction}. But in the finite Moran model the only known results  assume genic selection~\citep{Cordero17DL,Hummel2019}. Here, we consider the ancestral type distribution for the Moran model with nonlinear dominance and fittest-type-wins selection schemes, and we augment the analysis by a forward-in-time approach based on a \emph{descendant process}~\citep{Kluth2013}. Moreover, working with a finite population size all the way through allows to take various scaling limits at a late stage, which we exploit in a \emph{moderate selection} setting, and in a diffusion limit under \emph{weak selection}.

\smallskip

The paper is organised as follows. Our main results along with the intuition for our constructions are presented in Section~\ref{sec:mainresult}. Technical details and proofs are deferred to subsequent sections. Section~\ref{sec:models} contains details of the two selection models, their representations as interacting particle systems, and the connection between them. The definition of the ASG and the rigorous definition of the ancestral and common ancestor type distributions is in~Section~\ref{sec:ASG}. Details of the construction of the killed ASG and proofs of associated results may be found in Section~\ref{sec:kASG}. Section~\ref{sec:siegmund} provides details of a new look on the Siegmund dual for our Moran model. The results associated with the backward and forward perspectives on the ancestral type distribution  are in Sections~\ref{sec:ancestraltype} and ~\ref{sec:absorbingmarkov}, respectively. Section~\ref{sec:diffusionlimit} contains the proofs of the results in the diffusion limit, and Section~\ref{sec:applications} is devoted to the proof of the fixation probability of a single beneficial mutant in the moderate-selection regime.

%%%%%%%%%%%%%%%%%%%%%%%%%%%%%%%%%%%%%%%%%%%%%%%%%%%%%%%%%%%%%%%%%%%%%%%%%%%%%%%%%%%%%%%%%%%%%%%%%%%%%%%%%%%%%%%%%%%%%%%%%%%%%%%%%%%%%%%%%%%%%%%%%%%%%%%%%%%%%
%%%%%%%%%%%%%%%%%%%%%%%%%%%%%%%%%%%%%%%%%%%%%%%%%%%%%%%%%%%%%%%%Section MAIN RESULTS%%%%%%%%%%%%%%%%%%%%%%%%%%%%%%%%%%%%%%%%%%%%%%%%%%%%%%%%%%%%%%%%%%%%%%%%%%%%%%%%%%%%%
%%%%%%%%%%%%%%%%%%%%%%%%%%%%%%%%%%%%%%%%%%%%%%%%%%%%%%%%%%%%%%%%%%%%%%%%%%%%%%%%%%%%%%%%%%%u%%%%%%%%%%%%%%%%%%%%%%%%%%%%%%%%%%%%%%%%%%%%%%%%%%%%%%%%%%%%%%%

\section{Main results}\label{sec:mainresult}
A Moran model (MoMo) is composed of $N\in \N$ haploid individuals, each characterised by a type from~$\{0,1\}$. We refer to type $1$ as \emph{unfit} or \emph{deleterious}, and to type $0$ as \emph{fit} or \emph{beneficial}. The population is panmictic --- that is, there is no spatial structure --- and evolves in continuous time via mutation and neutral and selective reproduction. More specifically, each individual mutates at rate~$u$, with the resulting type  being $0$ and $1$, respectively, with probability $\nu_0\in(0,1)$ and $\nu_1=1-\nu_0$. Independently at rate $1$, each individual, independently of its type, produces a single offspring that inherits the parental type and replaces a uniformly chosen individual so that the population size remains constant; this is the neutral reproduction part. Selection can be incorporated into a MoMo in various ways. We consider two versions, one with \emph{nonlinear dominance} (DOM) and one with \emph{fittest-type-wins} (FTW) selection. Both mechanisms lead to a \emph{selective advantage} of type~$0$ over its counterparts, and both are parametrised by a non-negative real-valued sequence as follows.

\smallskip

Under the DOM scheme, each type-$0$ individual independently at rate~$\widehat s_m\in \R_+$ checks the type of $m-1$   individuals chosen uniformly with replacement, $m\in \N$. If all the checked individuals have type~$1$,  the checking individual produces a single fit offspring that replaces a uniformly chosen individual.
Under the FTW scheme, each individual  is independently \emph{affected} at rate $s_m\in \R_+$ by a selective event of order~$m$, $m\in \N$. In such an event, $m$ individuals, chosen uniformly without replacement, form a group. The first fit group member produces a fit offspring that replaces the affected individual. If none of the group members is fit, nothing happens. (Later, we will first consider DOM, but, for reasons that will become apparent, eventually work with the FTW model. This is why we use the hats for notation related to DOM.)

\smallskip

Let $ \widehat Y^{t_0}_t$ and ${Y}^{t_0}_t$ be the counts of type-$1$ individuals at time $t_0+t$ in the MoMo with DOM and FTW, respectively. Here, $t\in \R_+$ is the  time increment relative to some reference time $t_0\in \R$; the reason for this notation will become clear soon. Then $\widehat Y^{t_0} = (\widehat Y^{t_0}_t)_{t \geq 0}$ and $Y^{t_0} = (Y^{t_0}_t)_{t \geq 0}$ are birth-death processes on $[N]_0\coloneqq\{0,1,2,\ldots,N\}$; whenever $t_0=0$, we just write $\widehat Y_t$ instead of $\widehat Y_t^0$, and likewise for ${Y}_t$ and ${Y}_t^0$. The respective generators $\cA_{Y}$ and $\cA_{\widehat Y}$ act on functions $f: [N]_0 \to \R$ and are given by $\mathcal{A}_{\widehat Y} =\mathcal{A}_{ Y}^{u}+\mathcal{A}_{Y}^{\rm{n}}+\sum_{m>0} \mathcal{A}_{\widehat Y}^{\widehat s_m}$ and $\mathcal{A}_{Y} =\mathcal{A}_{ Y}^{u}+\mathcal{A}_{Y}^{\rm{n}}+\sum_{m>0} \mathcal{A}_{Y}^{s_m}$,  where 
\begin{equation}\label{eq:generatorhatY}
	\begin{split}
		\cA_{Y}^{\rm{n}}  f(k)& \defeq k \frac{N-k}{N} [ f(k+1)-f(k)] + k \frac{N-k}{N}  [ f(k-1)-f(k)], \\
		\mathcal{A}_{Y}^{u}   f(k) & \defeq (N-k) u \nu^{}_1 [ f(k+1)-f(k)] + k u \nu^{}_0 [ f(k-1)-f(k)],\\
		\mathcal{A}_{\widehat Y}^{\widehat s_m}   f(k) &  \defeq  \widehat s_m  (N-k) \Big (\frac{k}{N} \Big )^m  [f(k-1)- f(k)], \quad \mathcal{A}_Y^{s_m}   f(k)  \defeq s^{}_m  k \Big( 1- \Big(\frac{k}{N}\Big)^m\Big)  [f(k-1)- f(k)], 	\end{split}
\end{equation}
under the convention that $f(-1) \defeq 0 \eqdef f(N+1)$.
We assume throughout that $0 < \sum_{m>0}  \widehat s_m m < \infty$ and $0 < \sum_{m>0}  s_m m < \infty$ to avoid the trivial case of neutral evolution (that is, without selection), and the degenerate case of an infinite rate of offspring production. 

\begin{figure}[b]
	
	\scalebox{0.7}{\hspace{-.6cm}\begin{tikzpicture}
			
			%Horizontal ASG
			\draw[semithick ] (14.5,4) -- (0,4);
			\draw[semithick ] (12.9,1) -- (1,1);
			\draw[semithick ] (8.5,2) -- (3.6,2);
			\draw[semithick ] (10.4,1) -- (1,1);
			\draw[semithick ] (8.5,3) -- (0,3);
			\draw[semithick ] (2.9,0) -- (0,0);
			
			%Vertical ASG
			
			%Frame
			\draw[dashed] (0,-0.5) --(0,4.5);
			%\draw[dashed] (14.5,-0.5) --(14.5,4.5);    
			\node [below] at (0,-0.5) {\scalebox{1.3}{$t_0$}};
			%\node [right] at (14.5,-0.5) {$t$};
			\draw[-{angle 60[scale=5]},line width=1.3] (5.25,-0.6) -- (9.25,-0.6) node[text=black, pos=.5, yshift=-6pt]{\scalebox{1.3}{$t$}};
			%horizontal lines
			\draw[opacity=1,semithick] (0,0) -- (14.5,0);
			\draw[opacity=1,semithick] (0,1) -- (14.5,1);
			\draw[opacity=1,semithick] (14.5,2) -- (0,2);
			\draw[opacity=1,semithick] (14.5,3) -- (0,3);
			\draw[opacity=1,semithick] (0,4) -- (14.5,4);
			%neutral arrows
			\draw[-{triangle 45[scale=5]},semithick,opacity=1] (10,4) -- (10,3);	\draw[-{triangle 45[scale=5]},semithick,opacity=1] (11.5,1) -- (11.5,0);
			\draw[-{triangle 45[scale=5]},semithick,opacity=1] (3.6,1) -- (3.6,2);
			%\draw[-{triangle 45[scale=5]},semithick,opacity=1] (1,3) -- (1,1);		\draw[-{triangle 45[scale=5]},semithick,opacity=1] (1,3) .. controls (.8,2.5) and (0.8,1.5) .. (1,1);

			%selective arrows
			\draw[-{open triangle 45[scale=5]},semithick,opacity=1] (12.9,1) .. controls (12.7,2) and (12.7,3) .. (12.9,4);
			\draw[-{open triangle 45[scale=5]},semithick,opacity=1] (7,2) .. controls (6.8,1.3) and (6.8,0.7) .. (7,0);
			\draw[-{open triangle 45[scale=5]},semithick,opacity=1] (6.2,4) .. controls (6,2.5) and (6,2.4) .. (6.2,1);
			\draw[-{open triangle 45[scale=5]},semithick,opacity=1] (2.9,0) .. controls (2.7,1) and (2.7,3) .. (2.9,4);
			\draw[] (12.9,1) circle (.6mm)  [fill=black!100];
			\draw[] (7,2) circle (.6mm)  [fill=black!100];
			\draw[] (6.2,4) circle (.6mm)  [fill=black!100];
			\draw[] (2.9,0) circle (.6mm)  [fill=black!100];

			%interactive arrows 
			\draw[] (8.5,3) circle (.6mm)  [fill=black!100];
			\draw[semithick] (8.5,2) .. controls (8.5,2.5) and (8.65,2.5) .. (8.65,3);
			\draw[semithick] (8.65,3) .. controls (8.65,3.5) and (8.5,3.5) .. (8.5,4);
			\filldraw[white, draw=black] (8.5,2.1) -- (8.6,2) -- (8.5,1.9) -- (8.4,2) -- (8.5,2.1);
			\draw[-{triangle 45[fill=white]},opacity=1,semithick] (8.5,3) -- (8.5,4);
			
			\draw[opacity=1] (12.4,2) circle (.6mm)  [fill=black!100];
			\draw[semithick, opacity=1] (12.4,3.9) .. controls (12.55,3.5) and (12.55,3.5) .. (12.4,3);
			\filldraw[semithick,opacity=1,white, draw=black] (12.4,4.1) -- (12.5,4) -- (12.4,3.9) -- (12.3,4) -- (12.4,4.1);
			\draw[semithick, opacity=1] (12.4,0.1) .. controls (12.55,0.5) and (12.55,0.5) .. (12.55,1);
			\draw[opacity=1,semithick] (12.55,1) -- (12.55,2);
			\draw[semithick, opacity=1] (12.55,2) .. controls (12.55,2.5) and (12.35,2.5) .. (12.4,3);
			\filldraw[semithick,opacity=1,white, draw=black] (12.4,0.1) -- (12.5,0) -- (12.4,-0.1) -- (12.3,0) -- (12.4,0.1);
			\draw[-{triangle 45[fill=white]},semithick,opacity=1] (12.4,2) -- (12.4,3);
			%Mutations deleterious
			\node[opacity=1] at (11,3) {\scalebox{1.5}{$\times$}} ;
			\node[opacity=1] at (13.3,1) {\scalebox{1.5}{$\times$}};
			\node[opacity=1] at (12,0) {\scalebox{1.5}{$\times$}};
			\node[opacity=1] at (8,4) {\scalebox{1.5}{$\times$}};
			\node[opacity=1] at (2,2) {\scalebox{1.5}{$\times$}};
			
			\node[opacity=1,left] at (-.2,0) {\scalebox{1.5}{$N$}};   
			\node[opacity=1,left] at (-.2,4) {\scalebox{1.5}{$1$}};  
			\node[opacity=1,left] at (-.2,1.3) {\scalebox{1.2}{$\bullet$}}; 
			\node[opacity=1,left] at (-.2,2) {\scalebox{1.2}{$\bullet$}};   
			\node[opacity=1,left] at (-.2,2.7) {\scalebox{1.2}{$\bullet$}};
			
			%beneficial mutations
			\draw (2.2,0)[opacity=1] circle (1.5mm)  [fill=white!100];    
			%\draw (3.2,4)[opacity=1] circle (1.5mm)  [fill=white!100];
			\draw (7.5,2)[opacity=1] circle (1.5mm)  [fill=white!100];
			
	\end{tikzpicture} }
	\caption{An untyped realisation of the  Moran interacting particle system with nonlinear dominance; $t$ is the time increment.}
	\label{fig:untypedmomo}
\end{figure}
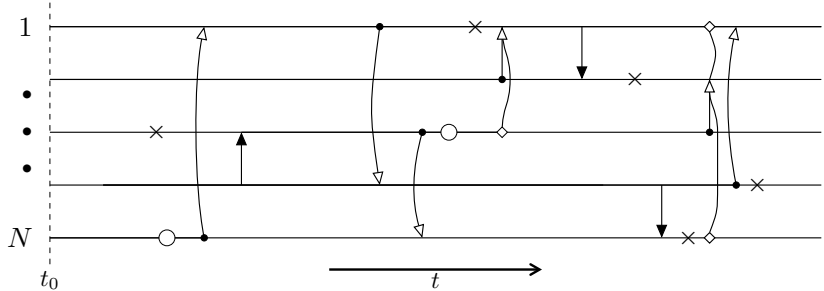

\smallskip

The \emph{graphical representation} underlying the MoMo provides intuition for the associated backward processes and is shown in   Fig.~\ref{fig:untypedmomo}. Each individual corresponds to a horizontal line segment, with the forward direction of time being left to right; the events described above are represented by graphical elements juxtaposed to this picture. We first describe the \emph{untyped} version and include the types later. There are $N$ lines, with labels in $[N]\coloneqq\{1,2,\ldots,N\}$ that we will sometimes refer to as \emph{sites} --- but this does not imply a spatial structure. Mutation events are depicted by circles and crosses on the lines. A circle (cross) indicates a mutation
to type 0 (type 1), which means that the type on the line is 0 (is 1) after the mutation. This occurs
at rate~$u \nu_0$ (at rate~$u \nu_1$) on every line, by way of independent Poisson point processes. (Potential) reproduction events are depicted by arrows between the lines, with the (potential) parent at the tail and the offspring at the tip. If a parent places offspring via the arrow, the offspring inherits the parent's type and replaces
the individual at the tip. We then say that the (parent) individual \emph{uses} the arrow. We decompose
reproduction events into neutral ones and selective ones of various orders, corresponding to the $\widehat s_m$ and $s_m$, respectively, for $m \in \mathbb{N}$. \emph{Neutral arrows} (with solid arrowheads) appear at rate $1/N$ independently per ordered pair of  lines; if head and tail are identical, the arrow points to itself, hence it is irrelevant and may be ignored. Neutral arrows are always used.

\smallskip

\emph{Selective arrows} (hollow arrowheads, black bullet at tail) of order $m$ appear at rate $\widehat s_m/N$  (at rate $s_m/N$) independently per ordered pair of  lines; again, an arrow pointing to itself is void. Every selective event of order $m$ consists of a selective  arrow  accompanied by $0$ up to $m-1$ \emph{checking arrows} (diamond at tail), whose tips share the tip of the selective arrow, but whose tails are connected to those lines that receive at least one mark when marking $m-1$ lines  chosen uniformly, independently, and with replacement. More precisely, for an $m$-tuple of lines  $J=(J_1, \ldots, J_m)\in [N]^m$, an event $ E_{J,i}$ defines the tip $i$ of the selective arrow, the tail $J_1$ of the selective arrow, and the set $\{J_2, \ldots, J_m\}$ of tails of checking arrows and occurs at rate $s_m/N^{m}$  per  $m+1$-tuple of lines; note that if there are duplicate marks, then $\lvert \{J_2, \ldots, J_m\}\rvert<m-1$. Line~$i$ is called the \emph{continuing line},  line $J_1$ is the  \emph{incoming} line, and the lines in $\{J_2, \ldots, J_m\}$ are the \emph{checking lines}, see Fig.~\ref{fig:asgelements}.  

\smallskip

Under the DOM scheme, fit individuals use selective arrows if there is an unfit
individual at the tail of each of the associated checking arrows. In particular, a selective arrow  will never be used if the incoming line is also a checking line.
In the FTW scheme, selective and checking arrows are equivalent; we therefore refer to them just as selective arrows in this context (and replace the diamonds at their tails by bullets). Likewise, the lines at their tails will all be referred to as incoming. In a selective event of order~$m$, that is $E_{J,i}$ for some $i \in [N]$ and $J \in [N]^m$, the first selective arrow with a fit individual at its tail is used (the order is prescribed by the indices of~$J$). None of the selective arrows is used if there is no such fit individual. In particular, the descendant on line~$i$ is fit if and only if at least one of the potential parents is fit, where the set of \emph{potential parents} is $\{J_1, \ldots, J_m\} \cup \{i\}$; hence the name \emph{fittest-type-wins}.

\smallskip

Given a realisation of the untyped system and some initial time~$t_0$,  we can  turn it into a \emph{typed} one by assigning a type to each line at time $t_0$ and propagating the types forward in time according to the rules described above (see Figs.~\ref{fig:asgelements} and~\ref{fig:typedmomo}). 
This way, we can read off the types for any time $t_0+t$ for $t>0$. 

\smallskip

Under certain conditions on the selection parameters, the two selection schemes are equivalent in distribution as is shown in the following result, which we prove in Section~\ref{sec:models}.

\begin{figure}[t]
	\begin{tikzpicture}
		\draw[opacity=0,line width=1.3mm] (0,-.2) -- (1,2);
		\draw[red,line width=1.3mm] (0,0) -- (1,0);
		\draw[red,line width=1.3mm] (0,1.4) -- (1.6,1.4);
		\draw[red,line width=1.3mm] (1,0) -- (1,1.4);
		\draw[semithick] (0,0) -- (1,0);
		\draw[semithick] (0,1.4) -- (1.6,1.4);
		
		%\draw[-{open triangle 45[scale=5]},line width=.8mm,red] (1,0) -- (1,.7);
		\draw[-{open triangle 45[scale=5]},semithick,opacity=1] (1,0) -- (1,1.4);
		\node[left] at (0,0) {\text{i}};
		\node[left] at (0,1.4) {\text{co}};
		\node[right] at (1.6,1.4) {\text{d}};
		\draw[opacity=1] (1,0) circle (.8mm)  [fill=black!100];
		
	\end{tikzpicture}\hspace{1cm}\begin{tikzpicture}
		\draw[opacity=0,line width=1.3mm] (0,-.2) -- (1,2);
		\draw[red,line width=1.3mm] (0,1.4) -- (1.6,1.4);
		\draw[line width=1.3mm,red] (0,.7) -- (1,.7);
		\draw[line width=1.3mm,red] (0,0) -- (1,0);
		\draw[line width=1.3mm,red] (1,0.7) -- (1,1.3);
		\draw[semithick] (0,1.4) -- (1.6,1.4);
		\draw[semithick] (0,.7) -- (1,.7);
		\draw[semithick] (0,0) -- (1,0);
		%\draw[semithick] (1,0) -- (1,1.3);
		
		\draw[line width=1mm,red] (1,0) .. controls (1,.2) and (1.15,.5) .. (1.15,.7);
		\draw[line width=1mm,red] (1.15,.7) .. controls (1.15,.9) and (1,1.05) .. (1,1.35);
		\draw[semithick] (1,0) .. controls (1,.2) and (1.15,.5) .. (1.15,.7);
		\draw[semithick] (1.15,.7) .. controls (1.15,.9) and (1,1.05) .. (1,1.17);

		%\fill[white, draw=black] (.9,1.3) rectangle (1.1,1.5); 
		\draw[-{open triangle 45[fill=white]},semithick,opacity=1] (1,.7) -- (1,1.4)  node [right] {};
		
		\filldraw[semithick,opacity=1,white, draw=black] (1,.8) -- (1.1,.7) -- (1,.6) -- (.9,.7) -- (1,.8);
		\draw[opacity=1] (1,0) circle (.8mm)  [fill=black!100];
		\node[left] at (0,0) {\text{i}};
		\node[left] at (0,.7) {\text{ch}};
		\node[left] at (0,1.4) {\text{co}};
		\node[right] at (1.6,1.4) {\text{d}};
	\end{tikzpicture}
	\hspace{1cm}
	\begin{tikzpicture}
		\draw[opacity=0,line width=1.3mm] (0,-.2) -- (1,2);
		\draw[semithick] (0,1.4) -- (1.6,1.4);
		\draw[semithick] (0,.7) -- (1,.7);
		\draw[semithick] (0,0) -- (1,0);
		%\draw[semithick] (1,0) -- (1,1.3);

		\draw[semithick] (1,0) .. controls (1,.2) and (1.15,.5) .. (1.15,.7);
		\draw[semithick] (1.15,.7) .. controls (1.15,.9) and (1,1.05) .. (1,1.17);
		
		%\fill[white, draw=black] (.9,1.3) rectangle (1.1,1.5);
		\draw[-{open triangle 45[fill=white]},semithick,opacity=1] (1,.7) -- (1,1.4)  node [right] {};
		
		\filldraw[semithick,opacity=1,white, draw=black] (1,.8) -- (1.1,.7) -- (1,.6) -- (.9,.7) -- (1,.8);
		\draw[opacity=1] (1,0) circle (.8mm)  [fill=black!100];
		
		\fill[lightbrown, draw=black] (1.45,1.25) rectangle (1.75,1.55);
		\fill[lightbrown, draw=black] (.15,1.25) rectangle (-.15,1.55);
		\fill[lightbrown, draw=black] (.15,-.15) rectangle (-.15,.15);
		\fill[lightbrown, draw=black] (-.15,.85) rectangle (.15,.55);
		
		\fill[lightbrown, draw=black] (-.3,1.25) rectangle (-.6,1.55);
		\fill[darkgreen, draw=black] (-.3,-.15) rectangle (-.6,.15);
		\fill[darkgreen, draw=black] (-.3,.85) rectangle (-.6,.55);
		
		\fill[lightbrown, draw=black] (-.75,1.25) rectangle (-1.05,1.55);
		\fill[lightbrown, draw=black] (-.75,-.15) rectangle (-1.05,.15);
		\fill[darkgreen, draw=black] (-.75,.85) rectangle (-1.05,.55);
		
	\end{tikzpicture}
	\caption{A selective event of order~$1$ (left) and of order~$2$ (center) in the DOM model. The continuing line is indicated by co, the checking line by ch, the incoming line by i, and the descendant  line by d. On the right, all the possible configurations in a selective event of order $2$ that lead to an unfit descendant are depicted; type~0 in dark green, type~1 in light brown.}
	\label{fig:asgelements}
\end{figure}
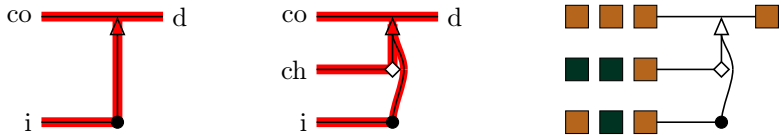

\begin{lem}\label{lem:DF}
	Let $(\widehat{s}_m)_{m>0}$ and $(s_m)_{m>0}$ be two sequences in $\R_+$ satisfying $0 < \sum_{m>0}  \widehat{s}_m m < \infty$, $0 < \sum_{m>0}  s_m m < \infty$, and $(\widehat{s}_m)_{m>0}$ is non-increasing.
	Let $\widehat Y$ and $Y$ be the MoMo processes with DOM and FTW scheme and with  selection parameters  $(\widehat{s}_m)_{m>0}$ and $(s_m)_{m>0}$, respectively. If $\widehat Y_0=Y_0$ and $s_m = \widehat s_m - \widehat s_{m+1}$ for $m>0$  (or, equivalently, $\widehat s_m = \sum_{n \geqslant m} s_n$), then $\widehat{Y}$ and $Y$ are identical in distribution.
\end{lem}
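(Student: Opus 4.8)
The plan is to reduce the statement to an identity between generators. Both $\widehat Y$ and $Y$ are continuous-time Markov chains on the finite set $[N]_0$, and such a chain is determined in distribution by its initial law together with its generator; since we assume $\widehat Y_0=Y_0$, it suffices to show $\mathcal{A}_{\widehat Y}=\mathcal{A}_Y$ as operators on functions $f\colon[N]_0\to\R$. In the decompositions $\mathcal{A}_{\widehat Y}=\mathcal{A}_Y^u+\mathcal{A}_Y^{\rm n}+\sum_{m>0}\mathcal{A}_{\widehat Y}^{\widehat s_m}$ and $\mathcal{A}_{Y}=\mathcal{A}_Y^u+\mathcal{A}_Y^{\rm n}+\sum_{m>0}\mathcal{A}_{Y}^{s_m}$ the mutation part $\mathcal{A}_Y^u$ and the neutral part $\mathcal{A}_Y^{\rm n}$ are literally identical, so everything comes down to $\sum_{m>0}\mathcal{A}_{\widehat Y}^{\widehat s_m}=\sum_{m>0}\mathcal{A}_Y^{s_m}$. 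Both of these act only through the death step $k\mapsto k-1$, i.e.\ only through the increment $f(k-1)-f(k)$, so the identity is equivalent to the pointwise equality of rates
\[
(N-k)\sum_{m>0}\widehat s_m (k/N)^m \;=\; k\sum_{m>0}s_m\bigl(1-(k/N)^m\bigr)\qquad\text{for all }k\in[N]_0 .
\]

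\textbf{The computational core.}
Set $x=k/N\in[0,1]$ and substitute $s_m=\widehat s_m-\widehat s_{m+1}$. Then the right-hand side equals $Nx\sum_{m>0}(\widehat s_m-\widehat s_{m+1})(1-x^m)$, and I would evaluate the sum by Abel summation (summation by parts): writing $S(x)\defeq\sum_{m>0}\widehat s_m x^m$, the telescoping identities $\sum_{m>0}(\widehat s_m-\widehat s_{m+1})=\widehat s_1$ and $\sum_{m>0}(\widehat s_m-\widehat s_{m+1})x^m=(1-x^{-1})S(x)+\widehat s_1$ (valid for $x\in(0,1]$) give
\[
\sum_{m>0}(\widehat s_m-\widehat s_{m+1})(1-x^m)=\frac{1-x}{x}\,S(x).
\]
Hence the right-hand side of the rate identity is $Nx\cdot\frac{1-x}{x}S(x)=N(1-x)S(x)$, which is exactly $(N-k)\sum_{m>0}\widehat s_m(k/N)^m$, the left-hand side; the boundary cases $k=0$ and $k=N$ are immediate (both sides vanish). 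This elementary telescoping identity is the only real content of the proof; care is needed only with the boundary term at $x=1$ and with justifying the interchange and rearrangement of the infinite sums, which I address next.

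\textbf{Admissibility and convergence.}
Before running the above I would check that $(s_m)_{m>0}$ defined by $s_m=\widehat s_m-\widehat s_{m+1}$ is a legitimate FTW selection sequence, i.e.\ that ``$Y$ is the MoMo with FTW and parameters $(s_m)_{m>0}$'' makes sense. Nonnegativity $s_m\ge0$ is precisely the hypothesis that $(\widehat s_m)_{m>0}$ is non-increasing. The hypothesis $0<\sum_{m>0}\widehat s_m m<\infty$ forces $\widehat s_m\to0$, so $S(x)$ and all the telescoping sums above converge absolutely on $[0,1]$ and the rearrangements are justified; moreover, using $M\widehat s_{M+1}\le\sum_{M/2<m\le M}\widehat s_m\to0$ together with summation by parts one gets $\sum_{m>0}s_m m=\sum_{m>0}\widehat s_m$, which lies in $(0,\infty)$ (positivity because $\widehat s_1>0$, itself a consequence of $\sum_{m>0}\widehat s_m m>0$ and monotonicity). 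Finally, $\widehat s_m\to0$ also makes the relation $s_m=\widehat s_m-\widehat s_{m+1}$ invertible as $\widehat s_m=\sum_{n\ge m}s_n$, so the two formulations in the statement coincide.

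\textbf{On the main obstacle.}
I do not expect a genuine difficulty: once one observes that the claim is a pure generator identity, what remains is the Abel-summation computation for $\sum_{m>0}(\widehat s_m-\widehat s_{m+1})(1-x^m)$ and the routine bookkeeping of convergence and admissibility above. If a more conceptual argument were desired, one could instead proceed graphically, checking that the weights $s_m=\widehat s_m-\widehat s_{m+1}$ are exactly what makes a DOM selective event of order $m$ (a fit individual reproduces iff all $m-1$ checked individuals are unfit) match the FTW rule (the fittest among the $m$ grouped potential parents wins), and hence induce the same type-frequency dynamics from a common graphical representation; but the generator comparison is the cleanest route and is the one I would write up.
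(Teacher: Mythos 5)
Your proposal is correct and follows essentially the same route as the paper: both reduce the claim to the equality of the selective death rates in the two generators (the neutral and mutation parts being literally identical) and then verify that equality by Abel summation against the telescoping sequence $s_m=\widehat s_m-\widehat s_{m+1}$, the paper phrasing the computation via the per-event reproduction probabilities $\widehat p_m=\frac{N-k}{N}(k/N)^{m-1}$ and $p_m=1-(k/N)^m$ with $\sum_{n\le m}\widehat p_n=p_m$, and you phrasing it via the power series $S(x)=\sum_{m>0}\widehat s_m x^m$. Your additional checks (nonnegativity of the $s_m$, convergence, invertibility of the relation) are correct and consistent with the paper's standing assumptions.
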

\begin{figure}[t]
	
	\scalebox{0.7}{\hspace{-.6cm}\begin{tikzpicture}
			
			%1 line
			\draw[darkgreen,opacity=1,line width=0.9mm] (0,0)--(11.5,0);
			\draw[darkgreen,opacity=1,line width=0.9mm] (11.5,0)--(12,0);
			\draw[lightbrown,opacity=1,line width=0.9mm] (12,0)--(14.5,0);
			
			%2 line
			\draw[darkgreen,opacity=1,line width=0.9mm] (0,1)--(1,1);
			\draw[lightbrown,opacity=1,line width=0.9mm] (1,1)--(6.2,1);
			\draw[darkgreen,opacity=1,line width=0.9mm] (6.2,1)--(13.3,1);
			\draw[lightbrown,opacity=1,line width=0.9mm] (13.3,1)--(14.5,1);
			
			%3 line
			\draw[darkgreen,opacity=1,line width=0.9mm] (0,2)--(2,2);
			\draw[lightbrown,opacity=1,line width=0.9mm] (2,2)--(9.4,2);
			\draw[darkgreen,opacity=1,line width=0.9mm] (7.5,2)--(14.5,2);
			
			%4 line
			\draw[lightbrown,opacity=1,line width=0.9mm] (0,3)--(12.4,3);
			\draw[darkgreen,opacity=1,line width=0.9mm] (12.4,3)--(14.5,3);
			
			%5 line
			\draw[lightbrown,opacity=1,line width=0.9mm] (0,4)--(2.9,4);
			\draw[darkgreen,opacity=1,line width=0.9mm] (2.9,4)--(8,4);
			\draw[lightbrown,opacity=1,line width=0.9mm] (8,4)--(12.9,4);
			\draw[darkgreen,opacity=1,line width=0.9mm] (12.9,4)--(14.5,4);
			
			%starting distribution
			\fill [darkgreen] (-.15,-.15) rectangle (0.15,0.15);
			\fill [darkgreen] (-.15,0.85) rectangle (0.15,1.15);
			\fill [darkgreen] (-.15,1.85) rectangle (0.15,2.15);
			\fill [lightbrown] (-.15,2.85) rectangle (0.15,3.15);
			\fill [lightbrown] (-.15,3.85) rectangle (0.15,4.15);
			
			%		%ending distribution
			%		\fill [lightbrown] (14.35,-.15) rectangle (14.65,0.15);
			%		\fill [lightbrown] (14.35,0.85) rectangle (14.65,1.15);
			%		\fill [darkgreen] (14.35,1.85) rectangle (14.65,2.15);
			%		\fill [darkgreen] (14.35,2.85) rectangle (14.65,3.15);
			%		\fill [darkgreen] (14.35,3.85) rectangle (14.65,4.15);
			
			%Horizontal ASG
			\draw[semithick ] (14.5,4) -- (0,4);
			\draw[semithick ] (12.9,1) -- (1,1);
			\draw[semithick ] (8.5,2) -- (3.6,2);
			\draw[semithick ] (10.4,1) -- (1,1);
			\draw[semithick ] (8.5,3) -- (0,3);
			\draw[semithick ] (2.9,0) -- (0,0);
			
			%Vertical ASG
			
			%Frame
			\draw[dashed] (0,-0.5) --(0,4.5);
			%\draw[dashed] (14.5,-0.5) --(14.5,4.5);    
			\node [below] at (0,-0.5) {\scalebox{1.3}{$t_0$}};
			%\node [right] at (14.5,-0.5) {\scalebox{1.3}{$t$}};
			\draw[-{angle 60[scale=5]},line width=1.3] (5.25,-0.6) -- (9.25,-0.6) node[text=black, pos=.5, yshift=-6pt]{\scalebox{1.3}{$t$}};
			%horizontal lines
			\draw[opacity=1,semithick] (0,0) -- (14.5,0);
			\draw[opacity=1,semithick] (0,1) -- (14.5,1);
			\draw[opacity=1,semithick] (14.5,2) -- (0,2);
			\draw[opacity=1,semithick] (14.5,3) -- (0,3);
			\draw[opacity=1,semithick] (0,4) -- (14.5,4);
			%neutral arrows
			\draw[-{triangle 45[scale=5]},semithick,opacity=1] (10,4) -- (10,3);
			\draw[-{triangle 45[scale=5]},semithick,opacity=1] (11.5,1) -- (11.5,0);
			\draw[-{triangle 45[scale=5]},semithick,opacity=1] (3.6,1) -- (3.6,2);
			\draw[-{triangle 45[scale=5]},semithick,opacity=1] (1,3) .. controls (.8,2.5) and (0.8,1.5) .. (1,1);
			%selective arrows
			\draw[-{open triangle 45[scale=5]},semithick,opacity=1] (12.9,1) .. controls (12.7,2) and (12.7,3) .. (12.9,4);
			\draw[-{open triangle 45[scale=5]},semithick,opacity=1] (7,2) .. controls (6.8,1.3) and (6.8,0.7) .. (7,0);
			\draw[-{open triangle 45[scale=5]},semithick,opacity=1] (6.2,4) .. controls (6,2.5) and (6,2.4) .. (6.2,1);
			\draw[-{open triangle 45[scale=5]},semithick,opacity=1] (2.9,0) .. controls (2.7,1) and (2.7,3) .. (2.9,4);
			\draw[] (12.9,1) circle (.6mm)  [fill=black!100];
			\draw[] (7,2) circle (.6mm)  [fill=black!100];
			\draw[] (6.2,4) circle (.6mm)  [fill=black!100];
			\draw[] (2.9,0) circle (.6mm)  [fill=black!100];
			
			%interactive arrows 
			\draw[] (8.5,3) circle (.6mm)  [fill=black!100];
			\draw[-{open triangle 45[scale=5]},opacity=1,semithick] (8.5,3) -- (8.5,4);
			\draw[semithick] (8.5,2) .. controls (8.5,2.5) and (8.65,2.5) .. (8.65,3);
			\draw[semithick] (8.65,3) .. controls (8.65,3.5) and (8.5,3.5) .. (8.5,4);
			\filldraw[white, draw=black] (8.5,2.1) -- (8.6,2) -- (8.5,1.9) -- (8.4,2) -- (8.5,2.1);
			
			\draw[-{open triangle 45[scale=5]},semithick,opacity=1] (12.4,2) -- (12.4,3);
			\draw[opacity=1] (12.4,2) circle (.6mm)  [fill=black!100];
			\draw[semithick, opacity=1] (12.4,3.9) .. controls (12.55,3.5) and (12.55,3.5) .. (12.4,3);
			\filldraw[semithick,opacity=1,white, draw=black] (12.4,4.1) -- (12.5,4) -- (12.4,3.9) -- (12.3,4) -- (12.4,4.1);
			\draw[semithick, opacity=1] (12.4,0.1) .. controls (12.55,0.5) and (12.55,0.5) .. (12.55,1);
			\draw[opacity=1,semithick] (12.55,1) -- (12.55,2);
			\draw[semithick, opacity=1] (12.55,2) .. controls (12.55,2.5) and (12.35,2.5) .. (12.4,3);
			\filldraw[semithick,opacity=1,white, draw=black] (12.4,0.1) -- (12.5,0) -- (12.4,-0.1) -- (12.3,0) -- (12.4,0.1);
			
			%Mutations deleterious
			\node[opacity=1] at (11,3) {\scalebox{1.5}{$\times$}} ;
			\node[opacity=1] at (13.3,1) {\scalebox{1.5}{$\times$}};
			\node[opacity=1] at (12,0) {\scalebox{1.5}{$\times$}};
			\node[opacity=1] at (8,4) {\scalebox{1.5}{$\times$}};
			\node[opacity=1] at (2,2) {\scalebox{1.5}{$\times$}};
			
			\node[opacity=1,left] at (-.2,0) {\scalebox{1.5}{$N$}};   
			\node[opacity=1,left] at (-.2,4) {\scalebox{1.5}{$1$}};  
			\node[opacity=1,left] at (-.2,1.3) {\scalebox{1.2}{$\bullet$}}; 
			\node[opacity=1,left] at (-.2,2) {\scalebox{1.2}{$\bullet$}};   
			\node[opacity=1,left] at (-.2,2.7) {\scalebox{1.2}{$\bullet$}};
			
			%beneficial mutations
			\draw (2.2,0)[opacity=1] circle (1.5mm)  [fill=white!100];    
			%\draw (3.2,4)[opacity=1] circle (1.5mm)  [fill=white!100];
			\draw (7.5,2)[opacity=1] circle (1.5mm)  [fill=white!100];
			
	\end{tikzpicture} }
	
	\caption{The realisation of Fig.~\ref{fig:untypedmomo}, but now with types (type~0 in dark green, type~1 in light brown).  }
	\label{fig:typedmomo}
	
\end{figure}
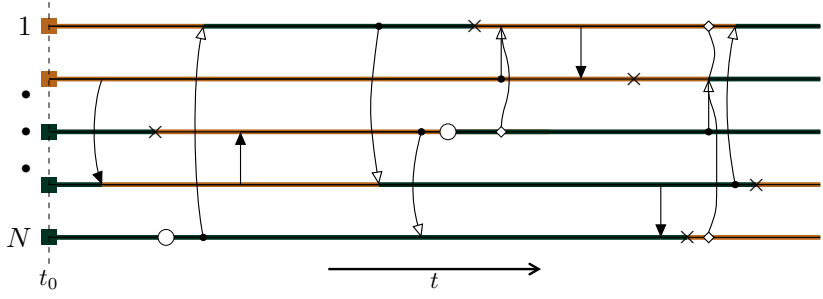

\subsection{The ancestral selection graph} \label{sec:mainresukt:subsec:asg}
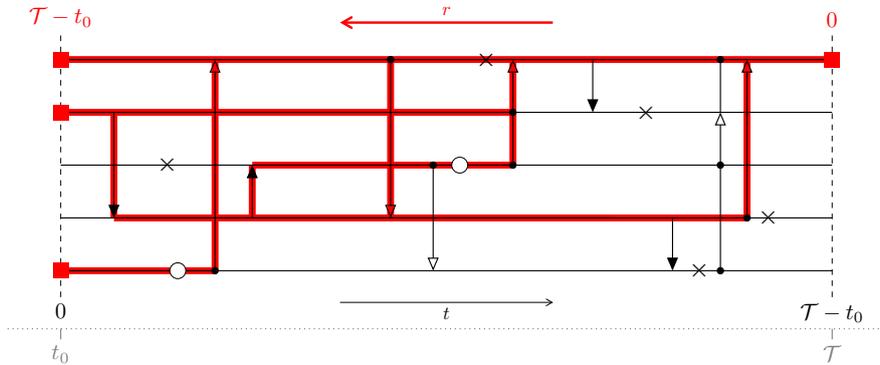
\begin{figure}[b]
	\scalebox{0.7}{\begin{tikzpicture}
			\draw[red,opacity=1,line width=1.3mm] (0,4)--(14.5,4);
			\draw[red,opacity=1,line width=1.3mm] (12.9,1)--(1,1);		
			\draw[red,opacity=1,line width=1.3mm] (3.6,2)--(8.5,2);
			\draw[red,opacity=1,line width=1.3mm] (0,3)--(8.5,3);
			\draw[red,opacity=1,line width=1.3mm] (0,0)--(2.9,0);
			\draw[red,opacity=1,line width=1.3mm] (12.9,1)--(12.9,4);
			\draw[red,opacity=1,line width=1.3mm] (8.5,2)--(8.5,4);
			\draw[red,opacity=1,line width=1.3mm] (6.2,1)--(6.2,4);
			\draw[red,opacity=1,line width=1.3mm] (2.9,0)--(2.9,4);
			\draw[red,opacity=1,line width=1.3mm] (3.6,1)--(3.6,2);
			\draw[red,opacity=1,line width=1.3mm] (1,1)--(1,3);

			%Horizontal ASG
			\draw[semithick ] (14.5,4) -- (0,4);
			\draw[semithick ] (12.9,1) -- (1,1);
			\draw[semithick ] (8.5,2) -- (3.6,2);
			\draw[semithick ] (10.4,1) -- (1,1);
			\draw[semithick ] (8.5,3) -- (0,3);
			%		\draw[semithick ] (4.25,3) -- (0,3);
			\draw[semithick ] (2.9,0) -- (0,0);
			%		%Vertical ASG

			%Frame
			\draw[dashed] (0,-0.5) --(0,4.5);
			\draw[dashed] (14.5,-0.5) --(14.5,4.5);
			%\node[right] at (0, 4.5) {\color{red} $t$};  
			%\node[right] at (14.5, 4.5) {\color{red} $0$};  
			\node [below] at (0,-0.5) {\scalebox{1.2}{$0$}};
			
			\draw[opacity=1,semithick, gray, dotted] (-1,-1.1) -- (15.5,-1.1);
			\draw[opacity=1,semithick, gray] (0,-1.1) -- (0,-1.3);
			\draw[opacity=1,semithick, gray] (14.5,-1.1) -- (14.5,-1.3);
			\node [below] at (0,-1.3) {{\color{gray}\scalebox{1.2}{$t_0$}}};
			\node [below] at (14.5,-1.3) {{\color{gray}\scalebox{1.2}{$\Ts$}}};

			\node [below] at (14.5,-0.5) {\scalebox{1.2}{$\Ts- t_0$}};
			\node [above] at (0,4.5) {\scalebox{1.2}{\color{red} $\Ts-t_0$}};
			\node [above] at (14.5,4.5) {\scalebox{1.2}{\color{red}$0$}};
			\draw[-{angle 60[scale=5]},red,line width=1.3] (9.25,4.7) -- (5.25,4.7) node[text=red, pos=.5, yshift=6pt]{$r$};
			\draw[-{angle 60[scale=5]}] (5.25,-0.6) -- (9.25,-0.6) node[text=black, pos=.5, yshift=-6pt]{$t$};
			%horizontal lines
			\draw[opacity=1,semithick] (0,0) -- (14.5,0);
			\draw[opacity=1,semithick] (0,1) -- (14.5,1);
			\draw[opacity=1,semithick] (14.5,2) -- (0,2);
			\draw[opacity=1,semithick] (14.5,3) -- (0,3);
			\draw[opacity=1,semithick] (0,4) -- (14.5,4);
			%neutral arrows
			\draw[-{triangle 45[scale=5]},semithick,opacity=1] (10,4) -- (10,3);	\draw[-{triangle 45[scale=5]},semithick,opacity=1] (11.5,1) -- (11.5,0);
			\draw[-{triangle 45[scale=5]},semithick,opacity=1] (3.6,1) -- (3.6,2);
			\draw[-{triangle 45[scale=5]},semithick,opacity=1] (1,3) -- (1,1);
			%selective arrows
			\draw[-{open triangle 45[scale=5]},semithick,opacity=1] (12.9,1) -- (12.9,4);
			\draw[-{open triangle 45[scale=5]},semithick,opacity=1] (2.9,0) -- (2.9,4);
			\draw[-{open triangle 45[scale=5]},semithick,opacity=1] (6.2,4) -- (6.2,1);
			\draw[-{open triangle 45[scale=5]},semithick,opacity=1] (7,2) -- (7,0);
			\draw[] (7,2) circle (.6mm)  [fill=black!100];
			\draw[] (6.2,4) circle (.6mm)  [fill=black!100];
			\draw[] (2.9,0) circle (.6mm)  [fill=black!100];
			\draw[] (12.9,1) circle (.6mm)  [fill=black!100];

			%interactive arrows 
			
			\draw[] (8.5,3) circle (.6mm)  [fill=black!100];
			\draw[] (8.5,2) circle (.6mm)  [fill=black!100];
			\draw[-{open triangle 45[scale=5]},opacity=1,semithick] (8.5,3) -- (8.5,4);
			\draw[opacity=1,semithick] (8.5,2) -- (8.5,3);
			
			\draw[-{open triangle 45[scale=5]},semithick,opacity=1] (12.4,2) -- (12.4,3);
			\draw[opacity=1] (12.4,2) circle (.6mm)  [fill=black!100];
			\draw[opacity=1] (12.4,0) circle (.6mm)  [fill=black!100];
			\draw[opacity=1] (12.4,4) circle (.6mm)  [fill=black!100];
			\draw[opacity=1,semithick] (12.4,4) -- (12.4,3);
			\draw[opacity=1,semithick] (12.4,0) -- (12.4,2);

			%Mutations deleterious
			\node[opacity=1] at (11,3) {\scalebox{1.5}{$\times$}} ;
			\node[opacity=1] at (13.3,1) {\scalebox{1.5}{$\times$}};
			\node[opacity=1] at (12,0) {\scalebox{1.5}{$\times$}};
			\node[opacity=1] at (8,4) {\scalebox{1.5}{$\times$}};
			\node[opacity=1] at (2,2) {\scalebox{1.5}{$\times$}};
			%\node[opacity=1] at (5,0) {\scalebox{1.5}{$\times$}};    
			%beneficial mutations
			\draw (2.2,0)[opacity=1] circle (1.5mm)  [fill=white!100];    
			%\draw (3.2,4)[opacity=1] circle (1.5mm)  [fill=white!100];
			%\draw (10.4,1)[opacity=1] circle (1.5mm)  [fill=white!100];
			\draw (7.5,2)[opacity=1] circle (1.5mm)  [fill=white!100];

			\fill [red] (14.35,3.85) rectangle (14.65,4.15);
			\fill [red] (-.15,3.85) rectangle (.15,4.15);
			\fill [red] (-.15,2.85) rectangle (.15,3.15);
			\fill [red] (-.15,-.15) rectangle (.15,0.15);

	\end{tikzpicture}}
	\caption{In red, the ASG for one of the individuals in Fig.~\ref{fig:untypedmomo}, but now for the FTW model. Notice that arrows that start from  lines in the ASG and hit individuals not in the current graph are not relevant for the types in our initial sample. Grey dotted line, black arrow, and red arrow indicate absolute time, forward time increment, and backward time increment, respectively.}
	\label{fig:ASG} 
\end{figure}  
Our analysis of the MoMo is based on the ASG. It arises by tracing back in the graphical representation all lines that may carry information about the ancestry of a sample, that is, those lines that may influence the types in the sample when mutations are ignored. We call the corresponding individuals \emph{potential influencers} (a term borrowed from \citep[Sect.~8.1]{Donnelly1999a}). The collection of these influencer lines  as a function of time makes up the ASG; lines that do not belong to the collection at a given time are said to be \emph{outside the graph}. The true ancestry is only determined after assigning the types to all lines in the ancestral graph at some initial time and propagating them forward through the untyped ASG according to the propagation rules for the given model variant.

\smallskip

Specifying this approach to the DOM model becomes quickly intractable because of the asymmetric role of selective and checking arrows. This is why from now on, unless stated otherwise,  \emph{we require} that 
\begin{equation}\label{eq:non-increasing}
(\widehat s_m)_{m>0} \; \text{is non-increasing},
\end{equation}
so that both models become distributionally equivalent by Lemma~\ref{lem:DF}. In particular, we may (and will) equivalently work with the FTW model and take advantage of its higher symmetry, due to the symmetric role of arrows.

\smallskip

To construct the ASG for the FTW model (see Fig.~\ref{fig:ASG}), fix absolute times $t_0<\Ts$, where $\Ts$ is referred to as the present. We  use the variables $t$ and $r$ for increments (relative to $t_0$ and $\Ts$, respectively) forward and backward in time, so that for $t\in [0,\Ts-t_0]$, we have $r=\Ts-t_0-t$, that is, $r=0$  corresponds to time~$\Ts$. Start the graph from a collection of lines at time~$\Ts$ and call these lines the \emph{sample}. When tracing back their ancestry, each  line is hit at rate $s_m$ (for $m\in \N$) by selective arrows associated with an event of order~$m$. This causes a number $\leqslant m$ of new lines to branch off, which may or may not be  part of the graph yet; if at least one of these lines goes to the outside, we speak of a (binary or multiple) \emph{branching event}. Moreover, neutral arrows hit every line in the graph at rate~$1$. If one out of currently~$n$ lines in the graph is hit by a neutral arrow that comes from one of the $n-1$ remaining  potential influencers, we have  a \emph{coalescence event}, that is, the two lines merge into a single one; such events occur at rate $(n-1)/N$ per line in the graph.  If a neutral arrow comes from outside the current set of lines in the graph (rate $(N-n)/N$ per line in the graph), this causes a \emph{relocation event}, which leaves the number of lines in the graph unchanged. Deleterious and beneficial mutations appear on every line at rates $u \nu_1$ and $u \nu_0$, respectively. The resulting process takes values in the set of (line) labels, see Definition~\ref{def:ASG} for details.

\smallskip

Denote by $h_r(k)$ the conditional probability that the ancestor of a uniformly chosen individual is unfit at backward time~$r$ given the population at this time consists of $k$ type-$1$ individuals. Put differently, the (conditional) \emph{ancestral type at time $r$} (given~$k$) is Bernoulli distributed with parameter $h_r(k)$. The conditional \emph{common ancestor type} (given $k$) is Bernoulli distributed with parameter $h_{\infty}(k)\coloneqq \lim_{r\to\infty}h_r(k)$ if the limit exists.  Definition~\ref{def:ancestraltypedistribution} contains the precise formulation. The name \emph{common ancestor}  is motivated by the fact that,
in the MoMo, all individuals at present share a common ancestor in the sufficiently distant past, as illustrated in Fig.~\ref{ancline}.  To see this, note that the genealogy of the entire population is embedded in the ASG started from the entire population. The number of true ancestors never increases and is dominated by the number of potential influencers, that is, the line-counting process of the ASG. Since the latter is irreducible on the finite state space $[N]$, it is recurrent and  reaches~$1$ in finite time almost surely; and from this point onwards, the number of true ancestors is always~$1$. 
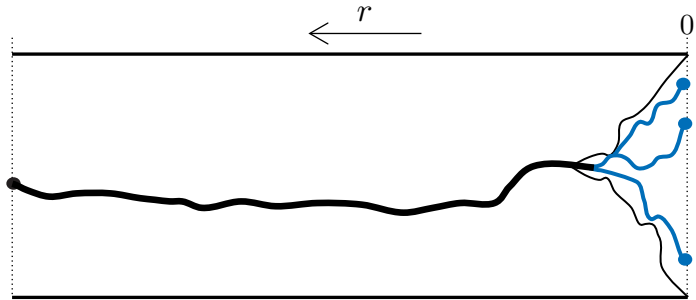
\begin{figure}[t]
	\scalebox{2.5}{\begin{tikzpicture}[y=0.80pt, x=0.80pt, yscale=-\globalscale, xscale=\globalscale, inner sep=0pt, outer sep=0pt]
			\begin{scope}[shift={(-30.09534,-91.13568)}]
				\begin{scope}[cm={{0.26458,0.0,0.0,0.26458,(-5.71556,82.08761)}}]
					
					%%%% Dotted Time lines
					\path[draw=black,dash pattern=on \pgflinewidth off .5pt,miter limit=1.00,line width=0.200pt] (140.5001,23) -- (140.5001,208.1513);
					
					\path[draw=black,dash pattern=on \pgflinewidth off .5pt,miter limit=1.00,line width=0.200pt] (618.0098,23) -- (618.0098,208.1513);
					
					\node [above] at (618,20.7) {\scalebox{0.44}{$0$}};
					%				\node [above] at (140,20.7) {\scalebox{0.44}{$-r$}};
					%%%% Frame
					\path[draw=black,line width=0.5pt] (140.3922,208.1513) -- (618.3922,208.1513);
					\path[draw=black,line width=0.5pt] (140.3927,34.6978) -- (618.3927,34.6978);
					
					%%% Time arrow
					\draw[-{angle 60[scale=2]},line width=0.2] (430,20) -- (350,20) node[text=black, pos=.5, yshift=3pt]{\scalebox{0.5}{$r$}};

					%%%%%Right side coalescent
					\path[draw=black,line join=miter,line cap=butt,line width=0.3pt] (536.9933,113.9372) .. controls (536.9933,113.9372) and (544.7100,109.5737) .. (548.8566,108.0517) .. controls (551.9150,106.9292) and (555.0864,105.6975) .. (558.3473,105.6975) .. controls (559.9777,105.6975) and (561.5626,107.4336) .. (563.0926,106.8747) .. controls (566.2849,105.7084) and (567.7422,101.7613) .. (569.0243,98.6349) .. controls (570.9846,93.8547) and (568.3331,87.5030) .. (571.3969,83.3326) .. controls (573.4804,80.4967) and (577.9410,80.5655) .. (580.8876,78.6242) .. controls (583.9249,76.6233) and (586.6993,74.2015) .. (589.1919,71.5616) .. controls (594.5993,65.8350) and (598.4623,58.8358) .. (603.4279,52.7280) .. controls (608.3562,46.6662) and (618.8503,35.0715) .. (618.8503,35.0715);

					\path[draw=black,line join=miter,line cap=butt,line width=0.3pt] (535.5691,114.6337) .. controls (535.5691,114.6337) and (543.0822,119.7903) .. (547.1193,121.5889) .. controls (550.0970,122.9155) and (553.1847,124.3710) .. (556.3595,124.3710) .. controls (557.9470,124.3710) and (559.4900,122.3195) .. (560.9796,122.9800) .. controls (564.0876,124.3581) and (564.4144,129.7546) .. (566.7547,132.7173) .. controls (570.0744,136.9198) and (574.9017,139.7615) .. (578.3050,143.8972) .. controls (581.4537,147.7236) and (582.4071,157.6012) .. (586.3901,156.3651) .. controls (597.0836,153.0464) and (595.5641,160.6069) .. (597.9403,164.7114) .. controls (601.6809,171.1723) and (597.2455,180.1367) .. (600.2504,186.9681) .. controls (603.9873,195.4637) and (618.7308,207.8337) .. (618.7308,207.8337);
					
					% Right Circles
					\path[xscale=1.000,yscale=-1.000,draw=NavyBlue,fill=NavyBlue,line width=0.400pt] (616.5671,-84.2876) ellipse (0.1150cm and 0.0986cm);
					
					\path[xscale=1.000,yscale=-1.000,draw=NavyBlue,fill=NavyBlue,line width=0.400pt] (615.2415,-55.9569) ellipse (0.1150cm and 0.0986cm);
					
					\path[xscale=1.000,yscale=-1.000,draw=NavyBlue,fill=NavyBlue,line width=0.400pt] (616.5673,-181.0941) ellipse (0.1150cm and 0.0986cm);		
					%Left Circle
					\path[xscale=1.000,yscale=-1.000,draw=Black,fill=Black,line width=0.400pt] (141.3886,-126.9441) ellipse (0.1150cm and 0.0986cm);
					
					%Blue lines
					\path[draw=NavyBlue,line width=0.600pt] (617.4450,181.1225) .. controls (612.0613,181.4137) and (611.1171,165.8070) .. (606.3329,159.1312) .. controls (603.9864,155.8568) and (601.7799,151.4750) .. (597.8705,150.5035) .. controls (595.9949,150.0374) and (594.0792,153.1142) .. (592.3452,152.2606) .. controls (588.1372,150.1892) and (589.2501,143.4034) .. (587.5467,139.0334) .. controls (585.7778,134.4954) and (586.2983,127.6815) .. (581.9195,125.5488) .. controls (575.4391,122.3924) and (566.3570,119.9258) .. (558.3509,117.8923) .. controls (555.2502,117.1048) and (559.1540,118.3328) .. (548.9144,116.1425);
					
					\path[draw=NavyBlue,line width=0.600pt] (618.0819,84.6076) .. controls (612.6983,84.3164) and (613.5287,101.5531) .. (606.9698,106.5989) .. controls (604.3665,108.6017) and (600.1367,107.0668) .. (597.2861,108.7129) .. controls (595.4244,109.7879) and (594.8145,112.3470) .. (592.9821,113.4695) .. controls (590.1394,115.2108) and (586.5666,116.8352) .. (583.2984,116.1120) .. controls (579.8258,115.3435) and (578.2327,111.1171) .. (575.2285,109.2414) .. controls (573.7379,108.3108) and (572.1272,107.4222) .. (570.3866,107.1274) .. controls (568.0825,106.7372) and (565.5171,106.6961) .. (563.3928,107.6559) .. controls (560.8583,108.8011) and (559.9978,112.2999) .. (557.4749,113.4695) .. controls (554.8506,114.6860) and (552.2486,115.2940) .. (548.8100,114.2926);
					
					\path[draw=NavyBlue,line width=0.600pt] (617.4450,56.4920) .. controls (612.0613,56.2008) and (611.5890,65.5563) .. (607.1471,68.3058) .. controls (604.3110,70.0614) and (600.0609,68.7345) .. (597.4633,70.8269) .. controls (593.5907,73.9465) and (595.1460,81.8718) .. (590.7168,84.1326) .. controls (588.7848,85.1188) and (586.2860,82.2622) .. (584.2899,83.1112) .. controls (580.6173,84.6731) and (579.8734,89.7676) .. (577.4414,92.9319) .. controls (575.4051,95.5812) and (573.1823,98.0832) .. (570.9710,100.5884) .. controls (569.2248,102.5667) and (567.4697,106.5253) .. (565.6056,106.4092);
					
					\path[draw=black,line width=1pt] (140.5837,127.1467) .. controls (144.4739,128.1579) and (147.3609,130.3969) .. (151.0258,131.7227) .. controls (156.0313,133.5336) and (161.0160,135.8522) .. (166.6889,136.2988) .. controls (173.1287,136.8057) and (179.3730,134.3936) .. (185.8326,134.0107) .. controls (193.9214,133.5313) and (202.1367,133.3584) .. (210.1974,134.0107) .. controls (217.3193,134.5871) and (224.0562,136.4823) .. (231.0815,137.4428) .. controls (234.5362,137.9151) and (238.0428,138.2054) .. (241.5236,138.5868) .. controls (245.0042,138.9681) and (248.4489,139.5395) .. (251.9656,139.7308) .. controls (254.8563,139.8880) and (257.8443,139.2927) .. (260.6673,139.7308) .. controls (265.7172,140.5144) and (269.4323,143.9346) .. (274.5900,144.3068) .. controls (284.1004,144.9931) and (292.8694,139.8373) .. (302.4355,139.7308) .. controls (310.7403,139.6383) and (318.5035,142.9063) .. (326.8003,143.1628) .. controls (336.7192,143.4694) and (346.4608,141.0758) .. (356.3860,140.8748) .. controls (366.8392,140.6630) and (377.4062,140.8505) .. (387.7122,142.0188) .. controls (397.8423,143.1672) and (407.0205,147.6001) .. (417.2980,147.7389) .. controls (424.4721,147.8358) and (431.2396,145.4994) .. (438.1821,144.3068) .. controls (444.5815,143.2075) and (450.7701,141.4519) .. (457.3258,140.8748) .. controls (465.3756,140.1662) and (474.6232,143.5052) .. (481.6906,140.8748) .. controls (487.5777,138.6837) and (488.3236,133.1099) .. (492.1327,129.4347) .. controls (497.4897,124.2659) and (500.9159,117.2100) .. (509.5361,114.5626) .. controls (522.3718,110.6205) and (538.2507,114.2282) .. (552.0028,115.7066);
				\end{scope}
			\end{scope}
	\end{tikzpicture}}
	\caption{All individuals in the population share a common ancestor in the sufficiently distant past.}
	\label{ancline}
\end{figure}

\smallskip

We aim at answering two questions: how can we determine via the ASG 1) the type distribution at present, and 2) the ancestral type distribution? If there are no mutations ($u=0$),  the answers are the same and tied to the line-counting process of the ASG starting with a sample of size one in a simple way: iterating the FTW rule, it becomes clear that the sample at time $r=0$ is unfit if and only if all lines in the graph are associated with the unfit type at any given backward time $r>0$;  likewise, all true ancestors of the sample are unfit in precisely this case.  If there are mutations, however, the two questions have different answers in general and require their own construction each, namely 1) the killed ASG with multiple branching and 2) the pruned lookdown ASG with multiple branching, both derived from the ASG by exploiting the information inherent in the mutation events. Both are generalisations of the corresponding processes  developed in \cite{LKBW15,BCH18jmath,baake2018lines} for the situation with binary branching in the law of large numbers and the diffusion limit, respectively.

\subsection{Type distribution via a killed ASG with multiple branching}\label{subsec:kASG}
The probability that all individuals in a sample from the present population are unfit can be deduced via two elementary but crucial insights that give rise to a modified ASG. First, the type of an individual at present is determined by the most recent mutation along its ancestral line. In particular, if the most recent mutation on a line of a potential influencer is of type~$1$, the individual is beneficial if and only if one of the remaining potential influencers is of type~$0$. Hence, we need not trace the line with the mutation any further and may instead prune it, that is remove it from the graph.  Second,  due to the FTW rule, type $0$ has priority at every branching event; if the most recent mutation on any line (that has not been pruned) is beneficial,  this means that a potential influencer of an individual in the sample is of type $0$. Due to the type propagation, at least one individual in the sample then has  type $0$ as well, thus  "killing" our chances of a completely unfit sample. We  therefore \emph{kill} the process, that is, send it to a \emph{cemetery state} $\Delta$. The resulting process is called the \emph{killed ASG} (kASG) and we write $K_r$ for the collection of its line labels at backward time~$r$.

\smallskip

In what follows, we will not need the full complexity of the kASG, but only its \emph{(generalised) line-counting process} $R=(R_r)_{r\geq 0}$, where $R_r \defeq \lvert K_r\rvert$ (with the convention $R_r=\Delta$ if $K_r=\Delta$, and $R_r=0$ if $K_r=\varnothing$). It will indeed turn out that~$R$ suffices to determine the type distribution at backward time 0 (for any given  exchangeable type distribution at backward time $r$). The following proposition summarises the transition rates of this line-counting process.

\begin{proposition}\label{prop:lckASG}
	The generalised line-counting process $R$ is a continuous-time Markov chain on~$[N]_{0,\Delta} \defeq [N]\cup \{0,\Delta\}$. The corresponding infinitesimal generator acts on functions $\tilde f: [N]_{0,\Delta} \to \R$ and is given by $\mathcal{A}_R  = \mathcal{A}_R^{\rm{n}} + \sum_{m>0} \mathcal{A}_R^{s_m} + \mathcal{A}_R^{u}$ with the building blocks defined for $n\in [N]_0$, $m \in \N$, via 
	\begin{align}
		\mathcal{A}_R^{\rm{n}} \tilde f(n) & \defeq n \frac{n-1}{N} [ \tilde f (n-1) - \tilde f(n)], \label{eq:generatorRcoalescence}\\
		\mathcal{A}_R^{s_m} \tilde f(n) & \defeq s^{}_m \frac{n}{N^m} \sum_{j=1}^m  (N-n)^{\underline{j}} \, C^n_{mj} [ \tilde f (n+j) - \tilde f(n)], \label{eq:generatorRselection} \\
		\mathcal{A}_R^{u} \tilde f(n) & \defeq n u \nu^{}_1  [ \tilde f (n-1) - \tilde f(n)] + n u \nu^{}_0  [ \tilde f (\Delta) - \tilde f(n)], \label{eq:generatorRmutation}
	\end{align}
	where $y^{\underline{j}} \defeq y(y-1)(y-2)\dots(y-j+1)$ is the falling factorial,
	\[
	C^n_{mj} \defeq  \sum_{\ell=j}^m \binom{m}{\ell}\stirlingii{\ell}{j}  n^{m-\ell},
	\] and the $\stirlingii{\ell}{j}$ are the Stirling numbers of the second kind. Moreover, $\mathcal{A}_R^{\rm{n}}\tilde f(\Delta)  =  \mathcal{A}_R^{s_m} \tilde f(\Delta) = \mathcal{A}_R^{u} \tilde f(\Delta) \defeq 0$.
\end{proposition}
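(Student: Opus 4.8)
The plan is to derive the generator of $R$ directly from the dynamics of the killed ASG described in Section~\ref{sec:mainresukt:subsec:asg}, by reading off, for each type of event (neutral reproduction, selective event of order~$m$, mutation), the rate at which it changes $R_r = \lvert K_r\rvert$ and the resulting jump. Since the ASG is constructed from independent Poisson point processes on the lines, it suffices to treat each event type separately and sum the contributions; the claim that $R$ is a Markov chain then follows because each of these rates depends only on the current number of lines $n$ (and $N$), not on their labels, which is a consequence of the exchangeability built into the graphical construction. We may work throughout in the FTW model, which is legitimate by the standing assumption~\eqref{eq:non-increasing} together with Lemma~\ref{lem:DF}.

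The neutral and mutation terms are immediate. If there are $n$ lines in the graph, each is hit by a neutral arrow whose tail is one of the other $n-1$ lines at rate $(n-1)/N$, producing a coalescence $n \to n-1$; arrows from outside cause relocations and do not change $R$; this gives \eqref{eq:generatorRcoalescence}. For mutations, each of the $n$ lines carries a deleterious mutation at rate $u\nu_1$, which (by the pruning rule) removes that line, $n\to n-1$, and a beneficial mutation at rate $u\nu_0$, which sends the process to $\Delta$; this gives \eqref{eq:generatorRmutation}. That $\Delta$ is absorbing is built into the definition of the killed ASG, yielding $\mathcal{A}_R^{\,\cdot}\tilde f(\Delta)=0$. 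The boundary convention $\tilde f(N+1)=\cdots$ is not needed here because selective events can only add lines that go to the outside, and there are at most $N-n$ such lines available.

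The selective term \eqref{eq:generatorRselection} is the one requiring real work, and it is where I expect the main obstacle. A selective event of order $m$ attached to a fixed line $i$ in the graph is governed by an $m$-tuple $J=(J_1,\dots,J_m)\in[N]^m$ of (incoming) lines chosen uniformly with replacement, occurring at rate $s_m/N^m$ per such tuple (equivalently, rate $s_m$ per line $i$, with $J$ uniform). The number of \emph{new} lines added to the graph is the number of distinct values among $J_1,\dots,J_m$ that lie outside the current set of $n$ lines. So the task is purely combinatorial: conditioning on the event adding exactly $j$ new lines, one must count, among all $N^m$ tuples, how many have exactly $j$ of their coordinates taking distinct values in the complement (of size $N-n$) while the remaining coordinates take arbitrary values in the $n$-element in-graph set. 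Choosing which $\ell$ of the $m$ coordinates fall outside ($\binom{m}{\ell}$ ways), distributing those $\ell$ coordinates onto exactly $j$ distinct outside-lines (this is a surjection count, $\stirlingii{\ell}{j}\,j!$ ways to pick the pattern, and then $(N-n)^{\underline{j}}$ ways to assign the actual lines), and letting the other $m-\ell$ coordinates range freely over the $n$ in-graph lines ($n^{m-\ell}$ ways), one gets exactly $n$ (the choices of $i$) times $\sum_{\ell=j}^m\binom{m}{\ell}\stirlingii{\ell}{j}\,j!\,n^{m-\ell}(N-n)^{\underline{j}}$; absorbing $j!$ into $(N-n)^{\underline{j}}\cdot j!/j!$ — more precisely writing $(N-n)^{\underline{j}} = j!\binom{N-n}{j}$ and keeping $(N-n)^{\underline{j}}$ as the ``assign distinct lines'' factor while $j!\stirlingii{\ell}{j}$ counts surjections — yields the coefficient $C^n_{mj}$ as stated. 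Dividing by $N^m$ and multiplying by $s_m$ and by the rate per line reproduces \eqref{eq:generatorRselection}. I would double-check the bookkeeping on the factor of $j!$ versus the falling-factorial normalisation, since $\stirlingii{\ell}{j}$ counts set partitions into $j$ blocks (unordered) whereas $(N-n)^{\underline{j}}$ already carries an ordering — the consistent statement is that $\stirlingii{\ell}{j}(N-n)^{\underline{j}}$ counts functions from an $\ell$-set onto a $j$-subset of an $(N-n)$-set, which is exactly what we want. Finally, summing over $m$ with the standing assumption $\sum_m s_m m<\infty$ ensures the total selective rate per line, $\sum_m s_m\sum_j (N-n)^{\underline j} C^n_{mj}/N^m \le \sum_m s_m \cdot (\text{bounded in }N)$, is finite, so $\mathcal{A}_R$ is a bona fide generator of a non-explosive chain on the finite state space $[N]_{0,\Delta}$.
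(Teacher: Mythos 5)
Your proposal is correct and follows essentially the same route as the paper: the neutral and mutation parts are read off directly, and the selective rate \eqref{eq:generatorRselection} is obtained by counting, among the $N^m$ placements of the $m$ marks of an order-$m$ event, those for which exactly $j$ of the $N-n$ outside sites receive at least one mark, decomposing over the number $\ell$ of marks landing outside and using that $\stirlingii{\ell}{j}(N-n)^{\underline{j}}$ counts maps from an $\ell$-set into the outside sites with image of size exactly $j$. Your parenthetical worry about an extra factor of $j!$ is resolved correctly by your own final bookkeeping, which matches the paper's count $(N-n)^{\underline{j}}C^n_{mj}$ exactly.
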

The proof of the proposition essentially boils down to a combinatorial argument that~\eqref{eq:generatorRselection} indeed corresponds to the rate at which the number of lines in the kASG increases; \eqref{eq:generatorRcoalescence} and \eqref{eq:generatorRmutation} are immediate. For details see Section~\ref{sec:kASG}, where we formally construct the kASG as a set-valued process.

The next theorem establishes that the line-counting process of the kASG carries enough information to determine the factorial moments of the FTW MoMo.

\begin{theorem}[Factorial moment duality]\label{thm:duality_R}
	Let $Y$ be the frequency process of the unfit individuals of the FTW MoMo and~$R$ the line-counting process of the kASG. Then, for all $t \geq 0$, $n \in [N]_{0,\Delta}$, and $k  \in [N]_0$, 
	\begin{equation}\label{eq:factmomentdualityYR}
		\mathbb{E} \Big [\frac{{ Y_t}^{\underline{n}}}{N^{\underline{n}}} \mid { Y_0} = {k} \Big ] = 	\E \Big [\frac{{ k}^{\underline{R_t}}}{N^{\underline{R_t}}} \mid { R_0} = { n} \Big ],
	\end{equation}
	where $k^{\underline{\Delta}}/N^{\underline{\Delta}} \defeq 0 \; \forall k$. 
	So ${Y}$ and ${R}$ are factorial moment (or hypergeometric) duals, that is,  dual w.r.t. the duality function 
	\[
	H_F (k, n) \defeq \frac{k^{\underline{n}}}{N^{\underline{n}}}.
	\]
\end{theorem}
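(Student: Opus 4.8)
The plan is to establish the duality \eqref{eq:factmomentdualityYR} via the standard generator criterion for Markov process duality: it suffices to verify that for the duality function $H_F(k,n) = k^{\underline{n}}/N^{\underline{n}}$ one has $(\mathcal{A}_Y H_F(\cdot,n))(k) = (\mathcal{A}_R H_F(k,\cdot))(n)$ for all $k \in [N]_0$ and all $n \in [N]_{0,\Delta}$, where $\mathcal{A}_Y$ acts on the first variable and $\mathcal{A}_R$ on the second. Since both $Y$ and $R$ are continuous-time Markov chains on finite state spaces, this pointwise equality of generators (together with boundedness, which is automatic here) upgrades to the equality of the associated semigroups applied to $H_F$, which is exactly \eqref{eq:factmomentdualityYR}; the degenerate cases $n = \Delta$ (both sides identically $0$, consistent with $\mathcal{A}_R\tilde f(\Delta)=0$) and $n=0$ (both sides identically $1$) are checked separately and are immediate.

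First I would split both generators into their three building blocks --- neutral, mutation, selection --- and match them block by block, exploiting the decompositions $\mathcal{A}_Y = \mathcal{A}_Y^{\mathrm n} + \mathcal{A}_Y^u + \sum_{m>0}\mathcal{A}_Y^{s_m}$ and $\mathcal{A}_R = \mathcal{A}_R^{\mathrm n} + \mathcal{A}_R^u + \sum_{m>0}\mathcal{A}_R^{s_m}$. The neutral and mutation matches should reduce to elementary identities for falling factorials. For instance, writing $g_n(k) = k^{\underline n}$, one uses $g_n(k+1) - g_n(k) = n\, g_{n-1}(k)$ and $g_n(k-1) - g_n(k) = -n\, g_{n-1}(k) \cdot (\text{correction})$, more precisely the identity $k \cdot k^{\underline{n}} = k^{\underline{n+1}} + n\, k^{\underline n}$ and its companion $(N-k) k^{\underline n} = (N-n) k^{\underline n} - k^{\underline{n+1}}$; combining these shows that $\mathcal{A}_Y^{\mathrm n}$ acting on $H_F(\cdot,n)$ produces the coalescence term $n\frac{n-1}{N}[H_F(k,n-1)-H_F(k,n)]$, i.e. $\mathcal{A}_R^{\mathrm n}$ on the second slot (the up-and-down neutral terms of $Y$ partially cancel, leaving only the coalescent drift). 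The mutation block is similar: the deleterious part $(N-k)u\nu_1$ together with $ku\nu_0$ yields, after the falling-factorial bookkeeping, the term $nu\nu_1[H_F(k,n-1)-H_F(k,n)] + nu\nu_0[0 - H_F(k,n)]$, the second summand matching the killing $\tilde f(\Delta)$ with $H_F(k,\Delta)=0$.

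The main obstacle --- and the genuine content of the theorem --- is the selection block: one must show that for each fixed $m$, $\mathcal{A}_Y^{s_m}$ applied to $H_F(\cdot,n)$ equals $\mathcal{A}_R^{s_m}$ applied to $H_F(k,\cdot)$, i.e.
\[
s_m\, k\Big(1 - \big(\tfrac kN\big)^m\Big)\big[H_F(k-1,n) - H_F(k,n)\big]
= s_m \frac{n}{N^m}\sum_{j=1}^m (N-n)^{\underline j}\, C^n_{mj}\,\big[H_F(k,n+j) - H_F(k,n)\big].
\]
Here I would clear the common factor $s_m/N^{\underline n}$ and the obvious $k$-linear factor coming from $H_F(k-1,n)-H_F(k,n) = -\tfrac{n}{N}\,k^{\underline n}/k \cdot(\ldots)$ handled again via falling-factorial identities, reducing the claim to a polynomial identity in $k$ and $N$. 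The right-hand side, after substituting $H_F(k,n+j) - H_F(k,n) = H_F(k,n)\big(\tfrac{k^{\underline j}_{(n)}}{N^{\underline j}_{(n)}} - 1\big)$ (with appropriate shifted falling factorials), becomes a sum over $j$ weighted by the $C^n_{mj}$. The key algebraic step is recognizing that the combinatorial coefficient, $C^n_{mj} = \sum_{\ell=j}^m \binom m\ell \stirlingii{\ell}{j} n^{m-\ell}$, is precisely what is produced by expanding $k^m$ via the Stirling/falling-factorial conversion $k^m = \sum_j \stirlingii{m}{j} k^{\underline j}$ after accounting for which of the $m$ sampled lines (sampled uniformly \emph{with} replacement in the DOM picture, equivalently the FTW rate $s_m k(1-(k/N)^m)$) land on already-present versus new lines of the kASG --- this is exactly the "combinatorial argument" referred to after Proposition~\ref{prop:lckASG}. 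I expect to organize this by computing $\sum_{j=1}^m (N-n)^{\underline j} C^n_{mj}\, x^{\underline j}$ in closed form (for the relevant $x$), showing it collapses to the expected binomial-type expression matching $\big(\tfrac kN\big)^m$-weighted terms, most cleanly via a probabilistic reading: both sides compute (up to normalization) the probability that an $m$-sample with replacement from $[N]$, conditioned on hitting the $k$-set in a prescribed way, is compatible with the hypergeometric sampling underlying $H_F$. Once this identity is in hand the theorem follows; I would relegate the detailed Stirling-number manipulation to the proof of Proposition~\ref{prop:lckASG} in Section~\ref{sec:kASG} and merely invoke it here, so that the proof of Theorem~\ref{thm:duality_R} itself is the short generator-matching argument plus a pointer.
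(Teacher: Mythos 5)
Your proposal is correct and follows essentially the same route as the paper: the generator criterion of \citep[Prop.~1.2]{Jaku}, a block-by-block match of the neutral, mutation, and selection parts via falling-factorial identities, and, for the selection block, the reduction (using $y^{\underline{n+j}}/y^{\underline{n}}=(y-n)^{\underline{j}}$) to the identity $\sum_{j=1}^m C^n_{mj}\,(k-n)^{\underline{j}}=k^m-n^m$, proved by interchanging summation, the Stirling conversion $x^\ell=\sum_j\stirlingii{\ell}{j}x^{\underline{j}}$, and the binomial theorem. The only minor discrepancy is organisational: the paper carries out this Stirling manipulation inside the proof of the theorem itself (the combinatorial argument in Proposition~\ref{prop:lckASG} only derives the rates), rather than delegating it there as you suggest.
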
		
The formal proof is based on generator calculations and can be found in Section~\ref{sec:kASG}. Here we provide  a plausibility argument that appeals to the intuition gained by the graphical construction. Consider a population with $Y_0 = k$, then let the process $Y$ evolve for a time $t$ and sample $n$ individuals from the population at time $t$ (with $Y_t$ unfit individuals) without replacement. The left-hand side of~\eqref{eq:factmomentdualityYR} is the probability that all individuals in the sample are  unfit. On the other hand, starting from a number $n$ of lines in the ASG, let $R$ run for  time $t$, then  sample without replacement $R_t$ individuals from the initial population with $Y_0=k$ type-1 individuals. If $R_t \notin \{0, \Delta\}$, then  $k^{\underline{R}_t}/N^{\underline{R}_t}$ is the probability that all lines that have not been pruned have type $1$ in the past, and hence all individuals in the sample are unfit. If $R_t=0$, all lines have been pruned by deleterious mutations, so the individual is unfit with probability 1; and if  $R_t=\Delta$, the process has been killed, so at least one individual is fit.

\smallskip

Let us mention that factorial moment dualities  have a long history in the context of fixed-size population genetic models. In the neutral case, they already appear in papers by Cannings \cite{Cannings74} and Gladstien \cite{Gladstien77a,Gladstien77b,Gladstien78} in the 1970's, at a time where neither the coalescent process nor the concept of dualities for Markov chains had been formulated yet. Rather, the dualities appear in terms of algebraic identities between matrices, and are used  to calculate the eigenvalues of the Markov transition matrix via a similarity transform; the connection with the backward point of view is at most implicit. In 1999, M\"ohle \cite{Moehle99} established factorial moment dualities in neutral genetic models, explicitly and in terms of backward processes.

\smallskip 

There are two distinct regimes depending on the presence of mutation. On the one hand, if $u = 0$, $Y$ absorbs in 0 or $N$ (so one of the two types dies out), while $R$ is positive recurrent and  therefore converges to  a unique stationary distribution~$\pi_R$~\citep[Thm. 3.5.3, Thm. 3.6.2]{norris1998markov}. On the other hand, if $u>0$, it is $Y$ that is positive recurrent and converges  to a unique stationary distribution~$\pi_Y$, while $R$ absorbs in~$0$ or~$\Delta$. We use this connection to derive a representation of the absorption probabilities of~$Y$ and $R$ if $u=0$ and $u>0$, respectively. Denote by $Y_\infty$ and $R_\infty$ a random variable on~$[N]_{0} \defeq [N] \cup \{0\}$ and $[N]_{0,\Delta}$, respectively, with distribution~$\pi_Y$ and~$\pi_R$.

\begin{corollary}[Representation absorption probabilities]\label{cor:repr.absorpt.prob.finite}
	Suppose $u=0$. For $k\in [N]_0$, $$\P(\lim_{t\to\infty} Y_t=N\mid Y_0=k)=\E \Big [\frac{{ k}^{\underline{R_\infty}}}{N^{\underline{R_\infty}}}\Big ].$$
	Suppose $u>0$. For $n\in [N]_{0,\Delta} $, $$\P(\lim_{t\to\infty} R_t=0\mid R_0=n)=\E \Big [\frac{Y_{\infty}^{\underline{n} } }{N^{\underline{n}}}\Big].$$
\end{corollary}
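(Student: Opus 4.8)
The plan is to take limits in the factorial moment duality of Theorem~\ref{thm:duality_R} and to identify the limiting quantities on each side with the corresponding absorption probabilities.

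\textbf{Case $u=0$.} First I would fix $k\in[N]_0$ and let $t\to\infty$ in~\eqref{eq:factmomentdualityYR}. On the left-hand side, $Y$ is a birth--death chain on $[N]_0$ whose only absorbing states are $0$ and $N$ (there is no mutation, and the neutral and selective parts vanish precisely at $k=0$ and $k=N$), so $Y_t$ converges almost surely to an $\{0,N\}$-valued limit $Y_\infty$. Since $H_F(k,n)=k^{\underline n}/N^{\underline n}$ is bounded, dominated convergence gives $\E[Y_t^{\underline n}/N^{\underline n}\mid Y_0=k]\to \P(Y_\infty=N\mid Y_0=k)$, because $Y_\infty^{\underline n}/N^{\underline n}=\ind_{\{Y_\infty=N\}}$ for $n\in[N]$ and the case $n=0$ is trivial (both sides equal $1$; and $n=\Delta$ never occurs when $u=0$, or is handled by the stated convention). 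On the right-hand side, with $n$ chosen to be the initial sample size, $R$ is (for $u=0$) an irreducible, hence positive recurrent, continuous-time Markov chain on the finite set $[N]$ (mutation is what could send it to $0$ or $\Delta$, and that channel is now closed), so $R_t$ converges in distribution to $\pi_R$; again boundedness of $k^{\underline{\cdot}}/N^{\underline{\cdot}}$ yields $\E[k^{\underline{R_t}}/N^{\underline{R_t}}\mid R_0=n]\to\E[k^{\underline{R_\infty}}/N^{\underline{R_\infty}}]$, independently of $n$. Equating the two limits gives the first identity. I would remark that the independence of the right-hand limit from $n$ is consistent: the left-hand fixation probability $\P(Y_\infty=N\mid Y_0=k)$ indeed does not depend on $n$, which should be noted as a sanity check (the duality at finite $t$ relates the $n$-th factorial moment to a process started from $n$ lines, but the stationary law of $R$ forgets its start).

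\textbf{Case $u>0$.} Here I would fix $n\in[N]_{0,\Delta}$ and again let $t\to\infty$. Now the roles swap: $Y$ is irreducible on $[N]_0$ (beneficial and deleterious mutations connect all states, since $\nu_0,\nu_1\in(0,1)$ and $u>0$), hence positive recurrent with stationary law $\pi_Y$, so $Y_t\Rightarrow Y_\infty$ and, by boundedness of $Y_t^{\underline n}/N^{\underline n}$, the left-hand side of~\eqref{eq:factmomentdualityYR} converges to $\E[Y_\infty^{\underline n}/N^{\underline n}]$ (for $n=\Delta$ both sides are $0$ by the stated conventions, and for $n=0$ both are $1$). On the right-hand side, the generator $\mathcal{A}_R$ from Proposition~\ref{prop:lckASG} shows that $R$ now has absorbing states $0$ (reached via a deleterious-mutation/pruning step from state $1$, or directly if started there) and $\Delta$ (reached via a beneficial-mutation/killing step, available from every $n\geq 1$); on $[N]$ the selection and coalescence terms keep things irreducible away from absorption, so $R_t$ converges almost surely to an $\{0,\Delta\}$-valued limit. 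Using the convention $k^{\underline\Delta}/N^{\underline\Delta}=0$, the integrand $k^{\underline{R_t}}/N^{\underline{R_t}}$ converges a.s.\ to $\ind_{\{R_\infty=0\}}$ (since $k^{\underline 0}/N^{\underline 0}=1$), and dominated convergence gives that the right-hand side of~\eqref{eq:factmomentdualityYR} tends to $\P(R_\infty=0\mid R_0=n)=\P(\lim_{t\to\infty}R_t=0\mid R_0=n)$. Equating limits yields the second identity.

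\textbf{Main obstacle.} The generator calculus and the passage to the limit are routine; the only genuinely substantive points are (i) verifying the correct absorption/ergodicity dichotomy for $R$ as $u$ crosses $0$ --- specifically that for $u>0$ the chain $R$ has \emph{exactly} the absorbing states $0$ and $\Delta$ and is absorbed a.s.\ (finiteness of the state space makes this quick, but one must check that from every transient state both $0$ and $\Delta$, or at least their union, is accessible, which follows because beneficial mutation at rate $n u\nu_0>0$ sends $n\mapsto\Delta$ from any $n\geq1$), and (ii) pinning down the behaviour of the duality function at the boundary states $0$ and $\Delta$ so that the pointwise limits of the integrands are the indicator functions claimed. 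Neither is deep, but both need to be stated carefully so that the dominated-convergence step is airtight; this is where I would spend the bulk of the written proof. I expect the argument to occupy only a few lines once Theorem~\ref{thm:duality_R} and the standard convergence theorems for finite-state continuous-time chains (e.g.~\citep[Thm.~3.5.3, Thm.~3.6.2]{norris1998markov}) are invoked.
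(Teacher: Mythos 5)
Your proposal is correct and follows essentially the same route as the paper: take $t\to\infty$ in the factorial moment duality of Theorem~\ref{thm:duality_R} and identify the limits via the absorption/positive-recurrence dichotomy for $Y$ and $R$. The only cosmetic difference is that the paper specialises to $n=1$ (respectively $k=1$) to read off the limit directly, whereas you keep these parameters general and observe that the relevant limit does not depend on them; both choices yield the stated identities.
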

\begin{proof}
	If $u=0$, setting $n=1$ in the duality in Theorem~\ref{thm:duality_R} and taking $t\to\infty$ yields the first result. If $u>0$, setting $k=1$ in Theorem~\ref{thm:duality_R} and taking $t\to\infty$ yields the second one.
\end{proof}

\subsection{Siegmund duality}
The type distribution in the MoMo may also be expressed via the  Siegmund dual process.  Siegmund duality has been observed and applied in many contexts, such as birth-death processes (e.g.\ \cite{DetteFillPitmanStudden97}), ruin problems (e.g.\ \cite[Ch.~XIV.5]{Asmussen2003applied}), interacting particle systems (e.g.\ \cite{CliffordSudbury85}), and population genetics (e.g.\ \cite{BLW16}). We focus here on an interpretation in terms of the graphical representation of the MoMo and establish a connection to a functional of the ASG that, to the best of our knowledge, has not appeared in the literature so far. Let us first recall Siegmund duality on $[N]_0$. The following result  is a corollary of~\citet[Thm.~3]{siegmund1976equivalence} (see also \citep{vanDoorn80} and \citep[Sec.~2]{CoxRoesler83}). We provide a short proof in our finite context in Section~\ref{sec:siegmund} based on generator arguments.

\begin{lemma}[Siegmund duality,{\citep[Thm.~3]{siegmund1976equivalence}}] \label{lem:bdduality}
	Let $X$ be a continuous-time birth-death process on $[N]_0$ with birth rates $\{\lambda_x \}_{\{0 \leq x <  N\}}$ and death rates $\{\mu_x\}_{\{0 < x\leq N\}}$, complemented by $\lambda_N=\mu^{}_0=0$. Denote as $X^S$ the continuous-time birth-death process on $[N+1]_0$ with birth rates $\lambda^*_x$ and death rates $\mu^*_x$, respectively, as given by 
	\begin{align*}
		\mu_x^{\ast} &\defeq \lambda_{x-1} \; \text{ for } x \in [N+1],\quad \, \lambda_x^{\ast} \defeq \mu_{x} \; \text{ for } x \in [N]_0.
	\end{align*}  
	Then, $X$ and $X^S$ are Siegmund duals, that is,
	\begin{equation}
		\mathbb{P} (X_t \geq x^\ast \mid X_0 = x) = \mathbb{P}(x \geq X_t^{S} \mid X^{S}_0 = x^\ast )  \quad \text{for } x \in [N]_0,\, x^\ast \in [N+1]_0,\, t\geq 0.
		\label{eq:sigmundduality}
	\end{equation}
	Put differently,  the processes are dual with respect to the duality function $H_S(x,x^\ast) \defeq \ind_{\{x \geq x^\ast \}}.$
\end{lemma}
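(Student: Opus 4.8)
The plan is to prove Lemma~\ref{lem:bdduality} by a generator comparison, which is the standard route to establishing a duality of Markov processes. Let $\cA$ be the generator of $X$ acting on functions $f:[N]_0\to\R$, so that
\[
\cA f(x) = \lambda_x[f(x+1)-f(x)] + \mu_x[f(x-1)-f(x)],
\]
with the boundary conventions $\lambda_N=\mu_0=0$, and let $\cA^S$ be the generator of $X^S$ on $[N+1]_0$ with rates $\lambda^*_x=\mu_x$ and $\mu^*_x=\lambda_{x-1}$. The Siegmund duality function is $H_S(x,x^*)=\ind_{\{x\ge x^*\}}$. The key identity to verify is
\[
(\cA H_S(\cdot,x^*))(x) = (\cA^S H_S(x,\cdot))(x^*)\qquad\text{for all }x\in[N]_0,\ x^*\in[N+1]_0.
\]
Once this pointwise generator identity is in hand, the duality relation~\eqref{eq:sigmundduality} follows by a routine argument: since the state spaces are finite, both semigroups are given by matrix exponentials, and one checks that $t\mapsto \E[H_S(X_t,x^*)\mid X_0=x]$ and $t\mapsto \E[H_S(x,X^S_t)\mid X^S_0=x^*]$ solve the same linear ODE system with the same initial condition $H_S(x,x^*)$, hence coincide for all $t\ge 0$. (Alternatively one invokes a general duality-from-generators proposition; but in the finite setting the explicit ODE argument is cleanest and self-contained.)

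For the generator identity itself, I would argue by a direct case analysis on the relative position of $x$ and $x^*$. Note that $H_S(\cdot,x^*)$ as a function of the first variable is the indicator of the "upper" set $\{x^*,x^*+1,\dots,N\}$, so $H_S(x+1,x^*)-H_S(x,x^*)=\ind_{\{x+1=x^*\}}$ and $H_S(x-1,x^*)-H_S(x,x^*)=-\ind_{\{x=x^*\}}$; thus the left side equals $\lambda_x\ind_{\{x+1=x^*\}}-\mu_x\ind_{\{x=x^*\}}$, which is nonzero only when $x^*\in\{x,x+1\}$. Similarly $H_S(x,\cdot)$ as a function of the second variable is the indicator of $\{0,1,\dots,x\}$, so the right side is $\lambda^*_{x^*}[H_S(x,x^*+1)-H_S(x,x^*)]+\mu^*_{x^*}[H_S(x,x^*-1)-H_S(x,x^*)] = -\mu_{x^*}\ind_{\{x^*=x+1\}}\cdot(-1)\cdot(-1)+\dots$; more carefully, $H_S(x,x^*+1)-H_S(x,x^*)=-\ind_{\{x^*=x\}}$ and $H_S(x,x^*-1)-H_S(x,x^*)=\ind_{\{x^*=x+1\}}$, so the right side equals $-\mu_{x^*}\ind_{\{x^*=x\}}+\lambda_{x^*-1}\ind_{\{x^*=x+1\}}$. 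Comparing: when $x^*=x$ both sides give $-\mu_x$; when $x^*=x+1$ both sides give $\lambda_x$; for all other $x^*$ both sides vanish. One must also separately check the boundary cases ($x=0$, $x=N$, $x^*=0$, $x^*=N+1$), where the conventions $\lambda_N=\mu_0=0$ on the $X$-side and the induced $\lambda^*_N=\mu_N$, $\mu^*_0=\lambda_{-1}$ (interpreted as $0$) on the $X^S$-side make everything consistent; this is where the precise index ranges in the statement of the lemma earn their keep.

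I do not expect a genuine obstacle here — Siegmund duality is classical and the proof is essentially bookkeeping — but the step requiring the most care is the boundary analysis: making sure that the added state $N+1$ on the $X^S$ side, together with the reflecting/absorbing conventions at $0$ and $N$, produces exactly the right behaviour of $H_S$ at the corners $x^*=0$ (where $\ind_{\{x\ge 0\}}\equiv 1$, forcing the generator to annihilate a constant, consistent with $\mu^*_0=0$) and $x^*=N+1$ (where $\ind_{\{x\ge N+1\}}\equiv 0$ on $[N]_0$). Getting the off-by-one shifts in $\mu^*_x=\lambda_{x-1}$ aligned with the indicator's jump locations is the only place an error could creep in, so I would write that case table out explicitly rather than wave at it.
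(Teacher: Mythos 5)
Your proposal is correct and follows essentially the same route as the paper: reduce the duality to the pointwise generator identity $\cA H_S(\cdot,x^*)(x)=\cA^S H_S(x,\cdot)(x^*)$, observe that both sides reduce to $\lambda_x\ind_{\{x^*=x+1\}}-\mu_x\ind_{\{x^*=x\}}$, and handle the corners via the conventions $\lambda_{-1}=\mu_{N+1}=0$. The only cosmetic difference is that the paper passes from the generator identity to the semigroup identity by citing a general duality criterion (\cite[Prop.~1.2]{Jaku}) rather than spelling out the finite-dimensional ODE argument, which is the alternative you already acknowledge.
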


\begin{remark} \label{rem:dualequiv}
If $X$ is irreducible on $[N]_0$, then (the only) absorbing states of $X^S$ are $0$ and $N+1$; on the other hand, if $X$ absorbs in $0$ and $N$, then $0$ and $N+1$ are isolated states for $X^S$, and its restriction to $[N]$ is irreducible and also Siegmund dual to $X$. A direct consequence of the lemma, which was also the motivation for the original setting, is the equivalence between absorption probabilities of one process and the stationary distribution of its dual \cite{siegmund1976equivalence,vanDoorn80,CoxRoesler83}. 
\end{remark}

The following corollary is an immediate consequence of the lemma.

\begin{corollary}
	Let $Y$ be the type-$1$ frequency process in an FTW MoMo and let $Y^S$ be the birth-death process on $[N]$ with birth rates $\lambda^\ast_k$ and  and death rates $\mu^\ast_k $ given by
	\[\lambda^\ast_k \defeq k \Big(\frac{N-k}{N}+u\nu_0+\sum_{m>0}s_m\Big(1-\Big(\frac{k}{N}\Big)^m\Big)\Big) \quad \text{and } \; \mu^\ast_k \defeq (N-k+1)\Big(\frac{k-1}{N}+u\nu_1\Big), \quad k\in [N].
	\]
 Let $k_0\in [N]_0$ and $k_0^\ast\in[N].$ Then, for $t\geq 0$, $$\P(Y_t\geq k_0^\ast\mid Y_0=k_0)=\P(k_0\geq Y^S_t\mid Y_0^S=k_0^\ast).$$
%	In particular, if $u=0$, $$\lim_{t\to\infty} \P(Y_t=0\mid Y_0=k_0)=\frac{\sum_{\ell=k_0+1}^N \eta(\ell)}{\sum_{\ell=1}^N \eta(\ell)},$$ where $\eta(\ell)=\prod_{i=1}^{\ell-1}\big(1+\sum_{m>0}s_m\frac{1-(i/N)^m}{1-i/N} \big)$.
\end{corollary}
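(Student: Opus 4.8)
The plan is to derive this corollary as a direct specialisation of Lemma~\ref{lem:bdduality}: since $Y$ is a birth–death process on $[N]_0$, all that is needed is to read off its birth and death rates from the generator and then apply the recipe $\mu^{\ast}=\lambda_{(\cdot)-1}$, $\lambda^{\ast}=\mu$ of that lemma.

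First I would note that, by the form of $\mathcal{A}_Y=\mathcal{A}_Y^{u}+\mathcal{A}_Y^{\rm n}+\sum_{m>0}\mathcal{A}_Y^{s_m}$ in~\eqref{eq:generatorhatY}, every building block moves $Y$ only by $\pm1$, so $Y$ is a birth–death process on $[N]_0$; collecting the coefficients of $[f(k+1)-f(k)]$ (which come only from the neutral and mutation parts) and of $[f(k-1)-f(k)]$ (neutral, mutation, and all selective parts) gives, for $k\in[N]_0$,
\[
\lambda_k = (N-k)\Big(\tfrac{k}{N}+u\nu_1\Big), \qquad \mu_k = k\Big(\tfrac{N-k}{N}+u\nu_0+\sum_{m>0}s_m\big(1-(\tfrac{k}{N})^m\big)\Big),
\]
and in particular $\lambda_N=\mu_0=0$ automatically, as required in Lemma~\ref{lem:bdduality}. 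Substituting these into the lemma's prescription, $\lambda_k^{\ast}\defeq\mu_k$ and $\mu_k^{\ast}\defeq\lambda_{k-1}=(N-k+1)\big(\tfrac{k-1}{N}+u\nu_1\big)$ reproduces exactly the rates in the statement, so the Siegmund duality~\eqref{eq:sigmundduality} of Lemma~\ref{lem:bdduality}, evaluated at $x=k_0$ and $x^{\ast}=k_0^{\ast}$ and then restricted to $k_0^{\ast}\in[N]$ (the range in which the indicator $H_S$ is non-degenerate), yields the claimed identity.

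There is essentially no obstacle beyond this bookkeeping; the only point that deserves a comment is the state space of $Y^S$. Lemma~\ref{lem:bdduality} a priori produces a process on $[N+1]_0$, and by Remark~\ref{rem:dualequiv} its endpoints $0$ and $N+1$ are absorbing when $Y$ is irreducible (the case $u>0$) and isolated when $Y$ absorbs in $0$ and $N$ (the case $u=0$). Since the corollary only ever evaluates the duality at starting points $k_0^{\ast}\in[N]$, it is harmless to speak of $Y^S$ ``on $[N]$'': in the $u=0$ case $Y^S$ restricted to $[N]$ is itself an irreducible birth–death process, and in the $u>0$ case a possible excursion of $Y^S$ to $\{0,N+1\}$ simply makes the right-hand side of the identity trivially equal to $1$ (after hitting $0$) or $0$ (after hitting $N+1$), consistently with the left-hand side.
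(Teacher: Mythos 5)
Your proposal is correct and matches the paper's intent exactly: the paper gives no separate proof, declaring the corollary an immediate consequence of Lemma~\ref{lem:bdduality}, and your reading-off of $\lambda_k=(N-k)(k/N+u\nu_1)$ and $\mu_k=k(\tfrac{N-k}{N}+u\nu_0+\sum_{m>0}s_m(1-(k/N)^m))$ from~\eqref{eq:generatorhatY} followed by the substitution $\lambda^\ast=\mu$, $\mu^\ast_k=\lambda_{k-1}$ is precisely that verification. Your closing remark on the state space of $Y^S$ (the possible excursions to $0$ and $N+1$ when $u>0$ and the resulting trivial values of the indicator) is a careful touch the paper leaves implicit.
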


\begin{figure}[t]
	\scalebox{0.7}{\begin{tikzpicture}
			\draw[red,opacity=1,line width=1.3mm] (0,4)--(12.9,4);
			%		\draw[red,opacity=1,line width=1.3mm] (12.9,1)--(2.9,1);	
			\draw[red,opacity=1,line width=1.3mm] (2.9,1)--(1,1);		
			\draw[red,opacity=1,line width=1.3mm] (3.6,2)--(8.5,2);
			\draw[red,opacity=1,line width=1.3mm] (0,3)--(8.5,3);
			%		\draw[red,opacity=1,line width=1.3mm] (0,0)--(2.9,0);
			%		\draw[red,opacity=1,line width=1.3mm] (14.5,4)--(12.9,4);
			\draw[red,opacity=1,line width=1.3mm] (12.9,1)--(12.9,4);
			\draw[red,opacity=1,line width=1.3mm] (8.5,2)--(8.5,4);
			\draw[red,opacity=1,line width=1.3mm] (6.2,1)--(6.2,4);
			\draw[red,opacity=1,line width=1.3mm] (2.9,0)--(2.9,4);
			\draw[red,opacity=1,line width=1.3mm] (3.6,1)--(3.6,2);
			\draw[red,opacity=1,line width=1.3mm] (1,1)--(1,3);
			
			\draw[cyan,opacity=1,line width=1.3mm] (0,0)--(2.9,0);
			%\draw[blue,opacity=1,line width=1.1mm] (2.9,0)--(2.9,1);
			\draw[cyan,opacity=1,line width=1.3mm] (2.9,1)--(12.9,1);
			%\draw[blue,opacity=1,line width=1.1mm] (12.9,1)--(12.9,4);
			\draw[cyan,opacity=1,line width=1.3mm] (12.9,4)--(14.5,4);

			%Horizontal ASG
			\draw[semithick ] (14.5,4) -- (0,4);
			\draw[semithick ] (12.9,1) -- (1,1);
			\draw[semithick ] (8.5,2) -- (3.6,2);
			\draw[semithick ] (10.4,1) -- (1,1);
			\draw[semithick ] (8.5,3) -- (0,3);
			%		\draw[semithick ] (4.25,3) -- (0,3);
			\draw[semithick ] (2.9,0) -- (0,0);
			%		%Vertical ASG

			%Frame
			\draw[dashed] (0,-0.5) --(0,4.5);
			\draw[dashed] (14.5,-0.5) --(14.5,4.5);    
			\node[above] at (0, 4.5) {\color{red} $\Ts-t_0$};  
			\node[above] at (14.5, 4.5) {\color{red} $0$};
			\node [below] at (0,-0.5) {$0$};
			\node [below] at (14.5,-0.5) {$\Ts-t_0$};
			\draw[-{angle 60[scale=5]},red,line width=1.3] (9.25,4.7) -- (5.25,4.7) node[text=red, pos=.5, yshift=6pt]{$r$};
			\draw[-{angle 60[scale=5]}] (5.25,-0.6) -- (9.25,-0.6) node[text=black, pos=.5, yshift=-6pt]{$t$};
			%horizontal lines
			\draw[opacity=1,semithick] (0,0) -- (14.5,0);
			\draw[opacity=1,semithick] (0,1) -- (14.5,1);
			\draw[opacity=1,semithick] (14.5,2) -- (0,2);
			\draw[opacity=1,semithick] (14.5,3) -- (0,3);
			\draw[opacity=1,semithick] (0,4) -- (14.5,4);
			%line numbering
			\node [left] at (-0.2,0) {$5$};
			\node [left] at (-0.2,1) {$4$};
			\node [left] at (-0.2,2) {$3$};
			\node [left] at (-0.2,3) {$2$};
			\node [left] at (-0.2,4) {$1$};
			%neutral arrows
			%\draw[-{triangle 45[scale=5]},semithick,opacity=1, visible on=<1-3>] (10,4) -- (10,3);
			%\draw[-{triangle 45[scale=5]},semithick,opacity=1, visible on=<1-3>] (11.5,1) -- (11.5,0);
			\draw[-{triangle 45[scale=5]},semithick,opacity=1] (3.6,1) -- (3.6,2);
			\draw[-{triangle 45[scale=5]},semithick,opacity=1] (1,3) -- (1,1);
			%selective arrows
			\draw[-{open triangle 45[scale=5]},semithick,opacity=1] (12.9,1) -- (12.9,4);
			\draw[-{open triangle 45[scale=5]},semithick,opacity=1] (2.9,0) -- (2.9,4);
			\draw[-{open triangle 45[scale=5]},semithick,opacity=1] (6.2,4) -- (6.2,1);
			%\draw[-{open triangle 45[scale=5]},semithick,opacity=1, visible on=<1-3>] (7,2) -- (7,0);
			\draw[] (12.9,1) circle (.6mm)  [fill=black!100];
			\draw[] (2.9,0) circle (.6mm)  [fill=black!100];
			\draw[] (6.2,4) circle (.6mm)  [fill=black!100];
			
			%interactive arrows 
			
			\draw[] (8.5,3) circle (.6mm)  [fill=black!100];
			\draw[] (8.5,2) circle (.6mm)  [fill=black!100];
			\draw[-{open triangle 45[scale=5]},opacity=1,semithick] (8.5,3) -- (8.5,4);
			\draw[opacity=1,semithick] (8.5,2) -- (8.5,3);
			
			\fill [red] (14.35,3.85) rectangle (14.65,4.15);
			\fill [red] (-.15,3.85) rectangle (.15,4.15);
			\fill [red] (-.15,2.85) rectangle (.15,3.15);
			\fill [red] (-.15,-.15) rectangle (.15,0.15);

	\end{tikzpicture}}
	\caption{The same ASG as in Fig.~\ref{fig:ASG}, but without mutations. In light blue, the maximal line in the ASG.}
	\label{fig:Mprocess}
\end{figure}
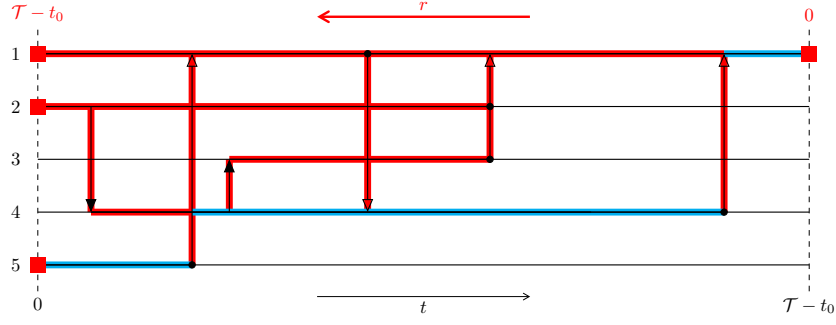

Consider now the kASG and define the \emph{maximal influencer line process} (or max-line process for short) $M=(M_r)_{r \geq 0}$ on $[N]_{0,\Delta}$, where $M_r \defeq \max  K_r$ with the convention that $M_r\defeq \Delta$ if $K_r=\Delta$, and $M_r\defeq 0$ if $K_r=\varnothing$. Fig. \ref{fig:Mprocess} illustrates this process. Let $p_r$ and $q_r$ be the distributions (understood as row vectors) of $R_r$ and $M_r$, respectively.  When starting from an exchangeable distribution for the lines contained in the kASG, the kASG stays exchangeable for all times, since its transitions do not depend on the line labels. Given this exchangeable setting, for any time $r$, $p_r$ and $q_r$ satisfy $q_r = p_r T$, where $T=(T(j,k)\in \R:j,k\in [N]_{0,\Delta})$ is the (matrix representation of the) linear transformation defined by $T(0,0)\defeq 1 \eqdef T(\Delta,\Delta)$, namely,
\begin{equation}\label{eq:T}
	T(j,k) \defeq 
	\frac{\binom{k-1}{j-1}}{\binom{N}{j}} \quad  \forall\, k,j\in[N],
\end{equation}
and all other entries are~$0$. Note that $$T(j,k)=\PP(M_r=k \mid R_r=j).$$ Indeed, for $k,j\in [N]$, place $j$ indistinguishable balls (the lines of the ASG) into $k$ out of~$N$ distinguishable boxes (the sites) so that each of the~$k$ boxes receives at most one ball; there are $\binom{N}{j}$ possibilities altogether, of which $\binom{k-1}{j-1}$ choices place $j-1$ of the balls in boxes $1, \dots, k-1$ and the remaining ball in box $k$. The remaining cases follow from the definition of $M$.

\medskip

Since $\frac{\dd p_r}{\dd r} = p_r Q_R$ with $Q_R$ the generator matrix of $R$, we have
\begin{equation}\label{tildeQM}
	\frac{\dd q_r}{\dd r}  = q_r \widetilde{Q}_M \text{ with } \widetilde{Q}_M \defeq T^{-1}Q_R T,
\end{equation}
where $T^{-1}$ is obtained via the block diagonal property of~$T$ and binomial inversion \cite[Cor.~3.38]{Aigner79} as
\begin{equation}\label{Tinv}
	(T^{-1})(j,k) = (-1)^{j + k} \binom{N}{k} \binom{k-1}{j-1},\qquad \forall \, j,k\in[N],
\end{equation}
complemented by $T^{-1}(0,0)=T^{-1}(\Delta,\Delta)=1$ and~$0$ otherwise. While $M$ is not Markov, $\widetilde Q_M$ is a Markov generator. Moreover, $\widetilde Q_M$ turns out to be the generator of the Siegmund dual~$Y^S$ of~$Y$ when we identify the state~$\Delta$ with $N+1$. This is a consequence of the following result, which will be proved in Section~\ref{sec:siegmund}.
\begin{theorem}\label{thm:factsieg}
	Consider a Markov process $X$ on $[N]_{0}$. 
	\begin{enumerate}
		\item If $X$ admits a factorial moment dual $X^F$ on $[N+1]_0$ with generator matrix $Q_F$, then $X$ admits a Siegmund dual $X^{S}$ on $[N+1]_0$ if and only if $T^{-1}Q_F T$ is a generator matrix, in which case it is the generator matrix of $X^{S}$; \label{factsieg}
		\item If $X$ admits a Siegmund dual $X^{S}$ on $[N+1]_0$ with generator matrix $Q_S$, then $X$ has a factorial moment dual $X^F$ on $[N]$ if and only if $TQ_S T^{-1}$ is a generator matrix, in which case it is the generator matrix of $X^F$. \label{siegfact}
	\end{enumerate}
\end{theorem}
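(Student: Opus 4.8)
The plan is to establish a ``conjugation'' between the three duality relations: factorial moment duality is governed by the duality function $H_F(k,n) = k^{\underline n}/N^{\underline n}$, Siegmund duality by $H_S(x,x^\ast) = \ind_{\{x\geq x^\ast\}}$, and the matrix $T$ of \eqref{eq:T} is exactly the change-of-basis matrix relating these two duality functions. Concretely, I would first verify the key algebraic identity
\begin{equation*}
	H_S(x,x^\ast) = \sum_{n} H_F(x,n)\, (T^{-1})(n,x^\ast) \quad\text{for all } x,x^\ast\in[N+1]_0,
\end{equation*}
equivalently $H_S = H_F\, (T^{-1})^{\trans}$ read as matrices (with suitable transpose conventions), using the explicit forms \eqref{eq:T} and \eqref{Tinv} of $T$ and $T^{-1}$ and the combinatorial interpretation already sketched after \eqref{eq:T}. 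This is the step I expect to carry the combinatorial weight: it amounts to the binomial identity $\ind_{\{k\geq j\}} = \sum_{n} \binom{k-1}{n-1}\binom{n-1}{j-1}(-1)^{n+j}/\binom{N}{n}\cdot\binom{N}{j}$ after cancellation, which is a form of binomial inversion and is essentially the content of \cite[Cor.~3.38]{Aigner79} invoked already for \eqref{Tinv}.

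Granting that identity, the rest is a standard duality-transfer argument via generators. Recall the general principle: two Markov generators $\mathcal A$ (for $X$, acting on the first variable) and $\mathcal B$ (acting on the second variable) are dual with respect to $H$ if and only if $\mathcal A H(\cdot,\cdot) = H(\cdot,\cdot)\mathcal B^{\trans}$ as a matrix identity, i.e. $Q_X\, \mathbf H = \mathbf H\, Q^{\trans}$ where $\mathbf H$ is the matrix $(H(x,y))_{x,y}$. So: \emph{Part \eqref{factsieg}.} Suppose $X$ has factorial moment dual $X^F$ with generator $Q_F$; this means $Q_X\,\mathbf H_F = \mathbf H_F\, Q_F^{\trans}$. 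Using $\mathbf H_S = \mathbf H_F\,(T^{-1})^{\trans}$ (the identity above), one computes
\begin{equation*}
	Q_X\,\mathbf H_S = Q_X\,\mathbf H_F\,(T^{-1})^{\trans} = \mathbf H_F\,Q_F^{\trans}\,(T^{-1})^{\trans} = \mathbf H_F\,(T^{-1})^{\trans}\,\big(T^{-1}Q_F T\big)^{\trans}\cdot(\text{rearranged}),
\end{equation*}
so that $X$ is Siegmund dual to a process with generator $T^{-1}Q_F T$ \emph{provided} the latter is a genuine Markov generator (rows summing to zero, nonnegative off-diagonal entries). Conversely, if $X$ has a Siegmund dual $X^S$ with generator $Q_S$, duality forces $Q_X \mathbf H_S = \mathbf H_S Q_S^{\trans}$, and comparing with the displayed computation and using invertibility of $\mathbf H_F$ and $T$ one reads off $Q_S = T^{-1}Q_F T$; in particular $T^{-1}Q_F T$ must then be a generator. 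This gives the ``if and only if''. \emph{Part \eqref{siegfact}} is the symmetric statement: start from $Q_X\,\mathbf H_S = \mathbf H_S\,Q_S^{\trans}$, substitute $\mathbf H_S = \mathbf H_F\,(T^{-1})^{\trans}$ to obtain $Q_X\,\mathbf H_F = \mathbf H_F\,(T Q_S T^{-1})^{\trans}$ (restricting appropriately from $[N+1]_0$ to $[N]$, since the factorial moment dual lives on $[N]$ and the states $0,N+1$ decouple as in Remark~\ref{rem:dualequiv}), so $X$ has a factorial moment dual iff $T Q_S T^{-1}$ is a generator, and then it is its generator.

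I would present the argument cleanly by isolating the linear-algebra lemma ``$\mathcal A, \mathcal B$ are $H$-dual iff $Q_{\mathcal A}\mathbf H = \mathbf H Q_{\mathcal B}^{\trans}$'' (standard, e.g.\ via differentiating $\E[H(X_t,y)\mid X_0=x]$ at $t=0$ and using the finiteness of the state space to avoid integrability issues — everything here is a finite CTMC so semigroups are matrix exponentials and no analytic subtleties arise), then the combinatorial identity $\mathbf H_S = \mathbf H_F (T^{-1})^{\trans}$, and finally the two-line conjugation computations for the two parts. The care points, which I would not skip: (i) getting the transpose/row-vector conventions consistent with the paper's convention $q_r = p_r T$ and $\tfrac{\dd p_r}{\dd r} = p_r Q_R$ in \eqref{tildeQM}, so that the ``$T^{-1}Q_F T$'' appearing here matches $\widetilde Q_M$ there; (ii) the bookkeeping of state spaces — $X$ on $[N]_0$, its Siegmund dual on $[N+1]_0$, its factorial moment dual on $[N]$ or $[N+1]_0$ — and the role of the isolated/absorbing boundary states $0$ and $N+1$ (resp.\ $\Delta$), which is exactly what Remark~\ref{rem:dualequiv} and the conventions $k^{\underline\Delta}/N^{\underline\Delta}\defeq 0$ are there to handle. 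The main obstacle is really just step one, the explicit identity $\mathbf H_S = \mathbf H_F(T^{-1})^{\trans}$; once that is in hand the theorem follows by pure matrix manipulation, and as a sanity check it immediately recovers the assertion in the text that $\widetilde Q_M = T^{-1}Q_R T$ is the generator of the Siegmund dual $Y^S$ of $Y$, given Theorem~\ref{thm:duality_R}.
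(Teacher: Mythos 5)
Your proposal is correct and follows essentially the same route as the paper: establish the change-of-basis identity between the two duality functions, write both dualities in matrix form via \cite[Prop.~1.2]{Jaku}, and conjugate by $T$. The only cosmetic difference is that the paper verifies the identity in the form $H_S T^{\trans}=H_F$ directly from \eqref{eq:T} via the hockey-stick identity $\sum_{j=0}^{k-1}\binom{j}{\ell-1}=\binom{k}{\ell}$ (no binomial inversion needed), and then deduces $H_S=H_F(T^{-1})^{\trans}$ from invertibility of $T$, which is slightly cleaner than checking the inverted form you propose.
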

Thus, $M$ has the same finite-dimensional distributions as $Y^S$, given an initial type assignment that is exchangeable. The benefit of this result is that the Siegmund dual has been given a meaning in terms of the ancestral process, via the max-line process and relation~\eqref{eq:T}.

\subsection{Ancestral type distribution}\label{mr:atd}
Our analysis of the conditional ancestral type distribution illustrates the power and versatility of the genealogical approach. More precisely, we exploit that a suitable ordering and pruning of the ASG leads to a tractable process that allows to determine the common ancestor and its type distribution. These ideas led to the representation of the common ancestor type distribution in the case of genic selection~\citep{Cordero17DL}, the only finite-population result available so far (see \citep{Kluth2013,LKBW15} for large population limits). Here, we extend this approach to the FTW case.

\smallskip

The overall idea is as follows. Assume the initial sample consists of a single individual. First, we \emph{prune} away those lines in the ASG that, due to mutations, are not potential ancestors of  the sampled individual.  Second, we \emph{order} the potential ancestors in a  manner bearing elements of the lookdown construction \cite{DK99}. More precisely, we assign to each potential ancestor a (finite) \emph{level} that reflects the priority (or \emph{pecking order}) to be the ancestor of the initial sample when the line is fit (see Prop.~\ref{prop:levelancestralsite} for the precise statement); this leads to an enumeration of the potential ancestors. The ordering is visualised  by placing the lines on top of each other according to their level, starting at level~$1$.
Third, we keep track of one distinguished line out of the potentially ancestral ones, which will turn out to be ancestral if a site colouring assigns only unfit types (see again Prop.~\ref{prop:levelancestralsite} for details). We call this distinguished line  \emph{immune} (following~\citep{LKBW15}). Lines in the ASG that are not potential ancestors are assigned the level~$\infty$, with the convention that $\infty=\infty-1$. There will always be at least one potential ancestor (i.e. with a finite level). The resulting process is called the \emph{pruned lookdown ASG} (pLD-ASG). In our figures, we only include lines that are potential ancestors.

\smallskip

Here, we construct the pLD-ASG informally appealing to Fig.~\ref{fig:transitionpLDASG} (see also Fig.~\ref{fig:pldASGfinitepopulations}); the rigorous formulation can be found in Definition~\ref{def:pLDASG}.  The events in the graphical representation affect the level ordering in the following way as we go backward in time. 
\begin{enumerate}
	\item[(1.a)] (Coalescence) 
	If two lines in the ASG coalesce, the line at the tail of the arrow (that is, the line remaining in the ASG and hence in the pLD-ASG) takes the lower level of the two coalescing lines. To fill up the (level) gap left by the removed line (the one at the tip), all finite levels are rearranged such that the set of finite levels remains an enumeration and the relative order between the potential parents is preserved. In particular, if both coalescing lines have level $\infty$, nothing happens. If the line at the tail has level $\infty$ but the line at the tip has a finite one, the levels are relabelled, but the event is invisible in our figures of the pLD-ASG.
	\item[(1.b)] (Relocation)
	If a line in the ASG  is relocated to a site that was outside the ASG, the line at the tail of the arrow takes the level of the line at the tip, while the latter is removed. In any case, these events are invisible in the graphical representation of the pLD-ASG.
	\item[(2)] (Selection)
	If a selective event hits a line in the ASG,  those incoming lines that are already in the graph at levels below the continuing line will stay where they are. The  remaining incoming lines  are assigned consecutive levels in the order given by $J$, starting with the level of the continuing line. The levels of all  other lines (that is, at levels at or above the continuing line and not belonging to $J$) are shifted above the incoming lines  such that the relative order among them is preserved. In particular, if the continuing line has level~$\infty$, all new lines have level~$\infty$ too; this is invisible in our figures.
	\item[(3.a)] (Deleterious mutation on immune line) If the immune line receives a deleterious mutation,  we relocate the line to the currently-highest finite level. All unaffected levels are reordered to fill the space, such that the relative order is preserved. In particular, if the mutation happens at level $\infty$, nothing happens.
	\item[(3.b)] (Deleterious mutation not on immune line) If any other line obtains a deleterious mutation, it ceases to be a potential parent and thus moves to level $\infty$. 
	Again, all unaffected levels are reordered to fill the space, such that the relative order is preserved. In particular, if the mutation happens at level~$\infty$, nothing happens.
	\item[(4)] (Beneficial mutation)
	A beneficial mutation means that all lines above it are not potential parents any more and thus moves them to level $\infty$. If the mutation happens at level $\infty$, nothing happens.
\end{enumerate}
A beneficial mutation on a line with finite level makes the (level of the) \emph{immune line} move to the level of the mutation (which is, by construction, the highest finite level after the event). In a coalescence event, the immune line moves to the (new) level together with its (new) site. In all other events, the site of the immune line  is unaffected, but  inherits its  (new) level. The assignment of types and the propagation of types and ancestry  also translate to the levels in the pLD-ASG. We say that a level has type~$0$ (or type~$1$) at (backward) time~$r$ if the line associated with that level has type~$0$ (type~$1$) at time~$r$  in the original ASG. \\

Let $L=(L_r)_{r\geq 0}$ be the line-counting process of the pLD-ASG, where $L_r$ counts the lines with finite level in the pLD-ASG at backward time~$r$. The next result provides the rates of the process and will be proved in Section~\ref{sec:ancestraltype}.
\begin{proposition}\label{prop:eqdistL}
	The line-counting process $L$ of a pLD-ASG is a continuous-time Markov chain on~$[N]$. 
	The corresponding infinitesimal generator acts on functions~$\tilde{f}:[N]\to\R$ and is given by $\As_{L}=\As_{R}^{\rm{n}}+\sum_{m>0}\As_{R}^{s_m}+\As_{L}^{\nu_0}+\As_{L}^{\nu_1}$, with~$\As_{R}^{\rm{n}}$ and~$\As_{R}^{s_m}$ of \eqref{eq:generatorRcoalescence} and \eqref{eq:generatorRselection}, respectively, and for $n\in [N]$, \begin{align}
		\As_{{L}}^{\nu_0}\tilde{f}(n)\defeq u\nu_0\sum_{j=1}^{n-1}\,[\tilde{f}(j)-\tilde{f}(n)],\qquad 
		\As_{L}^{\nu_1}\tilde{f}(n)\defeq u\nu_1\,(n-1)\,[\tilde{f}(n-1)-\tilde{f}(n)].\label{eq:generatorLmut}\end{align}
\end{proposition}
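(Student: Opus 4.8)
The plan is to read off the transition rates of $L$ directly from the informal description of the pLD-ASG given in items (1.a)--(4), checking each type of event against the claimed generator blocks. The strategy is the same as for Proposition~\ref{prop:lckASG}: $L_r$ counts only the finitely-levelled lines (the potential ancestors), so I should determine, for each event affecting the graph, whether it changes that count and at what rate. Crucially, since the transitions of the pLD-ASG do not depend on the line labels, $L$ is indeed a Markov chain on $[N]$, and it never leaves $[N]$ because there is always at least one potential ancestor (the immune line), so $L_r\geq 1$ always; this also justifies why there is no cemetery state $\Delta$ here, in contrast to $R$.

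\emph{Coalescence and relocation (items 1.a, 1.b).} A relocation event does not change the number of potential ancestors (a finite-level line at the tail simply takes over a finite or infinite level at the tip, and the count is unchanged when the tip line was outside). A coalescence event merges two lines of the ASG; among the finitely-levelled lines this decreases $L$ by one exactly when \emph{both} coalescing lines have finite level, and leaves $L$ unchanged otherwise. Counting ordered pairs among the $n$ finite-level lines, the rate of a genuine decrease is $n(n-1)/N$, which is exactly $\As_R^{\rm n}$ of \eqref{eq:generatorRcoalescence}. Here I would note the key point that the set of $n$ finite-level lines behaves, under neutral and selective arrows, exactly like the $n$ lines of the kASG (when the latter has not been killed), because the lookdown ordering is an internal bookkeeping device that does not affect which sites are in the graph; hence the coalescence and selection blocks are literally inherited from Proposition~\ref{prop:lckASG}.

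\emph{Selection (item 2).} A selective event of order $m$ hitting a line in the graph adds new sites to the graph exactly as in the kASG construction; the level reshuffling in (2) is again pure bookkeeping and does not change how many new external sites enter. Therefore the increase of $L$ by $j$ lines happens at precisely the rate computed in the proof of Proposition~\ref{prop:lckASG}, namely $s_m\,\frac{n}{N^m}(N-n)^{\underline j}C^n_{mj}$, giving the block $\As_R^{s_m}$ of \eqref{eq:generatorRselection} verbatim. I would just cite the combinatorial computation from Section~\ref{sec:kASG} rather than redo it.

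\emph{Mutation (items 3.a, 3.b, 4).} This is where the pLD-ASG genuinely differs from the kASG, and I expect it to be the main (though still routine) point of the proof. There are $n$ finite-level lines, each carrying an independent deleterious-mutation clock of rate $u\nu_1$ and a beneficial-mutation clock of rate $u\nu_0$. A deleterious mutation on the immune line (rate $u\nu_1$, one line) relocates it to the top finite level: $L$ is unchanged. A deleterious mutation on any of the other $n-1$ finite-level lines (rate $u\nu_1(n-1)$) sends that line to level $\infty$, decreasing $L$ by one; this gives $\As_L^{\nu_1}\tilde f(n)=u\nu_1(n-1)[\tilde f(n-1)-\tilde f(n)]$. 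A beneficial mutation on the line at level $i\in\{1,\dots,n\}$ sends all lines strictly above it (there are $n-i$ of them) to level $\infty$, so $L$ drops to $i$; summing over $i$ with rate $u\nu_0$ each and reindexing $j=i$ yields $\As_L^{\nu_0}\tilde f(n)=u\nu_0\sum_{j=1}^{n-1}[\tilde f(j)-\tilde f(n)]$ (the term $i=n$ contributes a zero increment and is dropped). The only subtlety to address carefully is that no mutation event can move $L$ out of $[N]$ — which holds because the immune line is never pruned, so at least one finite level always survives a beneficial mutation, consistent with the lower summation limit $j=1$. Assembling the four blocks gives $\As_L=\As_R^{\rm n}+\sum_{m>0}\As_R^{s_m}+\As_L^{\nu_0}+\As_L^{\nu_1}$, as claimed. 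For full rigour I would defer the formal verification — that the set-valued pLD-ASG of Definition~\ref{def:pLDASG} really induces these rates on its line count — to Section~\ref{sec:ancestraltype}, exactly as the statement anticipates.
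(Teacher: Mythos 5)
Your proposal is correct and follows essentially the same route as the paper's proof: both read off the transitions of $L$ event by event from the Poisson construction, inheriting the coalescence and selection rates from Proposition~\ref{prop:lckASG} and computing the two mutation blocks directly from rules (3.a), (3.b) and (4). The only point where the paper is more precise is the selection block: a selective event increases $L$ not by adding new sites to $G_r$ but by assigning finite levels to incoming lines that are either \emph{outside} $G_r$ or \emph{inside $G_r$ at level $\infty$} --- there are $N-n$ such sites in either case, so your rate is correct, but the phrase ``how many new external sites enter'' slightly misdescribes the mechanism.
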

Since $\sum_{m>0}s_m>0$, $L$ is irreducible and converges in distribution to its stationary measure. We denote by~$L_{\infty}$ a random variable distributed according to this measure.

\begin{figure}[t]
	\centering
	\begin{minipage}{.2\textwidth}
		\begin{center}
			\scalebox{0.75}{\begin{tikzpicture}
				%%%%%Coalescence
				\draw[opacity=0,line width=1.3mm] (0,0) -- (0,3.6);
				
				%%%%%%Immune line
				\draw[gray,line width=1mm,opacity=.7] (0,0) -- (1,0);
				\draw[gray,line width=1mm,opacity=.7] (1,1.4) -- (2.5,1.4);
				\draw[gray,line width=1mm,opacity=.7] (1,0) .. controls (0.8,0.4) and (0.8,1) .. (1,1.4);
				%%%%%%%%%%%%%%%%%%%%%%%%%%
				
				\draw (0,1.4) -- (0.7,1.4);
				\draw (0,.7) -- (1,.7);
				\draw (0,0) -- (1,0);
				
				\draw[-{triangle 45[scale=5]},opacity=1] (1,0) .. controls (0.8,0.4) and (0.8,1) .. (1,1.4);
				
				\draw (0.7,1.4) -- (1,2.1);
				
				\draw (1,2.1) -- (2.5,2.1);
				\draw (1,1.4) -- (2.5,1.4);
				\draw (1,.7) -- (2.5,.7);
				\draw (1,0) -- (2.5,0);
			\end{tikzpicture}}\\
			(1.a) coalescence \\ ~~
		\end{center}
	\end{minipage}\begin{minipage}{.2\textwidth}
		\begin{center}
			\scalebox{0.75}{\begin{tikzpicture}
				\draw[opacity=0,line width=1.3mm] (0,0) -- (0,3.6);
				
				%%%%%%Immune line
				\draw[gray,line width=1mm,opacity=.7] (0,2.1) -- (0.9,2.1);
				\draw[gray,line width=1mm,opacity=.7] (1.2,0.7) -- (2.5,0.7);
				\draw[gray,line width=1mm,opacity=.7] (0.9,2.1) -- (1.2,0.7);
				%%%%%%%%%%%%%%%%%%%%%%%%%%
				
				\draw (0,3.5) -- (0.9,3.5);
				\draw (0,2.8) -- (0.9,2.8);
				\draw (0,2.1) -- (0.9,2.1);
				\draw (0,1.4) -- (0.9,1.4);
				\draw (0,.7) -- (0.9,.7);
				\draw (0,0) -- (0.9,0);
				
				\draw[] (0.9,2.8) circle (.4mm)  [fill=black!100];
				\draw[] (0.9,2.1) circle (.4mm)  [fill=black!100];
				\draw[] (0.9,1.4) circle (.4mm)  [fill=black!100];
				\draw[] (0.9,0.7) circle (.4mm)  [fill=black!100];
				\draw[] (0.9,0) circle (.4mm)  [fill=black!100];
				
				%\draw[-{open triangle 45[scale=5]}] (1,0) .. controls (0.8,0.4) and (0.8,1) .. (1,1.4);
				
				\draw[-{open triangle 45[scale=5]}] (0.9,0.7) -- (1.15,0.7);
				
				%			\draw[-{open triangle 45[scale=5]}] (0.7,0.7) -- (1,.7);
				%\draw[-{open triangle 45[scale=5]}] (0.7,1.4) -- (1,0.7);
				%\draw[-{open triangle 45[scale=5]}] (0.7,2.1) -- (1,0.7);
				\draw (0.9,3.5) -- (1.2,2.8);
				\draw (0.9,2.1) -- (1.2,0.7);
				\draw (0.9,1.4) -- (1.2,2.1);
				\draw (0.9,2.8) -- (1.2,1.4);
				\draw (0.9,0) -- (1.2,0);
				\draw (0.9,2.8) .. controls (0.6,2) and (0.6,1.2) .. (0.9,0.7);
				\draw (0.9,2.1) .. controls (0.7,1.7) and (0.7,1) .. (0.9,0.7);
				\draw (0.9,1.4) .. controls (0.8,1.1) and (0.8,1) .. (0.9,0.7);
				\draw (0.9,0) .. controls (0.8,0.15) and (0.8,0.55) .. (0.9,0.7);

				\draw (1.2,2.8) -- (2.5,2.8);
				\draw (1.2,2.1) -- (2.5,2.1);
				\draw (1.2,1.4) -- (2.5,1.4);
				\draw (1.2,.7) -- (2.5,.7);
				\draw (1.2,0) -- (2.5,0);
			\end{tikzpicture}}\\
			(2) selection \\ ~~
		\end{center}
	\end{minipage}\begin{minipage}{.2\textwidth}
		\begin{center}
			\scalebox{0.75}{\begin{tikzpicture}
				\draw[opacity=0,line width=1.3mm] (0,0) -- (0,3.6);
				
				%%%%%%Immune line
				\draw[gray,line width=1mm,opacity=.7] (0,2.1) -- (0.9,2.1);
				\draw[gray,line width=1mm,opacity=.7] (1.2,0.7) -- (2.5,0.7);
				\draw[gray,line width=1mm,opacity=.7] (0.9,2.1) -- (1.2,0.7);
				%%%%%%%%%%%%%%%%%%%%%%%%%%
				
				\draw (0,2.1) -- (0.9,2.1);
				\draw (0,1.4) -- (0.9,1.4);
				\draw (0,.7) -- (0.9,.7);
				\draw (0,0) -- (1.2,0);
				
				\draw (0.9,2.1) -- (1.2,0.7);
				\draw (0.9,1.4) -- (1.2,2.1);
				\draw (0.9,0.7) -- (1.2,1.4);
				\draw (0.8,0) -- (1,0);

				\draw (1.2,2.1) -- (2.5,2.1);
				\draw (1.2,1.4) -- (2.5,1.4);
				\draw (1.2,.7) -- (2.5,.7);
				\draw (1.2,0) -- (2.5,0);
				
				\node[opacity=1] at (0.9,2.1) {\scalebox{1.5}{$\times$}};
			\end{tikzpicture}}\\
			(3.a) del. mutation \\ on immune line 
		\end{center}
	\end{minipage}\begin{minipage}{.2\textwidth}
		\begin{center}
			\scalebox{0.75}{\begin{tikzpicture}
				\draw[opacity=0,line width=1.3mm] (0,0) -- (0,3.6);
				
				%%%%%%Immune line
				\draw[gray,line width=1mm,opacity=.7] (0,0.7) -- (0.75,0.7);
				\draw[gray,line width=1mm,opacity=.7] (0.7,0.7) -- (1,1.4);
				\draw[gray,line width=1mm,opacity=.7] (1,1.4) -- (2.5,1.4);
				%%%%%%%%%%%%%%%%%%%%%%%%%%
				
				\draw (0,1.4) -- (0.7,1.4);
				\draw (0,.7) -- (0.7,.7);
				\draw (0,0) -- (1,0);
				
				\draw (0.7,1.4) -- (1,2.1);
				\draw (0.7,0.7) -- (1,1.4);
				\draw (0.8,0) -- (1,0);

				\draw (1,2.1) -- (2.5,2.1);
				\draw (1,1.4) -- (2.5,1.4);
				\draw (.9,.7) -- (2.5,.7);
				\draw (1,0) -- (2.5,0);
				
				\node[opacity=1] at (.9,0.7) {\scalebox{1.5}{$\times$}};
			\end{tikzpicture}}\\
			(3.b) del. mutation \\ not on immune line 
		\end{center}
	\end{minipage}\begin{minipage}{.2\textwidth}
		\begin{center}
			\scalebox{0.75}{\begin{tikzpicture}
				\draw[opacity=0,line width=1.3mm] (0,0) -- (0,3.6);
				
				%%%%%%Immune line
				\draw[gray,line width=1mm,opacity=.7] (0,1.4) -- (1,1.4);
				\draw[gray,line width=1mm,opacity=.7] (1,0.7) -- (2.5,0.7);
				%%%%%%%%%%%%%%%%%%%%%%%%%%
				
				\draw (0,1.4) -- (1,1.4);
				\draw (0,.7) -- (1,.7);
				\draw (0,0) -- (1,0);

				\draw (1,2.8) -- (2.5,2.8);
				\draw (1,2.1) -- (2.5,2.1);
				\draw (1,1.4) -- (2.5,1.4);
				\draw (1,.7) -- (2.5,.7);
				\draw (1,0) -- (2.5,0);
				
				%\draw[dashed] (1,1.4) --(1,3.3);
				
				\draw (1,1.4)[opacity=1] circle (1.5mm)  [fill=white!100];
			\end{tikzpicture}}\\
			(4) ben. mutation \\ ~~
		\end{center}
	\end{minipage}
	\caption[Transitions pLD-ASG]{Transitions of the pLD-ASG. Lines are ordered according to their levels and the immune line is pictured in bold grey. Only  lines at finite levels are shown.  Note that the direction of the arrow in the coalescence event is meaningless (either direction of the arrow in the original particle system will lead to an upward arrow in the pLD-ASG). Note also that, in a selection event, the hollow arrowhead and the bullet indicating the corresponding tail are contracted at the level the arrow is targeting. Relocation events are invisible in the pLD-ASG.  }
	\label{fig:transitionpLDASG}
\end{figure}
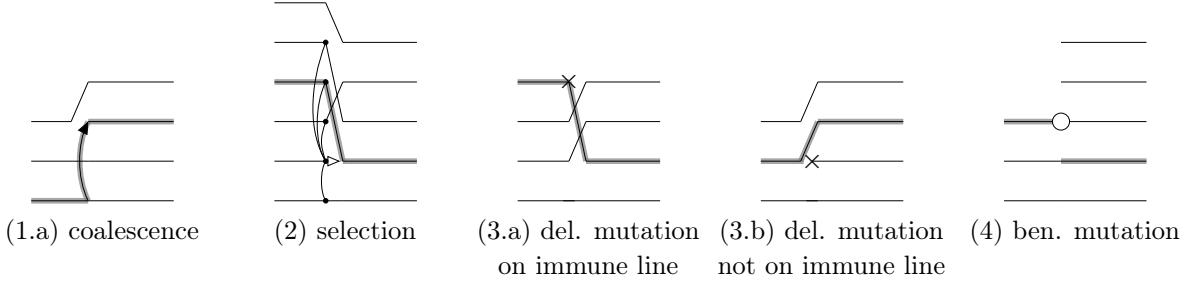

\smallskip

The pLD-ASG facilitates to identify the ancestral line for a given type assignment in a crucial way. More precisely, in Section~\ref{sec:ancestraltype}, we will establish in Proposition~\ref{prop:levelancestralsite}  that the ancestor of an individual is unfit at time $r$ if and only if all  lines at finite levels in its pLD-ASG are unfit at time $r$. This will lead to one of our main results. 

\begin{theorem}[Representation of ancestral type distribution] \label{thm:atyperepresentation}
	We have \begin{equation}
		h_r(k)=\E_1\Bigg[\frac{k^{\underline{L_{r} } }}{N^{\underline{L_{r} } }}\Bigg],\qquad k\in [N]_0.\label{eq:atypedist}
	\end{equation}
	Furthermore, $h_{\infty}(k)=\lim_{r\to\infty}h_r(k)$ exists and is given by \begin{equation}
		h_{\infty}(k)=\E \Bigg[\frac{k^{\underline{L_{\infty} } }}{N^{\underline{L_{\infty} } }}\Bigg],\qquad k\in [N]_0.\label{eq:catypedist}
	\end{equation}
	
\end{theorem}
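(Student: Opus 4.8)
The plan is to reduce the statement to Proposition~\ref{prop:levelancestralsite} combined with an exchangeability argument in the spirit of the one leading to~\eqref{eq:T}, mirroring the plausibility argument given after Theorem~\ref{thm:duality_R}. Fix a backward time $r\ge 0$ and condition on the event $C_k$ that the population at backward time $r$ consists of exactly $k$ type-$1$ individuals (the conditioning of Definition~\ref{def:ancestraltypedistribution}). By Proposition~\ref{prop:levelancestralsite}, on $C_k$ the ancestor at backward time $r$ of the uniformly sampled individual is unfit precisely when every line at a finite level of its pLD-ASG carries type~$1$ at backward time $r$; hence $h_r(k)=\P(\text{all finite-level lines carry type }1\mid C_k)$.

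The next step is to decouple two sources of randomness. The pLD-ASG over the backward window $[0,r]$ is a functional of the graphical elements inside that window and of the uniform choice of the sampled site, whereas the type configuration of the population at backward time $r$ is a functional of the initial type assignment and of the graphical elements lying strictly further in the past; these are independent. Consequently, conditioning on $C_k$ does not alter the law of the pLD-ASG on $[0,r]$: under $C_k$, the process $L_r$ still has its law started from $L_0=1$, and, because the pLD-ASG transitions do not refer to site labels and the sampled site is uniform, conditionally on $\{L_r=n\}$ the set of sites carried by the $n$ finite-level lines is a uniformly chosen $n$-subset of $[N]$ — the pLD-ASG analogue of~\eqref{eq:T}. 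Exchangeability of the Moran model makes the type configuration at backward time $r$, conditioned on $C_k$, uniform over configurations with exactly $k$ type-$1$ sites, independently of the pLD-ASG. Hence, conditionally on $C_k$ and $\{L_r=n\}$, all $n$ carried sites are type-$1$ with probability $\binom{k}{n}/\binom{N}{n}=k^{\underline{n}}/N^{\underline{n}}$ (which is $0$ when $n>k$, as it must be). Summing over $n\in[N]$ against $\P_1(L_r=n)$ gives $h_r(k)=\E_1[k^{\underline{L_r}}/N^{\underline{L_r}}]$, that is~\eqref{eq:atypedist}.

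For~\eqref{eq:catypedist}: by Proposition~\ref{prop:eqdistL}, $L$ is a continuous-time Markov chain on the finite set $[N]$, and since $\sum_{m>0}s_m>0$ it is irreducible (selective events can raise the level and coalescences can lower it), so $L_r$ converges in distribution to $L_\infty$ as $r\to\infty$. Since $n\mapsto k^{\underline{n}}/N^{\underline{n}}$ is bounded on $[N]$, the right-hand side of~\eqref{eq:atypedist} converges, whence $h_\infty(k)\defeq\lim_{r\to\infty}h_r(k)$ exists and equals $\E[k^{\underline{L_\infty}}/N^{\underline{L_\infty}}]$.

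The main obstacle is the middle paragraph: it requires making the decoupling of ``pLD-ASG on $[0,r]$'' from ``type configuration at backward time $r$'' rigorous (in particular confirming that Definition~\ref{def:ancestraltypedistribution} indeed amounts to conditioning on an exchangeable configuration with $k$ type-$1$ individuals), and proving the pLD-ASG analogue of~\eqref{eq:T}, namely that the carried-site set is exchangeable given its cardinality. The genealogically substantial ingredient, the equivalence ``ancestor unfit $\iff$ all finite levels unfit'', is Proposition~\ref{prop:levelancestralsite}, taken as given here and established separately in Section~\ref{sec:ancestraltype}.
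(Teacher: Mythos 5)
Your argument is correct and is essentially the paper's own proof: Proposition~\ref{prop:levelancestralsite} reduces the event to ``all finite-level lines are unfit'', and since $\Gamma$ in Definition~\ref{def:ancestraltypedistribution} is by construction independent of $\Lambda$ and uniform over colourings, conditioning on $\lvert\Gamma\rvert=k$ gives, for any fixed realisation of the pLD-ASG with $L_r=n$ finite-level sites, the hypergeometric probability $k^{\underline{n}}/N^{\underline{n}}$, after which averaging and the convergence of $L$ to stationarity finish the proof. The ``main obstacle'' you flag is in fact immediate from that independence (no appeal to exchangeability of the Moran dynamics or to a pLD-ASG analogue of~\eqref{eq:T} is needed, since the conditional probability is the same for every $n$-subset of carried sites).
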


There is also a complementary, forward-in-time representation that we now want to lay out. In the diffusion limit of the MoMo, the ancestral type distribution has been expressed as the absorption probability of a jump-diffusion process (\citet[Prop.~2.5]{Taylor2007}, see also \cite[Sec.~7]{LKBW15}). Our next result establishes an analogous representation in the finite-population setting. To this end, let $\widetilde{Y}$ be a process on $[N]_0$ that is coupled to~$Y$ on the basis of the same graphical representation until the first time~$\widetilde{Y}$ hits the boundary. More precisely, $\widetilde{Y}$ moves with~$Y$. Additionally, at every deleterious and beneficial type-changing mutation in the (typed) graphical representation that governs~$Y$,  perform a Bernoulli experiment with success parameter $1/(k+1)$ and $1/(N-k+1)$, respectively. In case of a success, $\widetilde{Y}$  jumps to~$0$ and~$1$, respectively, where it absorbs. In case of failure, $\widetilde{Y}$ continues to move with~$Y$ (if not yet absorbed at the boundary). The following result establishes the connection to the pLD-ASG.

\begin{theorem}[Factorial moment duality] \label{thm:finite.dual.mut}
	The processes~$\widetilde{Y}$ and~$L$ are dual with respect to the duality function~$H_F$ of Theorem~\ref{thm:duality_R}, that is, for~$t\geq0$,
	\begin{equation}
		\E_{k}\bigg[\frac{\faf{\widetilde{Y}_t}{n}}{\faf{N}{n}}\bigg]=\E_n\bigg[\frac{\faf{k}{L_t}}{\faf{N}{L_t}}\bigg]  \qquad \forall {k}\in[N]_0, \ n\in [N].\label{eq:dualitymarkovjumpchain}\end{equation}
\end{theorem}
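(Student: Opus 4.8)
The plan is to verify the duality at the level of the generators and then to conclude with the standard duality lemma for Markov chains on finite state spaces; this parallels the proof of Theorem~\ref{thm:duality_R}, the only genuinely new ingredient being the treatment of the modified mutation mechanism. Write $\As_{\widetilde Y}$ for the generator of $\widetilde Y$ on $[N]_0$ --- it is a bona fide continuous-time Markov chain, all of its rates at a state~$k$ being explicit functions of~$k$ --- and recall $\As_L$ from Proposition~\ref{prop:eqdistL}; both state spaces being finite, the generators are bounded matrices. Since $\widetilde Y$ moves with $Y$ away from the thinned mutation events, $\As_{\widetilde Y}=\As_{Y}^{\mathrm n}+\sum_{m>0}\As_{Y}^{s_m}+\As_{\widetilde Y}^{u}$, where $\As_{\widetilde Y}^{u}$ arises from $\As_{Y}^{u}$ by thinning each \emph{type-changing} deleterious (respectively beneficial) mutation --- which occurs at rate $(N-k)u\nu_1$ (respectively $ku\nu_0$), as only a fit (respectively unfit) individual can undergo one --- with an independent Bernoulli variable of parameter $\tfrac1{k+1}$ (respectively $\tfrac1{N-k+1}$) that on success sends $\widetilde Y$ into absorption and on failure lets it move with~$Y$. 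Granting the generator identity
\[
\big(\As_{\widetilde Y}\,H_F(\cdot,n)\big)(k)=\big(\As_{L}\,H_F(k,\cdot)\big)(n),\qquad k\in[N]_0,\ n\in[N],
\]
with $H_F(k,n)=k^{\underline n}/N^{\underline n}$, the identity~\eqref{eq:dualitymarkovjumpchain} follows in the usual way: if $\widetilde Y$ and $L$ are run independently with $\widetilde Y_0=k$ and $L_0=n$, then $s\mapsto\E[H_F(\widetilde Y_s,L_{t-s})]$ on $[0,t]$ has derivative identically~$0$ by the generator identity, and equating its values at $s=0$ and $s=t$ yields~\eqref{eq:dualitymarkovjumpchain}.

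For the generator identity I would split both sides according to $\As_{\widetilde Y}=\As_{Y}^{\mathrm n}+\sum_m\As_{Y}^{s_m}+\As_{\widetilde Y}^{u}$ and $\As_{L}=\As_{R}^{\mathrm n}+\sum_m\As_{R}^{s_m}+\As_{L}^{\nu_0}+\As_{L}^{\nu_1}$. The blocks $\As_{Y}^{\mathrm n}\leftrightarrow\As_{R}^{\mathrm n}$ and $\As_{Y}^{s_m}\leftrightarrow\As_{R}^{s_m}$ are dual with respect to $H_F$ --- this is precisely the computation already performed in the proof of Theorem~\ref{thm:duality_R} (the coalescence block being the classical Moran--coalescent factorial moment duality, the selection block the combinatorial identity packaged in the coefficients $C^n_{mj}$), and since $\As_L$ is assembled from these very same blocks nothing new is needed there. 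It therefore remains to prove the purely mutational identity
\[
\big(\As_{\widetilde Y}^{u}H_F(\cdot,n)\big)(k)=\big(\As_{L}^{\nu_0}H_F(k,\cdot)\big)(n)+\big(\As_{L}^{\nu_1}H_F(k,\cdot)\big)(n),\qquad k\in[N]_0,\ n\in[N],
\]
and in fact one sees that the deleterious part of $\As_{\widetilde Y}^{u}$ matches $\As_{L}^{\nu_1}$ and its beneficial part matches $\As_{L}^{\nu_0}$, separately.

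This last step is the real content of the theorem and reduces to a finite identity for falling factorials. The basic tools are $(k+1)^{\underline n}-k^{\underline n}=n\,k^{\underline{n-1}}$, the factorisations $k^{\underline n}=k\,(k-1)^{\underline{n-1}}$ and $N^{\underline n}=(N-n+1)\,N^{\underline{n-1}}$, and --- for the beneficial contribution --- the telescoping identity
\[
\frac{k^{\underline j}}{N^{\underline j}}-\frac{k^{\underline{j+1}}}{N^{\underline{j+1}}}=\frac{(N-k)\,k^{\underline j}}{N^{\underline{j+1}}},
\]
with the help of which the double sum coming from $\As_{L}^{\nu_0}H_F(k,\cdot)(n)=u\nu_0\sum_{j=1}^{n-1}\big(H_F(k,j)-H_F(k,n)\big)$ collapses to a closed form that can be matched against the single Bernoulli-thinned beneficial jump of $\As_{\widetilde Y}^{u}$. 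For the deleterious contribution the matching boils down to the elementary identity $n\,k\,k^{\underline{n-1}}-k^{\underline n}=(n-1)(k+1)\,k^{\underline{n-1}}$. I expect the beneficial term to be the one delicate point, exactly because a length-$(n-1)$ sum on the pLD-ASG side has to be reconciled with a single thinned jump on the $\widetilde Y$ side; one should also check separately the absorbing states of $\widetilde Y$ (there $\As_{\widetilde Y}H_F(\cdot,n)$ vanishes while the relevant $H_F$-values degenerate, so the identity is immediate) and the boundary indices $n=1$ and $k\in\{0,N\}$. The rest is routine verification.
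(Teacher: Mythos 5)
Your proposal is correct and follows essentially the same route as the paper: verify the generator identity, reuse the neutral and selective blocks already checked in the proof of Theorem~\ref{thm:duality_R} (which is legitimate since $\As_L$ is built from $\As_R^{\rm n}$ and $\As_R^{s_m}$ by Proposition~\ref{prop:eqdistL}), and match the two mutation blocks separately, with the beneficial block being the delicate one. The paper packages the collapse of $\sum_{j=1}^{n-1}\faf{k}{j}/\faf{N}{j}$ into the auxiliary Lemma~\ref{lem.binom} (proved by induction on $n$), which is exactly the closed form your telescoping argument is aiming at, and concludes via \citep[Prop.~1.2]{Jaku} rather than the explicit interpolation $s\mapsto\E[H_F(\widetilde Y_s,L_{t-s})]$ --- a purely cosmetic difference.
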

\begin{corollary}[Forward-in-time representation of ancestral type distribution] \label{coro:jumpprocess}
	For~${k}\in [N]_0$, one has $h_t(k)=\E_k[\widetilde{Y}_t/N]$ and  
	\begin{equation}
		h_{\infty}({k})=\P(\lim_{t\to\infty}\widetilde{Y}_t=N\mid \widetilde{Y}_0={k}).\label{eq:characerizationhinffinite}
	\end{equation}
	In particular,~$h_{\infty}$ is the unique solution of the difference equation
	\begin{equation} \begin{split}
			\Bigg[2\bigg(&1-\frac{k}{N}\bigg)  +\frac{N-k}{{k}}u\nu_1+u\nu_0+\sum_{m=1}^\infty s_m \bigg(1-\bigg(\frac{k}{N}\bigg)^m\bigg)\Bigg]h_{\infty}({k}) \\
			& = \Bigg[1-\frac{k}{N}+\frac{N-k}{{k}+1}u\nu_1\Bigg]h_{\infty}({k}+1)+\Bigg[1-\frac{k}{N}+\frac{N-k}{N-{k}+1}u\nu_0+\sum_{m=1}^\infty s_m\bigg(1-\bigg(\frac{k}{N}\bigg)^m\bigg) \Bigg]h_{\infty}({k}-1) \\ & \hphantom{=} +\frac{u\nu_0}{N-{k}+1}
		\end{split}
		\label{eq:boundaryvalueproblem}
	\end{equation}
	for ${k}\in [N-1]$; complemented by~$h_{\infty}(0)=0$ and~$h_{\infty}(N)=1$.
\end{corollary}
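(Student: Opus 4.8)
The plan is to turn the two factorial moment dualities just established into the claimed forward-in-time picture. For the identity $h_t(k)=\E_k[\widetilde{Y}_t/N]$, I would evaluate the duality \eqref{eq:dualitymarkovjumpchain} of Theorem~\ref{thm:finite.dual.mut} at $n=1$: the left-hand side becomes $\E_k[\widetilde{Y}_t/N]$, while the right-hand side becomes $\E_1[\faf{k}{L_t}/\faf{N}{L_t}]$, which is exactly $h_t(k)$ by \eqref{eq:atypedist} in Theorem~\ref{thm:atyperepresentation}. This holds for all $k\in[N]_0$, the boundary cases $k\in\{0,N\}$ being trivially consistent, since $h_t$ equals $0$ and $1$ there, as does $\E_k[\widetilde{Y}_t/N]$ because $0$ and $N$ are absorbing for $\widetilde{Y}$.

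For the limiting statement I would let $t\to\infty$ in $h_t(k)=\E_k[\widetilde{Y}_t/N]$. By Theorem~\ref{thm:atyperepresentation} the left-hand side converges and its limit is $h_\infty(k)$. For the right-hand side, $\widetilde{Y}$ is a Markov chain on the finite set $[N]_0$ whose only absorbing states are $0$ and $N$, while every interior state can reach $N$ (for instance through a run of neutral reproduction events from the unfit onto the fit individuals) and is therefore transient; hence $\widetilde{Y}$ reaches $\{0,N\}$ in finite time almost surely, and writing $\widetilde{Y}_\infty$ for this almost-sure limit we get $\widetilde{Y}_t/N\to\1_{\{\widetilde{Y}_\infty=N\}}$ a.s. Bounded convergence then gives $\E_k[\widetilde{Y}_t/N]\to\P_k(\widetilde{Y}_\infty=N)$, which combined with the previous display yields \eqref{eq:characerizationhinffinite}. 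Alternatively one can pass to the limit in \eqref{eq:dualitymarkovjumpchain} for general $n\in[N]$, using that $L$ converges in distribution to $L_\infty$ by Proposition~\ref{prop:eqdistL} while $\faf{\widetilde{Y}_t}{n}/\faf{N}{n}\to\1_{\{\widetilde{Y}_\infty=N\}}$.

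For the boundary value problem, I would read off the generator of $\widetilde{Y}$ from the coupled construction. Since $\widetilde{Y}$ agrees with $Y$ before absorption, all its modified rates depend only on its current value, so $\widetilde{Y}$ is a time-homogeneous chain; out of an interior state $k\in[N-1]$ its transitions are those of $\cA_Y$ for the neutral and selective parts, a type-changing deleterious mutation at rate $(N-k)u\nu_1$ sending $k\mapsto 0$ with probability $1/(k+1)$ and $k\mapsto k+1$ otherwise, and a type-changing beneficial mutation at rate $ku\nu_0$ sending $k\mapsto N$ with probability $1/(N-k+1)$ and $k\mapsto k-1$ otherwise. The function $g(k)\defeq\P_k(\widetilde{Y}_\infty=N)$ is then harmonic for $\cA_{\widetilde{Y}}$ on $[N-1]$ with $g(0)=0$ and $g(N)=1$; expanding $\cA_{\widetilde{Y}}g(k)=0$, dividing through by $k$, and collecting the coefficients of $g(k-1)$, $g(k)$, $g(k+1)$ and the constant term $u\nu_0/(N-k+1)$ produced by the jump to $N$ reproduces \eqref{eq:boundaryvalueproblem} after the elementary cancellations $\tfrac{N-k}{k+1}u\nu_1+\tfrac{(N-k)u\nu_1}{k(k+1)}=\tfrac{N-k}{k}u\nu_1$ and $\tfrac{N-k}{N-k+1}u\nu_0+\tfrac{u\nu_0}{N-k+1}=u\nu_0$. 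Uniqueness is then the standard first-passage argument: the interior states being transient, the corresponding sub-generator is invertible, so the boundary value problem has a unique solution; equivalently, the difference $d$ of two solutions is harmonic and vanishes on $\{0,N\}$, hence $d(\widetilde{Y}_{t\wedge\tau})$ is a bounded martingale along the almost surely finite absorption time $\tau$, and optional stopping forces $d\equiv 0$.

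I expect the only delicate point to be the bookkeeping in the last step: one must check that $(N-k)u\nu_1$ and $ku\nu_0$ are the correct rates of type-changing deleterious and beneficial mutations in the graphical representation (there being $N-k$ fit individuals mutating at rate $u\nu_1$ to type $1$ and $k$ unfit individuals mutating at rate $u\nu_0$ to type $0$), that only such mutations trigger the Bernoulli experiment, and that the probability split is applied to the post-event state so that the coefficients of $h_\infty(k+1)$ and $h_\infty(k-1)$ come out exactly as displayed; none of this poses a real obstacle.
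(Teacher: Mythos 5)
Your proof is correct and, for the representation $h_t(k)=\E_k[\widetilde Y_t/N]$, the passage to the limit $t\to\infty$, and the derivation of the difference equation, it follows essentially the same route as the paper: the paper likewise combines Theorem~\ref{thm:atyperepresentation} with the duality \eqref{eq:dualitymarkovjumpchain} at $n=1$, lets $t\to\infty$, and obtains \eqref{eq:boundaryvalueproblem} from a first-step decomposition of the absorption probability of $\widetilde Y$ in $N$ --- which is the same computation as your harmonicity equation $\As_{\widetilde Y}g(k)=0$ after dividing by $k$, and your two cancellations are exactly the ones that occur there. The only genuine divergence is the uniqueness argument. The paper writes \eqref{eq:boundaryvalueproblem} as a linear system $Dh_\infty=b$ with a strictly diagonally dominant tridiagonal matrix $D$ and invokes the L\'evy--Desplanques theorem to conclude that $D$ is nonsingular; you instead argue probabilistically that the difference of two solutions is $\As_{\widetilde Y}$-harmonic on the interior, vanishes at $0$ and $N$, and must be identically zero by optional stopping at the almost surely finite absorption time. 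Both are sound: the linear-algebra route is self-contained and purely algebraic, whereas yours reuses the transience of the interior states (which you need anyway to justify $\widetilde Y_t/N\to\1_{\{\widetilde Y_\infty=N\}}$) and keeps the uniqueness statement in the same probabilistic language as the rest of the corollary.
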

Even though $\widetilde{Y}$ is constructed via the graphical representation, a biological interpretation does not seem straightforward. However, we are able to relate the mean of $\widetilde Y$ to the so-called \emph{descendant process} of \citep{Kluth2013}. More specifically, consider a sample taken at forward time~$0$ and let $D_t$ count their type-$1$ \emph{descendants} at time~$t$; analogously, let $B_t$ count the sample's type-$0$ descendants at time~$t$. The triple $(Y,B,D)=(Y_t,B_t,D_t)_{t\geq 0}$ then forms the descendant process. 
\begin{proposition}\label{prop:ytildedescendantprocess}
	Consider $\widetilde{Y}$ and the descendant process $(Y, D, B)$. Then, for all $t \geq 0$ and $k \in [N]_0$, 
	\begin{equation}\label{eq:ytildedescendantprocess}
		\E [D_t + B_t \mid Y_0 = D_0 = k, B_0 = 0]	= \E [\widetilde{Y}_t \mid \widetilde{Y}_0 = k].	
	\end{equation}
\end{proposition}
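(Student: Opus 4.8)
The plan is to split the asserted identity into two halves, each of which equals $N h_t(k)$, where $h_t$ is the ancestral type probability of Section~\ref{sec:ASG}. The first half, $\E[\widetilde{Y}_t \mid \widetilde{Y}_0 = k] = N h_t(k)$, is exactly the first statement of Corollary~\ref{coro:jumpprocess} (equivalently, it follows from the duality of Theorem~\ref{thm:finite.dual.mut} with $n=1$ together with Theorem~\ref{thm:atyperepresentation}), so there is nothing new to do there. The genuine task is therefore to prove
\[
\E[D_t + B_t \mid Y_0 = D_0 = k, B_0 = 0] = N h_t(k).
\]

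First I would note that, under the conditioning $Y_0 = D_0 = k$ and $B_0 = 0$, the sample whose descendants are counted must be precisely the set of the $k$ type-$1$ individuals present at forward time $0$: it has size $D_0 + B_0 = k = Y_0$ and contains no type-$0$ individual. Next, I would work on the probability space carrying the typed graphical representation of the FTW MoMo on $[0,t]$ with $Y_0 = k$ (any exchangeable initial type assignment). Every individual $j$ alive at time $t$ has a unique ancestor $\alpha_j \in [N]$ at time $0$, obtained by following its ancestral line backwards through the reproduction events of the graphical representation (at a selective event $E_{J,i}$ the realised parent of the offspring on line $i$ is the first fit member of $J_1,\dots,J_m$, or $i$ itself if none is fit, and this is determined by the types, which are part of the typed representation). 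Then $j$ is a descendant of the sample if and only if $\alpha_j$ lies in the sample, i.e.\ iff $\alpha_j$ has type $1$ at time $0$; hence, pathwise,
\[
D_t + B_t = \#\{\, j \in [N] : \alpha_j \text{ has type } 1 \text{ at time } 0 \,\}.
\]

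Dividing by $N$ and taking expectations, $\E[(D_t+B_t)/N \mid Y_0 = D_0 = k, B_0 = 0]$ equals the probability that the time-$0$ ancestor of a uniformly chosen time-$t$ individual has type $1$, given that the time-$0$ population consists of $k$ type-$1$ individuals; by exchangeability of the dynamics this depends on the initial configuration only through $k$, and, taking the present to be time $t$, it coincides with $h_t(k)$ as introduced in Section~\ref{sec:ASG} and Definition~\ref{def:ancestraltypedistribution}. Multiplying by $N$ gives $\E[D_t + B_t \mid Y_0 = D_0 = k, B_0 = 0] = N h_t(k)$, and combining with Corollary~\ref{coro:jumpprocess} finishes the proof.

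The step I expect to be the main obstacle is the pathwise identity in the third paragraph: one must pin down the true ancestral line inside the (typed) graphical representation, verify event by event (neutral and selective reproduction, including the type-dependent choice of realised parent, as well as coalescence/relocation seen forward in time) that ``descendant of the sample'' and ``ancestor in the sample'' are the same event, and justify the exchangeability reduction that makes $h_t(k)$ a function of $k$ alone. An alternative, purely analytic route is to compare generators and show that $t \mapsto \E[D_t + B_t \mid \cdot]$ and $t \mapsto \E_k[\widetilde{Y}_t]$ solve the same linear evolution equation with common initial value $k$; but the generator of $(Y,B,D)$ applied to $(y,b,d)\mapsto b+d$ involves $B_t$ through the selective terms (a uniformly chosen fit individual is a descendant with probability $B_t/(N-Y_t)$), so it does not close on $(Y_t, D_t+B_t)$ and one is forced to carry the full triple, which is less transparent than the genealogical argument above.
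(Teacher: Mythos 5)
Your proof is correct, but it takes a genuinely different route from the paper's. You reduce both sides to $N h_t(k)$: the right-hand side via Corollary~\ref{coro:jumpprocess} (itself resting on the pLD-ASG representation of Theorem~\ref{thm:atyperepresentation} and the duality of Theorem~\ref{thm:finite.dual.mut}), and the left-hand side via the pathwise observation that, under the conditioning, the sample is exactly the set of type-$1$ sites at time $0$, so $D_t+B_t=\#\{i:\Ce_0(\Ae_t(i))=1\}$ and its expectation is $N$ times the probability that a uniformly chosen time-$t$ individual has an unfit time-$0$ ancestor, i.e.\ $N h_t(k)$ by exchangeability of the initial colouring and of the dynamics. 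This is sound and there is no circularity, since Corollary~\ref{coro:jumpprocess} is proved independently of the proposition. The paper instead proves the identity directly, by induction over the arrival times of the driving Poisson process: the key ingredient is Lemma~\ref{lem:lemmadesc}, which uses exchangeability and additivity of the descendant count in the starting set to write $\exdesc{t}{k}{d}{b}$ as a linear combination of $\exdesc{t}{k}{k}{0}$ and $\exdesc{t}{k}{0}{N-k}$; this is precisely what makes the first-step decomposition close, circumventing the non-closure issue you correctly identify for the naive generator comparison. Your argument is shorter and makes transparent that both sides are representations of the ancestral type distribution, at the cost of importing the entire backward (pLD-ASG and duality) machinery; the paper's argument is longer but elementary and self-contained within the forward picture, which fits its purpose of giving $\widetilde{Y}$ a forward-in-time interpretation independent of the genealogical constructions.
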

We prove Theorem~\ref{thm:finite.dual.mut}, its corollary, and Proposition~\ref{prop:ytildedescendantprocess} in Section~\ref{sec:absorbingmarkov}, where we also provide more detail on the definition of~$\widetilde{Y}$.

\subsection{Diffusion limit}\label{mr:difflimit}
The type frequency process of the MoMo converges to the Wright--Fisher diffusion if the population size tends to infinity, time is appropriately rescaled, and  mutation and selection are \emph{weak}. Most of our results in the finite population setting translate to that limit, as will be worked out in Section~\ref{sec:diffusionlimit}. We start out here by recalling the classic convergence result for the type-frequency process. To this end, let $u^{(N)}$, and $s_m^{(N)}$, $m\in \N$, be the mutation and selection rate in a population of size~$N$. We assume  \begin{equation}\label{eq:assweaklimits}
	N u^{(N)}\xrightarrow{N\to\infty}\theta, \qquad Ns_m^{(N)}\xrightarrow{N\to\infty} \sigma_m\quad (\forall m\in\N),\quad  \text{and}\quad \sum_{m>0} Ns_m^{(N)}\xrightarrow{N\to\infty}\sum_{m>0} \sigma_m,
\end{equation}
where $\theta\geq 0$ and $(\sigma_m)_{m=1}^\infty$ is a sequence in~$\R_+$ with $\sum_{m=1}^\infty \sigma_m m <\infty$. 

\smallskip

Denote by $\Ys = (\Ys_t)_{t\geq 0}$ the Wright-Fisher diffusion on $[0,1]$ with mutation and FTW selection, i.e. the process with generator \begin{equation}
	\mathcal{A}_\Ys= \mathcal{A}_{\Ys}^{\mathrm{n}} + \sum_{m>0} \mathcal{A}_{\Ys}^{\sigma_m} + \mathcal{A}_{\Ys}^{\nu_0} + \mathcal{A}_{\Ys}^{\nu_1} \label{eq:generatorWF}
\end{equation} acting on $f\in C^2([0,1])$, where 
\begin{align*}
	\mathcal{A}_{\Ys}^n f(y) &\defeq y(1-y) f''(y), &&\mathcal{A}_{\Ys}^{\sigma_m} f(y) \defeq -\sigma_m y(1-y^m) f'(y), \\
	\mathcal{A}_{\Ys}^{\nu_0}f(y) & \defeq -y\theta \nu_0 f'(y),  &&\mathcal{A}_{\Ys}^{\nu_1} f(y) \defeq (1-y) \theta \nu_1 f'(y).
\end{align*} 
Because the drift term is Lipschitz continuous, it follows from~\citep[Thm. 8.2.8]{Ku86} that the closure of $\mathcal{A}_{\Ys}$ generates a Feller semigroup on $C([0,1])$. The following result connects the Wright--Fisher diffusion of the present section with the MoMo with mutation and FTW.

\begin{proposition}[Convergence MoMo to Wright--Fisher diffusion]\label{prop:convergenceWF}
	For $N \in \mathbb{N}$, let $\bar{Y} \defeq (Y^{(N)}_{Nt}/N)_{t \geq 0}$, where  $Y^{(N)}$ is the Moran model with population size~$N$. Suppose $\lim_{N \to \infty} \bar{Y}^{(N)}_0 = \Ys_0$ in distribution. Then, $\bar{Y}^{(N)} \xRightarrow{N \to \infty} \Ys$ in distribution. 
\end{proposition}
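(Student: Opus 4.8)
The plan is to invoke the standard generator-convergence machinery for Markov processes (e.g.\ \citep[Thm.~4.2.6 and Thm.~4.8.10]{EthierKurtz86}, or the formulation used in~\citep[Ch.~1]{Ku86}). Concretely, I would let $\mathcal{A}^{(N)}$ denote the generator of the rescaled process $\bar Y^{(N)}=(Y^{(N)}_{Nt}/N)_{t\ge 0}$, which by the time change acts on $f\colon \{0,1/N,\dots,1\}\to\R$ as $N$ times the generator $\mathcal A_Y$ of $Y^{(N)}$ evaluated at $k=Ny$, with mutation/selection parameters $u^{(N)}$, $s_m^{(N)}$. The goal is to show that for every $f\in C^3([0,1])$ (a core for the closure of $\mathcal A_{\Ys}$, since the drift is Lipschitz and the diffusion coefficient $y(1-y)$ vanishes nicely at the boundary), one has $\sup_{y\in\{0,\dots,N\}/N}\lvert \mathcal A^{(N)} f(y) - \mathcal A_{\Ys} f(y)\rvert \to 0$ as $N\to\infty$. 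Combined with the assumed convergence $\bar Y^{(N)}_0\Rightarrow \Ys_0$ and the fact that $\mathcal A_{\Ys}$ generates a Feller semigroup on $C([0,1])$ (already noted in the excerpt via~\citep[Thm.~8.2.8]{Ku86}), this yields $\bar Y^{(N)}\Rightarrow \Ys$ in the Skorokhod topology, hence in distribution.

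The key computational step is the Taylor expansion of the discrete generator. For a smooth $f$ and $y=k/N$, write $f(y\pm 1/N)-f(y)=\pm \tfrac1N f'(y)+\tfrac1{2N^2}f''(y)+O(N^{-3})$. Applying this to the four building blocks in~\eqref{eq:generatorhatY}: the neutral reproduction part $\mathcal A_Y^{\rm n}$ contributes, after multiplication by $N$, a term $N\cdot k\tfrac{N-k}{N}\cdot \tfrac1{N^2}f''(y)+O(N^{-1}) = y(1-y)f''(y)+O(N^{-1})$ — the first-order terms cancel by symmetry — matching $\mathcal A_{\Ys}^{\rm n}$. The mutation part $\mathcal A_Y^u$, multiplied by $N$, gives $N u^{(N)}\big[(1-y)\nu_1 f'(y) - y\nu_0 f'(y)\big]+O(Nu^{(N)}/N)$, which by $Nu^{(N)}\to\theta$ converges to $\theta\nu_1(1-y)f'(y)-\theta\nu_0 y f'(y)=\mathcal A_{\Ys}^{\nu_1}f(y)+\mathcal A_{\Ys}^{\nu_0}f(y)$. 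The selection part: $N\mathcal A_Y^{s_m}f(k)= N s_m^{(N)} k\big(1-(k/N)^m\big)\cdot\big[-\tfrac1N f'(y)+O(N^{-2})\big] = -N s_m^{(N)} y(1-y^m)f'(y)+O(s_m^{(N)})$, converging to $-\sigma_m y(1-y^m)f'(y)=\mathcal A_{\Ys}^{\sigma_m}f(y)$ by $Ns_m^{(N)}\to\sigma_m$. Summing over $m$ requires the uniform summability hypothesis in~\eqref{eq:assweaklimits}, namely $\sum_{m>0}Ns_m^{(N)}\to\sum_{m>0}\sigma_m<\infty$, to justify interchanging the limit and the infinite sum (dominated convergence on the counting measure over $m$, using $\lvert y(1-y^m)\rvert\le 1$ and $\lVert f'\rVert_\infty<\infty$); the error terms $O(s_m^{(N)})$ also sum to something $O(1/N)$ small.

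The main obstacle — and the only place real care is needed — is the handling of the infinite sum over selective orders $m$ uniformly in $N$ and $y$. One must control $\sum_{m>0}\lvert N\mathcal A_Y^{s_m}f(k) - \mathcal A_{\Ys}^{\sigma_m}f(y)\rvert$ and show it tends to $0$ uniformly in $y$; this is where the precise form of assumption~\eqref{eq:assweaklimits} (convergence of the \emph{total} selection rate, not just termwise) is essential, so that a tail $\sum_{m>M}Ns_m^{(N)}$ can be made uniformly small. A secondary, routine point is verifying that $C^3([0,1])$ (or $C^\infty$) is a core for the closure of $\mathcal A_{\Ys}$: since $\mathcal A_{\Ys}$ is a one-dimensional diffusion operator with Lipschitz drift and the boundary behaviour is already controlled (the Feller property was established), this follows from standard results on one-dimensional diffusions, or one can simply quote that the closure generates the Feller semigroup on $C([0,1])$ and apply the convergence theorem with the pre-generator on smooth functions. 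Finally, tightness of $\{\bar Y^{(N)}\}$ and the identification of limit points are automatic consequences of the generator convergence together with the fact that the limiting martingale problem for $\mathcal A_{\Ys}$ is well-posed (again by the Feller property), so no separate tightness argument is needed beyond citing the relevant theorem.
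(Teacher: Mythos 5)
Your proposal is correct and follows essentially the same route as the paper: uniform convergence of the rescaled generators on a core (the paper uses polynomials, you use $C^3([0,1])$, an immaterial difference), with the neutral, mutation, and selective parts handled by the same Taylor expansions and with the infinite sum over selective orders controlled exactly as you describe via the convergence of the total selection rate in~\eqref{eq:assweaklimits}, before invoking the Feller property and the standard convergence theorems from Ethier--Kurtz. No gaps.
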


The proof will be given in Section~\ref{sec:diffusionlimit}. There, 
we establish that the processes encoding the ancestral structures  also converge weakly. In particular, the duality between forward and backward process is preserved in the limit; only that moment dualities take the place of factorial moment dualities. 

\subsubsection{The kASG in the diffusion limit}
The following process is a natural candidate to be the limit of the kASG under assumption~\eqref{eq:assweaklimits}. Define $\mathcal{R} = (\mathcal{R}_r)_{r \geq 0}$ as the continuous-time Markov chain on $\N_{0,\Delta}\coloneqq\mathbb{N}_{0} \cup \{\Delta\}$ with generator $\mathcal{A}_{\mathcal{R}} = \mathcal{A}^{\rm{n}}_{\mathcal{R}} + \sum_{m>0} \mathcal{A}^{\sigma_m}_{\mathcal{R}} + \mathcal{A}^{\theta}_{\mathcal{R}}$, where 
\begin{equation}\label{eq:R} \begin{split}
		\mathcal{A}_{\Rs}^{\rm{n}} f(n) & \defeq n (n-1) [f (n-1) - f(n)],\\ 
		\mathcal{A}_{\Rs}^{\sigma_m}  f(n)  &\defeq n\sigma^{}_m [f (n+m) -  f(n)], \\
		\mathcal{A}_{\Rs}^{\theta}  f(n) & \defeq n\theta \nu^{}_0  [  f (\Delta) -  f(n)] + n \theta \nu_1 [f(n-1) - f(n)], 
	\end{split}
\end{equation}

and $f$ is a function on $\N_{0,\Delta}$ vanishing at infinity. Here, $\N_{0,\Delta}$ is to be equipped with the discrete topology; in particular, letting $\Delta$ sit in the spot of, for instance, $-1$, makes it a normed space with the usual norm and the notion of "vanishing at infinity" then corresponds to the usual one. It is not difficult to argue that~\eqref{eq:R} gives rise to a unique Markov process. The following result establishes that $\Rs$ is indeed the correct limit process.

\begin{proposition}\label{prop:convergencekASG}
	For $N\in \N$, let $\bar{R}^{(N)}\coloneqq (R^{(N)}_{Nr})_{r\geq 0}$, where $\bar{R}^{(N)}$ is the line-counting process of the kASG in a MoMo of size~$N$. Assume $\bar{R}^{(N)}_0\xrightarrow[N\to\infty]{(d)} \Rs_0$. Then, $\bar{R}^{(N)}\xRightarrow[]{N\to\infty} \Rs$ in distribution.
\end{proposition}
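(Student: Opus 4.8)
The plan is to prove this weak convergence by the standard generator-convergence route for Markov chains, i.e. via the operator convergence criterion of Ethan--Kurtz (\citep[Thm.~4.2.6 and Cor.~4.8.7]{Ku86}, or equivalently \citep[Thm.~17.28]{Ka02}). Both $\bar R^{(N)}$ and $\Rs$ live on (subsets of) the countable state space $\N_{0,\Delta}$ equipped with the discrete topology, which we view as a compact set if we one-point-compactify, or more conveniently we work with the Banach space $\hat C$ of functions on $\N_{0,\Delta}$ vanishing at infinity, as indicated in the excerpt. The first step is to fix a test function $f\in\hat C$ with finite support (finitely supported functions are a core for $\As_\Rs$, and they are dense in $\hat C$), and to compute, for $N$ large enough that the support of $f$ is contained in $[N]_{0,\Delta}$, the generator $\As_{\bar R^{(N)}}f(n) = N\,\As_{R^{(N)}} f(n)$ using Proposition~\ref{prop:lckASG}, where the extra factor of $N$ comes from the time rescaling $r\mapsto Nr$.

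Second, I would show termwise that $N\,\As_{R^{(N)}}f(n)\to \As_\Rs f(n)$ for each fixed $n$, and then upgrade this to $\sup_n$-convergence exploiting the finite support of $f$. For the coalescence term, $N\cdot n\frac{n-1}{N}[f(n-1)-f(n)] = n(n-1)[f(n-1)-f(n)]$ exactly, so this matches $\As^{\mathrm n}_\Rs f(n)$ with no error at all. For the mutation term, $N\cdot n u^{(N)}\nu_1[f(n-1)-f(n)] \to n\theta\nu_1[f(n-1)-f(n)]$ and similarly for the $\Delta$-part, using $N u^{(N)}\to\theta$; again exact up to the scalar convergence of $Nu^{(N)}$. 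The genuine work is in the selection term: one must show that
\begin{equation*}
N\, s_m^{(N)}\,\frac{n}{N^m}\sum_{j=1}^m (N-n)^{\underline j}\,C^n_{mj}\,[f(n+j)-f(n)] \;\xrightarrow{N\to\infty}\; n\sigma_m\,[f(n+m)-f(n)].
\end{equation*}
Here one uses $(N-n)^{\underline j}\sim N^j$ and the key combinatorial fact that, among the coefficients $C^n_{mj}$, only $C^n_{mm}=\stirlingii{m}{m}=1$ contributes at the top order $N^m$ (since $C^n_{mj}$ for $j<m$ is a polynomial in $n$ of degree $m-j$ with no $N$-dependence, so $(N-n)^{\underline j} C^n_{mj}/N^m = O(N^{j-m})\to 0$). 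Thus only the $j=m$ branching term survives, and $N s_m^{(N)} \to \sigma_m$ gives the limit. One then sums over $m$, controlling the tail uniformly via $\sum_{m>0} N s_m^{(N)}\to\sum_{m>0}\sigma_m<\infty$ together with $\sum_m \sigma_m m<\infty$, which is needed because $f$ being finitely supported kills all but finitely many terms in the sum over $m$ anyway once $N$ is large --- more precisely, for fixed finitely supported $f$ supported in $[0,K]$, only $m\le K$ can move $n\le K$ into the support, so the sum over $m$ in $\As_\Rs f$ is actually finite, and the same is true for $\As_{R^{(N)}} f$ once $N>K$; this makes the tail estimate essentially trivial and the convergence uniform in $n$.

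Third, having established $\|N\As_{R^{(N)}}f - \As_\Rs f\|_\infty\to 0$ for all $f$ in the finitely supported core, and noting that $\As_\Rs$ restricted to this core generates a Feller (in fact conservative, since rates are bounded on bounded sets and the chain is non-explosive --- the number of lines is dominated by a pure-birth process with linearly growing rates $n\sum_m\sigma_m$, which does not explode) semigroup on $\hat C$, the convergence criterion yields $\bar R^{(N)}\Rightarrow \Rs$ in $D_{\N_{0,\Delta}}[0,\infty)$ provided $\bar R^{(N)}_0\Rightarrow\Rs_0$, which is the hypothesis. I would also remark that one can alternatively bypass operator-semigroup technicalities and argue directly: since all the transition rates of $\bar R^{(N)}$ converge to those of $\Rs$ and the state space is discrete, one gets convergence of the finite-dimensional distributions plus tightness (the latter from the non-explosivity bound), hence weak convergence in the Skorokhod space. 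The one genuine obstacle is the non-explosion / tightness point: because selective events of order $m$ increase the line count by up to $m$ and $\sum_m \sigma_m$ may be such that large jumps are frequent, one must verify that $\Rs$ (and uniformly the $\bar R^{(N)}$) does not run to infinity in finite time; this follows from the assumption $\sum_{m>0}\sigma_m m<\infty$, which makes the mean upward drift of $\Rs$ from state $n$ equal to $n\sum_m \sigma_m m < \infty$, so that $\E[\Rs_r]$ grows at most exponentially by a Grönwall argument and no explosion occurs. Everything else is routine bookkeeping with falling factorials and Stirling numbers.
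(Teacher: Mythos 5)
Your generator computation is essentially correct and reproduces the content of the paper's Lemma~\ref{lem:kASGratesconvergence} (in particular the observation that only the $j=m$ term of \eqref{eq:generatorRselection} survives at order $N^m$). The gap is in the route from generator convergence to process convergence. You invoke the Ethier--Kurtz operator-convergence theorem on the Banach space $\hat C$ of functions on $\N_{0,\Delta}$ vanishing at infinity, but $\Rs$ is \emph{not} a Feller process on $\hat C$: the coalescence rate $n(n-1)$ grows quadratically, so the process comes down from infinity, and the semigroup does not preserve $\hat C$. Concretely, for $f=\ind_{\{\Delta\}}\in\hat C$ one has $P_tf(n)=\P(\Rs_t=\Delta\mid\Rs_0=n)\to 1$ as $n\to\infty$ when $\theta\nu_0>0$ (at each level of the descent the killing event wins with probability of order $\theta\nu_0/n$, and $\sum_n 1/n$ diverges while the descent through all levels takes vanishing time), and for $\theta=0$ the probability $\P(\Rs_t=k\mid\Rs_0=n)$ converges to a positive limit for fixed small $k$. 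So the hypothesis that the closure of $\As_{\Rs}$ generate a strongly continuous semigroup on $\hat C$ fails, and the cited convergence theorems do not apply as stated; passing to the one-point compactification does not rescue the argument cheaply, since finitely supported functions are then no longer dense and the behaviour at $\infty$ is exactly the delicate point.

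This is precisely why the paper localises: it stops $\bar R^{(N)}$ and $\Rs$ upon exiting $[k]$, where the state space is finite and the generator convergence of Lemma~\ref{lem:kASGratesconvergence} (which is uniform only over $n\in[k]$) genuinely yields convergence of the stopped processes; it then removes the stopping using non-explosivity of $\Rs$, continuity of the exit time $T_k$ on $\Db_{\N_{0,\Delta}}[0,\infty)$, and the Portmanteau lemma --- the estimates of $D_1^N$ and $D_3^N$ there are the bulk of the proof and are absent from your proposal. Your fallback (``rates converge, hence f.d.d.\ convergence plus tightness'') does not fill this hole: a Gr\"onwall bound on $\E[\Rs_r]$ gives non-explosion but not tightness in the Skorokhod space, and the passage from the localised to the unstopped processes still has to be argued. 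A further minor slip: for finitely supported $f$ the sum $\sum_m\As^{\sigma_m}_{\Rs}f(n)$ is not a finite sum, because the exit part $-n\sigma_m f(n)$ contributes for every $m$ whenever $f(n)\neq 0$; it is only absolutely convergent, and the tail in $m$ must be truncated as in Lemma~\ref{lem:kASGratesconvergence}.
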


The factorial moment duality between $R^{(N)}$ and $Y^{(N)}$ turns into a moment duality in  the diffusion limit. This is the content of the next result, which we prove in Section~\ref{sec:kASGdiff} using that $R^{(N)}$ and $Y^{(N)}$ converge weakly as $N\to\infty$. 

\begin{theorem}[Moment duality]\label{thm:momentduality}
	Let $\Ys$ be the Wright--Fisher diffusion with mutation and FTW selection and $\Rs$ the line-counting process of the kASG in the diffusion limit. Then, for $y\in [0,1]$, $n\in \N_{0, \Delta}$, and $t\geq 0$,
	\begin{equation}\label{eq:momentdualityYR}
		\E[\Ys_t^{n} \mid \Ys_0 = y]=\E[y^{\Rs_t} \mid \Rs_0 = n],
	\end{equation}
	where $y^{\Delta} \defeq 0, \, \forall y$. That is, $\Ys$ and $\Rs$ are dual w.r.t. the duality function $\mathcal{H} (y, n) \defeq y^n.$
\end{theorem}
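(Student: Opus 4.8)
The plan is to derive the moment duality \eqref{eq:momentdualityYR} as a limit of the factorial moment duality \eqref{eq:factmomentdualityYR} from Theorem~\ref{thm:duality_R}, using the two weak-convergence statements in Proposition~\ref{prop:convergenceWF} and Proposition~\ref{prop:convergencekASG}. First I would fix the notation $\bar Y^{(N)}=(Y^{(N)}_{Nt}/N)_{t\ge0}$ and $\bar R^{(N)}=(R^{(N)}_{Nr})_{r\ge0}$ and rescale the finite-$N$ duality: for each $N$, Theorem~\ref{thm:duality_R} applied at time $Nt$ gives
\[
	\E\Big[\frac{(Y^{(N)}_{Nt})^{\underline{n}}}{N^{\underline{n}}}\ \Big|\ Y^{(N)}_0=\lfloor yN\rfloor\Big]
	=\E\Big[\frac{\lfloor yN\rfloor^{\underline{R^{(N)}_{Nt}}}}{N^{\underline{R^{(N)}_{Nt}}}}\ \Big|\ R^{(N)}_0=n\Big]
\]
for $n\in\N$ (the case $n=0$ is trivial, and $\Delta$ is handled by the conventions $y^{\Delta}=0$, $k^{\underline{\Delta}}/N^{\underline{\Delta}}=0$). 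The goal is then to pass to the limit $N\to\infty$ on both sides and identify the limits with the two sides of \eqref{eq:momentdualityYR}.

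For the left-hand side, note that $(Y^{(N)}_{Nt})^{\underline{n}}/N^{\underline{n}}=\bar Y^{(N)}_t(\bar Y^{(N)}_t-\tfrac1N)\cdots(\bar Y^{(N)}_t-\tfrac{n-1}{N})\big/\big(1-\tfrac1N\big)\cdots\big(1-\tfrac{n-1}{N}\big)$, so as a function of the single coordinate $\bar Y^{(N)}_t\in[0,1]$ it converges uniformly to $y\mapsto y^n$ (a fixed continuous, bounded function on $[0,1]$). Hence by Proposition~\ref{prop:convergenceWF} and the continuous mapping / bounded convergence theorem, the left side converges to $\E[\Ys_t^{\,n}\mid\Ys_0=y]$, provided $\bar Y^{(N)}_0\to y$; choosing $Y^{(N)}_0=\lfloor yN\rfloor$ ensures this. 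For the right-hand side, the relevant observable is $k\mapsto k^{\underline{j}}/N^{\underline{j}}$ evaluated at $k=\lfloor yN\rfloor$ and $j=R^{(N)}_{Nt}$. For fixed $j$, $\lfloor yN\rfloor^{\underline{j}}/N^{\underline{j}}\to y^{j}$; since $\Rs$ lives on the discrete space $\N_{0,\Delta}$ and, by Proposition~\ref{prop:convergencekASG}, $\bar R^{(N)}_t\Rightarrow\Rs_t$, one can combine pointwise (in $j$) convergence of the observable with weak convergence of the integer-valued process. Concretely I would write the right side as $\sum_{j\ge0}\P(\bar R^{(N)}_t=j\mid R^{(N)}_0=n)\,\lfloor yN\rfloor^{\underline{j}}/N^{\underline{j}}+\P(\bar R^{(N)}_t=\Delta\mid R^{(N)}_0=n)\cdot 0$ and argue that this converges to $\sum_{j\ge0}\P(\Rs_t=j\mid\Rs_0=n)\,y^{j}=\E[y^{\Rs_t}\mid\Rs_0=n]$.

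The main obstacle is the exchange of limit and (infinite) sum on the backward side: $\bar R^{(N)}$ takes values in the \emph{unbounded} set $\N_0$ (unlike $[N]_0$, which is where a naive uniform bound would come from), so one needs a tightness/uniform-integrability argument to rule out mass escaping to $j=+\infty$. I would control this by a domination argument: switching off coalescence and mutation only increases the number of lines, so $\bar R^{(N)}_t$ (before killing) is stochastically dominated by a pure-birth (Yule-type) process with bounded per-line branching rate $\sum_{m>0}Ns_m^{(N)}m$, which by \eqref{eq:assweaklimits} stays bounded in $N$; hence $\{\bar R^{(N)}_t\}_N$ is tight and has uniformly bounded exponential moments, so the tail sums $\sum_{j>M}\P(\bar R^{(N)}_t=j)$ are small uniformly in $N$. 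Since $0\le \lfloor yN\rfloor^{\underline j}/N^{\underline j}\le 1$, this gives the required uniform control and justifies passing the limit inside the sum. Once both sides have been shown to converge to the claimed expressions, the finite-$N$ identities force \eqref{eq:momentdualityYR}, completing the proof; the closing remark that $\Ys$ and $\Rs$ are dual with respect to $\mathcal H(y,n)=y^n$ is then just a restatement.
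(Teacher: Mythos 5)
Your proposal follows the same overall strategy as the paper: rescale the finite-$N$ factorial moment duality of Theorem~\ref{thm:duality_R}, choose $k_N$ with $k_N/N\to y$, and pass to the limit on both sides using Propositions~\ref{prop:convergenceWF} and~\ref{prop:convergencekASG}. The left-hand side is handled identically (the paper proves the uniform convergence $F_n^N\to F_n$ on $[0,1]$ via monotonicity in $N$ and Dini's theorem, which you assert directly; that is easily checked). Where you genuinely diverge is the right-hand side. The paper avoids any tightness consideration for $\bar R^{(N)}_t$ by proving the \emph{uniform} convergence $\sup_{n\in\N}\lvert k_N^{\underline{n}}/N^{\underline{n}}-y^n\rvert\to 0$ (using that both functions are non-increasing in $n$ and that $y^n\to 0$ for $y<1$, with $y=1$ treated separately), after which the Portmanteau lemma applied to the fixed bounded function $n\mapsto y^n$ finishes the job. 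You instead truncate the sum over $j$ and control the tails uniformly in $N$ by stochastic domination of $\bar R^{(N)}_t$ by a branching process with per-line mean offspring rate $\sum_m N s_m^{(N)} m$, which is bounded under \eqref{eq:assweaklimits}; this gives $\sup_N\E[\bar R^{(N)}_t]<\infty$ and hence uniformly small tails by Markov's inequality, which, combined with $0\le k_N^{\underline{j}}/N^{\underline{j}}\le 1$ and pointwise convergence of the point masses (valid on the discrete state space), closes the argument. Both routes work; yours imports an extra (but easily available, cf.~the domination used to establish \eqref{eq:kASGnon-explosive}) moment bound in exchange for skipping the somewhat fiddly uniform estimate on the duality observables. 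One small correction: under the standing hypothesis $\sum_m\sigma_m m<\infty$ you should not claim \emph{uniformly bounded exponential moments} for the dominating branching process --- that would require exponential moments of the offspring law --- but you only need the uniform first-moment bound, so the proof is unaffected.
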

This moment duality is an extension of the case without selection and mutation,  where $\Rs$ is the block-counting process of Kingman's coalescent, which is the moment dual of $\Ys$; see, for example, \cite[Thm.~2.7]{B09}. 

\smallskip

As in the MoMo, two different regimes appear for the long-term behaviour of the Wright--Fisher diffusion and the ancestral process depending on the presence of mutation. On the one hand, if $\theta = 0$, then one of the two types dies out, so $\Ys$ is absorbing; and~$\Rs$ is positive recurrent and hence converges to a unique stationary distribution, which we denote by~$\pi_{\Rs}$. 
On the other hand, if $\theta > 0$, then~$\Ys$ converges to a unique stationary distribution~$\pi_{\Ys}$, while $\Rs$ is absorbing. (Let us note in passing that, as an easy consequence of \cite[Eq.~(2)]{Taylor2007}, $\pi_{\Ys}$ has density 
\begin{equation}\label{piY}
   \pi_{\Ys}(y) = C \exp \Big ( -\sum_{m > 0} \sigma_m \sum_{k=1}^m \frac{y^k}{k!} \Big ) (1-y)^{\theta \nu_0-1} y^{\theta \nu_1 -1}
\end{equation}
on $[0,1]$, where $C$ is a normalising constant.) 

The moment duality leads to a representation of the absorption probabilities of~$\Ys$ in terms of the stationary distribution of~$\Rs$ and vice versa; thus providing the analogue of Corollary~\ref{cor:repr.absorpt.prob.finite} in the large population setting. Denote by $\Ys_\infty$ and $\Rs_\infty$ a random variable on~$[0,1]$ and $\N_{0,\Delta}$, respectively, with distribution $\pi_\Ys$ and $\pi_\Rs$. 

\begin{corollary}[Representation absorption probabilities]\label{cor:repr.absorpt.prob.diff}
	Assume $\theta=0$. For $y \in [0,1]$, $$\P(\lim_{t\to\infty} \Ys_t=1\mid \Ys_0=y)=\E [ y^{\Rs_\infty} ].$$
	Assume $\theta>0$. For $n\in \N_{0,\Delta}  $, $$\P(\lim_{t\to\infty} \Rs_t=0\mid \Rs_0=n)=\E [\Ys_\infty^n ],$$
	with the convention that $y^{\Delta} \defeq 0$ for every $y$.
\end{corollary}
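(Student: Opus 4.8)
The plan is to mimic the proof of Corollary~\ref{cor:repr.absorpt.prob.finite}: we take the moment duality of Theorem~\ref{thm:momentduality}, specialise the ``trivial'' variable (setting $n=1$ in the first case, $y=1$ in the second), and pass to the limit $t\to\infty$, identifying each side of the duality with the quantity of interest. The only genuinely new ingredient compared with the finite-population case is that here we must justify interchanging the limit in $t$ with the expectation, since the state spaces are no longer finite; this is where the bulk of the work lies.

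\emph{Case $\theta=0$.} Set $n=1$ in~\eqref{eq:momentdualityYR}: for $y\in[0,1]$ and $t\geq 0$, $\E[\Ys_t\mid \Ys_0=y]=\E[y^{\Rs_t}\mid \Rs_0=1]$. For the left-hand side, when $\theta=0$ the diffusion $\Ys$ is absorbed in $\{0,1\}$; since $\Ys_t$ is bounded, $\E[\Ys_t\mid\Ys_0=y]\to \P(\lim_{t\to\infty}\Ys_t=1\mid\Ys_0=y)$ as $t\to\infty$ by bounded convergence. For the right-hand side, $\Rs$ is positive recurrent (as noted before the statement) and hence $\Rs_t\Rightarrow \Rs_\infty$; because $0\le y^{\Rs_t}\le 1$ and $y^{\cdot}$ is a bounded continuous function on $\N_{0,\Delta}$ with the discrete topology (recall $y^\Delta\defeq 0$), convergence in distribution gives $\E[y^{\Rs_t}\mid\Rs_0=1]\to\E[y^{\Rs_\infty}]$. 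Equating the two limits yields the first identity. (When $y\in\{0,1\}$ the claim is trivial and consistent.)

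\emph{Case $\theta>0$.} Set $y=1$ in~\eqref{eq:momentdualityYR}: $\E[\Ys_t^n\mid\Ys_0=1]=\E[1^{\Rs_t}\mid\Rs_0=n]$. Here $1^{\Rs_t}=\ind_{\{\Rs_t\neq\Delta\}}=\ind_{\{\Rs_t\in\N_0\}}$, so the right-hand side equals $\P(\Rs_t\neq\Delta\mid\Rs_0=n)$. When $\theta>0$, $\Rs$ is absorbed in $\{0,\Delta\}$, and both $0$ and $\Delta$ are absorbing; moreover the process reaches $\{0,\Delta\}$ almost surely (coalescence at rate $n(n-1)$ and deleterious mutation at rate $n\theta\nu_1$ push $n$ downward, killing at rate $n\theta\nu_0$), so $\P(\Rs_t\neq\Delta\mid\Rs_0=n)\to\P(\lim_{t\to\infty}\Rs_t=0\mid\Rs_0=n)$ as $t\to\infty$. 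For the left-hand side, when $\theta>0$ the diffusion $\Ys$ converges to its stationary law $\pi_\Ys$ (with density~\eqref{piY}); since $\Ys_t^n\in[0,1]$ and $y\mapsto y^n$ is continuous and bounded on $[0,1]$, $\Ys_t\Rightarrow\Ys_\infty$ gives $\E[\Ys_t^n\mid\Ys_0=1]\to\E[\Ys_\infty^n]$. Equating the limits yields the second identity.

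The main obstacle, and essentially the only point requiring care, is the justification that $\Rs$ (in the case $\theta=0$) is positive recurrent and converges in distribution, and dually that $\Ys$ (in the case $\theta>0$) converges to $\pi_\Ys$, together with the fact that the relevant test functions ($y^{\cdot}$ on $\N_{0,\Delta}$, resp. $y\mapsto y^n$ on $[0,1]$) are bounded and continuous so that weak convergence suffices to pass the limit inside the expectation. The positive recurrence of $\Rs$ when $\theta=0$ is standard: the drift of the line count is strictly negative once $n\ge 2$ because of the quadratic coalescence rate $n(n-1)$ against the linear branching rate $n\sum_m\sigma_m$, so a Lyapunov/Foster argument applies; the ergodicity of the one-dimensional diffusion $\Ys$ on $[0,1]$ with reflecting-type behaviour at the boundary when $\theta\nu_0,\theta\nu_1>0$ is classical and the stationary density is~\eqref{piY}. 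Everything else is a direct specialisation of Theorem~\ref{thm:momentduality} followed by bounded/weak convergence, exactly parallel to the proof of Corollary~\ref{cor:repr.absorpt.prob.finite}.
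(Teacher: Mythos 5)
Your proof is correct and follows essentially the same route as the paper: specialise the moment duality of Theorem~\ref{thm:momentduality} (to $n=1$ when $\theta=0$, and to a fixed $y$ when $\theta>0$) and let $t\to\infty$, using almost-sure absorption on one side and convergence to the stationary distribution on the other. The only cosmetic difference is that for $\theta>0$ the paper fixes $y\in(0,1)$ instead of $y=1$, which identifies the right-hand limit as $\P(\lim_{t\to\infty}\Rs_t=0\mid\Rs_0=n)$ via $y^0=1$, $y^\Delta=0$ without having to invoke ergodicity of $\Ys$ started from the boundary point $1$; both variants work.
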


The proofs of Propositions~\ref{prop:convergenceWF} and \ref{prop:convergencekASG}, Theorem~\ref{thm:momentduality}, as well as Corollary \ref{cor:repr.absorpt.prob.diff} may be found in Section~\ref{sec:kASGdiff}.

\subsubsection{The pLD-ASG in the diffusion limit}\label{mr:pldASGdifflimit}
To derive the ancestral type distribution in the diffusion limit, we determine the limiting behaviour of $L$ and then combine it with the representation of the ancestral type distribution in Theorem~\ref{thm:atyperepresentation}. First, we establish the limiting process.

\smallskip

The limit candidate is the continuous-time Markov chain $\Ls=(\Ls_r)_{r\geq 0}$ on $\N$ with generator 
\[
\mathcal{A}_{\Ls}  = \mathcal{A}_{\Rs}^{\rm{n}} + \sum_{m>0} \mathcal{A}_{\Rs}^{\sigma_m} + \mathcal{A}_{\Ls}^{\nu_0}+\mathcal{A}_{\Ls}^{\nu_1},\]
where $\mathcal{A}_{\Rs}^{\rm{n}}$ and $\mathcal{A}_{\Rs}^{\sigma_m}$ are defined in~\eqref{eq:R}, whereas 
\[
	\mathcal{A}_{\Ls}^{\nu_0}  f(n)  \defeq \theta \nu^{}_0  \sum_{j=1}^{n-1}[  f (j) -  f(n)] \quad \text{and } \; \mathcal{A}_{\Ls}^{\nu_1}  f(n)  \defeq (n-1) \theta \nu^{}_1  [  f (n-1) -  f(n)]
\]
for~$f:\N\to\R$ vanishing at infinity. \begin{proposition}\label{prop:convergencepLD}
	For $N\in \N$, let $\bar{L}^{(N)}\coloneqq (L^{(N)}_{Nt})_{t\geq 0}$, where $\bar{L}^{(N)}$ is the line-counting process of the pLD-ASG in a MoMo of size~$N$. Assume $\bar{L}^{(N)}_0\xrightarrow[N\to\infty]{(d)} \Ls_0$. Then,  $\bar{L}^{(N)}\xRightarrow[]{N\to\infty} \Ls$ in distribution.
\end{proposition}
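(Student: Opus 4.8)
The plan is to prove Proposition~\ref{prop:convergencepLD} by the standard recipe for weak convergence of continuous-time Markov chains via generator convergence, in the spirit of~\cite[Thm.~4.2.6 and Cor.~4.8.7]{EthierKurtz86} (or the analogous statements in~\cite{Ku86}). First I would note that $\bar{L}^{(N)}$ is a continuous-time Markov chain on $[N]\subset\N$ whose generator, after speeding up time by the factor $N$, is $N\As_L^{(N)}$, where $\As_L^{(N)}$ is the generator in Proposition~\ref{prop:eqdistL} with $u$ and $s_m$ replaced by $u^{(N)}$ and $s_m^{(N)}$; I would make the state space uniform by viewing all chains as living on $\N$ (with rates out of states $>N$ set to zero for the $N$-th chain), equipped with the discrete topology. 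The task then reduces to showing that for every $f:\N\to\R$ that vanishes at infinity (or, more conveniently, every $f$ with finite support, which form a core for $\As_\Ls$), one has $N\As_L^{(N)}f(n)\to\As_\Ls f(n)$ for each fixed $n$, together with a mild uniform-boundedness/tightness control.

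The core computation is term by term. For the coalescence part, $N\cdot\As_R^{\rm n}\tilde f(n)=N\cdot n\frac{n-1}{N}[\tilde f(n-1)-\tilde f(n)]=n(n-1)[f(n-1)-f(n)]=\As_\Rs^{\rm n}f(n)$ exactly, with no limit needed. For the mutation parts, $N\cdot\As_L^{\nu_0}\tilde f(n)=Nu^{(N)}\nu_0\sum_{j=1}^{n-1}[f(j)-f(n)]\to\theta\nu_0\sum_{j=1}^{n-1}[f(j)-f(n)]=\As_\Ls^{\nu_0}f(n)$ by~\eqref{eq:assweaklimits}, and similarly $N\cdot\As_L^{\nu_1}\tilde f(n)\to(n-1)\theta\nu_1[f(n-1)-f(n)]$. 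The only genuinely delicate block is the selection term $N\cdot\As_R^{s_m}$: one must check that $N\cdot s_m^{(N)}\frac{n}{N^m}\sum_{j=1}^m (N-n)^{\underline j}\,C^n_{mj}[f(n+j)-f(n)]\to n\sigma_m[f(n+m)-f(n)]$. Here I would observe that $C^n_{mj}$ is a polynomial in $n$ of degree $m-j$ whose leading behaviour in the falling-factorial counting is $\stirlingii{m}{j}$-governed, but the crucial point is the power of $N$: the $j$-th summand carries $(N-n)^{\underline j}\sim N^j$, so the term is of order $N\cdot s_m^{(N)}\cdot N^{j-m}$; since $Ns_m^{(N)}\to\sigma_m$, every summand with $j<m$ vanishes as $N\to\infty$, and only the $j=m$ summand survives, contributing $N s_m^{(N)}\cdot\frac{n}{N^m}(N-n)^{\underline m}C^n_{mm}[f(n+m)-f(n)]$. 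Finally $C^n_{mm}=\binom{m}{m}\stirlingii{m}{m}=1$ and $(N-n)^{\underline m}/N^m\to 1$, giving the claimed limit $n\sigma_m[f(n+m)-f(n)]$. One then sums over $m$ and invokes $\sum_{m>0}Ns_m^{(N)}\to\sum_{m>0}\sigma_m$ together with $\sum_m\sigma_m m<\infty$ to exchange the limit and the infinite sum (dominated convergence with the envelope $2\|f\|_\infty$ on the $[f(n+j)-f(n)]$ factors and the summability of $\sigma_m$, noting that for $f$ of finite support only finitely many $m$ contribute a nonzero value of $f(n+m)$, and the $-f(n)$ part is controlled by $n\sum_m\sigma_m$).

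Having established pointwise generator convergence on a core, I would invoke the standard convergence theorem: since the $\bar{L}^{(N)}$ are Feller (jump chains with bounded-on-compacts rates) and $\As_\Ls$ generates a Feller semigroup on $C_0(\N)$ — the latter because $\Ls$ has bounded total jump rate out of each state and, thanks to $\sum_m\sigma_m m<\infty$, non-explosive (the expected upward drift per selective event is $\sum_m\sigma_m m<\infty$, dominating a pure-birth chain with summable-in-expectation increments, hence no explosion) — the generator convergence on a core together with the assumed convergence of initial distributions $\bar{L}^{(N)}_0\Rightarrow\Ls_0$ yields $\bar{L}^{(N)}\Rightarrow\Ls$ in the Skorokhod space $D_{\N}[0,\infty)$. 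The one place to be careful, and which I expect to be the main obstacle, is precisely the justification that the infinite sum $\sum_{m>0}\As_R^{s_m}$ behaves well under the scaling limit: one needs the uniform-in-$N$ tail bound $\sum_{m>M}Ns_m^{(N)}m\to 0$ as $M\to\infty$ uniformly in $N$ (which follows from the assumed convergence $\sum_m Ns_m^{(N)}m\to\sum_m\sigma_m m<\infty$ of the full series, i.e.\ a Scheffé-type argument) to guarantee both that no mass escapes to high-order selective jumps in the limit and that the limiting generator $\As_\Ls$ is indeed conservative and non-explosive. Once this tail control is in hand, the rest is the routine application of the semigroup/martingale-problem convergence machinery.
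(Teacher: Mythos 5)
Your generator computation is correct and is essentially the paper's Lemma~\ref{lem:kASGratesconvergence} adapted to $\As_{\Ls}$: only the $j=m$ summand of the selection block survives (indeed $C^n_{mm}=1$ and $(N-n)^{\underline{m}}/N^m\to1$), and the tail in $m$ is handled by \eqref{eq:assweaklimits}. One minor misreading: you invoke convergence of $\sum_m Ns_m^{(N)}m$, which is not assumed — \eqref{eq:assweaklimits} only gives convergence of the unweighted series $\sum_m Ns_m^{(N)}$; that, however, is all the generator estimate needs, since $\lvert f(n+j)-f(n)\rvert\le 2\lVert f\rVert_\infty$.

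The genuine gap is in the concluding step. You assert that $\As_{\Ls}$ generates a Feller semigroup on the functions vanishing at infinity on $\N$, justified by non-explosiveness and finiteness of the jump rate out of each state. Neither implies the Feller property, and here it actually fails: because of the quadratic coalescence rate $n(n-1)$, the chain $\Ls$ comes down from infinity (just as Kingman's block-counting process does), so for $t>0$ and $f=\ind_{\{1\}}$ one has $P_tf(n)=\P_n(\Ls_t=1)\to\P_\infty(\Ls_t=1)>0$ as $n\to\infty$, i.e.\ $P_tf$ does not vanish at infinity. Hence the Trotter--Kurtz convergence theorem cannot be applied globally, and the ``mild uniform-boundedness/tightness control'' you defer is in fact the crux of the proof. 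The paper's argument (that of Proposition~\ref{prop:convergencekASG}, reused verbatim for $L$) resolves this by localisation: stop $\bar{L}^{(N)}$ and $\Ls$ upon exiting $[k]$, where the rates are bounded, so the stopped generators are bounded operators generating Feller semigroups and Lemma~\ref{lem:kASGratesconvergence} yields convergence of the stopped processes; then remove the stopping by estimating $\E[\lvert F(\bar{L}^{(N)})-F(\bar{L}^{(N)}(k))\rvert]$ and $\E[\lvert F(\Ls)-F(\Ls(k))\rvert]$ via the uniform continuity of $F$ on Skorokhod space, the continuity of the exit time $T_k$, the Portmanteau lemma, and the almost sure non-explosion of $\Ls$, which makes these error terms small for large $k$. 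If you prefer the martingale-problem phrasing, you must prove the compact containment condition for $(\bar{L}^{(N)})_N$ explicitly — which is the same localisation estimate in disguise, not a routine afterthought.
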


The proof will be given in Section~\ref{sec:pldASGdifflimit}.

\smallskip

If $\theta=0$, then~$\Ls$ and $\Rs$ agree and admit a unique stationary distribution. If $\theta>0$, the return time to~$1$ is dominated by an exponential random variable with parameter~$\theta\nu_0$ and $\Ls$ is then       positive recurrent. In both cases we denote the stationary distribution of $\Ls$ by~$\pi_{\Ls}$.

\smallskip

In the diffusion limit, the probability that  the ancestral type is $1$ at backward time~$r$, given the type-$1$ frequency at that time is~$y$, is \begin{equation}
	\mathfrak{h}_r(y)\coloneqq\E[y^{\Ls_r}]\label{eq:diffancestraltyp}.\end{equation}
Moreover, the common ancestor type conditional on the type-$1$ frequency~$y\in [0,1]$ is Bernoulli distributed with parameter
\begin{equation}
	\mathfrak{h}_{\infty}(y)\coloneqq \lim_{r\to\infty}\mathfrak{h}_r(y),\label{eq:diffcomancestraltyp}
\end{equation}
which is well defined because $\Ls$ converges to its unique stationary distribution~$\pi_{\Ls}$. Both definitions are motivated by Theorem~\ref{thm:atyperepresentation} and the convergence of~$L^{(N)}$ to~$\Ls$.
\smallskip 

Finally, we provide an alternative representation in terms of the diffusion-limit analogue of $\widetilde{Y}$ from Section~\ref{mr:atd}. Consider $\widetilde{\Ys}_t$ on $[0,1]$ with generator $\As_{\widetilde{\Ys}}f(y)=\As_{\Ys}f(y)+\As_{\widetilde{\Ys}}^{\mathrm{j}}f(y)$, where $\As_{\Ys}$ is given in~\eqref{eq:generatorWF} and \begin{equation}\label{eq:tildeYlimitgen}
	\As_{\widetilde{\Ys}}^{\mathrm{j}}f(y)=\frac{1-y}{y}\theta\nu_1 [f(0)-f(y)]+\frac{y}{1-y} \theta \nu_0[f(1)-f(y)] 
\end{equation}
with domain $\Ds(\As_{\widetilde{\Ys}})=\{f\in \Cs^2([0,1]):\lim_{y\to 1} \As_{\widetilde{\Ys}}f(y)=\lim_{y\to 0}\As_{\widetilde{\Ys}}f(y)=0\}$.
That~$\As_{\widetilde{\Ys}}$ indeed generates a well-defined Markov semigroup was proved in \citep[Eq. (11) ff.]{Taylor2007}. This process follows a Wright--Fisher diffusion with mutation and selection until a random time, when it jumps to one of the boundary points, where it is absorbed. The crucial feature is that the jump rates diverge at the boundary, leading to a jump to $\{0,1\}$ before the process can diffusively access~$\{0,1\}$. Moreover, one can show that~$\widetilde{\Ys}$ arises as the large population limit of $\widetilde{Y}^{(N)}$ (where the superscript indicates again the population size) in the diffusion limit setting of~\eqref{eq:assweaklimits}. We sketch the key steps of the proof that establishes existence of~$\widetilde{\Ys}$ and convergence of $\widetilde{Y}^{(N)}\to \widetilde{\Ys}$ as $N\to\infty$ in Section~\ref{sec:pldASGdifflimit}.

\smallskip 
The factorial moment duality between~$L^{(N)}$ and $\widetilde{Y}^{(N)}$ translates to the diffusion limit as a moment duality. This strengthens the result of \citet[Eq.~(11)]{Taylor2007} insofar as we establish a connection between the jump diffusion and an ancestral process.
\begin{theorem}[Moment duality]\label{thm:tildemomentduality}
	Let $\widetilde{\Ys}$ be the Markov process corresponding to $\As_{\widetilde{\Ys}}$ and $\Ls$ the line-counting process of the pLD-ASG in the diffusion limit. Then, for $y\in [0,1]$, $n\in \N$, and $t\geq 0$,
	\begin{equation}
		\E_y[\widetilde{\Ys}_t^{n}]=\E_n[y^{\Ls_t}].
	\end{equation}
	That is, $\Ys$ and $\Rs$ are dual w.r.t. the duality function $\mathcal{H} (y, n)= y^n$.
\end{theorem}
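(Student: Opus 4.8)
The plan is to pass to the limit in the finite-population factorial moment duality of Theorem~\ref{thm:finite.dual.mut}, in the same spirit as the proof of Theorem~\ref{thm:momentduality}. Fix $y\in[0,1]$, $n\in\N$ and $t\geq0$. For every $N\in\N$ pick $k_N\in[N]_0$ with $k_N/N\to y$, and let $\widetilde{Y}^{(N)}$ and $L^{(N)}$ be the processes of Section~\ref{mr:atd} for a population of size $N$ (with mutation and selection parameters as in~\eqref{eq:assweaklimits}), started from $\widetilde{Y}^{(N)}_0=k_N$ and $L^{(N)}_0=n$. Evaluating the duality~\eqref{eq:dualitymarkovjumpchain} at time $Nt$ and passing to the rescaled processes $\bar{Y}^{(N)}_r\defeq\widetilde{Y}^{(N)}_{Nr}/N$ and $\bar{L}^{(N)}_r\defeq L^{(N)}_{Nr}$ yields
\begin{equation}\label{eq:prerescaled}
	\E\Bigl[\prod_{i=0}^{n-1}\frac{N\bar{Y}^{(N)}_t-i}{N-i}\Bigm|\bar{Y}^{(N)}_0=\tfrac{k_N}{N}\Bigr]=\E\Bigl[\prod_{i=0}^{\bar{L}^{(N)}_t-1}\frac{k_N-i}{N-i}\Bigm|\bar{L}^{(N)}_0=n\Bigr],
\end{equation}
and the goal is to send $N\to\infty$ on both sides.

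For the left-hand side, a factorwise estimate gives $\sup_{z\in[0,1]}\bigl|\prod_{i=0}^{n-1}\tfrac{Nz-i}{N-i}-z^{n}\bigr|\leq n(n-1)/(N-n+1)\to0$, so, up to a vanishing error, the left-hand side equals $\E\bigl[(\bar{Y}^{(N)}_t)^{n}\bigr]$; since $z\mapsto z^{n}$ is bounded and continuous on $[0,1]$ and $\bar{Y}^{(N)}\Rightarrow\widetilde{\Ys}$ started at $y$ (the convergence asserted in Section~\ref{mr:pldASGdifflimit}, whose proof is sketched in Section~\ref{sec:pldASGdifflimit}), the left-hand side converges to $\E_y[\widetilde{\Ys}_t^{\,n}]$. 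For the right-hand side, write it as $\E_n[g_N(\bar{L}^{(N)}_t)]$ with $g_N(\ell)\defeq\prod_{i=0}^{\ell-1}\tfrac{k_N-i}{N-i}\in[0,1]$ (no division by zero since $\bar{L}^{(N)}_t\leq N$). One has $g_N(\ell)\to y^{\ell}$ for every fixed $\ell$, and by Proposition~\ref{prop:convergencepLD} $\bar{L}^{(N)}_t\Rightarrow\Ls_t$ on the discrete space $\N$, which also entails tightness of $\{\bar{L}^{(N)}_t\}_N$; combining pointwise convergence of the uniformly bounded functions $g_N$ with this tightness gives $\E_n[g_N(\bar{L}^{(N)}_t)]\to\E_n[y^{\Ls_t}]$. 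Equating the two limits in~\eqref{eq:prerescaled} proves the theorem.

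The point requiring care is the interchange of limit and expectation, made delicate by the fact that the duality functions in~\eqref{eq:prerescaled} themselves depend on $N$; this is precisely what the uniform estimate on the left and the tightness argument on the right are for, the uniform boundedness by $1$ of all quantities removing any integrability issue. I expect the only genuinely nontrivial external input to be the convergence $\widetilde{Y}^{(N)}\Rightarrow\widetilde{\Ys}$, which has to accommodate the jump rates of $\widetilde{\Ys}$ blowing up at $\{0,1\}$ and is handled separately in Section~\ref{sec:pldASGdifflimit}. As an alternative, self-contained route one can prove the statement by the generator criterion for duality: a direct computation from~\eqref{eq:generatorWF} and~\eqref{eq:tildeYlimitgen} shows that for $\mathcal{H}(y,n)=y^{n}$ one has $\As_{\widetilde{\Ys}}\mathcal{H}(\cdot,n)(y)=\As_{\Ls}\mathcal{H}(y,\cdot)(n)$, the singular boundary terms $\tfrac{1-y}{y}\theta\nu_1$ and $\tfrac{y}{1-y}\theta\nu_0$ combining with the mutation drift of $\As_{\Ys}$ to reproduce exactly $\As_{\Ls}^{\nu_1}$ and $\As_{\Ls}^{\nu_0}$; since moreover $y\mapsto y^{n}$ lies in $\Ds(\As_{\widetilde{\Ys}})$, a standard duality theorem for Markov processes then yields the claim.
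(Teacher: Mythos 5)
Your proposal is correct, but your \emph{primary} route is not the one the paper takes: the paper proves Theorem~\ref{thm:tildemomentduality} by exactly the argument you relegate to the end, namely checking the generator identity $\As_{\widetilde{\Ys}}\mathcal{H}(\cdot,n)(y)=\As_{\Ls}\mathcal{H}(y,\cdot)(n)$ together with the bound $\mathcal{H}\leq 1$ and invoking a standard duality criterion (Ethier--Kurtz, Cor.~4.4.13) --- a three-line proof, with the singular jump terms of $\As^{\mathrm{j}}_{\widetilde{\Ys}}$ cancelling against the mutation drift of $\As_{\Ys}$ to produce $\As_{\Ls}^{\nu_0}+\As_{\Ls}^{\nu_1}$, and $y\mapsto y^n$ indeed lying in $\Ds(\As_{\widetilde{\Ys}})$ since the resulting expression vanishes at both endpoints. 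Your main route, passing to the limit in the finite-$N$ factorial moment duality of Theorem~\ref{thm:finite.dual.mut}, is the strategy the paper uses for the \emph{other} diffusion-limit duality (Theorem~\ref{thm:momentduality}); the paper even remarks there that both routes are available and chooses between them ``for the sake of variety''. Your limiting argument is sound in substance --- the uniform convergence $\faf{(Nz)}{n}/\faf{N}{n}\to z^n$ on $[0,1]$, and the uniform-on-finite-sets convergence of $g_N$ to $y^{\cdot}$ combined with tightness of $\{\bar{L}^{(N)}_t\}$ and the uniform bound by $1$, mirror the estimates in the proof of Theorem~\ref{thm:momentduality} --- but it is the more laborious option here and leans on the convergence $\widetilde{Y}^{(N)}\Rightarrow\widetilde{\Ys}$, which the paper only sketches (the boundary blow-up of the jump rates requires the localisation argument of Section~\ref{sec:pldASGdifflimit}); one should also note, as in the paper's own proof, that convergence of the fixed-time marginals follows because a fixed $t$ is almost surely a continuity point of the limit paths. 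The generator route buys a short, self-contained proof; your route buys a conceptual derivation of the limit duality directly from its finite-population ancestor.
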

The proof is based on generator arguments, see Section~\ref{sec:pldASGdifflimit}.
The result leads to a representation of the ancestral type distribution in terms of $\widetilde{\Ys}$. Moreover, the common ancestor type distribution admits a representation as a hitting probability, which was derived in~\citep[Prop.~2.5]{Taylor2007} for more general forms of selection.
\begin{corollary}\label{coro:hittingprobab}
	For $y\in [0,1]$, $\mathfrak{h}_r(y)=\E[\widetilde{\Ys}_r]$. Moreover, $$\mathfrak{h}_\infty(y)=\P(\widetilde{T}_1<\widetilde{T}_0\mid \widetilde{\Ys}_0=y),$$ where $\widetilde{T}_i\coloneqq \inf\{r\geq 0:\widetilde{\Ys}_r=i\}$ for $i \in \{0,1\}$.
\end{corollary}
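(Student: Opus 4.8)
The plan is to read off everything from the $n=1$ case of Theorem~\ref{thm:tildemomentduality}. Setting $n=1$ there gives $\E_y[\widetilde{\Ys}_t]=\E_1[y^{\Ls_t}]$, and by the definition~\eqref{eq:diffancestraltyp} of $\mathfrak{h}_t$ (read with $\Ls_0=1$) the right-hand side is exactly $\mathfrak{h}_t(y)$; this already proves the first identity $\mathfrak{h}_r(y)=\E[\widetilde{\Ys}_r]$. For the second identity I would let $t\to\infty$ in this relation. By the first identity and the definition~\eqref{eq:diffcomancestraltyp} of $\mathfrak{h}_\infty$, the left-hand side converges, $\E_y[\widetilde{\Ys}_t]=\mathfrak{h}_t(y)\to\mathfrak{h}_\infty(y)$, so it only remains to identify this limit as $\P(\widetilde{T}_1<\widetilde{T}_0\mid\widetilde{\Ys}_0=y)$.

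For that identification the key input is that $\widetilde{\Ys}$ is absorbed in $\{0,1\}$ after an almost surely finite time $\widetilde{T}\coloneqq\widetilde{T}_0\wedge\widetilde{T}_1$. This is part of the construction of $\widetilde{\Ys}$ (cf.~\citep[Eq.~(11) ff.]{Taylor2007} and the discussion around~\eqref{eq:tildeYlimitgen}): in the interior $\widetilde{\Ys}$ moves like the diffusion $\Ys$, which visits every neighbourhood of $\{0,1\}$ (and, if $\theta=0$, is itself absorbed there), while the jump rates $\tfrac{1-y}{y}\theta\nu_1$ and $\tfrac{y}{1-y}\theta\nu_0$ blow up at $0$ and $1$, so the process cannot linger near the boundary without jumping onto it. Granting this, $\widetilde{\Ys}_t$ is eventually constant and equal to $\widetilde{\Ys}_{\widetilde{T}}\in\{0,1\}$, with $\widetilde{\Ys}_{\widetilde{T}}=1$ precisely on $\{\widetilde{T}_1<\widetilde{T}_0\}$; bounded convergence then gives $\E_y[\widetilde{\Ys}_t]\to\P(\widetilde{\Ys}_{\widetilde{T}}=1\mid\widetilde{\Ys}_0=y)=\P(\widetilde{T}_1<\widetilde{T}_0\mid\widetilde{\Ys}_0=y)$, which combined with the previous paragraph yields the claim and, as a by-product, re-derives \citep[Prop.~2.5]{Taylor2007}.

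The only genuinely non-routine point is thus the almost-sure absorption of $\widetilde{\Ys}$ at the boundary, and this is where I expect the work to lie if it cannot simply be quoted from~\citep{Taylor2007}. A self-contained argument would combine an occupation-time bound for the diffusion part (the time $\Ys$, hence $\widetilde{\Ys}$ before absorption, spends in a boundary collar $[0,\varepsilon]\cup[1-\varepsilon,1]$ grows without bound) with the lower bound on the total jump rate on that collar, to conclude that $\int_0^\infty\lambda(\widetilde{\Ys}_s)\dd s=\infty$ on the event of non-absorption and hence $\P(\widetilde{T}=\infty\mid\widetilde{\Ys}_0=y)=0$. An alternative that sidesteps a.s.\ absorption is to push the duality through for every $n\in\N$: then $\E_y[\widetilde{\Ys}_t^{\,n}]=\E_n[y^{\Ls_t}]\to\mathfrak{h}_\infty(y)$ for all $n\geq1$ (since $\Ls$ is positive recurrent with the same stationary law from every starting state), so any weak limit point of the law of $\widetilde{\Ys}_t$ has all moments equal and must equal $(1-\mathfrak{h}_\infty(y))\delta_0+\mathfrak{h}_\infty(y)\delta_1$; turning this weak convergence into the hitting-probability statement, however, still requires that $\P(\widetilde{\Ys}_t=1\mid\widetilde{\Ys}_0=y)$ — nondecreasing in $t$ because $1$ is absorbing — increases to the full mass $\mathfrak{h}_\infty(y)$, i.e.\ that no probability escapes into the boundary without being absorbed there, which is exactly the boundary estimate again.
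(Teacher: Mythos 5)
Your proposal is correct and follows essentially the same route as the paper: the first identity is the $n=1$ case of Theorem~\ref{thm:tildemomentduality} combined with \eqref{eq:diffancestraltyp}, and the second follows by letting $t\to\infty$ once one knows $\widetilde{T}_0\wedge\widetilde{T}_1<\infty$ almost surely. The paper simply quotes that finiteness from \citep[Lem.~2.1]{Taylor2007}, exactly as you anticipated, so your sketched self-contained boundary argument is not needed.
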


\subsection{Applications}
\subsubsection{Fixation probability under moderate selection, and the expected number of potential influencers}
Recently, the regime of moderate selection, where the strength of selection scales with $1/N^\alpha$ for some $0<\alpha<1$, has received increased attention. After all, it covers a large range of possible scalings, in contrast to the diffusion limit, which requires scaling with precisely $1/N$. Specifically,  the classical  \emph{Haldane's formula} for the fixation probability of a single beneficial mutant  has been extended to the MoMo and the class of \emph{Cannings models}, both  under  moderate genic selection \cite{Boenkostetal21,Boenkostetal22}. We  now generalise this MoMo result to FTW (and hence DOM) selection.

\begin{proposition}[Haldane's formula]\label{prop:haldane}
In the FTW model with $u=0$, $s_m=\sigma/N^\alpha$ for some $m>0$, $\sigma>0$, and $0<\alpha<1$, as well as $s_j=0$ for all $ j \neq m$, the fixation probability of a single fit individual in an otherwise unfit population is 
\[
  \P(\lim_{t\to\infty} Y_t = 0 \mid Y_0=N-1)  =  \frac{m \sigma}{N^\alpha} \big (1+\scO(1) \big ) \quad \text{as} \; N \to \infty.
\] 
\end{proposition}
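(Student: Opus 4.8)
The starting point is Corollary~\ref{cor:repr.absorpt.prob.finite}: with $u=0$ and initial condition $Y_0=N-1$, the fixation probability of the fit type equals $\E[k^{\underline{R_\infty}}/N^{\underline{R_\infty}}]$ with $k=N-1$, where $R_\infty$ is distributed according to the stationary law $\pi_R$ of the line-counting process of the kASG (positive recurrent because $u=0$). Now $k^{\underline{r}}/N^{\underline{r}} = (N-1)^{\underline{r}}/N^{\underline{r}} = (N-r)/N = 1-r/N$, so
\[
  \P(\lim_{t\to\infty} Y_t = 0 \mid Y_0=N-1) = \E\Big[1-\frac{R_\infty}{N}\Big] = \frac{N-\E[R_\infty]}{N}.
\]
Hence the whole problem reduces to computing $\E[R_\infty]$, the expected number of potential influencers in stationarity, to precision $o(N^\alpha)$; more precisely, we must show $N-\E[R_\infty] = m\sigma N^{1-\alpha}(1+\scO(1))$.

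**Analysing $\E[R_\infty]$.** The idea is to exploit stationarity: $\E[\mathcal{A}_R \tilde f(R_\infty)] = 0$ for suitable test functions. Taking $\tilde f(n)=n$ in the generator of Proposition~\ref{prop:lckASG} (with only $s_m\neq 0$, $s_m=\sigma/N^\alpha$, $u=0$, so no mutation terms and no killing), the coalescence part contributes $-n(n-1)/N$ and the selection part contributes $+s_m\,\frac{n}{N^m}\sum_{j=1}^m (N-n)^{\underline{j}} C^n_{mj}\, j$. Setting the stationary expectation of the sum to zero gives an identity relating $\E[R_\infty(R_\infty-1)]$ to a selection term. The key structural observation is that, for $n$ of order $1$ (which is where $\pi_R$ concentrates, since coalescence pushes $R$ down at rate $O(n^2/N)$ while the upward selective pushes are of order $s_m = \sigma/N^\alpha \to 0$), the dominant selective increment has $j=m$ and $(N-n)^{\underline{m}} C^n_{mm} \sim N^m$ (since $C^n_{mm}=\binom{m}{m}\stirlingii{m}{m}=1$ and $(N-n)^{\underline m}\sim N^m$), giving an upward drift of $n\cdot m\cdot s_m\cdot(1+\scO(1)) = n\, m\sigma N^{-\alpha}(1+\scO(1))$. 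Balancing with the downward coalescent drift $n(n-1)/N$ one reads off that, in equilibrium, $R_\infty-1$ is of order $m\sigma N^{1-\alpha}$, and more precisely $\E[R_\infty-1] = m\sigma N^{1-\alpha}(1+\scO(1))$, whence $N-\E[R_\infty] = N - 1 - m\sigma N^{1-\alpha}(1+\scO(1))$ — wait, that is the wrong order. Let me reconsider: I instead want the complementary formulation. Since $R$ started from $n$ lines never exceeds $N$, and with $k=N-1$ the duality function is $1-R_\infty/N$, I should track $N-R_\infty \ge 1$. Set $g(n)=N-n$; then $\mathcal{A}_R g(n) = -\mathcal{A}_R(\mathrm{id})(n)$, so stationarity gives the same identity. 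The honest route: show $\pi_R$ concentrates on $n$ close to $N$ is \emph{false}; rather, one shows directly via the Siegmund-dual picture (Theorem~\ref{thm:factsieg}, the max-line process $M$) or via a direct coupling that the fixation probability is governed by the single-line selective branching rate. Concretely, starting $R$ from $n=1$: the first event is a selective branching at total rate $\le s_m\,\frac{1}{N^m}\sum_j (N-1)^{\underline j} C^1_{mj}\,$, whose leading term (the $j=m$ term) is $m s_m(1+\scO(1))=m\sigma N^{-\alpha}(1+\scO(1))$; once $R\ge 2$, coalescence at rate $\ge 2\cdot 1/N$ competes, and since $N^{-\alpha}\gg N^{-1}$ the process is pushed up and typically escapes the "small" regime. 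Running this comparison to stationarity (or equivalently using Corollary~\ref{cor:repr.absorpt.prob.finite} with the explicit $1-R_\infty/N$) and a first-step decomposition yields $\P(\lim Y_t=0\mid Y_0=N-1) = \frac{\E[N-R_\infty]}{N}$, and a careful estimate of $\E[N-R_\infty]$ — using the generator identity at $\tilde f = \mathrm{id}$ together with a bound showing $\E[R_\infty(R_\infty-1)] = \scO(N^{1-\alpha})\cdot$(something controllable) — gives $\E[N-R_\infty] = m\sigma N^{1-\alpha}(1+\scO(1))$, i.e.\ $N\cdot m\sigma N^{-\alpha}(1+\scO(1))$. Hmm, I need $\E[N-R_\infty]\sim m\sigma N^{1-\alpha}$ to get the final answer $m\sigma N^{-\alpha}(1+\scO(1))$ after dividing by $N$ — but $N-R_\infty\ge 1$ always, and $N^{1-\alpha}\to\infty$, so this is consistent; what it says is that in stationarity $R_\infty$ is typically within $O(N^{1-\alpha})$ of $N$, i.e.\ coalescence dominates and drives $R$ up toward $N$.

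**Execution and the main obstacle.** So the plan has three steps: (i) invoke Corollary~\ref{cor:repr.absorpt.prob.finite} and simplify the duality function at $k=N-1$ to reduce the problem to $\E[N-R_\infty]$; (ii) write the stationarity identity $\E[\mathcal{A}_R\,\mathrm{id}(R_\infty)]=0$, which reads
\[
  \E\Big[R_\infty\frac{R_\infty-1}{N}\Big] = s_m\,\E\Big[\frac{R_\infty}{N^m}\sum_{j=1}^m (N-R_\infty)^{\underline j}\,C^{R_\infty}_{mj}\,j\Big];
\]
(iii) show the left side is $m\sigma N^{-\alpha}(1+\scO(1))$ by bounding the right side, which requires controlling the distribution of $N-R_\infty$ — in particular showing $\E[(N-R_\infty)^{\underline j}] = \scO(N^{(1-\alpha)})\cdot N^{j-1}$-type estimates so that only the $j=1$ term on the right survives at leading order, contributing $s_m\,\E[\tfrac{R_\infty}{N^m}(N-R_\infty)C^{R_\infty}_{m1}]$ with $C^{R_\infty}_{m1}\sim R_\infty^{m-1}\sim N^{m-1}$ and $R_\infty\sim N$, giving $\sim s_m\,\E[N-R_\infty]\cdot m$ (using $C^n_{m1}=\sum_{\ell\ge 1}\binom m\ell \stirlingii{\ell}{1}n^{m-\ell}$, whose top term is $mn^{m-1}$); combined with the left side $\sim \E[N-R_\infty]$ (since $R_\infty\sim N$, $R_\infty(R_\infty-1)/N \sim N - (2\text{-ish}) \cdots$ — this needs care, as the left side is $\sim N - \E[R_\infty]\cdot(\text{lower order})$, so one actually solves a self-consistent relation of the form $\E[N-R_\infty](1+\scO(1)) = m s_m N(1+\scO(1))$, yielding $\E[N-R_\infty] = m\sigma N^{1-\alpha}(1+\scO(1))$). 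The \textbf{main obstacle} is step (iii): making the "$R_\infty$ concentrates near $N$, with fluctuations of order $N^{1-\alpha}$" heuristic rigorous, i.e.\ obtaining two-sided moment bounds on $N-R_\infty$ uniform in $N$. This is plausibly handled by a Lyapunov/Foster–Lyapunov argument on $V(n)=N-n$ (drift is $\le -c\,V(n)^2/N + C\,s_m N = -c V^2/N + C\sigma N^{1-\alpha}$, forcing $\E[V]=\scO(N^{1-\alpha})$ and $\E[V^2]=\scO(N^{2(1-\alpha)}\vee N) \cdot$const), which controls all the error terms; alternatively one can pass through the diffusion-limit analogue and the known Haldane computation \cite{Boenkostetal21} as a sanity check, but the finite-$N$, $0<\alpha<1$ estimate has to be done directly. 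The remaining computations (expanding falling factorials, Stirling-number identities, collecting leading terms) are routine once these a priori bounds are in hand.
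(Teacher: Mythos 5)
Your reduction via Corollary~\ref{cor:repr.absorpt.prob.finite} is applied with the wrong orientation, and this sign error derails the rest of the argument. The corollary (i.e.\ the duality of Theorem~\ref{thm:duality_R} with $n=1$, $t\to\infty$) gives $\P(\lim_{t\to\infty}Y_t=N\mid Y_0=k)=\E\big[k^{\underline{R_\infty}}/N^{\underline{R_\infty}}\big]$ --- the fixation probability of the \emph{unfit} type. With $k=N-1$ this equals $\E[(N-R_\infty)/N]$, so the quantity you want is
$\P(\lim_{t\to\infty}Y_t=0\mid Y_0=N-1)=1-\E[(N-R_\infty)/N]=\E[R_\infty]/N$,
not $(N-\E[R_\infty])/N$. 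Your own drift heuristic (coalescence pushes $R$ down at rate $n(n-1)/N$, selection pushes it up at rate $\approx n\,m\sigma N^{-\alpha}$) correctly locates the stationary mass near $n\approx m\sigma N^{1-\alpha}=o(N)$, which is exactly consistent with the target once the reduction is fixed: $\E[R_\infty]/N=m\sigma N^{-\alpha}(1+\scO(1))$. The moment you noticed ``that is the wrong order'' you had in fact found the inconsistency, but you resolved it in the wrong direction: the subsequent claim that $\pi_R$ concentrates within $O(N^{1-\alpha})$ of $N$ is false ($R$ never tends to grow toward $N$; coalescence at rate $\sim n^2/N$ overwhelms the $\scO(N^{-\alpha})$ selective branching long before $n$ is of order $N$), and the estimates you then try to set up for $\E[(N-R_\infty)^{\underline{j}}]$ are aimed at the wrong quantity.

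Even with the orientation corrected, your route inverts the logic of the paper and leaves the hard step unproved. The paper does not use the kASG here at all: it writes the absorption probability of the birth--death chain $Y$ explicitly as $h_\infty(k)=\big(1+\sum_{\ell=1}^{k-1}\prod_{j=1}^{\ell}q_j^{-1}\big)/\big(1+\sum_{\ell=1}^{N-1}\prod_{j=1}^{\ell}q_j^{-1}\big)$ with $q_k=\lambda_k/\mu_k$, and then carries out careful asymptotics on $1-h_\infty(N-1)$; the statement $\E[R_\infty]=m\sigma N^{1-\alpha}(1+\scO(1))$ is deduced \emph{afterwards} as Corollary~\ref{coro:statlines}, precisely by the duality identity you are trying to use in reverse. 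To make your approach work you would have to establish two-sided bounds on $\E[R_\infty]$ (and control $\E[R_\infty^2]$ to close the self-consistent relation coming from $\E[\mathcal{A}_R\,\mathrm{id}(R_\infty)]=0$) without any input from the fixation probability; you correctly flag this as the main obstacle but only gesture at a Foster--Lyapunov argument. As it stands the proposal is not a proof: fix the sign in the reduction, and either carry out the stationary-moment analysis of $R$ in full, or switch to the direct birth--death computation, which is elementary and self-contained.
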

This will be proved in Section~\ref{sec:applications}, but let us  give a heuristic argument here. The above parameter choice   in the FTW model translates into the DOM model via $\widehat s_i = s_m$ for all $i \leq m$ and $\widehat s_i = 0$ otherwise. As long as the proportion of the fit type is small, all individuals in a random  sample of finite size are unfit with high probability. Hence,  whenever a   fit individual under DOM selection encounters a selective arrow, the lines at the tails of all checking arrows as well as the line at the tip of the selective arrow will be unfit with high probability. Likewise, the rate at which fit individuals will be killed via selective arrows is negligible. We may therefore initially approximate the number of fit individuals (forward in time) by  a slightly supercritical branching process in which every fit individual splits into two at rate $1+\widehat s_1 + \ldots + \widehat s_m = 1+m \widehat s_1$ and dies at rate 1. The resulting offspring expectation and offspring variance are   $\mu = 1+m \widehat s_1/2 + \scO(\widehat s_1)$ and $\sigma^2 = 1+ \scO(\widehat s_1)$, respectively,  as $\widehat s_1 \to 0$. This results in the survival probability of $2(\mu-1)/\sigma^2 = m \widehat s_1+ \scO(\widehat s_1)$ \cite[Theorem~5.5]{Haccou_05}. The classical approximation of the fixation probability by the  survival probability (see  \cite{Boenkostetal21,Boenkostetal22} and references therein) then yields the claim. 

\smallskip

Specialising the first statement in Corollary \ref{cor:repr.absorpt.prob.finite} to $k=N-1$,
noting that $(N-1)^{\underline{R_\infty}}/N^{\underline{R_\infty}} = (N-R_\infty)/N$, and then combining with Proposition~\ref{prop:haldane}, we get
\begin{corollary}\label{coro:statlines}
  Under the conditions of Proposition~\ref{prop:haldane}, 
  \[
    \E[R_\infty] = m \sigma N^{1-\alpha}  \big (1+\scO(1) \big ).
  \]
\end{corollary}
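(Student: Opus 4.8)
The plan is to read the statement off by combining the two results it names, with essentially no new work beyond them. Writing $\faf{k}{j}$ for the falling factorial, I would first record the elementary identity
\[
  \frac{\faf{(N-1)}{j}}{\faf{N}{j}}=\frac{(N-1)(N-2)\cdots(N-j)}{N(N-1)\cdots(N-j+1)}=\frac{N-j}{N},\qquad j\in[N]_0,
\]
with the ratio understood as $1$ for $j=0$ and as $0$ for $j=N$. Since $u=0$, the killing and pruning rates of the kASG vanish, so the line-counting process $R$ never enters $\Delta$ and is positive recurrent on $[N]=\{1,\dots,N\}$; hence $\pi_R$ is supported on $[N]$ and $R_\infty\in[N]$ almost surely. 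Evaluating the identity at $j=R_\infty$ and taking expectations gives $\E\big[\faf{(N-1)}{R_\infty}/\faf{N}{R_\infty}\big]=1-\E[R_\infty]/N$.

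Next I would instantiate the first identity of Corollary~\ref{cor:repr.absorpt.prob.finite} at $k=N-1$, obtaining $\P(\lim_{t\to\infty}Y_t=N\mid Y_0=N-1)=1-\E[R_\infty]/N$. For $u=0$ the process $Y$ lives on the finite set $[N]_0$ with absorbing states $0$ and $N$, both accessible from every interior state, so $Y$ is absorbed almost surely and the left-hand side equals $1-\P(\lim_{t\to\infty}Y_t=0\mid Y_0=N-1)$. By Proposition~\ref{prop:haldane}, this last fixation probability is $\tfrac{m\sigma}{N^\alpha}\big(1+\scO(1)\big)$ as $N\to\infty$. Equating the two expressions for $\P(\lim_{t\to\infty}Y_t=N\mid Y_0=N-1)$ and cancelling the leading $1$'s yields $\E[R_\infty]/N=\tfrac{m\sigma}{N^\alpha}\big(1+\scO(1)\big)$, i.e.\ $\E[R_\infty]=m\sigma N^{1-\alpha}\big(1+\scO(1)\big)$.

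The only genuine content is Proposition~\ref{prop:haldane}, whose proof is deferred to Section~\ref{sec:applications}; granted it, the corollary is bookkeeping. The one point worth stressing is the collapse of the hypergeometric duality function at $k=N-1$ to the affine map $j\mapsto(N-j)/N$: this is precisely what turns an absorption probability of $Y$ into the first moment of the stationary line count $R_\infty$, and it explains why $k=N-1$ (a single beneficial founder) is the natural specialisation. No further estimates or limiting arguments are needed beyond those already invoked, so I do not anticipate a real obstacle here beyond Proposition~\ref{prop:haldane} itself.
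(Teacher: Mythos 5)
Your proposal is correct and follows essentially the same route as the paper, which likewise specialises Corollary~\ref{cor:repr.absorpt.prob.finite} to $k=N-1$, uses the identity $(N-1)^{\underline{j}}/N^{\underline{j}}=(N-j)/N$, and combines the result with Proposition~\ref{prop:haldane}. Your additional remarks (that $R_\infty\in[N]$ almost surely when $u=0$, and that $Y$ absorbs almost surely so the two fixation probabilities sum to one) are correct and merely make explicit what the paper leaves implicit.
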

For $m=1$, this reduces to the result of \cite[Sec.~2.4]{Boenkostetal21}  for single branching events at rate $\sigma/N^\alpha$. The corollary then tells us that, for $m$-fold branching, $\E[R_\infty]$ is very similar to the same quantity under single branching events at rate $m \sigma/N^\alpha$. 

\subsubsection{Stationary and ancestral type distributions under FTW: some observations}

To conclude the main-results section, we compare the behaviour of the frequency-dependent selection scheme with the simpler genic case. We do this in a somewhat informal way, which also relies on numerical observations. 

\smallskip

We focus on the mean proportion of unfit individuals and  unfit ancestors at stationarity under different parameter regimes. The first observation is that, morally speaking, the higher the order of selection, the greater its strength in the following sense. Fix a parameter $s > 0$ and $m_1,m_2\in \N$ with $m_1< m_2$. Consider the number of unfit individuals $Y^{(1)}$ and $Y^{(2)}$ in two FTW models with the same parameters $u$, $\nu_0$ (and thus $\nu_1$) and identical initial values, but with selection rates $s^{(1)}_m = \ind_{m_1}(m)s $ and $s^{(2)}_m = \ind_{m_2}(m)s$, respectively. Then the two processes can be coupled by replacing every selection event $E_{J,i}$ of $Y^{(1)}$ with $E_{\tilde{J}, i}$ of $Y^{(2)}$, where $\tilde{J}_j = J_j$ for all $j \leq m_1$ and  $\tilde{J}_{m_1+1}, \ldots, \tilde{J}_{m_2}$  are chosen uniformly at random in $[N]$. It is then clear that $Y^{(2)}$ is stochastically dominated by~$Y^{(1)}$: having more potential parents increases the probability that there is at least one fit individual in the sample, thus skewing the distribution towards the fit type. The same applies for the ancestral type. This observation is in line with Fig.~6 of \cite{Wiehe95}, which illustrates the stationary distribution in a diploid model with and without dominance (compare also item (2) at the beginning of Section~\ref{sec:models}).

\smallskip

A more meaningful comparison between different parameter regimes is obtained by fixing the  \emph{effective branching rate} $b \defeq \sum_{m > 0} ms_m$,  as analogously defined in \citep[Def.~2.18]{cordero2019general}. In Figure~\ref{fig:plots} we have chosen for comparison a model with pure genic selection (blue, solid line), one without a genic-selection component (green, dotted line) and a mixture of the two situations (orange, dashed line); all with a  small $\nu_0$ and weak selection and mutation,  a regime close to the diffusion limit. We use the ratio $u/b$  as the independent parameter in both graphs, thus generalising $u/s_1$,  the relevant quantity in the case of genic selection.  In the law-of-large-numbers regime  of the MoMo  and as $\nu_0 \to 0$,  one has the well-known  \emph{error threshold} phenomenon (see \cite{Eigenetal88,baake2018lines} and references therein). This is a threshold for the mutation rate at which the fit type goes extinct due to mutation, irrespective of  its initial frequency; a behaviour that still persists approximately for large but finite $N$ and small $\nu_0$. Figure~\ref{fig:plots} now suggests that the phenomenon also survives in some capacity in the frequency-dependent setting.
\begin{figure}[t]
        \begin{center}
		\scalebox{0.6}{\includegraphics[width=1\textwidth]{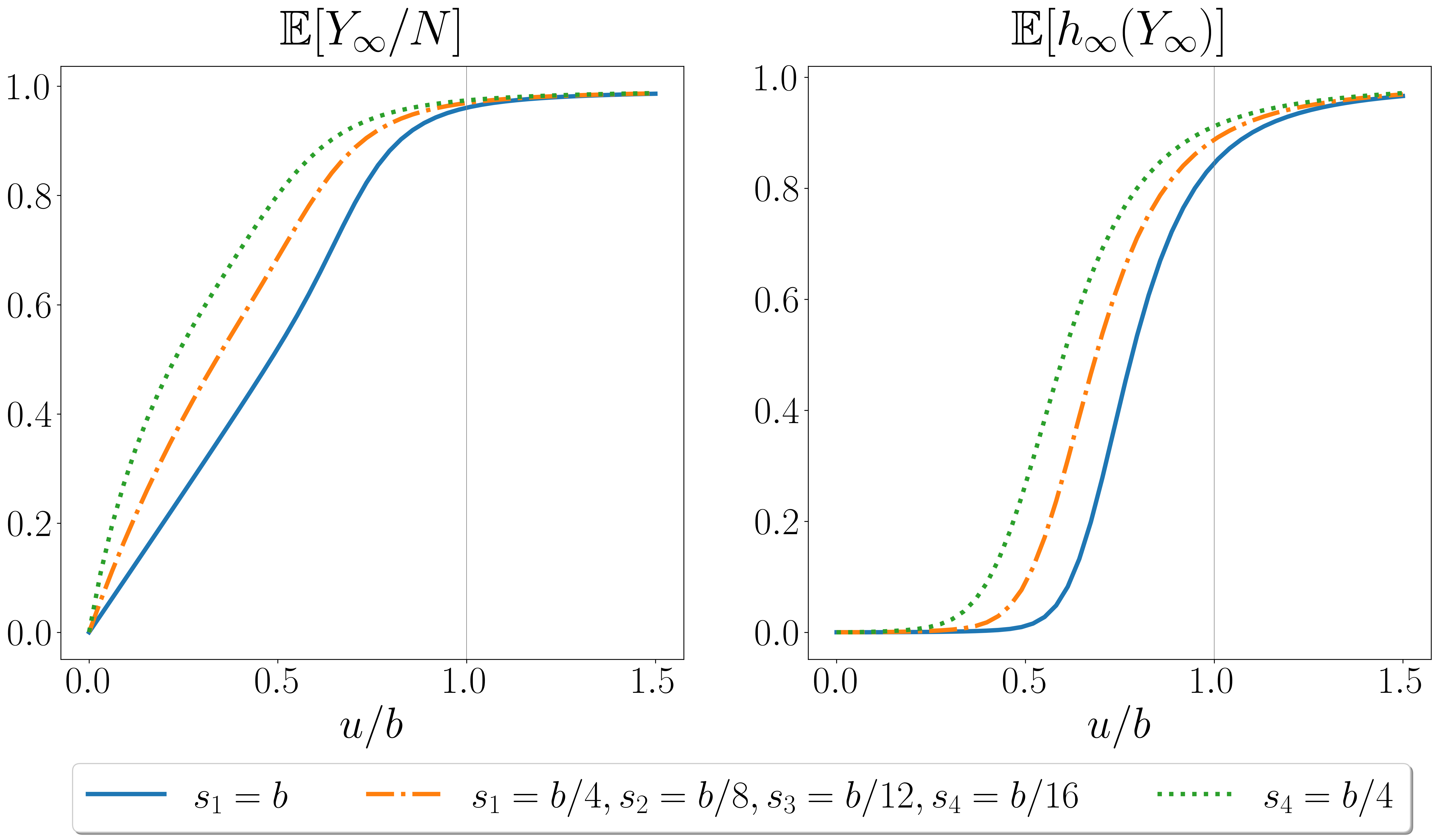}	}
	\end{center}
	\caption{The expected stationary proportions of unfit individuals (left) and of unfit ancestors (right) for $b = 0.005$ as a function of the mutation rate, for $N = 10000$ and $\nu_0 = 0.005$. The grey vertical line indicates $u=b$, which corresponds to the error threshold in the law-of-large-numbers regime in the case of genic selection as $\nu_0\to 0$~\citep[Rem.~8]{BCH18jmath}. }
	\label{fig:plots}
\end{figure}
More precisely, for fixed  $b$, the strength of selection now \emph{decreases} with the order of frequency dependence (whereas, in the moderate-selection setting of  Proposition~\ref{prop:haldane} and Corollary~\ref{coro:statlines}, the order of the frequency dependence only has a  negligible effect for fixed effective branching rate).
The diffusion approximation delivers a heuristic for this observation. Consider the selective part of the drift term in the generator of~$\Ys$ (see \eqref{eq:generatorWF}), whose modulus is a measure of the strength of selection. For $0<y<1$, we have
\begin{equation*}
\sum_{m>0} \sigma_m y(1 - y^m) = \sum_{m>0} m \sigma_m y(1-y)\frac{1}{m}\sum_{i=0}^{m-1} y^{i} \leq y(1-y)\sum_{m>0} m\sigma_m \eqdef  \beta y(1-y)
\end{equation*}
with equality if and only if $\sigma_i=0$ for all $i>1$, that is, in the case of genic selection, where the effective branching rate is $\beta=\sigma_1$. 
This small calculation shows that, for fixed $\beta$, the genic case (that is, $\sigma_m = \beta \ind_{1}(m)$) maximises the selective drift. Whether  formal results can be obtained in the diffusion limit and also in the finite setting is  territory for future explorations.

%%%%%%%%%%%%%%%%%%%%%%%%%%%%%%%%%%%%%%%%%%%%%%%%%%%%%%%%%%%%%%%%%%%%%%%%%%%%%%%%%%%%%%%%%%%%%%%%%%%%%%%%%%%%%%%%%%%%%%%%%%%%%%%%%%%%%%%%%%%%%%%%%%%%%%%%%%%%%
%%%%%%%%%%%%%%%%%%%%%%%%%%%%%%%%%%%%%%%%%%%%%%%%%%%%%%%%%%%%%%%%Section2%%%%%%%%%%%%%%%%%%%%%%%%%%%%%%%%%%%%%%%%%%%%%%%%%%%%%%%%%%%%%%%%%%%%%%%%%%%%%%%%%%%%%
%%%%%%%%%%%%%%%%%%%%%%%%%%%%%%%%%%%%%%%%%%%%%%%%%%%%%%%%%%%%%%%%%%%%%%%%%%%%%%%%%%%%%%%%%%%u%%%%%%%%%%%%%%%%%%%%%%%%%%%%%%%%%%%%%%%%%%%%%%%%%%%%%%%%%%%%%%%
\section{Details of the two selection models}\label{sec:models}
In this section, we detail the construction of the two MoMo's. To start off, we discuss the biological settings covered by the DOM model. Next, we formalise the graphical representation in terms of Poisson processes and explain the propagation of types and ancestry. Finally, we prove the equivalence in distribution of the two models under assumption~\eqref{eq:non-increasing}.

\smallskip

The DOM selection scheme covers a variety of biologically relevant situations. 
	\begin{enumerate}
		\item \label{s1}
		For $\widehat s_1>0$ and $\widehat s_m = 0$ for all $m > 1$,  the reproduction rate of a type-0 individual is  independent of the remaining population. This is the aforementioned case of \emph{genic selection}, as opposed to  \emph{frequency-dependent selection}, which applies to situations (\ref{s2}) and (\ref{sall}) below.
              \item \label{s2}
                For $\widehat s_1 \geqslant 0$, $\widehat s_2 > 0$, and $\widehat s_m = 0$ for $m>2$,   the selective advantage of a type-0 individual has a contribution determined by the type of a randomly chosen partner. Up to small terms that vanish in the diffusion limit, this has an alternative interpretation in terms of \emph{diploid selection}. Here, one identifies diploid individuals  with their genotypes, where a genotype is an unordered pair~$\{ i,j\} \in \{0,1\}^2$ of gametes, which are combined independently (by slight abuse of notation, we use  the set notation here for unordered pairs even if $i=j$). Working at the level of the gametes, one assumes that type 0 when combined with another 0 has a selective reproduction rate of $2 \widehat s_1 + \widehat s_2$;  type 0 combined with type 1, as well as type 1 combined with type 0,  has a selective reproduction rate of  $\widehat s_1 + \widehat s_2$; and type~1 combined with type~1 has no seletive reproduction rate. This means that  type~$0$ is (partially) dominant\footnote{Note that the term \emph{balancing selection} used in \cite{casanova2018duality} instead of dominance is misleading; in fact, balancing selection means that the genotype $\{ 0,1\}$ is superior to both $\{ 0,0\}$ and $\{ 1,1\}$, which is excluded by the positivity of the parameters; see, for example, \cite[p.~64]{Du08}.}, that is, it can also (partly) play out its advantage if paired with a type~$1$; but, unlike in our DOM model, this happens in a symmetric way, in that type~1 also profits from the interaction.  We also speak of \emph{linear dominance} since  the reproduction rate of any given type depends linearly on $k/N$.  In a population with $k$ type-1 gametes, we therefore have selective transitions to $k-1$ at rate $(2 \widehat s_1 + \widehat s_2) k(N-k)^2/N^2  + k^2(N-k)/N^2$ and to $k-1$ at rate $(\widehat s_1 + \widehat s_2) k(N-k)^2/N^2$.
With the methods used in Section~\ref{sec:diffusionlimit}, it is easily seen that the resulting process has the same diffusion limit as $\widehat{Y}$.  		
		\item \label{sall}
		For any other choice of the $\widehat s_m$, the selective advantage of a type-0 individual depends on  two or more randomly chosen partners to be of type 1. This  may come from  ecological interactions between individuals, as opposed to the purely genetic interactions caused by diploid genotypes.  Since this generalises case (\ref{s2}) in that the reproduction rate now contains nonlinear terms in $k/N$, we speak of \emph{nonlinear dominance}.
	\end{enumerate}

\subsection{Graphical construction with type and ancestry propagation}
The construction of the graphical representation for the two MoMos requires the following independent families of independent homogeneous Poisson point processes. Recall the parameters from the model description in Section~\ref{sec:mainresult} (in particular, $(\widehat{s}_m)_{m>0}$ need not be non-increasing here) and consider the Poisson point processes on the real line \begin{equation}\label{eq:poissonprocesses}
\Pi_{j,i}^{\narrow} \text{ at rate } \frac{1}{N},\quad  \widehat{\Pi}_{J,i}^{\sarrow} \text{ at rate } \frac{ \widehat{s}^{}_{l(J)}}{N^{l(J)}}, \quad  \Pi_{J,i}^{\sarrow} \text{ at rate } \frac{ s^{}_{l(J)}}{N^{l(J)}}, \quad   \Pi_i^{\circ} \text{ at rate } u\nu_0, \quad  \Pi_i^{\times} \text{ at rate } u\nu_1,
\end{equation}
where $i,j\in [N]$, $J\in \bigcup_{m>0} [N]^m$, and $l(J)$ denotes the length of the tuple $J$ (of course, almost surely, no point belongs to more than one family).
Collect these families into the set $\widehat \Lambda\coloneqq \{ \Pi_{j,i}^{\narrow}\cup \widehat \Pi_{J,i}^{\sarrow}\cup  \Pi_i^{\circ}\cup \Pi_i^{\times}: i,j\in [N],\, J\in \bigcup_{m>0} [N]^m  \}$; analogously, let $\Lambda$ be defined as $\widehat \Lambda$ but with $\widehat{\Pi}_{J,i}^{\sarrow}$ replaced by $\Pi_{J,i}^{\sarrow}$. These Poisson point processes  deliver the graphical elements encoding neutral arrows (from line $i$ to $j$), checking and selective arrows (DOM),  selective arrows (FTW), beneficial mutations (on line~$i$), and deleterious mutations (on line~$i$). More precisely, in the DOM model, for example, a point in $\widehat \Pi_{J,i}^{\sarrow}$ means there is a selective arrow with tip pointing to line $i$ and tail $J_1$ accompanied by checking arrows whose tails are the lines in the set  $\{J_2, \ldots, J_m\}$. Similarly, in the FTW model, a point in $\Pi_{J,i}^{\sarrow}$ means that the continuing line is~$i$ and it receives the (joint) tip of all arrows emanating from the lines in $\breve{J}\coloneqq\{J_1, \ldots, J_m\}$ (the set of incoming lines).

\medskip

The DOM model  will be proved to be equivalent in distribution to the FTW model under assumption \eqref{eq:non-increasing}. Therefore we formalise the type and ancestry propagation only in the FTW model; defining the analogue for the DOM model is straightforward. For $\tau\in \R$, we write $f_{\tau+}\defeq\lim_{h\searrow 0} f_{\tau+h}$ and $f_{\tau-}\defeq\lim_{h\searrow 0} f_{\tau-h}$ for any function $f$ on $\R$. Moreover, for $a\in \R$ and $B\subseteq \R$, $B\pm a \coloneqq\{a\pm b: b\in B\} \eqqcolon a\pm B$.

\begin{definition}[Types and ancestry in the FTW MoMo]\label{def:typingMoMo}
	Let $\Lambda$ be the family of Poisson point processes in~\eqref{eq:poissonprocesses}. We call $\alpha:[N]\to \{0,1\}, \ i\mapsto \alpha(i)$, a \emph{site colouring}. Given $\Lambda$, $\alpha$ and $t_0\in \R$, we define \begin{enumerate}
		\item[(i)] the \emph{typed MoMo} $\Ce^{t_0}=(\Ce^{t_0}_t)_{t \geqslant 0}$ with site colouring~$\alpha$ at time~$t_0$, where $\Ce^{t_0}_t = (\Ce^{t_0}_t(i))_{i \in [N]}$ and $\Ce^{t_0}_t(i) \in \{0,1\}$ is the type of site $i$ at time $t_0+t$ if $\Ce^{t_0}_{0}=\alpha$, and
		\item[(ii)] the \emph{MoMo ancestry} $\Ae^{t_0}=(\Ae^{t_0}_t)_{t \geqslant 0}$ with site colouring~$\alpha$ at time~$t_0$, where $\Ae^{t_0}_t = (\Ae^{t_0}_t(i))_{i \in [N]}$ and $\Ae^{t_0}_t(i) \in [N]$ is the ancestral site at time~$t_0$ of the individual occupying site $i$ at time $t_0+t$ if $\Ce^{t_0}_{0}=\alpha$.
	\end{enumerate}  
We construct types and ancestries as follows. Start with $\Ce^{t_0}_{0} = \alpha$ and set $\Ae^{t_0}_{0}(i)=i$ for $i\in[N]$. Proceed inductively for the arrival times  $\tau \in (\Lambda-t_0) \cap \R_{+}$ in increasing order, in the following way. In between arrival times of~$\Lambda$, nothing happens. 
	\begin{enumerate}
		\item If $\tau\in (\Pi_{j,i}^{\narrow}-t_0)$ for some $i,j\in [N]$, then $\Ce^{t_0}_{\tau+}(i)=\Ce^{t_0}_{\tau}(j)$, and for $\eta\in [N]\setminus\{i\}$, $\Ce^{t_0}_{\tau+}(\eta)=\Ce^{t_0}_{\tau}(\eta)$. Similarly, $\Ae^{t_0}_{\tau+}(i)=\Ae^{t_0}_{\tau}(j)$, and for $\eta\in [N]\setminus\{i\}$, $\Ae^{t_0}_{\tau+}(\eta)=\Ae^{t_0}_{\tau}(\eta)$.
		\item If $\tau\in (\Pi_{J,i}^{\sarrow}-t_0)$ for some $i \in [N]$ and $J=(J_{\ell})_{\ell=1}^m\in[N]^m$, $m\in \N$, then \begin{itemize}
			\item if $\exists \ell\in \breve{J}$ with $\Ce^{t_0}_{\tau}(\ell)=0$, set $\Ce^{t_0}_{\tau+}(i)=0$, $\Ae^{t_0}_{\tau+}(i)=\Ae^{t_0}_{\tau}(\min\{k\in[m]:\Ce^{t_0}_{\tau}(J_k)=0\})$, and for $\eta\in [N]\setminus\{i\}$, $\Ce^{t_0}_{\tau+}(\eta)=\Ce^{t_0}_{\tau}(\eta)$ and $\Ae^{t_0}_{\tau+}(\eta)=\Ae^{t_0}_{\tau}(\eta)$;
			\item if  $\Ce_{\tau}(\ell)=1 \forall \ell\in \breve{J}$, set $\Ce^{t_0}_{\tau+}(\eta)=\Ce^{t_0}_{\tau}(\eta)$ and $\Ae^{t_0}_{\tau+}(\eta)=\Ae^{t_0}_{\tau}(\eta)$ for all $\eta\in [N]$.
		\end{itemize}
		\item If $\tau\in (\Pi_i^{\star}-t_0)$ for some $i \in [N]$ and $\star\in \{\times,\circ\}$, then $\Ce^{t_0}_{\tau+}(i)=\1\{\star=\times\}$ and for $\eta\in [N]\setminus\{i\}$, $\Ce^{t_0}_{\tau+}(\eta)=\Ce^{t_0}_{\tau}(\eta)$. Moreover, for all $\eta\in [N]$, $\Ae^{t_0}_{\tau+}(\eta)=\Ae^{t_0}_{\tau}(\eta)$.
	\end{enumerate}

\end{definition}
We omit the superscript if it is $0$, that is, $\Ce^{}_t \defeq \Ce^{0}_t$ and $\Ae^{}_t \defeq \Ae^{0}_t$. Clearly, (1), (2), and (3)  formalise the idea of type propagation and the notion of ancestry in a neutral reproduction, selective reproduction, and mutation event, respectively. We have defined $\Ce$ and $\Ae$ as l\`{a}dc\`{a}g for consistency with the ancestral processes considered below. In the forward perspective, we will for notational convenience usually choose $t_0=0$.

\smallskip

Let $Y_t\coloneqq \sum_{i=1}^N\Ce_t(i)$ be the number of unfit individuals at time $t$. Then $Y=({Y}_t)_{t \geqslant 0}$ is the birth-death process with the generator $\cA_Y$ as defined in~\eqref{eq:generatorhatY}. In particular, the type of an individual randomly chosen at time~$t$ has Bernoulli distribution with parameter~$Y_t/N$.

\subsection{Connecting nonlinear dominance with fittest-type-wins}
For the remainder of the manuscript, we assume~\eqref{eq:non-increasing} is in place. Then Lemma~\ref{lem:DF} states that the two selection models are equivalent in distribution. We now provide the proof.
\begin{proof}[Proof of Lemma~\ref{lem:DF}]
Since the generators of $\widehat Y$ and $Y$   agree except for the terms involving selective reproduction (cf.~\eqref{eq:generatorhatY}), we only consider the latter. Fix $\widehat Y_t = Y_t=k\in[N]_0$. For $m\in \N$, let $\widehat p_m $ ($p_m$) be the probability in the DOM (FTW)  model that, given a selective event of order~$m$ occurs, an offspring is produced (which is then of type 0 by construction). Given $\widehat Y_t = Y_t=k$, these probabilities are independent of the type of the individual that will be replaced (that is, the one on the continuing line). By construction, we have
\[
\textstyle \widehat p_m = \frac{N-k}{N} \big ( \frac{k}{N} \big )^{m-1}   \quad \text{and}\quad   p_m = 1- \big ( \frac{k}{N} \big )^m.
\]
Hence, 
\begin{equation}\label{eq:decomp}
\textstyle \sum_{n=1}^m \widehat p_n = \frac{N-k}{N} \sum_{n=1}^m \big ( \frac{k}{N} \big )^{n-1} = 1- \big ( \frac{k}{N} \big )^m  = p_m
\end{equation}
and so $\widehat p_1 = p_1$ and $\widehat p_m = p_m - p_{m-1}$  for $m>1$.
As a consequence, the reproduction rate of fit individuals via selective events is
\[\textstyle
\sum_{m >0} \widehat s_m \widehat p_m = \widehat s_1 p_1 + \sum_{m >1} \widehat s_m (p_m - p_{m-1}) = \sum_{m>0} (\widehat s_m -  \widehat s_{m+1}) p_m = \sum_{m>0} s_m p_m
\]
under the stated choice of the $s_m$, which entails the identity of the selective death rates in \eqref{eq:generatorhatY}.
 \end{proof}

Lemma~\ref{lem:DF} can be read in two ways:

\begin{enumerate}
\item For any given time horizon $t$, a realisation of the DOM model may be obtained from a realisation of the FTW model by replacing every event $E_{J,i}$ by  events $\widehat E_{(J_1),i},  \widehat E_{(J_2,J_1),i}, \ldots, \widehat E_{(J_ {l(J)}, J_{l(J)-1},\ldots, J_1),i}$ (so each incoming line $J_k$ in an FTW event introduces a $k$th-order DOM event with incoming line~$J_k$),  each occurring independently at a time chosen uniformly in $[t_0, t_0+t]$. Indeed, for $J\in[N]^m$,  $\widehat{E}_{J,i}$-events then occur at rate $$\sum_{n\geq m}\sum_{\tilde{J}\in [N]^n: \atop \forall \eta\in [m]:\, \tilde{J}_\eta= J_\eta} \frac{s_n}{N^n}=\frac{1}{N^m}\sum_{n\geq m} s_n.$$
Each FTW event of order $m$ is thus decomposed into a family of DOM events of orders $1, \ldots, m$.
This yields the relation $\widehat s_m = \sum_{n \geqslant m} s_n$, in agreement with the assumption that $(\widehat s_m)_{m >0}$ is a nonincreasing sequence. 
\item On the other hand, a realisation of the FTW model may be obtained from a realisation of the DOM model satisfying \eqref{eq:non-increasing}  via thinning as follows.
Whenever an event $\widehat E_{J,i}$ occurs in DOM, replace it either by the event $E_{J,i}$ or $E_\varnothing$ (a silent event where nothing happens) with probability $\PP(E_{J,i}\mid \widehat E_{J,i}) := (\widehat s_{l(J)} - \widehat s_{l(J) +1})/\widehat s_{l(J)}$ and $\PP(E_{\varnothing} \mid \widehat E_{J,i}) = \widehat s_{l(J) +1}/\widehat s_{l(J)}$, respectively. In the so-constructed FTW model, $E_{J,i}$ events occur at rate $(\widehat s_{l(J)} - \widehat s_{l(J) +1})/N^{l(J)}$, and therefore selective events of order $m$ occur to every line $i$ at rate $s_m = \widehat s_m - \widehat s_{m+1}$. 
\end{enumerate}
 
\begin{remark}
\begin{enumerate}
	\item \citet[Cor.~2.12]{cordero2019general} describe a rather general class of selection models. In their context, a condition similar to~\eqref{eq:non-increasing} implies the existence of a moment duality, which is also ASG-based. Ancestral structures beyond this case seem to be inextricably harder to treat. 
	
	\item Let us compare our model parameters with~\cite{casanova2018duality} assuming $s_m=\widehat{s}_m-\widehat{s}_{m-1}$ for $m>0$. The counterpart to $\sum_{m>0}  s_m$  is the
	selection intensity $\kappa$ in~\citep{casanova2018duality}, $s_m/(\sum_j s_j)$ corresponds to the probability $\pi_m$ of~$m$ potential parents in a selective (FTW) event; and $\widehat s_m/(\sum_j s_j)$ is the corresponding tail probability, once more in agreement with our assumption  \eqref{eq:non-increasing}. So, $\widehat s_m/(\sum_j s_j)$ is the coefficient of $x^{m-1}$ in the power series of the selection function $s(x)$ in \cite{casanova2018duality}. The connection between the $\pi_m$ and the coefficients in the power series of $s(x)$ as tail probabilities becomes transparent via our Lemma~\ref{lem:DF}.
\end{enumerate}
\end{remark}

\section{Construction of the ancestral selection graph with multiple branching}\label{sec:ASG}
In this section, we formalise the ASG and the notions of the distributions of type, ancestral type, and common ancestor type. Because of Lemma~\ref{lem:DF}, it suffices to focus on the FTW model.

\smallskip

The ASG can be formalised in various ways (e.g. directed acyclic graphs~\citep[Sect.~4]{cordero2019general}). We encode it as a continuous-time Markov chain $G^{\Ts}=(G_r^{\Ts})_{r \geq 0}$ on $\Ps([N])$, the power set of $[N]$, where~$G_r^{\Ts}$ is the set of sites in the graph at  time~$r$ before~$\Ts$. Recall that in principle, we distinguish between the \emph{sites} (the labels of the lines in the interacting particle system) and the influencer \emph{lines}  (which may move between sites); nevertheless, for the sake of readability, we will sometimes speak of `line~$i$' instead of `the line at site~$i$' when we think there is no risk of confusion. 

\smallskip

To construct the process $G^{\Ts}$, we again rely on the Poisson point processes in \eqref{eq:poissonprocesses}, see Fig.~\ref{fig:ASG2}. 
More precisely, we now consider the arrival times backward in time. We write $\varrho$ instead of $\tau$ for the time points, i.e. $\varrho\in (\Ts-\Lambda)\cap \R_{+}$.  Since $G^{\Ts}$ is constant between jumps, it suffices to define what happens at the jump times.

\smallskip

\begin{definition}[ASG]\label{def:ASG}
Fix~$\Ts\in \R$, $g \subseteq [N]$, and  a realisation of~$\Lambda$ in~\eqref{eq:poissonprocesses}. Let $G_0^{\Ts}=g$ be the (set of sites of) the \emph{initial sample} taken at time $\Ts$. For $r> 0$, $G_r^{\Ts}\subseteq[N]$ is the set of sites occupied by the potential influencers  of the lines in $g$ at  time $r$ before $\Ts$.
To construct $G^{\Ts}=(G_r^{\Ts})_{r\geq0}$, we proceed inductively for the arrival times  $\varrho \in (\Ts-\Lambda) \cap \R_{+}$ in increasing order in the following way. In between arrival times of~$\Lambda$, $G^{\Ts}$ does not change. 

\begin{enumerate}
	\item If $\varrho\in (\Ts-\Pi_{j,i}^{\narrow})$ for some $i \in G_{\varrho-}^{\Ts}$ and $j \in [N]$, then $G_\varrho^{\Ts} = (G_{\varrho-}^{\Ts}\setminus \{i\})\cup\{j\}$.
	\item If $\varrho\in (\Ts-\Pi_{J,i}^{\sarrow})$ for some $i \in G_{\varrho-}^{\Ts}$ and $J \in [N]^m$ for some $m\in \N$, then $G_\varrho^{\Ts} = G_{\varrho-}^{\Ts}\cup \breve J$. 
	\item In all other cases (that is, if $\varrho\in (\Ts-\Pi_i^{\star})$ for $\star\in \{\times,\circ\}$ and some $i \in [N]$, or if $\varrho\in (\Ts-\Pi_{j,i}^{\narrow})\cup (\Ts-\Pi_{J,i}^{\sarrow})$ for some $j$ or $J$, but $i\notin G_{\varrho-}^{\Ts}$), $G_\varrho^{\Ts} = G_{\varrho-}^{\Ts}$.
\end{enumerate}
We refer to the triple $(G^{\Ts},g,\Lambda)$ as the ASG with initial sample $g$ taken at time~$\Ts$. In what follows, we frequently consider an ASG in a finite time horizon. We then write $(G^{\Ts}_{[0,r]},g,\Lambda)$ for the restriction of $(G^{\Ts},g,\Lambda)$ to $[\Ts-r,\Ts]$ (or equivalently, until backward time~$r$).
\end{definition} 
We again omit the upper index if it is $0$, that is, $G^{}_r \defeq G^{0}_r$. Unless specified otherwise, in what follows we assume $t_0=0$ ($\Ts=0$) when we consider the forward (backward) process individually. When we consider them jointly, we assume $t_0<\Ts$.

\begin{figure}
	\scalebox{0.75}{\begin{tikzpicture}
		
	%Frame
	\draw[dashed] (0,-0.5) --(0,4.25);
	\draw[dashed] (14.5,-0.5) --(14.5,4.25);    
	\node[above] at (0,4.25) {$\Ts-t_0$};  
	\node[above] at (14.5, 4.25) {$0$};  
	%	\node [right] at (0,-0.5) {$r$};
	%	\node [right] at (14.5,-0.5) {$0$};
	\draw[-{angle 60[scale=5]}] (9.25,4.7 ) -- (5.25,4.7) node[text=black, pos=.5, yshift=6pt]{$r$};

	%G_0
	\node[gray] at (14.5,-0.8) {\scalebox{1}{$g$}};
	\fill[gray,opacity=.6]  (14.35,3.85) rectangle (14.65,4.15);
	
	%G_1
	\node[gray] at (12.9,-0.8) {\scalebox{1}{$G_{\varrho^{}_1}$}};
	\draw[dashed] (12.9,-0.5) --(12.9,4.25);   
	\node [above] at (12.9,4.25) {$\varrho^{}_1$};
	
	\fill[gray,opacity=.6]  (12.75,3.85) rectangle (13.05,4.15);
	\fill[gray,opacity=.6]  (12.75,0.85) rectangle (13.05,1.15);

	%G_2
	\node[gray] at (8.5,-0.8) {\scalebox{1}{$G_{\varrho^{}_2}$}};
	\draw[dashed] (8.5,-0.5) --(8.5,4.25);   
	\node [above] at (8.5,4.25) {$\varrho^{}_2$};
	
	\fill[gray,opacity=.6]  (8.35,3.85) rectangle (8.65,4.15);
	\fill[gray,opacity=.6]  (8.35,2.85) rectangle (8.65,3.15);
	\fill[gray,opacity=.6]  (8.35,1.85) rectangle (8.65,2.15);
	\fill[gray,opacity=.6]  (8.35,0.85) rectangle (8.65,1.15);

	%G_3
	\node[gray] at (8,-0.8) {\scalebox{1}{$G_{\varrho^{}_3}$}};
	\draw[dashed] (8,-0.5) --(8,4.25);   
	\node [above] at (8,4.25) {$\varrho^{}_3$};
	
	\fill[gray,opacity=.6]  (7.85,3.85) rectangle (8.15,4.15);
	\fill[gray,opacity=.6]  (7.85,2.85) rectangle (8.15,3.15);
	\fill[gray,opacity=.6]  (7.85,1.85) rectangle (8.15,2.15);
	\fill[gray,opacity=.6]  (7.85,0.85) rectangle (8.15,1.15);
	
	%G_4
	\node[gray] at (7.5,-0.8) {\scalebox{1}{$G_{\varrho^{}_4}$}};
	\draw[dashed] (7.5,-0.5) --(7.5,4.25);   
	\node [above] at (7.5,4.25) {$\varrho^{}_4$};
	
	\fill[gray,opacity=.6]  (7.35,3.85) rectangle (7.65,4.15);
	\fill[gray,opacity=.6]  (7.35,2.85) rectangle (7.65,3.15);
	\fill[gray,opacity=.6]  (7.35,1.85) rectangle (7.65,2.15);
	\fill[gray,opacity=.6]  (7.35,0.85) rectangle (7.65,1.15);
	
	%G_5
	\node[gray] at (6.2,-0.8) {\scalebox{1}{$G_{\varrho^{}_5}$}};
	\draw[dashed] (6.2,-0.5) --(6.2,4.25);   
	\node [above] at (6.2,4.25) {$\varrho^{}_5$};
	
	\fill[gray,opacity=.6]  (6.05,3.85) rectangle (6.35,4.15);
	\fill[gray,opacity=.6]  (6.05,2.85) rectangle (6.35,3.15);
	\fill[gray,opacity=.6]  (6.05,1.85) rectangle (6.35,2.15);
	\fill[gray,opacity=.6]  (6.05,0.85) rectangle (6.35,1.15);
	
	%G_6
	\node[gray] at (3.6,-0.8) {\scalebox{1}{$G_{\varrho^{}_6}$}};
	\draw[dashed] (3.6,-0.5) --(3.6,4.25);   
	\node [above] at (3.6,4.25) {$\varrho^{}_6$};
	
	\fill[gray,opacity=.6]  (3.45,3.85) rectangle (3.75,4.15);
	\fill[gray,opacity=.6]  (3.45,2.85) rectangle (3.75,3.15);
	\fill[gray,opacity=.6]  (3.45,0.85) rectangle (3.75,1.15);
	
	%G_7
	\node[gray] at (2.9,-0.8) {\scalebox{1}{$G_{\varrho^{}_7}$}};
	\draw[dashed] (2.9,-0.5) --(2.9,4.25);   
	\node [above] at (2.9,4.25) {$\varrho^{}_7$};
	
	\fill[gray,opacity=.6]  (2.75,3.85) rectangle (3.05,4.15);
	\fill[gray,opacity=.6]  (2.75,2.85) rectangle (3.05,3.15);
	\fill[gray,opacity=.6]  (2.75,0.85) rectangle (3.05,1.15);
	\fill[gray,opacity=.6]  (2.75,-0.15) rectangle (3.05,0.15);
	
	%G_8
	\node[gray] at (2.2,-0.8) {\scalebox{1}{$G_{\varrho^{}_8}$}};
	\draw[dashed] (2.2,-0.5) --(2.2,4.25);   
	\node [above] at (2.2,4.25) {$\varrho^{}_8$};
	
	\fill[gray,opacity=.6]  (2.05,3.85) rectangle (2.35,4.15);
	\fill[gray,opacity=.6]  (2.05,2.85) rectangle (2.35,3.15);
	\fill[gray,opacity=.6]  (2.05,0.85) rectangle (2.35,1.15);
	\fill[gray,opacity=.6]  (2.05,-0.15) rectangle (2.35,0.15);
	
	%G_9
	\node[gray] at (1,-0.8) {\scalebox{1}{$G_{\varrho^{}_9}$}};
	\draw[dashed] (1,-0.5) --(1,4.25);   
	\node [above] at (1,4.25) {$\varrho^{}_{9}$};
	
	\fill[gray,opacity=.6]  (0.85,3.85) rectangle (1.15,4.15);
	\fill[gray,opacity=.6]  (0.85,2.85) rectangle (1.15,3.15);
	\fill[gray,opacity=.6]  (0.85,-0.15) rectangle (1.15,0.15);

	%%%%%%%%%%%%%TYPING
	starting distribution
	\fill [lightbrown] (-.15,-.15) rectangle (0.15,0.15);
	\fill [lightbrown] (-.15,2.85) rectangle (0.15,3.15);
	\fill [lightbrown] (-.15,3.85) rectangle (0.15,4.15);
	
	%5 line
	\draw[lightbrown,loosely dashdotted, opacity=1,line width=0.9mm] (0,4)--(2.9,4);
	\draw[darkgreen,solid, opacity=1,line width=0.9mm] (2.9,4)--(8,4);
	\draw[lightbrown,solid,opacity=1,line width=0.9mm] (8,4)--(12.9,4);
	\draw[darkgreen,solid, opacity=1,line width=0.9mm] (12.9,4)--(14.5,4);

	%4 line
	\draw[lightbrown,densely dotted,opacity=1,line width=0.9mm] (0,3)--(8.5,3);

	%3 line
	\draw[lightbrown,densely dotted,opacity=1,line width=0.9mm] (3.6,2)--(7.5,2);
	\draw[darkgreen,densely dotted,opacity=1,line width=0.9mm] (7.5,2)--(8.5,2);

	%2 line
	
	\draw[lightbrown,densely dotted,opacity=1,line width=0.9mm] (1,1)--(6.2,1);	\draw[darkgreen,solid, opacity=1,line width=0.9mm] (6.2,1)--(12.9,1);

	%1 line
	\draw[lightbrown,solid, opacity=1,line width=0.9mm] (0,0)--(2.2,0);
	\draw[darkgreen,solid,opacity=1,line width=0.9mm] (2.2,0)--(2.9,0);

	%neutral arrows
	\draw[-{triangle 45[scale=5]},semithick,opacity=1] (3.6,1) -- (3.6,2);
	\draw[-{triangle 45[scale=5]},semithick,opacity=1] (1,3) -- (1,1);
	
	%selective arrows
	\draw[-{open triangle 45[scale=5]},semithick,opacity=1] (12.9,1) -- (12.9,4);
	%\draw[-{open triangle 45[scale=5]},semithick,opacity=1] (2.9,0) -- (2.9,4);
	\draw[-{open triangle 45[scale=5]},semithick,opacity=1] (6.2,4) .. controls (6,2.5) and (6,2.4) .. (6.2,1);
	%\draw[-{open triangle 45[scale=5]},semithick,opacity=1] (6.2,4) -- (6.2,1);
	\draw[-{open triangle 45[scale=5]},semithick,opacity=1] (2.9,0) .. controls (2.7,2.5) and (2.7,1.5) .. (2.9,4);

	%interactive arrows 
	\draw[] (8.5,3) circle (.6mm)  [fill=black!100];
	\draw[] (8.5,2) circle (.6mm)  [fill=black!100];
	\draw[-{open triangle 45[scale=5]},opacity=1,semithick] (8.5,3) -- (8.5,4);
	\draw[opacity=1,semithick] (8.5,2) -- (8.5,3);

	%Mutations deleterious
	\node[opacity=1] at (8,4) {\scalebox{1.5}{$\times$}};
	
	%Mutations beneficial
	\draw (2.2,0)[opacity=1] circle (1.5mm)  [fill=white!100];    
	%\draw (10.4,1)[opacity=1] circle (1.5mm)  [fill=white!100];
	\draw (7.5,2)[opacity=1] circle (1.5mm)  [fill=white!100];
	\end{tikzpicture}}
	
	\caption{The ASG from Fig.~\ref{fig:ASG} typed according to $\alpha\equiv 1$ (and with $\Ts=0$). The line colour indicates the type, the line style (dash-dotted, dotted, solid) indicates the ancestral site. $\varrho^{}_1, \ldots, \varrho^{}_{9}$ are the jump times of the process, and $G_{\varrho^{}_1}, \ldots, G_{\varrho^{}_{9}}$ are the sites of the ASG at each jump (indicated by grey squares). }
	\label{fig:ASG2}
\end{figure}
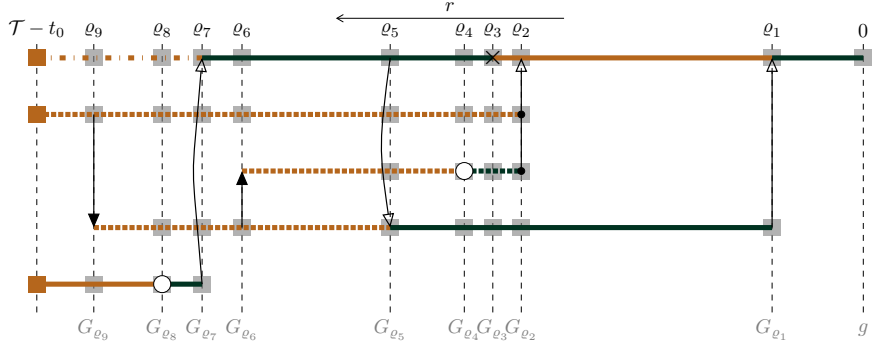

\medskip

For fixed~$r\geq 0$,  a site colouring for the MoMo leads to a \emph{typed ASG}. To this end, attach the types to each line of the ASG at backward time $r$ according to the site colouring, and then propagate the types through the graph in the forward direction of time (i.e., for decreasing $r$), see again Fig.~\ref{fig:ASG2}. The typing mechanism is the same as in the FTW MoMo, but restricted to the lines in $(G_{[0,r]},g,\Lambda)$. That is, at a neutral reproduction event, the offspring inherits the type of the parent; at a selective reproduction, the offspring is type~$1$ if and only if all potential parents are type~$1$; and at a deleterious (beneficial) mutation event the type on the line is~$1$ (resp.~$0$) after the mutation. Lines not affected by an event keep their types. Since we have now taken the backward perspective, the sites are coloured at~$t_0=-r$. In particular, for $(G_{[0,r]},g,\Lambda)$ and a site colouring~$\alpha$, $\Ce^{-r}_{0}(i)=\alpha(i)$ for all $i\in G_r$. The notion of ancestry also translates naturally  to $(G_{[0,r]},g,\Lambda)$. To this end, set $\Ae_{0}^{-r}(i)=i$ for all $i\in G_r$, and then propagate the ancestral sites as in the MoMo. We note that if we fix $g,r,\alpha$,  then for $v\in [0,r)$ and $i\in G_v$, $\Ce^{-r}_{r-v}(i)$ and $\Ae^{-r}_{r-v}(i)$ are not measurable with respect to $\sigma(\Lambda \cap [-v,0])$, but they are with respect to~$\sigma(\Lambda \cap [-r,-v])$. Moreover, only the restriction of a site colouring to the lines in $G_r$ enters the typed ASG.

\smallskip 

The type distribution of the ancestors of individuals alive at time $r=0$ will now be defined in a way amenable to an analysis via the ASG. 

\begin{definition}[Ancestral type distribution, common ancestor type distribution]\label{def:ancestraltypedistribution}
	Let $\Gamma=(\Gamma_i)_{i \in [N]}$ be a random variable that is independent of~$\Lambda$ and uniformly distributed on the site colourings, i.e. for $\alpha:[N]\to\{0,1\}$, $\P(\Gamma=\alpha)=2^{-N}$. Set $\lvert \Gamma\rvert\defeq\sum_{i\in [N]} \Gamma(i)$.
	\begin{itemize}
		\item The conditional \emph{ancestral type} at backward time~$r>0$ given $\lvert \Gamma \rvert =k\in[N]_0$ is Bernoulli distributed with parameter $$h_r(k)\defeq\P(\Ce_{0}^{-r}(\Ae_{r}^{-r}(1))=1\mid \Ce_{0}^{-r}=\Gamma, \lvert \Gamma \rvert =k).$$
		\item The (conditional) type of the \emph{common ancestor} given $\lvert \Gamma \rvert =k\in[N]_0$ has Bernoulli distribution with parameter $h_{\infty}(k)$, where  $h_{\infty}(k)\defeq\lim_{r\to\infty} h_r(k)$,	if the limit exists.
	\end{itemize}
\end{definition}
\begin{remark}
The distributions in the above definition depends on $\Gamma$ only via $\lvert \Gamma \rvert$. We choose to consider (the ancestry of) the individual occupying site~$1$; but since we work under an exchangeable type assignment, we could have chosen any other individual. Furthermore, we also have  $h_r(k)=\P(\Ce_{0}^{}(\Ae_{r}^{}(1))=1\mid \Ce_{0}^{}=\Gamma, \lvert \Gamma \rvert =k)$ due to time homogeneity.
\end{remark}

\section{Killed ASG with multiple branching and its applications}\label{sec:kASG}
In this section, we first formalise the kASG for the FTW model and derive the rates of the associated (generalised) line-counting process. We then prove the factorial moment duality and its applications. We start by making the intuition appealed to in Section~\ref{subsec:kASG} precise.
\begin{definition} \label{DefkilledASG} (killed ASG with multiple branching).
Fix $g \subseteq [N]\cup\{\Delta\}$ and a realisation of~$\Lambda$ defined in~\eqref{eq:poissonprocesses}. Let $K_0=g$ be the (set of sites of) the initial sample (taken at backward time $r=0$). The killed ASG is then the process $K=(K_r)_{r \geq 0}$ on $\Ps([N])\cup \{\Delta\}$ constructed as follows. If $K_0\in \{\varnothing,\Delta\}$, then $K_r=K_0$ for all $r>0$. If $K_0\notin\{\varnothing,\Delta\}$, we construct $K$ inductively for the arrival times  $\varrho \in (-\Lambda \cap \R_{+})$ in increasing order according to the following rules. In between arrival times of~$\Lambda$, $K$ does not change. 

\begin{enumerate}
 \item[(0)] If $\varrho\in -(\Pi_{j,i}^{\narrow}\cup \Pi_{J,i}^{\sarrow}\cup\Pi_i^{\times}\cup\Pi_i^{\circ})$ with $i\notin K_{\varrho-}$ (and some $j,J$), then $K_\varrho = K_{\varrho-}$.
	\item[(1)] If $\varrho\in -\Pi_{j,i}^{\narrow}$ for some $i \in K_{\varrho-}$ and $j \in [N]$, then $K_\varrho = (K_{\varrho-}\setminus \{i\})\cup\{j\}$.
	\item[(2)] If $\varrho\in -\Pi_{J,i}^{\sarrow}$ for some $i \in K_{\varrho-}$ and $J \in [N]^m$ for some $m\in \N$, then $K_\varrho = K_{\varrho-}\cup \breve J$. 
	\item[(3)] If $\varrho\in -\Pi_i^{\times}$ for  some $i \in [N]$, then $K_\varrho = K_{\varrho-} \setminus \{i\}$; if $\varrho\in -\Pi_i^{\circ}$ for  some $i \in [N]$, then $K_\varrho = \Delta$.
\end{enumerate}
The triple $(K,g,\Lambda)$ is the kASG with initial sample $g$.
\end{definition}

The associated line-counting process plays a major role in the analysis. Recall that $R_r\coloneqq \lvert K_r\rvert,$ where $\lvert \Delta \rvert\coloneqq \Delta$. $R$ (like the kASG) is  c\`{a}dl\`{a}g in the positive direction of $r$. Whenever we write $R_0=[N]_{0,\Delta}$,  it is assumed that the initial sample is uniformly chosen among all sets in $K_0\in\Ps([N])\cup\{\Delta\}$ such that $\lvert K_0\rvert =n$.
Next, we provide the proof for Proposition~\ref{prop:lckASG}, that is, we determine the infinitesimal generator of~$R$.

\begin{proof}[Proof of Proposition~\ref{prop:lckASG}]
If $s_m=0$ for all $m>0$, the rates are clear. Hence, it suffices to check that~\eqref{eq:generatorRselection} corresponds to the rate at which the number of lines in~$K$ increases. Assume there are currently~$n$ lines in the kASG. Each of them is hit independently by the tip of a selective
arrow of order ~$m$ at rate~$s_m$. For the number of lines to increase by~$j\in[m]$, we have to place the $m$ distinguishable marks on $N$ distinguishable sites such that exactly $j$ out of $N-n$ sites currently not in the graph receive at least one mark. There are $(N-n)^{\underline{j}} C^n_{mj}$ such possibilities. To see this, observe that there are $(N-n)^{\underline{j}}$ possibilities to choose $j$ out of $N-n$ sites not in the graph. For each of these possibilities, we must place some number $\ell\in \{j,\ldots,m\}$ of $m$ marks on these $j$ sites, and the remaining $m-\ell$ ones on the~$n$ lines in the graph. We therefore sum over all possibilities to select~$\ell$ out of~$m$ marks; for every such $\ell$, there are $\binom{m}{\ell}$ such ways. For every such possibility, in turn, there are $\stirlingii{\ell}{j}$ ways to partition~$\ell$ marks into the~$j$ selected  sites. For the remaining $m-\ell$ marks, there are $n^{m-\ell}$ ways to place them on~$n$ lines in the graph. Each configuration has probability $N^{-m}$. Hence, we obtain~\eqref{eq:generatorRselection}.
\end{proof}

Next, we prove the factorial moment duality between $Y$ and~$R$.

\begin{proof}[Proof of Theorem~\ref{thm:duality_R}]
We want to apply~\citep[Prop. 1.2]{Jaku}. Recall that $\mathcal{A}_Y$ and $\mathcal{A}_R$ denote the generators of $Y$ and $R$, respectively. Since the state space of $Y$ is finite, every function is in the domain of  $\mathcal{A}_Y$. In particular, $H(\cdot, n)(k)$ and $P_t^Y H(\cdot, n)(k)$
lie in the domain of  $\mathcal{A}_Y$, where $(P_t^Y)_{t\geqslant 0}$ is the transition semigroup corresponding to $Y$. Similarly,
$H(k, \cdot)(n)$ and $P_t^R H(k, \cdot)(n)$ lie in the domain of $\mathcal{A}_R$, where  $(P_t^R)_{t\geqslant 0}$ is the transition
semigroup corresponding to~$R$. Furthermore, $H_F$ is obviously bounded. Thus, \citep[Prop. 1.2]{Jaku} provides us with a necessary and sufficient condition for  duality, namely
\begin{equation}
\mathcal{A}_Y H_F(\cdot, n)(k) = \mathcal{A}_R H_F(k, \cdot)(n) \quad \forall k \in [N]_0, \quad n \in [N]_{0,\Delta},
\label{eq:fmdualitygen}
\end{equation}
which we now verify.

\smallskip

Recall that $\mathcal{A}_Y = \mathcal{A}_Y^{\rm{n}} + \sum_{m>0} \mathcal{A}_Y^{s_m} + \mathcal{A}_Y^{u},$ and $\mathcal{A}_R  = \mathcal{A}_R^{\rm{n}} + \sum_{m>0} \mathcal{A}_R^{s_m} + \mathcal{A}_R^{u},$
with the building blocks defined in~\eqref{eq:generatorhatY} and \eqref{eq:generatorRcoalescence}--\eqref{eq:generatorRmutation}. It will turn out that the duality relation  holds pairwise for each of these parts. The case is clear for $n=\Delta$  since $\mathcal{A}_{Y} H_F(\cdot, \Delta)(k) = 0 = \mathcal{A}_R H_F(k, \cdot)(\Delta)$ for $k \in [N]_0$. For $n \in [N]_0$ and $k \in [N]_0$, we get for the neutral part\[
\begin{split}
\cA_Y^{\rm{n}} H_F ( \cdot,n)(k) & = k \frac{N-k}{N} \big ( [H_F(k+1,n) - H_F(k,n)] + [H_F(k-1,n) - H_F(k,n)]  \big )\\
& = k \frac{N-k}{N} \Big ( \frac{(k+1)^{\underline{n}} - k^{\underline{n}}}{N^{\underline{n}}} + \frac{(k-1)^{\underline{n}} - k^{\underline{n}}}{N^{\underline{n}}} \Big ) \\
& = k \frac{N-n+1-(k-n+1)}{N^{\underline{n}}} n \frac{n-1}{N} (k-1)^{\underline{n-2}} \\
& = \frac{n(n-1)}{N} [H_F(k,n-1)-H_F(k,n)] = \cA_R^{\rm{n}} H_F(k,\cdot)(n);
\end{split}
\]
for the mutation part \[
\mathcal{A}_Y^{u} H_F  ( \cdot,n)(k)  = (N-k) nu \nu^{}_1 \frac{k^{\underline{n-1}}}{N^{\underline{n}}}  - k n u \nu^{}_0 \frac{(k-1)^{\underline{n-1}}}{N^{\underline{n}}} = \mathcal{A}_R^{u} H_F ( k, \cdot)(n),
\]
since $(k+1)^{\underline{n}}-k^{\underline{n}} = n k^{^{\underline{n-1}}}$ and $\frac{k^{\underline{n-1}}}{N^{\underline{n-1}}} - \frac{k^{\underline{n}}}{N^{\underline{n}}}=(N-k) \frac{k^{\underline{n-1}}}{N^{\underline{n}}}$;
and
for   the selective part of order $m$
\[
\mathcal{A}_{Y}^{s_m} H_F(\cdot, n)(k) 
= s_m k \frac{N^m - k^m}{N^m} \frac{1}{N^{\underline{n}}} [ (k-1)^{\underline{n}} - k^{\underline{n}} ] = s_m \frac{n}{N^m} \frac{k^{\underline{n}}}{N^{\underline{n}}} (k^m-N^m),
\]
since $ (k-1)^{\underline{n}} - k^{\underline{n}} = k^{\underline{n}} (\frac{k-n}{k}-1) = - k^{\underline{n}} \frac{n}{k}$.
On the other hand,
\[
\mathcal{A}_R^{s_m} H(k, \cdot)(n) 
= s_m \frac{n}{N^m}  \sum_{j=1}^{m}  (N-n)^{\underline{j}} C^n_{mj} \Big[ \frac{k^{\underline{n+j}}}{N^{\underline{n+j}}} - \frac{k^{\underline{n}}}{N^{\underline{n}}}\Big] 
= s_m \frac{n}{N^m}  \frac{k^{\underline{n}}}{N^{\underline{n}}} \sum_{j=1}^m C^n_{mj} [ (k-n)^{\underline{j}} - (N-n)^{\underline{j}}],
\]
where we have used in the last step that $y^{\underline{n+j}}/y^{\underline{n}} = (y-n)^{\underline{j}}$. It remains to show that the sum on the right-hand side  equals $ k^m - N^m$. Changing summation, using the identity $x^\ell = \sum_{j=0}^\ell \stirlingii{\ell}{j} x^{\underline{j}}$ \citep[Prop.~3.24]{Aigner79} and the fact that $\stirlingii{0}{j}=0$ for all $j>0$  gives \begin{align}
\sum_{j=1}^m C^n_{mj} (k-n)^{\underline{j}} &= \sum_{j=1}^m \sum_{\ell=j}^m \binom{m}{\ell} \stirlingii{\ell}{j} n^{m-\ell} (k-n)^{\underline{j}}  
= \sum_{\ell=1}^m \binom{m}{\ell} n^{m-\ell} \sum_{j=1}^\ell  \stirlingii{\ell}{j} (k-n)^{\underline{j}} \nonumber \\
&= \sum_{\ell=1}^m \binom{m}{\ell} n^{m-\ell} (k- n)^\ell = (n + k -n)^m - n^m= k^m- n^m	.\label{eq:aux2}
\end{align}

Since the same holds for $k$ replaced by $N$, the result follows suit.   \end{proof}

\begin{remark}
	Theorem~\ref{thm:duality_R} also extends to the diffusion limit and the law of large numbers (e.g. \cite[Props.~1 and~2]{baake2018lines}  and \cite[Thm.~2]{BCH18jmath}  for genic selection,  and \cite[Cor.~2.12]{cordero2019general} for FTW selection in a diffusion limit but without mutation). Predecessors of the idea go back to \cite{Shiga1986}. Recently, \citet[Thm.~3.1]{Boenkostetal21} established  a factorial moment duality for a Cannings model with selection.
\end{remark}

\section{Proofs related to Siegmund duality}\label{sec:siegmund}
This section contains the proofs of Lemma~\ref{lem:bdduality} and Theorem~\ref{thm:factsieg}. 

\begin{proof}[Proof of Lemma~\ref{lem:bdduality}]
	Because of the finite state spaces of both $X$ and $X^S$, once again \cite[Prop 1.2]{Jaku} tells us it is enough to verify a relation between the generators analogue to \eqref{eq:fmdualitygen}. Name $\mathcal{A}$ and $\mathcal{A}^{S}$ the generator of $X$ and $X^{S}$, respectively. We have
	\begin{align*}
		\mathcal{A}H_S(\cdot, x^\ast) (x) &= \lambda_x [\ind{\{x+1 \geq x^\ast\}} - \ind{\{x \geq x^\ast\}}] + \mu_x [\ind{\{x-1 \geq x^\ast\}} - \ind{\{x \geq x^\ast\}}]  \\
		&= \lambda_x \ind{\{x = x^\ast-1\}} - \mu_x \ind{\{x = x^\ast\}} 	\end{align*} 
	for all $x,x^\ast$; as for $\mathcal{A}^{S}$, consider the following cases: For $x^\ast = x$,
	\[
	\mathcal{A}^{S}H_S(x, \cdot) (x^\ast) = \mu_x [\ind{\{x \geq x+1\}} - \ind{\{x \geq x \}}] + \lambda_{x-1} [\ind{\{x \geq x-1 \}} - \ind{\{x \geq x \}}] = - \mu_x.
	\]
	For $x^\ast = x+1$,
	\[
	\mathcal{A}^{S}H_S(x, \cdot) (x^\ast) = \mu_{x+1} [\ind{\{x \geq x+2\}}- \ind{\{x \geq x+1 \}}] + \lambda_x [\ind{\{x \geq x\}} - \ind{\{x \geq x+1\}}] = \lambda_x.
	\]
	For $x^\ast < x$ or $x^\ast > x+1$, it is trivially checked that $\mathcal{A}^{S}H_S(x, \cdot) (x^\ast) = 0$. These calculations also remain true for the edge cases under the convention that $\lambda_{-1} =   \mu_{N+1} \defeq 0$. 
\end{proof}

To formalise the connection between the factorial moment and the Siegmund duality, we work on the integer numbers, and in order to do so, we identify $\Delta$ with $N+1$, therefore replacing $H_F(k, \Delta) \defeq 0$ by $H_F(k, N+1) \defeq 0$. The linear transformation $T$ also  translates naturally to this setting by using~\eqref{eq:T}, but replacing $T(\Delta,\Delta)\defeq1$ by $T(N+1,N+1)\defeq1$.

\begin{proof}[Proof of Theorem~\ref{thm:factsieg}]
	With a slight abuse of notation, we identify $H_S$, $H_F$ with their matrix representation. In particular, $H_S,H_F\in \R^{(N+1)\times (N+2)}$.  We claim \begin{eqnarray}
		H_S T^{\trans} = H_F, \label{eq:relationSiFa}
	\end{eqnarray} where superscript~$\trans$ indicates a transposition. To see \eqref{eq:relationSiFa}, note that for $k\in [N]_0$ and $\ell\in [N]$,  
	\[
	(H_ST^{\trans})(k, \ell) = \sum_{j=0}^k T(\ell, j) = \frac{1}{\binom{N}{\ell}} \sum_{j=1}^k \binom{j-1}{\ell-1} 
	= \frac{1}{\binom{N}{\ell}} \sum_{j=0}^{k-1}\binom{j}{\ell-1} = \frac{\binom{k}{\ell}}{\binom{N}{\ell}} =  \frac{k^{\underline{\ell}}}{N^{\underline{\ell}}} = H_F(k, \ell),
	\]	
	with the usual convention that the empty sum is 0. Moreover, $H_S T^{\trans}(k,0)=\1\{k\geq 0\}=1=H_F (k,0)$, and $H_S T^{\trans}(k,N+1)=0=H_F T^{\trans}(k,N+1)$.
	Since $T$ is invertible (cf.\ \eqref{Tinv}), we also have  $H_S = H_F (T^{-1})^{\trans}$. 
	
	\smallskip
	
	Identity \eqref{eq:relationSiFa} can be exploited in our setting by writing down the duality relations in their matrix form, thanks to \cite[Prop. 1.2]{Jaku}. In particular, denote by $Q\in \R^{(N+1)\times (N+1)}$ the generator matrix of $X$. If~$X$ admits a factorial moment dual $X^F$ on~$[N+1]_0$ with generator matrix $Q_F\in\R^{(N+2)\times (N+2)}$, then, using~\eqref{eq:relationSiFa} and~$(T^{\trans})^{-1}$ of \eqref{Tinv},
	\begin{equation*}
		Q H_F = H_F Q_F^{\trans} \Longleftrightarrow Q H_S T^{\trans} = H_S  T^{\trans}  Q_F^{\trans} \Longleftrightarrow Q H_S = H_S T^{\trans} Q_F^{\trans} (T^{-1})^{\trans} = H_S (T^{-1} Q_F T)^{\trans}.
	\end{equation*}
	The last set of equalities establishes the Siegmund duality, provided  $T^{-1}Q_FT$ is indeed a generator matrix; this settles part~\eqref{factsieg} of the theorem. The proof of part~\eqref{siegfact} is completely analogous. 
\end{proof}

\section{Formalisation of the pruned lookdown ASG and proof of associated results}\label{sec:ancestraltype}

In the following, we make the verbal description of the pLD-ASG rigorous, and prepare and then prove the representation of the ancestral type distribution. 

\begin{definition}[pruned lookdown ASG]\label{def:pLDASG}
	Fix a realisation of the  family $\Lambda$ of Poisson point processes~\eqref{eq:poissonprocesses} and let $\varnothing \neq G_0=g=\{i_1,\ldots,i_n\}\subseteq [N]$ such that $i_1<\ldots<i_n$. Let $(G,g,\Lambda)$ be the corresponding ASG. The \emph{level process} $\ell = (\ell_r)_{r\geq 0}$ with $\ell_r \defeq (\ell_r(\eta))_{\eta \in G_r}$ and $\ell_r(\eta) \in [\lvert G_r\rvert]\cup\{\infty\}\}$, together with the \emph{immune-line process} $\imu = (\imu_r)_{r \geq 0}$ with $\imu_r\in[\lvert G_r \rvert ]$ are constructed as follows. Start by setting $\ell_0(i_m)=m$ for $m\in [n]$ and $\imu_0=n$. Proceed inductively for the arrival times $\varrho \in (-\Lambda) \cap \R_{+}$ in increasing order in the following way. In between arrival times of~$\Lambda$, $\ell$ and $\imu$ do not change. 
Given $\ell_{\varrho-}$ and $\imu_{\varrho-}$ for some $\varrho \in (-\Lambda)\cap \R_{+}$, we first define $\ell_\varrho$. 
	\begin{enumerate}
	\item[(0)] (Outside event) If $\varrho\in \Ts-(\Pi_{j,i}^{\narrow}\cup \Pi_{J,i}^{\sarrow}\cup \Pi_i^{\times},\Pi_i^{\circ})$ for $i\notin G_{\varrho-}^{\Ts}$ (and some $j,J$), then $\ell_\varrho=\ell_{\varrho-}$.
		\item[(1)] \begin{enumerate}
			\item(Coalescence) If $\varrho\in -\Pi_{j,i}^{\narrow}$ for some $i,j\in G_{\varrho-}$, then
			 $$\ell_\varrho(\eta)=\begin{cases}
			\ell_{\varrho-}(i)\wedge \ell_{\varrho-}(j), &\text{if }\eta=j,\\
			\ell_{\varrho-}(\eta),& \text{if }\eta\in G_{\varrho-}\setminus\{i\}\text{ and } \ell_{\varrho-}(\eta)< \big ( \ell_{\varrho-}(i)\vee \ell_{\varrho-}(j) \big ),\\
			\ell_{\varrho-}(\eta) -1,& \text{if }\eta\in G_{\varrho-}\setminus\{i\}\text{ and } \ell_{\varrho-}(\eta)> \big (\ell_{\varrho-}(i)\vee \ell_{\varrho-}(j) \big ).
			\end{cases}$$ 
			\item (Relocation) If $\varrho \in -\Pi_{j,i}^{\narrow}$ for some $i\in G_{\varrho-}$ and $j\notin G_{\varrho-}$, then
			\[
			\ell_{\varrho}(\eta) = \begin{cases}
				\ell_{\varrho-}(i), &\text{if }\eta = j, \\
				\ell_{\varrho-}(\eta), &\text{if }\eta \neq j. 	
			\end{cases} \]
		\end{enumerate}
		\item[(2)] (Selection)  If $\varrho\in -\Pi_{J,i}^{\sarrow}$ for some $i\in G_{\varrho-}$ and $J\in [N]^m$, $m\in \N$,  let $\breve{J} \setminus \{\eta\in G_{\varrho-} :\ell_{\varrho-}(\eta)<\ell_{\varrho-}(i)\} = \{J_{j_1},\ldots,J_{j_{\kappa}}\}$, where $j_1<\ldots<j_\kappa$ for some $\kappa\in[m]_0$ (here $\kappa=0$ means that the set is empty). Then, for $\eta\in G_{\varrho-}$,
		$$\ell_{\varrho}(\eta)=\begin{cases}
		\ell_{\varrho-}(\eta),&\text{if }\eta \in G_{\varrho-} \text{ and }\ell_{\varrho-}(\eta)<\ell_{\varrho-}(i),\\
		\ell_{\varrho-}(i)+\beta-1,&\text{if } \eta=J_{j_\beta} \text{ for some } \beta\in[\lvert\kappa\rvert],\\
		\ell_{\varrho-}(\eta)+\kappa, &\text{if } \eta \in G_{\varrho-} \text{ and }\ell_{\varrho-}(\eta)\geq\ell_{\varrho-}(i),\, \eta\notin\breve{J}. 		\end{cases}$$
		\item[(3)] (Deleterious mutation) If $\varrho\in -\Pi_{i}^{\times}$ for some $i\in G_{\varrho-}$, then for $\eta\in G_\varrho$,
		$$\ell_\varrho(\eta)=\begin{cases}
		\ell_{\varrho-}(\eta),&\text{if } \ell_{\varrho-}(\eta)<\ell_{\varrho-}(i),\\
		\ell_{\varrho-}(\eta)-1, &\text{if } \ell_{\varrho-}(\eta)>\ell_{\varrho-}(i),\\
		\lvert\{j\in G_{\varrho-}: \ell_{\varrho-}(j)\neq \infty\}\rvert,&\text{if }\eta=i \text{ and } \ \  \ell_{\varrho-}(i)=\imu_{\varrho-},\\
		\infty,&\text{if }\eta=i,\ \  \ell_{\varrho-}(i)\neq \imu_{\varrho-}.		\end{cases}$$
		\item[(4)] (Beneficial mutation) If $\varrho\in -\Pi_{i}^{\circ}$ for some $i\in G_{\varrho-}$, then for $\eta\in G_\varrho$,
		$$\ell_\varrho(\eta)=\begin{cases}
		\ell_{\varrho-}(\eta),&\text{if } \ell_{\varrho-}(\eta)\leq\ell_{\varrho-}(i),\\
		\infty, &\text{if } \ell_{\varrho-}(\eta)>\ell_{\varrho-}(i).
		\end{cases}$$
	\end{enumerate}
	Finally, $\imu_\varrho$ is given as follows (still given $\ell_{\varrho-}$ and $\imu_{\varrho-}$). If  $\varrho\in -\Pi_{i}^{\circ}$ for some $i\in G_{\varrho-}$ with $\ell_{\varrho-}(i) < \infty$, then $\imu_\varrho =  \ell_{\varrho-}(i).$
	If  $\varrho\in -\Pi_{j,i}^{\narrow}$ for some $i\in G_{\varrho-}$ with $\ell_{\varrho-}(i),\ell_{\varrho-}(j) < \infty$ and $\imu_{\varrho-}=i$, then $\imu_\varrho =  \ell_{\varrho-}(i) \wedge \ell_{\varrho-}(j).$ 
	In all other cases,
	\begin{equation}\label{eq:immunefollows}
			\imu_\varrho = \ell_\varrho \big ( \ell_{\varrho-}^{-1}(\imu_{\varrho-}) \big ).
	\end{equation}
	 We refer to $\ell_r(i)$ as the level of line~$i$ at (backward) time~$r$, to $\imu_r$ as the (level of the) immune line at time~$r$, and to $(G,g,\Lambda,\ell)$ as the pLD-ASG, $(G_{[0,r]},g,\Lambda,\ell)$ is its restriction until time~$r$. The \emph{line-counting process} $L=(L_r)_{r\geq 0}$ of the pLD-ASG is formally defined via
	 $$L_r \defeq \lvert\{j\in G_r: \ell_r(j)\neq \infty \}\rvert.$$
\end{definition}
Each finite level is associated with a unique site. In particular, the site of the immune line is~$\ell_r^{-1}(\imu_r)$.
Recall that in all transitions except those induced by beneficial mutations and neutral reproduction events, this site remains unchanged, but it moves to a new level whenever  the site does so; this is what \eqref{eq:immunefollows} tells us.

\smallskip

 The pLD-ASG is constructed so as to facilitate to identify the ancestral line for a given site colouring. This crucial feature  is the content of the next proposition.

\begin{figure}[t]
	\begin{center}
		\scalebox{0.8}{\begin{tikzpicture}

			%%horizontal line
			\draw[ ]  (3.7,0.7) -- (4,0.7) -- (4.3,1.4)-- (4.75,1.4) -- (5.05,.7) -- (6.75,.7) -- (7.05,0)-- (9.75,0) -- (10.05,1.4) -- (10.5,1.4) -- (10.7,2.1) -- (10.9,2.1)-- (11,2.8)-- (11.75,2.8) -- (12.05,1.4) -- (12.75,1.4) -- (13.05,.7) -- (13.75,.7) -- (14.05,0) -- (14.5,0);

			\draw[]  (3.7,1.4) -- (4,1.4) -- (4.3,0)--(4.5,0);
			\draw[]  (3.7,0)-- (4,0) -- (4.3,0.7) -- (4.5,0.7);

			%%incoming line
			\draw[-{open triangle 45[scale=2.5]}] (11,0) -- (14,0);

			\draw[-{open triangle 45[scale=2.5]}] (8,2.1) -- (8.65,2.1) -- (8.95,1.4) -- (9.75,1.4) -- (10.05,.7) -- (10.5,.7) -- (10.7,1.4) -- (10.9,1.4) -- (11,2.1) -- (11.75,2.1) -- (12.05,.7) -- (13,.7);

			\draw[-{open triangle 45[scale=2.5]}] (9.5,0) -- (10,0);
			
			\draw[-{open triangle 45[scale=2.5]}]  (6,1.4) -- (6.75,1.4) -- (7.05,.7) -- (8.9,0.7);
			
			\draw[-{open triangle 45[scale=2.5]},]  (4.5,0) -- (7,0);
			
			\draw[-{open triangle 45[scale=2.5]},]  (4.5,0.7) -- (5,0.7);
			
			\draw[-{open triangle 45[scale=2.5]},]  (10.65,0.7) -- (12,.7);
			\draw  (11,1.4)-- (11.75,1.4);
			
			\draw (11.75,1.4) .. controls (11.65,1.2) and (11.65,0.8) .. (11.75,.7);
			%\draw (11.75,1.4) -- (11.75,0.7);
			\draw (11.75,0) .. controls (11.65,0.2) and (11.65,0.6) .. (11.75,.7);			\draw (8,1.4) -- (8.65,1.4) -- (8.9,0.7) -- (9.75,0.7) -- (10,0) -- (11.75,0);

			%Neutral events
			\draw[-{triangle 45[scale=2.5]},]  (6,0) -- (6,1.4);
			\draw[-{triangle 45[scale=2.5]},]  (11,0.7) -- (11,1.4);

			%\draw[dashed] (8,.7) -- (8,2.1);
			
			%Immune line
			
			\draw[line width=.5mm, lightgray ] (8,0)-- (9.75,0) -- (10.05,1.4) -- (10.5,1.4) -- (10.7,2.1) -- (10.9,2.1)-- (11,2.8)-- (11.75,2.8) -- (12.05,1.4) -- (12.75,1.4) -- (13.05,.7) -- (13.75,.7) -- (14.05,0) -- (14.5,0);
			\draw[line width=.5mm, lightgray ] (6,1.4) -- (6.75,1.4) -- (7.05,0.7)-- (8,0.7);
			\draw[line width=.5mm, lightgray ] (3.7,1.4)--(4,1.4)-- (4.3,0) -- (6,0);

			%Mutations
			\node at (10.65,.7) {\scalebox{1.6}{$\times$}} ;
			\node at (4, 1.4) {\scalebox{1.6}{$\times$}} ;
			\node at (13.4, 0.7) {\scalebox{1.6}{$\times$}} ;
			\draw (8,.7) circle (1.7mm)  [fill=white!100];
			
			\draw (9.75,0) circle (.4mm)  [fill=black!100];
			\draw (13.75,0) circle (.4mm)  [fill=black!100];
			\draw (12.75,0.7) circle (.4mm)  [fill=black!100];
			\draw (8.65,0.7) circle (.4mm)  [fill=black!100];
			\draw (6.75,0) circle (.4mm)  [fill=black!100];
			\draw[] (4.75,0.7) circle (.4mm)  [fill=black!100];

			\draw[] (11.75,1.4) circle (.4mm)  [fill=black!100];
			\draw[] (11.75,.7) circle (.4mm)  [fill=black!100];
			\draw[] (11.75,0) circle (.4mm)  [fill=black!100];

			\draw[dashed,opacity=0] (11,.7) --(11,2.45);
			
			\end{tikzpicture}}
		\end{center}
		\caption[Realisation of the pLD-ASG for finite populations]{A cutout of a realisation of the pLD-ASG. Only lines at finite levels are shown. Grey: level of immune line.}
		\label{fig:pldASGfinitepopulations}
	
\end{figure}
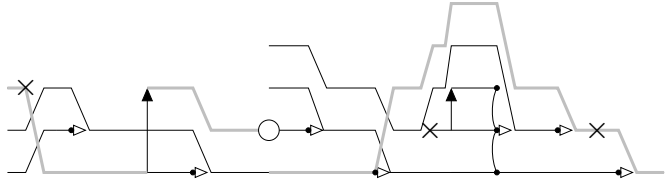

 \smallskip

\begin{proposition}\label{prop:levelancestralsite}
	Let $g=\{1\}$ and consider the pLD-ASG $(G_{[0,r]},g,\Lambda,\ell)$ for some $r>0$ together with some site colouring. Then, the level of the ancestral line at backward time $r$ is almost surely either the lowest finite level that has type $0$ at time~$r$;  or, if all finite levels are of type $1$, it is $\imu(r)$, the level of the immune line at time $r$. In particular, the ancestral site is of type~$1$ at time $r$ if and only if all  lines at finite levels are of type~$1$ at time $r$.
\end{proposition}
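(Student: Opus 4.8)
The plan is to prove Proposition~\ref{prop:levelancestralsite} by backward induction on the Poisson events of $\Lambda$, maintaining as induction hypothesis a precise description of how the ancestral line is located within the level ordering. Concretely, I would argue that if at some backward time $r$ we know the type of every finite level, then the ancestral site at time $r$ is determined by the \emph{pecking order} built into the level process: following the line labels forward in time according to the FTW propagation rule, the descendant on the sampled site $1$ is of type $0$ as soon as one of the potential parents is of type $0$, and among several such type-$0$ potential parents it is the one prescribed by the index ordering of $J$ that is copied; the level mechanics of Definition~\ref{def:pLDASG}(2) are designed precisely so that this ``winning'' potential parent always sits at the lowest finite level carrying type $0$. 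If no finite level carries type $0$ at time $r$, the sampled individual is of type $1$, and one must then check that the unique line whose type at time $r$ could still affect whether the descendant is of type $0$ is exactly the immune line; this is where the bookkeeping rule~\eqref{eq:immunefollows} and the special cases for mutation and coalescence enter.

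First I would set up the induction carefully: fix $r>0$, enumerate the arrival times of $\Lambda$ in $(-r,0]$ as $0<\varrho_1<\dots<\varrho_p\le r$ (forward order), and for each intermediate time $v$ let $a(v)\in[\lvert G_v\rvert]\cup\{\infty\}$ denote the level of the true ancestor of site $1$, computed by propagating types and ancestry forward through the typed pLD-ASG from the site colouring imposed at time $r$. The claim to be proved by downward induction on $v$ from $v=0$ (equivalently, upward in the number of events crossed) is: $a(v)$ equals the lowest finite level of type $0$ at time $v$, or $\imu_v$ if all finite levels are of type $1$. The base case $v=0$ is immediate, since there is only one line, at level $1$, and it is ancestral by definition. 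For the inductive step I would go through the five event types in Definition~\ref{def:pLDASG}. Outside events (0) and relocation (1.b) do nothing visible and are trivial. For a coalescence (1.a), the two merging lines contribute a single line carrying the \emph{parent's} type; the level-minimum rule and the simultaneous relabelling preserve ``lowest finite type-$0$ level'', and the immune-line update (the special case $\imu_{\varrho-}=i$, else \eqref{eq:immunefollows}) preserves the all-type-$1$ alternative. For a selective event (2), I would invoke the FTW rule directly: the descendant inherits the type of the first type-$0$ line among the potential parents $\{J_1,\dots,J_m\}\cup\{i\}$ in the order given by $J$, so after re-indexing the incoming lines to consecutive levels starting at $\ell_{\varrho-}(i)$, the ancestor's level is again the lowest finite type-$0$ level; and if all potential parents (equivalently, all relevant finite levels) are type $1$, the continuing line keeps its type and the immune line is unchanged. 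For deleterious mutation (3): a type-$1$ mutation on a non-immune line removes that line as a potential parent, which is consistent with sending it to level $\infty$; on the immune line it forces the ancestor to be of type $1$ at that instant unless some \emph{other} finite level is type $0$, which is exactly why the immune line is relocated to the current highest finite level rather than to $\infty$. For beneficial mutation (4): the mutated line is now of type $0$, hence becomes a potential parent of top priority among lines above it; sending all higher levels to $\infty$ and moving the immune line onto the mutation's level encodes ``this is now the ancestor unless a strictly lower finite level is type $0$''.

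Having run the induction down to $v=r$ itself (where the colouring is the one given in the statement, i.e.\ $\Ce^{-r}_0(i)=\Gamma(i)$ restricted to $G_r$), the first assertion of the proposition is exactly the induction hypothesis at $v=r$. The ``in particular'' then follows: if all finite levels are of type $1$ at time $r$, the ancestral level is $\imu_r<\infty$, which is one of those type-$1$ levels, so the ancestral site is of type $1$; conversely, if some finite level is of type $1$ fails — i.e.\ some finite level is of type $0$ — then by the first assertion the ancestral level is the lowest such, hence the ancestral site is of type $0$. The word ``almost surely'' covers the null event on which two Poisson points coincide or a point lies on more than one of the processes in~\eqref{eq:poissonprocesses}.

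The main obstacle I expect is the selection step (2), and more specifically keeping the identification between ``lines that are potential parents in the FTW sense'' and ``lines at finite levels'' synchronised with the re-indexing rule, since only those incoming lines \emph{not already below the continuing line} get inserted just above $\ell_{\varrho-}(i)$ while the ones below keep their (higher-priority) levels; one has to check that this reshuffling really does reproduce the order in which the FTW rule consults potential parents, taking into account that several of the $J_k$ may coincide with each other or with lines already present. The second delicate point is the interaction of the immune line with deleterious mutations and coalescences in the all-type-$1$ regime — one must verify that the immune line is always a \emph{bona fide} line of the ASG (never sent to $\infty$) and that \eqref{eq:immunefollows} together with the two exceptional cases exactly tracks ``the line that would become ancestral if it were the unique type-$0$ line''. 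Both points are ultimately bookkeeping, but they are the places where an off-by-one error in the level shifts would break the statement, so they deserve the most care in the full proof in Section~\ref{sec:ancestraltype}.
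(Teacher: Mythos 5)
Your proposal is correct and follows essentially the same route as the paper: an induction over the Poisson events of $\Lambda$, anchored at the single sampled line at backward time $0$, with a case analysis over the event types in which the FTW priority rule handles the selection step and the immune-line bookkeeping handles the all-type-$1$ alternative. The two delicate points you flag (the level re-indexing at selective events and the immune-line updates at mutations and coalescences) are precisely the parts the paper's proof singles out for explicit treatment, deferring the remaining cases to the genic-selection argument of \cite{LKBW15}.
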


\begin{proof}
	The proof is similar to the proof of Prop.~2 in \cite{LKBW15}. We recall the argument and adapt the part associated with the more general form of selection. Let $0<\varrho_1<\ldots<\varrho_m$ be the arrival times of~$-\Lambda \cap (0,r]$  and set $\varrho_0=0$. We will prove by induction that for any $k\in [m]_0$, the level of the ancestral line at time $\varrho\in[\varrho_k,\varrho_{k+1})$ is either the line at time~$\varrho$ that, under a given site colouring  assigned at time~$\varrho$, is the lowest finite level that carries type~$0$; or, if all finite levels are of type~$1$, it is $\imu(\varrho)$. For $k=0$, the claim is trivially true. Next, we prove the claim for times $\varrho\in [\varrho_{k},\varrho_{k+1})$ assuming it is true at any time~$<\varrho_k$. Because the types and ancestral sites are constant  between events in $(0,r]\cap -\Lambda$, it suffices to prove the claim for $\varrho=\varrho_k$ assuming the claim is true at time $\varrho_{k-1}$. We denote by $\lambda_k$  and $\lambda_{k-1}$  the lowest finite type-$0$ level at $\varrho_k$ and $\varrho_{k-1}$, respectively, under a given site colouring assigned at time~$\varrho_k$ --- provided such levels exist. Consider (I) the case  that all finite levels at time $\varrho_k$ are assigned type~$1$. If, (A), the event at time~$\varrho_k$ is not a beneficial mutation, then $\imu_k$ is the predecessor of~$\imu_{k-1}$, which  is the ancestral line by the induction hypothesis. If, (B), the event at time~$\varrho_k$ is a beneficial mutation, then $\lambda_{k-1}=\imu_k$ and $\imu_k$ is the predecessor of $\lambda_{k-1}$, which is the ancestral line by the induction hypothesis. Hence, in (IA) and (IB), the ancestral line at time~$\varrho_k$ is $\imu_k$.  We are therefore left to consider  case (II) where  at least one finite level  is assigned type $0$ at time $\varrho_k$. We now consider the possible events at time~$\varrho_k$. For mutations, coalescence and relocations events, argue as in~\citep[Proof of Prop.~2]{LKBW15}.
	
	\smallskip

	In a selective event, the order among all finite-level lines at time~$\varrho_k$ that are not incoming at time~$\varrho_{k-1}$ carries over to the descendants at time~$\varrho_{k-1}$. Moreover, all finite-level lines at time~$\varrho_{k-1}$ are descendants of lines at finite levels at time~$\varrho_k$. If $\lambda_k$ is not a potential parent in the selective event, then $\lambda_k$ is the parent of $\lambda_{k-1}$. If $\lambda_k$ is a potential parent, it is the one with the lowest type-0 level in this event and therefore, by the  propagation rule, again the parent of $\lambda_{k-1}$. By the induction hypothesis, the claim follows. 

\end{proof}

Next, we derive the infinitesimal generator of~$L$.

\begin{proof}[Proof of Proposition~\ref{prop:eqdistL}]
	Assume that $L_r=n$ for some $r\geq0$. For $j\in [N-n]$, the next jump of~$L$ is to $n+j$ if the first arrival of $-\Lambda\cap(r,\infty)$ is in $-\Pi_{J,i}^{\sarrow}$ for some $i\in G_r$ with  $\ell_r(i) <  \infty$ and $\breve{J}\subseteq[N]$ with $\lvert \breve{J}\rvert \geq j$ such that $j$ lines in~$\breve{J}$ have level~$\infty$ or are not in $G_r$. Such an arrival occurs at rate $n\sum_{m \geq j} s_m \, \frac{\faf{(N-n)}{\,j}}{N^m}C^n_{mj}$, which agrees with the rate at which~$R$ makes such a transition. The first jump of~$L$ after backward time~$r$ is to $n-1$ if the first arrival in $-\Lambda\cap(r,\infty)$ is in $-\Pi_{j,i}^{\narrow}$ for some $i,j\in G_r$ with $i \neq j$, $\ell_r(i),\ell_r(j)\neq \infty$, or if it is in $-\Pi_i^\times$ for some $i\in G_r$ such that $\ell_r(i)\notin\{\infty, \imu_r\}$, or if it is in $-\Pi_i^\circ$ for $i\in G_r$ such that $\ell_r(i)=n-1$. The first of these events occurs at rate $n\frac{n-1}{N}+u\nu_1(n-1)+u\nu_0\1_{\{n>1\}}$. The first jump of $L$ after time~$r$ is to~$j \in [n-2]$ if the first arrival of~$-\Lambda\cap(r,\infty)$ is in $-\Pi_i^\circ$ for $i\in G_r$ with $\ell_r(i)=j$, which occurs at rate~$u\nu_0$. 
\end{proof}

We are now ready to prove the representation of the ancestral type distribution.
\begin{proof}[Proof of Theorem~\ref{thm:atyperepresentation}]
	The sample has an unfit ancestor at backward time~$r$ if and only if all the individuals at backward time $r$ in the pLD-ASG are type~$1$ by Proposition~\ref{prop:levelancestralsite}; for a given pLD-ASG and a site colouring at time~$r$ with~$k$ type-$1$ assignments the probability for this is $\faf{k}{L_r}/\faf{N}{L_r}$. Averaging over all realisations of $(L_v)_{v\in[0,r]}$ yields~\eqref{eq:atypedist}. \eqref{eq:catypedist} follows because $L$  converges to its stationary distribution.
\end{proof}

\begin{remark} 
	\begin{enumerate}
		\item Note that we work here with the  Poisson point processes $\Lambda$ that define the ASG;  so $\ell_r$ and~$\imu_r$ are functionals of the ASG and hence measurable with respect to $\sigma(\Lambda \cap [-r,0])$  for given $\varnothing\neq g\subseteq[N]$ and $r>0$. Due to the exchangeability, however, the pLD-ASG may as well be constructed as a Markov process independent of an ASG, by using an analogous family of Poisson point processes attached to  the \emph{levels of the lookdown},  rather than the \emph{lines of the ASG}. The former is in the spirit of the  original  lookdown construction by Donnelly and Kurtz \cite{DK99}; see \cite{LKBW15} for a discussion of both possibilities in the case of genic selection.
		\item It is customary (and required for the formulation of a duality) to not insist on starting the pLD-ASG from a single individual. One should keep in mind, however, that if we start the process with~$\lvert G_0\rvert>1$ 
	lines, then $L$ does not correctly describe the number of potential ancestors of a sample of~$\lvert G_0\rvert$ individuals. For example, assume that the first event is a beneficial mutation on level~$1$. This induces the pruning of all other levels, which does not properly reflect the potential ancestry of~$\lvert G_0\rvert$ individuals.
		\item If~$u=0$ and $L_0=R_0$, then~$L=R$ in distribution and hence $L_{\infty}=R_{\infty}$. In this case, one type goes to fixation. In particular, the probability of the common ancestor of the population in the distant future  to be unfit at present coincides with the fixation probability of the unfit type at present.
		\item We recover the known representation of the probability for a fit common ancestor in the MoMo with genic selection of~\citep[Prop.~4.7]{Cordero17DL} by rewriting $\P(L_\infty=n)$ in~\eqref{eq:catypedist} as $\P(L_\infty>n-1)-\P(L_\infty>n)$ and rearranging terms, which yields\begin{equation}\label{eq:catypedist2}
			1-h_\infty(k)=1-\E\Bigg[\frac{k^{\underline{L_{\infty} } }}{N^{\underline{L_{\infty} } }}\Bigg]=\frac{N-k}{N}\sum_{n=0}^{N-1}\P(L_{\infty}>n) \,\frac{\faf{k}{n} }{\faf{(N-1)}{n}}. \end{equation}
		This means that we can partition the event of a beneficial ancestor according to the first finite level occupied by a type-$0$ individual. Namely, \begin{equation}
			\P(L_\infty>n)\, \frac{N-k}{N}\,\frac{\faf{k}{n}}{\faf{(N-1)}{n}}\label{eq:ancestraltailrepresentation}
		\end{equation}  is the probability that at least~$n+1$ finite levels are occupied, the first~$n$ levels are of type~$1$, and the ~$(n+1)^{st}$ level carries type~$0$. Summing this probability over~$n$ gives the probability of a fit ancestor in a stationary pLD-ASG.
	\end{enumerate}
\end{remark}

\section{A forward approach to the ancestral type distribution}\label{sec:absorbingmarkov}
The forward approach to the ancestral type distribution is based on~$\widetilde{Y}$. First, we formally describe this process. Next, we prove the factorial moment duality between~$\widetilde{Y}$ and $L$. The alternative representation of the ancestral type distribution then follows easily. Finally, we formally define the descendant process and prove the connection to $\widetilde{Y}$.

\subsection{The process $\widetilde Y$}\label{sec:widetildeY}

Consider a site colouring with $\widetilde{Y}_0\in[N]_0$ type-$1$ individuals; time runs forward. Let~$\Ce$ be the corresponding typed FTW MoMo. Recall the family of Poisson processes in~\eqref{eq:poissonprocesses} and, for each $\tau\in \{\Pi_i^\star: i\in[N],\, \star\in \{\times,\circ\}\}$, let $B_\tau^\circ$ and $B_\tau^\times$ be random variables such that given $Y_\tau$, $B_\tau^\circ$ and $B_\tau^\times$ has Bernoulli distribution with parameter $1/(N-Y_\tau+1)$ and $1/(Y_\tau+1)$, respectively. For $\star\in\{\times,\circ\}$, define  $\widetilde{T}^\star\defeq\inf\{\tau\geq 0: Y_\tau\neq Y_{\tau+}, \tau\in \Pi_i^\star \text{ for some } i\in[N], B^\star_\tau=1\}$, and $T_{0,N}\defeq\min\{t\geq 0: Y_t\in\{0,N\}\}$ (the usual convention applies: these times are infinite if the events defining them never occur). Set $\widetilde{T}\defeq\min\{\widetilde{T}^\times,\widetilde{T}^\circ,T_{0,N}\}$. Then, for $t\leq \widetilde{T}$, set $\widetilde{Y}_t\defeq Y_t$. For $t>\widetilde{T}$, set $$\widetilde{Y}_t\defeq \begin{cases}
	0,&\text{if } \widetilde{T}=\widetilde{T}^\times,\\
	N, & \text{if } \widetilde{T}=\widetilde{T}^\circ,\\
	Y_{T_{0,N}}, & \text{if } \widetilde{T}=T_{0,N}.
\end{cases}$$
Note that the states~$0$ and~$N$ are absorbing.

\smallskip

The so-constructed process~$\widetilde{Y}\defeq(\widetilde{Y}_t)_{t\geq 0}$ is a continuous-time Markov chain on~$[N]_{0}$, with an infinitesimal generator that acts on functions~$f:[N]_0\to \R$ and is given by~$\As_{\widetilde{Y}}=\As_{Y}^{\rm{n}}+\sum_{m>0}\As_{Y}^{s_m}+\As_{\widetilde{Y}}^{\nu_0}+\As_{\widetilde{Y}}^{\nu_1},$ with~$\As_{Y}^{\rm{n}}$ and~$\As_{Y}^{s_m}$ of~\eqref{eq:generatorhatY}, respectively, and for $k\in [N]_0$,\begin{equation}\label{generatorYtilde}
\begin{split} \As_{\widetilde{Y}}^{\nu_0}f({k})&\defeq{k}\, \frac{N-{k}}{N-{k}+1}\,u\nu_0\,[f({k}-1)-f({k})]+\frac{{k}}{N-{k}+1}\,u\nu_0\,[f(N)-f({k})],\\
\As_{\widetilde{Y}}^{\nu_1}f({k})&\defeq(N-{k})\,\frac{{k}}{{k}+1}\,u\nu_1\,[f({k}+1)-f({k})]+ \frac{N-{k}}{{k}+1}\,u\nu_1\,[f(0)-f({k})] .\end{split}
\end{equation}

\begin{figure}[t]
	\begin{center}
		\hspace{-2cm}\begin{minipage}{0.4\textwidth} \begin{center}
				\scalebox{0.7}{
					\begin{tikzpicture}
					\draw (0,-.5) -- (0,3) [dashed];
					\draw (5,-.5) -- (5,3) [dashed];

					\node at (2.5,1.5) {\scalebox{1.8}{$\times$}};

					\draw[darkgreen,line width=0.6mm] (0,2.5) -- (5,2.5);
					\draw[darkgreen,line width=0.6mm] (0,2) -- (5,2);
					\draw[darkgreen,line width=0.6mm] (0,1.5) -- (2.5,1.5) ;
					\draw[lightbrown,line width=0.6mm] (2.5,1.5) -- (5,1.5) ;
					\draw[lightbrown,line width=0.6mm] (0,1) -- (5,1);
					\draw[lightbrown,line width=0.6mm] (0,0.5) -- (5,0.5);
					\draw[lightbrown,line width=0.6mm] (0,0) -- (5,0);
					\node (s1) at (5,1.5) {} ;
					\node (s2) at (5,0.5) {} ;
					\node (s3) at (5,0) {} ;
					%\fill [lightbrown] (s1) circle (2pt);
					%\fill [lightbrown] (s2) circle (2pt);
					%\fill [lightbrown] (s3) circle (2pt);
					%\node[draw, cross out] at (2.5,1.5) {};
					\draw [decorate,decoration={brace,amplitude=4pt},xshift=-3]
					(0,-0.10) -- (0,1.1) node [black,midway,left, xshift=-5pt] {\scalebox{1.3}{$k$}}; 
					\draw [decorate,decoration={brace,amplitude=4pt},xshift=-3]
					(0,1.2) -- (0,2.6) node [black,midway,left, xshift=-5pt] {\scalebox{1.3}{$N-k$}}; 
					\draw [decorate,decoration={brace,amplitude=4pt},xshift=3]
					(5,1.6) -- (5,-0.10) node [black,midway,right] {\ \scalebox{1.3}{$k+1$}};
					\draw [decorate,decoration={brace,amplitude=4pt},xshift=3]
					(5,2.6) -- (5,1.7) node [black,midway,right] {\ \scalebox{1.3}{$N-(k+1)$}};
					\end{tikzpicture}
				}
			\end{center}
		\end{minipage}\hspace{1.3cm}\begin{minipage}{0.3\textwidth}
			\begin{center}
				\scalebox{0.7}{
					\begin{tikzpicture}
					\draw (0,-.5) -- (0,3) [dashed];
					\draw (5,-.5) -- (5,3) [dashed];

					\draw[darkgreen,line width=0.6mm] (0,2.5) -- (5,2.5);
					\draw[darkgreen,line width=0.6mm] (0,2) -- (5,2);
					\draw[lightbrown,line width=0.6mm] (0,1.5) -- (2.5,1.5) ;
					\draw[darkgreen,line width=0.6mm] (2.5,1.5) -- (5,1.5) ;
					\draw[lightbrown,line width=0.6mm] (0,1) -- (5,1);
					\draw[lightbrown,line width=0.6mm] (0,0.5) -- (5,0.5);
					\draw[lightbrown,line width=0.6mm] (0,0) -- (5,0);
					\node (s1) at (5,2.5) {} ;
					\node (s2) at (5,1.5) {} ;
					\node (s3) at (5,0) {} ;
					%\draw[fill=white] (2.5,1.5) circle [radius=0.25cm];
					\draw[fill=white!100] (2.5,1.5) circle (1.8mm);
					%\node[draw, circle, fill=white, color=white] at (2.5,1.5) {};
					\draw [decorate,decoration={brace,amplitude=4pt},xshift=-3] (0,-0.10) -- (0,1.6) node [black,midway,left, xshift=-5pt] {\scalebox{1.3}{$k$}}; 
					
					\draw [decorate,decoration={brace,amplitude=4pt},xshift=-3] (0,1.7) -- (0,2.6) node [black,midway,left, xshift=-5pt] {\scalebox{1.3}{$N-k$}}; 
					
					\draw [decorate,decoration={brace,amplitude=4pt},xshift=3] (5,1.1) -- (5,-0.10) node [black,midway,right] {\ \scalebox{1.3}{$k-1$}};
					\draw [decorate,decoration={brace,amplitude=4pt},xshift=3] (5,2.6) -- (5,1.1) node [black,midway,right] {\ \scalebox{1.3}{$N-(k-1)$}};
					\end{tikzpicture}
				}
			\end{center}
		\end{minipage}
	\end{center}
	\caption[Type changing mutation in the graphical representation]{If $Y_{t-}=k$, a type-changing deleterious mutation (beneficial mutation) on a given line changes the type distribution in the MoMo  from $(N-k,k)$ to~$(N-(k+1),k+1)$ (to~$(N-(k-1),k-1)$) and is depicted on the left (right). At each type-changing deleterious mutation (beneficial mutation), flip a coin with frequency-dependent success probability $1/(k+1)$ (success probability $1/(N-k+1)$). In the case of  success, $\widetilde{Y}$ jumps to~$0$ (to~$N$). Recall that a dark green (light brown) line corresponds to an fit (unfit) individual. }
	\label{fig:typeChangeAncestral}
\end{figure}
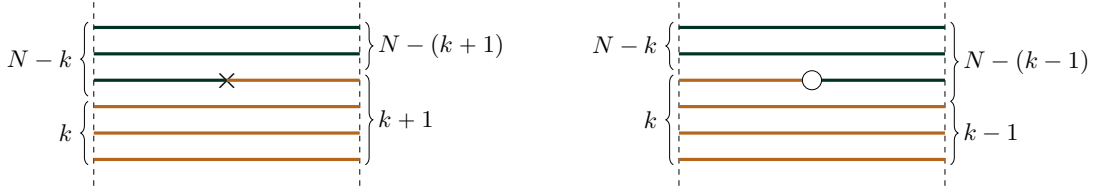

To prove the factorial moment duality between $L$ and $\widetilde{Y}$, we require the following auxiliary lemma.

\begin{lemma}[Auxiliary lemma]\label{lem.binom}
	For~$N,{k},n\in \Nb$ with $k,n \leq N$,
	\begin{equation}
	\sum_{j=1}^{n-1} \frac{\faf{k}{j}}{\faf{N}{j}} = \frac{k}{N-k+1} - \frac{N-n+1}{N-k+1} \frac{\faf{k}{n}}{\faf{N}{n}},
	\end{equation}
	with the usual convention that the empty sum is 0.
\end{lemma}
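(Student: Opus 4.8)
The plan is to prove the identity by induction on $n$. For the base case $n=1$, the left-hand side is an empty sum, hence $0$, while the right-hand side becomes $\frac{k}{N-k+1} - \frac{N}{N-k+1}\cdot\frac{\faf{k}{1}}{\faf{N}{1}} = \frac{k}{N-k+1} - \frac{N}{N-k+1}\cdot\frac{k}{N} = \frac{k}{N-k+1} - \frac{k}{N-k+1} = 0$, so the identity holds.

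For the inductive step, assume the identity holds for some $n \geq 1$; I would then add the term $\faf{k}{n}/\faf{N}{n}$ to both the sum and (after simplification) show the right-hand side transforms correctly. Explicitly,
\[
\sum_{j=1}^{n} \frac{\faf{k}{j}}{\faf{N}{j}} = \frac{k}{N-k+1} - \frac{N-n+1}{N-k+1}\frac{\faf{k}{n}}{\faf{N}{n}} + \frac{\faf{k}{n}}{\faf{N}{n}} = \frac{k}{N-k+1} + \frac{\faf{k}{n}}{\faf{N}{n}}\left(1 - \frac{N-n+1}{N-k+1}\right).
\]
The parenthesised factor equals $\frac{(N-k+1)-(N-n+1)}{N-k+1} = \frac{n-k}{N-k+1}$, so the additional contribution is $\frac{n-k}{N-k+1}\cdot\frac{\faf{k}{n}}{\faf{N}{n}}$. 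The key algebraic observation is that $(n-k)\faf{k}{n} = -\faf{k}{n}\cdot\frac{k-n}{1} = -(k-n)\faf{k}{n}$, and since $\faf{k}{n} = k(k-1)\cdots(k-n+1)$ we have $(k-n)\faf{k}{n} = \faf{k}{n+1}$; likewise $(N-n)\faf{N}{n} = \faf{N}{n+1}$, hence $\frac{(k-n)\faf{k}{n}}{\faf{N}{n}} = \frac{\faf{k}{n+1}}{\faf{N}{n}} = \frac{(N-n)\faf{k}{n+1}}{\faf{N}{n+1}}$. Substituting, the additional contribution becomes $-\frac{1}{N-k+1}\cdot\frac{(N-n)\faf{k}{n+1}}{\faf{N}{n+1}}\cdot\frac{1}{N-n}\cdot(N-n) = -\frac{N-n}{N-k+1}\cdot\frac{\faf{k}{n+1}}{\faf{N}{n+1}}$, wait --- more carefully, $\frac{n-k}{N-k+1}\cdot\frac{\faf{k}{n}}{\faf{N}{n}} = -\frac{1}{N-k+1}\cdot\frac{\faf{k}{n+1}}{\faf{N}{n}} = -\frac{N-n}{N-k+1}\cdot\frac{\faf{k}{n+1}}{\faf{N}{n+1}}$, which is exactly $-\frac{N-(n+1)+1}{N-k+1}\cdot\frac{\faf{k}{n+1}}{\faf{N}{n+1}}$, completing the induction.

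There is no substantial obstacle here; the proof is a routine telescoping/induction argument, and the only thing to be careful about is the bookkeeping of the falling-factorial identities $(y-n)\faf{y}{n} = \faf{y}{n+1}$, together with the edge cases (e.g.\ $k \leq N$ and $n \leq N$ ensure all falling factorials in the denominators are nonzero, so the formal manipulations are valid). An alternative, slightly slicker route would be to recognise $\faf{k}{j}/\faf{N}{j} = \binom{k}{j}/\binom{N}{j}$ and use a known hypergeometric summation identity (a form of the Chu--Vandermonde or a column-sum identity for binomial coefficients), but the direct induction is self-contained and I would go with that. Either way, this lemma is purely a technical preparation for the generator computation in the proof of Theorem~\ref{thm:finite.dual.mut}, where the sum $\sum_{j=1}^{n-1}[\tilde f(j) - \tilde f(n)]$ from the mutation part $\As_L^{\nu_0}$ of the pLD-ASG generator needs to be matched against the frequency-dependent jump terms in $\As_{\widetilde Y}^{\nu_0}$.
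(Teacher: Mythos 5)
Your proof is correct and follows exactly the same route as the paper's: induction on $n$, with the inductive step combining the coefficient $1-\frac{N-n+1}{N-k+1}=\frac{n-k}{N-k+1}$ with the falling-factorial identities $(k-n)\faf{k}{n}=\faf{k}{n+1}$ and $\faf{N}{n+1}=(N-n)\faf{N}{n}$. Apart from the momentary "wait" in your bookkeeping (which you resolve correctly), the argument matches the paper's proof step for step.
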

\begin{proof} 
The statement is proved via an elementary induction over $n$. For $n=1$, it is trivially true.  Using the induction hypothesis in the first step, we obtain for the induction step
\[
\begin{split}
\sum_{j=1}^{n} \frac{\faf{k}{j}}{\faf{N}{j}} &  = \frac{k}{N-k+1} + \Big ( - \frac{N-n+1}{N-k+1}+1 \Big ) \frac{\faf{k}{n}}{\faf{N}{n}}  
 = \frac{k}{N-k+1} - \frac{k-n}{N-k+1} \frac{\faf{k}{n}}{\faf{N}{n}} \\ & = \frac{k}{N-k+1} - \frac{N-n}{N-k+1} \frac{\faf{k}{n+1}}{\faf{N}{n+1}},
\end{split}
\]
which proves the claim.
\end{proof}
Next, we prove the factorial moment duality between $\widetilde{Y}$ and $L$.
{\allowdisplaybreaks
	\begin{proof}[Proof of Theorem~\ref{thm:finite.dual.mut}]
		We want to apply~\citep[Prop.~1.2]{Jaku}. Since the state space of~$\widetilde{Y}$ is finite, every $\mathbb{R}$-valued function is in the domain of~$\As_{\widetilde{Y}}$. In particular, for $k\in[N]_0$, $n\in[N]$, $H_F(\cdot,n)(k)$ and $P^{\widetilde{Y}}_tH_F(\cdot,n)(k)$ lie in the domain of~$\As_{\widetilde{Y}}$, where $(P^{\widetilde{Y}}_t)_{t\geq 0}$ is the transition semigroup corresponding to~$\widetilde{Y}$. Similarly, $H_F(k,\cdot)(n)$ and $P^L_tH_F(k,\cdot)(n)$ lie in the domain of~$\As_L$, where $(P_t^L)_{t\geq 0}$ is the transition semigroup corresponding to~$L$. In the proof of Theorem~\ref{thm:duality_R}, we already showed that $\As_{Y}^{\rm{n}} H_F(\cdot,n)({k})=\As_{R}^{\rm{n}} H_F({k},\cdot)(n)$ and $\As_{Y}^{s_m} H_F(\cdot,n)({k})=\As_{R}^{s_m} H_F({k},\cdot)(n)$ for all $m>0$. Hence, it suffices to check that for all $k\in[N]_0$, $n\in[N]$, $$\As_{\widetilde{Y}}^{\nu_0}H_F(\cdot,n)({k})=\As_{L}^{\nu_0}H_F({k},\cdot)(n)\quad \text{and}\quad \As_{\widetilde{Y}}^{\nu_1}H_F(\cdot,n)({k})=\As_{L}^{\nu_1}H_F({k},\cdot)(n),$$ which then implies $\As_{\widetilde{Y}}H_F(\cdot,n)({k})=\As_{L}H_F({k},\cdot)(n)$ for all ${k}\in[N]_0,\, n\in [N]$. First note that for $k=0$ or $n >k$ the result is trivial. It is then enough to prove for $k \in [N], n \in [k]$. For the part corresponding to the type-$1$ mutation we obtain 
		\begin{align*}\allowdisplaybreaks
		\As_{L}^{\nu_1}H_F({k},\cdot)(n)&= (n-1)u\nu_1 \big(N-n+1-({k}-n+1)\big)\frac{\faf{{k\,}}{n-1}}{\faf{N}{n}}
		=u\nu_1 (N-{k})\,\frac{{k}-({k}-n+1)}{{k}+1} \, \frac{\faf{({k}+1)}{n}}{\faf{N}{n}}  \\
		&=u\nu_1 (N-{k})\,\frac{{k}}{{k}+1}\,\Bigg[\frac{\faf{({k}+1)}{n}}{\faf{N}{n}}-\frac{\faf{{k\,}}{n}}{\faf{N}{n}}\Bigg]+u\nu_1\,\frac{N-{k}}{{k}+1}\,\Bigg[-\frac{\faf{{k\,}}{n}}{\faf{N}{n}}\Bigg]
		=\As_{\widetilde{Y}}^{\nu_1}H_F(\cdot,n)({k}).
		\end{align*}
		For the part associated to mutation to type~$0$, we get,  with the help of Lemma~\ref{lem.binom} in the second step,
\[
\begin{split}
\As_{L}^{\nu_0}H_F({k},\cdot)(n)&= u\nu_0 \sum_{j=1}^{n-1}\Bigg [ \frac{{k\,}^{\underline{j}}}{N^{\underline{j}}}-\frac{\faf{k\,}{n}}{\faf{N}{n}}\Bigg ] 
= u\nu_0 \Bigg [ \frac{k}{N-k+1} - \frac{N-n+1}{N-k+1}  \frac{\faf{{k\,}}{n}}{\faf{N}{n}} - (n-1) \frac{\faf{{k\,}}{n}}{\faf{N}{n}} \Bigg ] \\
& = u\nu_0 \frac{k}{N-k+1} \Bigg [ \Big ( 1- \frac{\faf{{k\,}}{n}}{\faf{N}{n}} \Big ) - (N-k) \frac{n}{k} \frac{\faf{{k\,}}{n}}{\faf{N}{n}} \Bigg ] \\
& = u\nu_0 \frac{k}{N-k+1} \Bigg [ \Big ( 1- \frac{\faf{{k\,}}{n}}{\faf{N}{n}} \Big ) - (N-k)  \frac{\faf{(k-1)}{n} - \faf{k}{n}}{\faf{N}{n}}  \Bigg ] = \As_{\widetilde Y}^{\nu_0}H_F(\cdot,n)(k),
\end{split}
\]
where the second-last step is true since $- \frac{n}{k} \faf{k}{n} = \faf{(k-1)}{n} - \faf{k}{n} $. 
	\end{proof}
	
	This factorial moment duality leads to a representation of~$h_{\infty}$ as an absorption probability that does not depend on~$L$. It transpires in Section~\ref{mr:pldASGdifflimit} that this representation is the natural analogue to~\citep[Prop.~2.5]{Taylor2007} in the finite-population setting. We now derive this representation.

	\begin{proof}[Proof of Corollary~\ref{coro:jumpprocess}]
	 Theorem~\ref{thm:atyperepresentation} and \eqref{eq:dualitymarkovjumpchain} yield the representation of $h_t(k)$. Taking~$t\to\infty$ then leads to~\eqref{eq:characerizationhinffinite}. The boundary conditions follow also from Theorem~\ref{thm:atyperepresentation}. A first-step decomposition of the absorption probability of~$\widetilde{Y}_t$ in~$N$ leads to
		\begin{align*} &\Bigg[{k}\bigg\{2\,\frac{N-{k}}{N}+\sum_{m=1}^\infty s_m \bigg(1-\bigg(\frac{k}{N}\bigg)^m\bigg)\bigg\}+(N-{k})u\nu_1+{k}u\nu_0\Bigg]h_{\infty}({k}) \\
		&= k\Bigg[\frac{N-{k}}{N}+\frac{N-{k}}{{k}+1}\,u\nu_1\Bigg]h_{\infty}({k}+1)+k\Bigg[\frac{N-{k}}{N}+\sum_{m=1}^\infty s_m\bigg( 1-\bigg(\frac{k}{N}\bigg)^m\bigg)+\frac{N-{k}}{N-({k}-1)}\,u\nu_0\Bigg]h_{\infty}({k}-1)\\
		&\ +\frac{{k}}{N-{k}+1}\,u\nu_0,  \qquad \qquad ({k}\in [N-1]),\end{align*}
		where we used the boundary conditions. Dividing by ${k}$ leads to~\eqref{eq:boundaryvalueproblem}. Note that the system of equations~\eqref{eq:boundaryvalueproblem} can be written in matrix form. More precisely, define $D=(d_{jk})\in \R^{(N-1)\times (N-1)}$ with
		$$d_{jk}=\begin{cases}
		\frac{N-k}{N} +\sum_{m=1}^\infty s_m\big(1-\big(\frac{k}{N}\big)^m\big) + \frac{N-k}{N-{k}+1}u\nu_0, &\text{if } k = j-1,
		\\
		-\Big(2\,\frac{N-k}{N}+\sum_{m=1}^\infty s_m \big(1-\big(\frac{k}{N}\big)^m\big)+\frac{N-k}{{k}}u\nu_1+u\nu_0\Big), &\text{if } k=j,\\
		\frac{N-k}{N}+\frac{N-k}{{k}+1}u\nu_1 , &\text{if } k=j+1,\\
		0,&\text{otherwise},
		\end{cases}$$
		where $j,k\in[N-1]$. Writing $h_{\infty}\coloneqq (h_{\infty}(k))_{k=1}^{N-1}$, \eqref{eq:boundaryvalueproblem} is equivalent to $Dh_{\infty}=b$ for some $b\in \R^{n-1}$. $D$ is a strictly diagonally dominant matrix, i.e. $\lvert d_{ii}\lvert>\sum_{j\neq i} \lvert d_{ij}\rvert$. It follows from the L\'{e}vy--Desplanques theorem (e.g. \citep[Cor.~5.6.17]{johnson1985matrix}) that $D$ is nonsingular. In particular, the solution of the recursion with the boundary conditions is unique.
	\end{proof}
	
}

\subsection{Descendant process}\label{sec:descendantprocess}
We begin by recalling the definition of the descendant process of~\citep{Kluth2013}.

\begin{definition}[Descendant process] 
	Consider the setup and notation of Definition~\ref{def:typingMoMo}, i.e. fix a site colouring $\alpha\in \{0,1\}^{[N]}$ and a realisation of the family~$\Lambda$ of Poisson processes. For $t\geq 0$ and a \emph{starting set} $A\subseteq [N]$, define $D_t \defeq \lvert \{i \in [N]: \Ce_t(i) = 1,\, \Ae_t(i)\in A\} \rvert\in[N]_0$ as the number of unfit descendants at time~$t$ of individuals in $A$ at time $0$, and analogously, $B_t\defeq \lvert \{i \in [N]: \Ce_t(i) = 0, \, \Ae_t(i)\in A\} \rvert\in[N]_0$ those of the fit type. We refer to $(Y,D,B)=(Y_t, D_t, B_t)_{t \geq 0}$ as the descendant process started from $A$ (with site colouring~$\alpha$). 
\end{definition}
$(Y, D, B)$ is a time-homogenous continuous-time Markov chain with values in $\Theta=\{(k, d, b)\in [N]_0^3: d\leq k,\, b\leq N-k\}$. The initial distribution of the descendant processes will often be prescribed by some deterministic $(Y_0,D_0,B_0)$ without explicitly stating a starting set or site colouring. It is meant that they are then uniformly chosen among all starting sets and site colourings compatible with~$(Y_0,D_0,B_0)$. The (lengthy) generator of $(Y,D,B)$ can be derived in a straightforward  calculation; it acts on $f:\Theta\to\R$ and is given by
\begin{equation}\label{eq:genDescendant}
	\begin{aligned}
		\As_{(Y,D,B)}f(k,d,b) = &d\frac{k-d}{N}\,[f(k,d+1,b)+f(k,d-1,b)] \\
	&+b\frac{N-k-b}{N}(1+s(k))[f(k,d,b+1)+f(k,d,b-1)] \\
	&+(k-d)\frac{N-k-b}{N}[f(k+1,d,b) +(1+s(k))f(k-1,d,b)]\\ &+d\frac{b}{N}[f(k+1,d+1,b-1)+(1+s(k))f(k-1,d-1,b+1)]\\
	&+d\frac{N-k-b}{N}\,[f(k+1,d+1,b)+(1+s(k))f(k-1,d-1,b)]\\
	&+b\frac{k-d}{N}\,[f(k+1,d,b-1)+(1+s(k))f(k-1,d,b+1)]\\
	&+u\nu_0[d f(k-1,d-1,b+1)+ (k-d)f(k-1,b,d)]\\
	&+u\nu_1[bf(k+1,d+1,b-1)+(N-k-b)f(k+1,d,b)]\\
	&-c(k,d,b) f(k,d,b),
	\end{aligned}
\end{equation}
where $s(k) \defeq \sum_{m>0} s_m (1 - (k/N)^m )$ and $c(k,d,b)$ is such that $\As_{(Y,D,B)}1=0$. Note that $(Y,D,B)$ enters, in finite time,  either $\{(k,d,b)\in \Theta: d+b=N\}$ or  $\{(k,d,b)\in \Theta: d+b=0\}$, where it is trapped. In the first case, $b=N-k, d=k$, and in the second case $b=d=0$. Moreover, the construction via the graphical representation implies additivity in the initial condition, i.e. if the starting set is of the form $A_1\dot{\cup}A_2$, then almost surely 
\[
\{i \in [N]: \Ce_t(i) = 1,\, \Ae_t(i)\in A_1\dot{\cup}A_2\} =  \{i \in [N]: \Ce_t(i) = 1,\, \Ae_t(i)\in A_1\}  \dot{\cup}  \{i \in [N]: \Ce_t(i) = 1,\, \Ae_t(i)\in A_2\}
\]
and analogously for the fit types.

\smallskip

Before proving the connection between $(Y,D,B)$ and $\widetilde{Y}$ stated in Proposition~\ref{prop:ytildedescendantprocess}, we require the following lemma.  For $t\geq 0$ and $(k,d,b)\in\Theta$, define $$\exdesc{t}{k}{d}{b} \defeq \E [D_{t} + B_{t} \mid Y_0 = k, D_0 = d, B_0 = b].$$ 

\begin{lemma}\label{lem:lemmadesc}
	For $n\in \N_0$, $(k,d,b)\in\Theta$ and $t\geq 0$, we have  $\exdesc{t}{k}{d}{0} = \frac{d}{k}\exdesc{t}{k}{k}{0}$ for $k\in[N]$ and $\exdesc{t}{k}{0}{b} = \frac{b}{N-k}\exdesc{t}{k}{0}{N-k}$ for $k\in[N-1]_0$. Moreover, for $k\notin\{0,N\}$, 
	\[
	\exdesc{t}{k}{d}{b} = \frac{d}{k}\exdesc{t}{k}{k}{0} + \frac{b}{N-k}\exdesc{t}{k}{0}{N-k}. 
	\]
	In particular, \begin{equation}
	\exdesc{t}{k}{d}{b} = \Big(\frac{d}{k}-\frac{b}{N-k}\Big)\exdesc{t}{k}{k}{0} + \frac{Nb}{N-k}.\label{eq:lemmadesc}
\end{equation}
\end{lemma}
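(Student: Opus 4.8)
The plan is to prove the three stated identities in order, each by exploiting the additivity of the descendant process in its initial condition, and then to combine the first two with the observation that the ``fit'' and ``unfit'' descendant counts behave symmetrically under the graphical construction.

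\textbf{Step 1: reduction to single-individual starting sets.} First I would recall that, by the additivity of $(Y,D,B)$ in the starting set $A$ (stated right after \eqref{eq:genDescendant}), if we write $A = \{a_1,\dots,a_j\}$ as a disjoint union of singletons, then $D_t$ (resp.\ $B_t$) is the sum over $a \in A$ of the corresponding single-individual descendant counts, and hence $\exdesc{t}{k}{d}{b}$ is additive: starting from $Y_0 = k$ with $d$ type-$1$ and $b$ type-$0$ individuals marked, $\exdesc{t}{k}{d}{b} = d\, g_1(t,k) + b\, g_0(t,k)$, where $g_1(t,k)$ is the expected total number of descendants (of either type) of a single type-$1$ individual in a population with $k$ type-$1$ individuals, and $g_0(t,k)$ the analogue for a single type-$0$ individual. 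Here I use that, conditionally on the realisation of $Y$ (equivalently, of $\Lambda$ and $\alpha$), the marked individual is exchangeable among the $k$ type-$1$ sites, resp.\ the $N-k$ type-$0$ sites, so $g_1$ and $g_0$ do not depend on which site is marked. Setting $d=k$, $b=0$ gives $\exdesc{t}{k}{k}{0} = k\, g_1(t,k)$, hence $g_1(t,k) = \exdesc{t}{k}{k}{0}/k$ for $k \in [N]$; setting $d=0$, $b=N-k$ gives $g_0(t,k) = \exdesc{t}{k}{0}{N-k}/(N-k)$ for $k \in [N-1]_0$. Substituting back yields the first two displayed identities, $\exdesc{t}{k}{d}{0} = \frac{d}{k}\exdesc{t}{k}{k}{0}$ and $\exdesc{t}{k}{0}{b} = \frac{b}{N-k}\exdesc{t}{k}{0}{N-k}$, and the general formula
\[
\exdesc{t}{k}{d}{b} = \frac{d}{k}\exdesc{t}{k}{k}{0} + \frac{b}{N-k}\exdesc{t}{k}{0}{N-k}, \qquad k \notin \{0,N\}.
\]

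\textbf{Step 2: eliminating $\exdesc{t}{k}{0}{N-k}$.} The remaining work is to show $\exdesc{t}{k}{0}{N-k} = N - \exdesc{t}{k}{k}{0}$, which converts the general formula into \eqref{eq:lemmadesc}. This follows from a conservation identity: if we start the descendant process from the \emph{entire} population $A = [N]$, then every individual alive at time $t$ has its ancestor in $A$, so $D_t + B_t = N$ deterministically, i.e.\ $\exdesc{t}{k}{k}{N-k} = N$. On the other hand, by the additivity of Step 1 applied with $d = k$ and $b = N-k$, $\exdesc{t}{k}{k}{N-k} = \exdesc{t}{k}{k}{0} + \exdesc{t}{k}{0}{N-k}$. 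Combining, $\exdesc{t}{k}{0}{N-k} = N - \exdesc{t}{k}{k}{0}$ for $k \notin \{0,N\}$, and substituting into the general formula gives
\[
\exdesc{t}{k}{d}{b} = \frac{d}{k}\exdesc{t}{k}{k}{0} + \frac{b}{N-k}\bigl(N - \exdesc{t}{k}{k}{0}\bigr) = \Bigl(\frac{d}{k} - \frac{b}{N-k}\Bigr)\exdesc{t}{k}{k}{0} + \frac{Nb}{N-k},
\]
which is \eqref{eq:lemmadesc}. I would remark that the edge cases $k \in \{0,N\}$ are excluded exactly because one of the populations is empty so one of the normalising factors vanishes; the first two identities still make sense on their stated ranges ($k \in [N]$ resp.\ $k \in [N-1]_0$) since then the relevant denominator is nonzero.

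\textbf{Main obstacle.} The only delicate point is the exchangeability claim underpinning Step 1 --- that the expected descendant count of a single marked individual depends only on its type and on $k$, not on its label. This is intuitively clear from the graphical construction (the Poisson families in \eqref{eq:poissonprocesses} are label-symmetric, and conditioning on $Y_0 = k$ plus a site colouring with $k$ type-$1$ sites leaves the marked type-$1$ site uniform among those $k$), but it should be argued carefully: one fixes the realisation of $\Lambda$ and the value of $Y_0 = k$, notes that the joint law of $(\Lambda, \alpha)$ given $\lvert\alpha\rvert = k$ is invariant under any permutation of $[N]$, and hence averaging over the choice of the marked site (which is what the phrase ``uniformly chosen among all starting sets and site colourings compatible with $(Y_0,D_0,B_0)$'' prescribes) gives a quantity symmetric in the marks. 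Once this is in place, everything else is bookkeeping with the additivity property. An alternative, fully rigorous route that sidesteps exchangeability is to verify \eqref{eq:lemmadesc} directly from the generator \eqref{eq:genDescendant}: check that the right-hand side, as a function of $(k,d,b)$ with $t$ a parameter, satisfies the same backward (Kolmogorov) equation $\partial_t \exdesc{\cdot}{k}{d}{b} = \As_{(Y,D,B)}\exdesc{\cdot}{\cdot}{\cdot}{\cdot}(k,d,b)$ with the correct initial condition $\exdesc{0}{k}{d}{b} = d + b$; but this requires grinding through the (lengthy) generator, so I would prefer the additivity argument and only fall back on the generator check if a reviewer objects to the exchangeability step.
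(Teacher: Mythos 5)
Your proposal is correct and follows essentially the same route as the paper's proof: decompose the starting set into singletons via additivity, use exchangeability of the (uniformly chosen) starting set and site colouring to reduce each singleton contribution to a quantity depending only on the marked individual's type and on $k$, and then eliminate $\exdesc{t}{k}{0}{N-k}$ via the conservation identity $\exdesc{t}{k}{k}{N-k}=N$. The paper carries out the exchangeability step exactly as you outline, by explicitly averaging over compatible starting sets and colourings, so no fallback to the generator computation is needed.
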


\begin{proof}
The proof follows from the exchangeability of starting set and site colouring, together with the additivity in the starting set. First assume $k>0$, $d\leq k$ and $b=0$. Define the specific site colouring $\alpha^{(k)}$ by setting $\alpha^{(k)}(i)=\1_{[k]}(i)$, $i\in[N]$. Using that starting set and site colouring are uniformly distributed among all  compatible ones in the first step, additivity in the starting set in the second, and exchangeability in the third, we get (here the first and second entries in the conditioning argument are the underlying site colouring and starting set, respectively)
\begin{align*}
	\exdesc{t}{k}{d}{0}=& \frac{1}{\binom{N}{d}} \sum_{\substack{A\subseteq [N] \\ \lvert A\rvert =d}}  \frac{1}{\binom{N-d}{k-d}} \sum_{\substack{\alpha:[N]\to \{0,1\},\, \lvert \alpha\rvert =k \\ \alpha(i)=1 \forall  i \in A}}  \E[D_t+B_t\mid \alpha, A]\\
	=&\, \frac{1}{\binom{N}{d}} \sum_{\substack{A\subseteq [N] \\ \lvert A\rvert =d}}  \frac{1}{\binom{N-d}{k-d}} \sum_{\substack{\alpha:[N]\to \{0,1\},\, \lvert \alpha\rvert =k \\ \alpha(i)=1 \forall i \in A}}  \ \ \sum_{i\in A}\E[D_t+B_t\mid \alpha, \{i\}] \\
	=&\, \frac{1}{\binom{N}{d}} \sum_{\substack{A\subseteq [N] \\ \lvert A\rvert =d}}  \sum_{i\in A}\E[D_t+B_t\mid \alpha^{(k)}, \{1\}] \\
	=&\, d\,\E[D_t+B_t\mid \alpha^{(k)}, \{1\}]=\frac{d}{k}\sum_{i\in[k]}\E[D_t+B_t\mid \alpha^{(k)}, \{i\}]=\frac{d}{k}\exdesc{t}{k}{k}{0}.
\end{align*}
An analogous calculation leads to $\exdesc{t}{k}{0}{b} = \frac{b}{N-k}\exdesc{t}{k}{0}{N-k}$. Next, let $k\notin\{0,N\}$, and $b,d\in[N]_0$ such that $(k,d,b)\in \Theta$. Using again that starting set and site colouring are uniformly distributed among all the compatible ones, we have
\begin{align*}
&\exdesc{t}{k}{d}{b}\\
	&=\, \frac{b!d!(N-b-d)!}{N!} \sum_{\substack{A_0\dot{\cup}A_1\subseteq [N] \\ \lvert A_0\rvert =b, \lvert A_1\rvert =d}} \frac{1}{\binom{N-(b+d)}{k-d}} \sum_{\substack{\alpha:[N]\to \{0,1\},\, \lvert \alpha\rvert =k \\ \alpha(i)=\ell \forall  i \in A_\ell,\, \ell\in \{0,1\}}}   \E[D_t+B_t\mid \alpha, A_0\dot{\cup}A_1]\\
	&=\, \frac{b!d!(N-b-d)!}{N!} \sum_{\substack{A_0\dot{\cup}A_1\subseteq [N] \\ \lvert A_0\rvert =b, \lvert A_1\rvert =d}}  \! \! \!  \frac{1}{\binom{N-(b+d)}{k-d}} \sum_{\substack{\alpha:[N]\to \{0,1\},\, \lvert \alpha\rvert =k \\ \alpha(i)=\ell \forall i \in A_\ell,\, \ell\in \{0,1\}}}   \Big(\sum_{i\in A_0}\E[D_t+B_t\mid \alpha, \{i\}]+\sum_{i\in A_1}\E[D_t+B_t\mid \alpha, \{i\}]\Big)\\
	& =\,  \big(d\, \E[D_t+B_t\mid \alpha^{(k)}, \{1\}]+b\, \E[D_t+B_t\mid \alpha^{(k)}, \{N\}]\big)\\
	& =\frac{d}{k}\exdesc{t}{k}{k}{0} + \frac{b}{N-k}\exdesc{t}{k}{0}{N-k}.
\end{align*}

Finally, \eqref{eq:lemmadesc} follows after noting that $\exdesc{t}{k}{0}{N-k}=N-\exdesc{t}{k}{k}{0}$.
\end{proof}

\begin{proof}[Proof of Proposition~\ref{prop:ytildedescendantprocess}] 
	Recall that $\widetilde{Y}$ is defined via the graphical representation. In particular, we can use the arrival times $(\tau_i)_{i \in \mathbb{N}}$ of the underlying Poisson process (see~\eqref{eq:poissonprocesses}) as the (common) transition times for both $Y$ and $\widetilde Y$. Note that some events may be silent for either one or both  processes. We claim that  for all $k\in [N]_0$, $n \geq 0$, $M_{k,k,0}^{\tau_n}=\E[\widetilde{Y}_{\tau_n}\mid \widetilde{Y}_0=k]$. If this  is true, then 
	\begin{align*}
		&\E[D_t+B_t\mid Y_0=k,\, D_0=k, B_0=0]\\
		&=\sum_{n\geq 0 } \P(\tau_{n}< t, \tau_{n+1}\geq t) \E[D_t+B_t\mid Y_0=k,\, D_0=k, B_0=0,\tau_{n}< t,\tau_{n+1}\geq t]\\
		&=\sum_{n\geq0 } \P(\tau_{n}< t,\tau_{n+1}\geq t) \E[\widetilde{Y}_{\tau_{n}} \mid \widetilde{Y}_0=k, \tau_{n}< t, \tau_{n+1}\geq t]=\E[\widetilde{Y}_{t} \mid \widetilde{Y}_0=k],
	\end{align*}
thus proving the proposition. It remains to prove the claim. Note first that the claim is true if $k\in \{0,N\}$, because if $\widetilde{Y}_r=D_r+B_r=0$  (or $=N$) for some $r\geq 0$, then $\widetilde{Y}_t=D_t+B_t=0$ (or $=N$) for all $t\geq r$. So it remains to prove the claim for $k\in[N-1]$. We proceed by induction on the number of arrivals~$n$ in the underlying Poisson process. To ease notation, we write $\tau\defeq \tau_1$. Using the Poissonian families, we introduce the  events
	\begin{align*}
		E^\times &\defeq \{Y_{\tau+} \neq Y_{\tau}, \,  \tau\in \Pi_i^{\times} \text{ for some } i \}, && E^\circ \defeq \{Y_{\tau+} \neq Y_{\tau}, \,  \tau\in \Pi_i^{\circ}\text{ for some } i\} ,\\
		E^{+} &\defeq \{Y_{\tau+} = Y_{\tau} +1, \,  \tau\in \Pi_{i,j}^{\narrow}  \text{ for some } i,j \},&&   E^{-} \defeq \{Y_{\tau+} = Y_{\tau}-1, \,  \tau\in \Pi_{i,j}^{\narrow}\cup \Pi_{J,i}^{\sarrow}\text{ for some } i,j,J\}, 
	\end{align*}  i.e. respectively the event of a type-changing  deleterious or beneficial mutation; or the reproduction of an unfit or fit individual that  changes~$Y$. For $k\in [N]_0$, write $\P_k$ for the probability measure under $Y_0=k$. It is straightforward from the arrival rates of the Poisson process that 
\begin{align*}\label{eq:transitionsdescproc}
	&\P_k(E^\times)=\frac{1}{R}u\nu_1(N-k),  \qquad \P_k(E^\circ)=\frac{1}{R}u\nu_0k,   \\
	&\P_k(E^{+})=\frac{1}{R} k\frac{N-k}{N} ,\qquad \P_k(E^{-})=\frac{1}{R} k\Big ( \frac{N-k}{N}+s(k) \Big ),\\
	&\P_k((E^\times \cup E^\circ \cup E^{-} \cup E^{+})^C)=\frac{1}{R}\Big( u\nu_1k+u\nu_0(N-k)+\frac{k^2}{N}+\frac{(N-k)^2}{N}+ s(k)\frac{N-k}{N} \Big)
\end{align*} 
	where $R \defeq N(1 + u + \sum_{m\geq 1} s_m)$ is the total rate of $\Lambda$ and superscript~$C$ indicates the complement. Then,
	\begin{align*}
	&\E[D_\tau + B_\tau \mid Y_0 = k = D_0, B_0 = 0] \\
	&= (1-\P_k(E^+) - \P_k(E^-))\, k+ \P_k(E^{+}) \, (k+1) + \P_k(E^{-})\, (k-1) \\
	&= \frac{1}{R}u\nu_1(N-k) \Big( \frac{k}{k+1}(k+1)+ \frac{1}{k+1} 0\Big) + \frac{1}{R} u\nu_0k \Big( \frac{N-k}{N-k+1}(k-1) + \frac{1}{N-k+1} \ N \Big) \\
	&\textcolor{white}{=}  + \P_k((E^\times \cup E^\circ \cup E^{+} \cup E^{-})^C)\ k + \P_k(E^{+}) \, (k+1) + \P_k(E^{-})\, (k-1) \\
	&= \E [\widetilde{Y}_\tau \mid \widetilde{Y}_0 = k],
\end{align*}
where we have used in the last step that  $\frac{N-k}{N-k+1}(k-1) + \frac{1}{N-k+1} \ N = k$ (in line with the fact that $B_\tau+D_\tau$ do not change upon mutation), together with \eqref{generatorYtilde}.
 
For the inductive step we first use a first-step-decomposition and the Markov property of $(Y,D,B)$; second, Lemma~\ref{lem:lemmadesc}; and third, the inductive hypothesis, to obtain
\begin{align*}
	&\E [D_{\tau_{n+1}} + B_{\tau_{n+1}} \mid Y_0 = k = D_0, B_0 = 0] \\
	%&=\ \E \big[ \E[D_{\tau_n} + B_{\tau_n} \mid Y_\tau, D_\tau, B_\tau] \mid Y_0 = k = D_0, B_0 = 0  \big] \\
	&=\ \P_k (E^{\times}) \exdesc{\tau_{n}}{k+1}{k}{0} + \P_k (E^{\circ}) \exdesc{\tau_{n}}{k-1}{k-1}{1}  + \P_k (E^{+}) \exdesc{\tau_{n}}{k+1}{k+1}{0} + \P_k (E^{-}) \exdesc{\tau_{n}}{k-1}{k-1}{0} \\
	&\ \  + \P_k((E^\times \cup E^\circ \cup E^{+} \cup E^{-})^C) \exdesc{\tau_{n}}{k}{k}{0}\\
	&=\ \P_k (E^{\times}) \frac{k}{k+1}\exdesc{\tau_{n}}{k+1}{k+1}{0} + \P_k (E^{\circ}) \Big( \frac{N-k}{N-k+1} \exdesc{\tau_{n}}{k-1}{k-1}{0} + \frac{N}{N-k+1} \Big)  + \P_k (E^{+}) \exdesc{\tau_{n}}{k+1}{k+1}{0} \\
	&\ + \P_k (E^{-}) \exdesc{\tau_{n}}{k-1}{k-1}{0} + \P_k((E^\times \cup E^\circ \cup E^{+} \cup E^{-})^C) \exdesc{\tau_{n}}{k}{k}{0}\\
	&=\ \P_k (E^{\times}) \frac{k}{k+1} \E[\widetilde{Y}_{\tau_n}\mid \widetilde{Y}_0=k+1] + \P_k (E^{\circ}) \Big(\frac{N-k}{N-k+1}\E[\widetilde{Y}_{\tau_n}\mid \widetilde{Y}_0=k-1]+ \frac{N}{N-k+1}\Big) \\
	& \ + \P_k (E^{+}) \E[\widetilde{Y}_{\tau_n}\mid \widetilde{Y}_0=k+1] + \P_k (E^{-}) \E[\widetilde{Y}_{\tau_n}\mid \widetilde{Y}_0=k-1]+ \P_k((E^\times \cup E^\circ \cup E^{+} \cup E^{-})^C) \E[\widetilde{Y}_{\tau_n}\mid \widetilde{Y}_0=k]\\
	&= \E[ \widetilde{Y}_{\tau_{n+1}}\mid \widetilde{Y}_0=k].
\end{align*}
and where we used in the last step the Markov property and a first-step-decomposition of $\widetilde{Y}$.
\end{proof}

\section{Details of the results on the diffusion limit}\label{sec:diffusionlimit}
In this section, we prove the convergence results as $N\to\infty$, and how the dualities translate to this limit. Throughout the section, we indicate that processes or parameters depend on the population size~$N\in \N$ via a superscript, i.e. we write $Y= Y^{(N)}$, $R = R^{(N)}$, $L=L^{(N)}$, $u=u^{(N)}$, and for all $m\in \N$, $s_m=s_m^{(N)}$; while we assume $\nu_0$ and $\nu_1$ to be independent of $N$. Recall the assumptions on the parameters in~\eqref{eq:assweaklimits}. 

\smallskip

The following corresponds to the (mostly standard) proof of the convergence of the MoMo to the Wright--Fisher diffusion.
\begin{proof}[Proof of Proposition~\ref{prop:convergenceWF}]
	 We first prove uniform convergence of the generators applied to polynomials on $[0,1]$. Because such polynomials form a core of $\mathcal{A}_{\Ys}$, and $\Ys$ is Feller~\citep[Thm. 8.2.8]{Ku86}, the result follows from~\citep[Thm 1.6.1 and Thm. 4.2.11]{Ku86}.
	
	\smallskip

	For all $N \in \mathbb{N}$, denote by ${\mathcal{A}}_{\bar{Y}^{(N)}}$ the generator of $\bar{Y}^{(N)}$ (more precisely, its c{\`a}dl{\`a}g version, which exists because $\bar{Y}^{(N)}$ is Feller). It follows from the definition of $\bar{Y}$ that for $\bar{f}:[N]_0/N\to \R$, one has ${\mathcal{A}}_{\bar{Y}^{(N)}} \bar{f}(\frac{k}{N})= N(\cA_{Y^{(N)}}^{\mathrm{n}} + \sum_{m > 0} \cA_{Y^{(N)}}^{s_m} + \cA_{Y^{(N)}}^{\nu_0} + \cA_{Y^{(N)}}^{\nu_1})\bar{f}_N(k)$, where $\bar{f}_N(k)=\bar{f}(k/N)$.
	Let $\varphi$ be a polynomial on~$[0,1]$. Proving uniform convergence of the generators applied to~$\varphi$ means we show
	\begin{equation}
	\sup_{k \in [N]_0} \Big \lvert  {\mathcal{A}}_{\bar{Y}^{(N)}} \varphi \Big (\frac{k}{N}\Big ) - \cA_{\Ys} \varphi\Big (\frac{k}{N}\Big ) \Big \rvert \xrightarrow[]{N \to \infty} 0 \quad \text{for} \; k \in [N].
	\label{eq:limconvergence}
	\end{equation}
	We estimate the difference by splitting ${\mathcal{A}}_{\bar{Y}^{(N)}}$ and $\mathcal{A}_{\Ys}$ into their components: starting with the neutral part, a Taylor approximation of $\varphi$ gives for all $k\in [N]_0$ that	\begin{align*}
	\left\lvert N\cA^{\mathrm{n}}_{Y^{(N)}} \varphi\left(\frac{k}{N}\right) - \cA^{\mathrm{n}}_{\Ys} \varphi \left(\frac{k}{N}\right) \right\rvert & \leq \frac{\lVert \varphi^{(3)}\rVert_{\infty}}{3N}\xrightarrow{N\to\infty}0. 
	\end{align*}
	Proceeding in a similar fashion for the selective term, for every $k \in [N]_0$ we have \begin{align*}
	\left\lvert \sum_{m >0} N\cA^{s_m^{(N)}}_{Y^{(N)}} \varphi\left(\frac{k}{N}\right) - \sum_{m >0} \cA^{\sigma_m}_{\Ys} \varphi\left(\frac{k}{N}\right) \right\vert & \leq \lVert \varphi' \rVert_{\infty} \left\lvert \sum_{m>0} \big (  Ns_m^{(N)} - \sigma_m  \big ) \right\rvert + \frac{\lVert \varphi''\rVert_{\infty}}{2N} \sum_{m>0} N s_m^{(N)},  
	\end{align*}
	and both summands on the last line tend to $0$ as $N \to \infty$ under~\eqref{eq:assweaklimits}
	Calculations for the two remaining terms are completely analogous. \eqref{eq:limconvergence} then comes by using the triangle inequality, concluding the proof.

\end{proof}

\subsection{Proofs connected to the kASG in the diffusion limit}\label{sec:kASGdiff}
First, we show that~$\mathcal{R}$ indeed arises from $R^N$ as $N\to\infty$. Next, we prove that the factorial moment duality translates into a moment duality in the limit.

\smallskip 

Recall the definition of $\mathcal{R} = (\mathcal{R}_r)_{r \geq 0}$ in \eqref{eq:R}. $\mathcal{R}$ is well defined. Indeed, let us initially understand~\eqref{eq:R} as nothing but a convenient way of stating the rates. It is then clear that $\Rs$ can be coupled to a branching process where each particle branches into $m+1$ independently at rate $\sigma_m$ ($m\in \N$) so that the branching process almost surely dominates $\Rs$. It is well known that $\sum_{m=1}^\infty \sigma_m m<\infty$ implies that this branching process is non-explosive~\citep[Ch. V, Thm. 9.1]{harris1964theory}. In particular, \begin{equation}
	\Rs\text{ is non-explosive}, \label{eq:kASGnon-explosive}
\end{equation}
so~\eqref{eq:R} indeed gives rise to a unique Markov process.
We now prepare for the proof of $\bar{R}^{(N)}\xRightarrow[]{N\to\infty} \Rs$  in distribution. The proof is carried out  via a localisation argument. To this end we require convergence of the processes stopped when they escape some finite level, or absorb. The following lemma plays a key role for this.

\begin{lem}\label{lem:kASGratesconvergence}
	Let $f:\N_{0,\Delta} \to \R$ be bounded 
	and, for $N\in \N$, let $f^{\lvert N}$ be the restriction of $f$ to $[N]_{0,\Delta}$. Then, for all~$k\in \N$, $$\sup_{n\in [k]}\lvert N\As_{R^{(N)}}f^{\lvert N}(n)-\As_{\Rs}f(n)\lvert\xrightarrow{N\to\infty} 0\qquad (N\geq k).$$
\end{lem}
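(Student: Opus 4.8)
The statement is a routine but slightly tedious generator-convergence estimate, so the plan is to split $\As_{R^{(N)}}$ and $\As_{\Rs}$ into their neutral, selective, and mutation parts according to Proposition~\ref{prop:lckASG} and \eqref{eq:R}, and bound each difference uniformly over $n\in[k]$. For the neutral part this is immediate: $N\As_{R^{(N)}}^{\rm n}f^{\lvert N}(n) = n(n-1)[f(n-1)-f(n)] = \As_{\Rs}^{\rm n}f(n)$ exactly, so there is nothing to estimate there (note $n\leq k\leq N$ guarantees $n-1\in[N]_0$, so $f^{\lvert N}$ is evaluated inside its domain). Similarly, the deleterious part of $\As_R^{u}$, namely $Nnu^{(N)}\nu_1[f(n-1)-f(n)]$, converges to $n\theta\nu_1[f(n-1)-f(n)]$ uniformly over $n\in[k]$ since $Nu^{(N)}\to\theta$ and $|f(n-1)-f(n)|\leq 2\|f\|_\infty$, with the error bounded by $2k\|f\|_\infty\,|Nu^{(N)}-\theta|$; the same holds for the beneficial part $Nnu^{(N)}\nu_0[f(\Delta)-f(n)]\to n\theta\nu_0[f(\Delta)-f(n)]$.

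\textbf{The selective part.} The only term requiring real work is the selection term, where $N\As_{R^{(N)}}^{s_m}f^{\lvert N}(n) = N s_m^{(N)}\frac{n}{N^m}\sum_{j=1}^m (N-n)^{\underline j}\,C^n_{mj}[f(n+j)-f(n)]$ must be compared with $\As_{\Rs}^{\sigma_m}f(n) = n\sigma_m[f(n+m)-f(n)]$. The key observation is that, as $N\to\infty$ with $n\leq k$ fixed, the dominant contribution in $\sum_{j=1}^m(N-n)^{\underline j}C^n_{mj}$ comes from $j=m$, $\ell=m$: here $C^n_{mm}=\stirlingii{m}{m}=1$ and $(N-n)^{\underline m} = N^m(1+O(1/N))$, so $\frac{1}{N^m}(N-n)^{\underline m}C^n_{mm}\to 1$; all terms with $j<m$ carry a factor $N^{-m}\cdot N^{j}\cdot(\text{const depending on }n,m) = O(N^{j-m}) = O(1/N)$. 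Since $|f(n+j)-f(n)|\leq 2\|f\|_\infty$ for every $j$, we get that $\big|N\As_{R^{(N)}}^{s_m}f^{\lvert N}(n) - Ns_m^{(N)}\,n[f(n+m)-f(n)]\big|$ is bounded by $2k\|f\|_\infty\,Ns_m^{(N)}\cdot c_{k,m}/N$ for a constant $c_{k,m}$ depending only on $k$ and $m$, and then $|Ns_m^{(N)}\,n[f(n+m)-f(n)] - n\sigma_m[f(n+m)-f(n)]|\leq 2k\|f\|_\infty|Ns_m^{(N)}-\sigma_m|$. To handle the infinite sum over $m$, I would first note that $f$ bounded and $n\leq k$ imply that only finitely many... no: $m$ ranges over all of $\N$, so the bound must be summable. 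Here one uses that $|f(n+m)-f(n)|\leq 2\|f\|_\infty$ uniformly and $\sum_{m>0}Ns_m^{(N)}\to\sum_{m>0}\sigma_m<\infty$ by \eqref{eq:assweaklimits}; combining with $\sum_m \sigma_m m<\infty$ controls the $O(1/N)$ remainders after summing (the constant $c_{k,m}$ grows polynomially in $m$, so $\sum_m c_{k,m} Ns_m^{(N)}/N \to 0$ requires a little care but follows from $\sum_m m\sigma_m<\infty$ together with the third condition in \eqref{eq:assweaklimits} via a dominated-convergence argument).

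\textbf{Main obstacle.} The genuinely delicate point is the uniform control of the tail of the sum over selection orders $m$: one needs that the $O(1/N)$ per-$m$ corrections in the combinatorial identity, when multiplied by $Ns_m^{(N)}$ and summed over all $m$, still vanish as $N\to\infty$. This is where the hypothesis $\sum_{m>0}\sigma_m m<\infty$ (and the matching convergence $\sum_{m>0}Ns_m^{(N)}\to\sum_{m>0}\sigma_m$ in \eqref{eq:assweaklimits}) is essential: it lets one dominate the series and pass to the limit term by term. I would make the dependence of $c_{k,m}$ on $m$ explicit enough (it is at most polynomial, since $C^n_{mj}\leq$ a polynomial in $m$ for $n\leq k$ fixed) to invoke dominated convergence against the summable majorant $m\mapsto C\, m\,\sigma_m + $ (an $\varepsilon$-neighborhood thereof). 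Everything else is bookkeeping with falling factorials and Stirling numbers, already essentially done inside the proof of Theorem~\ref{thm:duality_R}.
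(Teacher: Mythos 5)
Your decomposition matches the paper's: the rescaled neutral term equals $n(n-1)[f(n-1)-f(n)]$ exactly, the mutation terms converge trivially, and for selection the leading contribution is the $j=m$, $\ell=m$ term while the $j<m$ terms are lower order; the paper even invokes the same identity $\sum_{j=1}^{m-1}C^n_{mj}(N-n)^{\underline{j}}=N^m-n^m-(N-n)^{\underline{m}}$ from the proof of Theorem~\ref{thm:duality_R} that you allude to. The gap is in the step you yourself flag as delicate: the control of the infinite sum over selection orders $m$. Your per-order correction constant $c_{k,m}$ is not $O(m)$ but $O(m^2)$: one has $1-(N-n)^{\underline{m}}/N^m\leq\sum_{i=0}^{m-1}(n+i)/N=(mn+m(m-1)/2)/N$, and this order is sharp (already for $n=0$, $N^{\underline{m}}/N^m\approx e^{-m^2/(2N)}$). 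Since only $\sum_m m\sigma_m<\infty$ is assumed, the proposed majorant $C\,m\,\sigma_m$ does not dominate $c_{k,m}\sigma_m$, and $\sum_m m^2\sigma_m$ may be infinite. A second, independent problem is that dominated convergence in $m$ requires an $N$-independent summable majorant for the summands $c_{k,m}\,Ns_m^{(N)}/N$ (or at least for $Ns_m^{(N)}$), and \eqref{eq:assweaklimits} supplies no such domination: it gives only termwise convergence of $Ns_m^{(N)}$ together with convergence of the total sum.

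The repair --- and this is what the paper does --- is an $\varepsilon$-truncation rather than domination. Choose $N_1$ with $\sum_{m>N_1}\sigma_m<\varepsilon/2$; for the finitely many orders $m\leq N_1$ use your pointwise estimates; for $m>N_1$ use the uniform bound $\lvert 1-n^m/N^m-(N-n)^{\underline{m}}/N^m\rvert\leq 2$, which costs only a constant times $\lVert f\rVert_\infty^{}\sum_{m>N_1}Ns_m^{(N)}$, and observe that $\sum_{m>N_1}Ns_m^{(N)}\to\sum_{m>N_1}\sigma_m<\varepsilon/2$ by subtracting the convergent head from the convergent total. This uses only $\sum_m\sigma_m<\infty$ and the third condition of \eqref{eq:assweaklimits}; the hypothesis $\sum_m m\sigma_m<\infty$ is not what rescues the tail here (in the paper it is used elsewhere, e.g.\ for non-explosion of $\Rs$). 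The remainder of your argument is correct.
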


\begin{proof}
	Recall the definition of $\As_{R^{(N)}}^{\rm{n}}$, $\As_{R^{(N)}}^{s_m}$, $\As_{R^{(N)}}^{u}$ in \eqref{eq:generatorRcoalescence}, \eqref{eq:generatorRselection}, and \eqref{eq:generatorRmutation}, respectively.
	 For a bounded function $f:\N_{0,\Delta} \to\R$ and $k\in \N$, it is straightforward to show that for all $n\in [k]$, $$\lvert N(\As_{R^{(N)}}^{\rm{n}}+\As_{R^{(N)}}^{u})f^{\lvert N}(n) -(\As_{\Rs}^{\rm{n}}+\As_{\Rs}^{u})f(n)\lvert\xrightarrow{N\to\infty}0 \qquad (N\geq k).$$
	It remains to show the statement for the parts corresponding to selection. For $n\in[k]$,
	\begin{align*}
	&\Big\lvert \sum_{m>0}N\As_{R^{(N)}}^{s_m}f^{\lvert N}(n)-\sum_{m>0}\As_{\Rs}^{\sigma_m}f(n)\Big\rvert\\
	&\leq \Big\lvert n\sum_{m>0} \Big(Ns_m^{(N)} \frac{\faf{(N-n)}{m}}{N^m}-\sigma_m\Big)[f(n+m)-f(n)]\Big\rvert + \Big\lvert n\sum_{m> 0} Ns_m^{(N)} \sum_{j=1}^{m-1} \frac{\faf{(N-n)}{\,j}}{N^m} C_{mj}^n [f(n+j)-f(n)] \Big\rvert.
	\end{align*}
	Denote the first and second absolute value by $E^1_{N,n}$ and $E^2_{N,n}$, respectively. We first deal with $E^1_{N,n}$. Fix $\varepsilon>0$. Since $\sum_{m>0}\sigma_m<\infty$, there is $N_1=N_1(\varepsilon)$ s.t. $\sum_{m>N_1}\sigma_m<\varepsilon/2$. 
	Hence,
	$$E^1_{N,n}\leq 2k\lVert f\rVert_{\infty}^{} \bigg(N_1\max_{m\in[N_1]}\Big\{ \Big\lvert Ns_m^{(N)} \frac{\faf{(N-n)}{m}}{N^m}-\sigma_m\Big\rvert\Big\}+\sum_{m>N_1}Ns_m^{(N)}+ \frac{\varepsilon}{2}\bigg).$$
	Under \eqref{eq:assweaklimits}, we have $\lvert \sum_{m> N_1} Ns_m^{(N)}-\sum_{m> N_1}\sigma_m\rvert \xrightarrow{N\to\infty}0$. In particular, $ \lim_{N\to\infty}\sum_{m>N_1}Ns_m^{(N)}<\varepsilon/2$. 
	Moreover, for every $n\leq k$, $\lvert Ns_m^{(N)} \frac{\faf{(N-n)}{m}}{N^m}-\sigma_m\rvert\xrightarrow{N\to\infty}0$. Hence, $\lim\sup_{N\to\infty} E_{N,n}^1\leq 2k\lVert f\rVert_{\infty}^{}\varepsilon.$ Since~$\varepsilon$ was arbitrary, we deduce $\lim_{N\to\infty} E^1_{N,n}=0$. Next, we consider $E^2_{N,n}$. Fix again $\varepsilon>0$ and let $N_1(\varepsilon)$ as before. Recall the calculation from~\eqref{eq:aux2} to see that $\sum_{j=1}^{m-1}C_{mj}^n \faf{(N-n)}{\,j}= N^{m}-n^{m}-\faf{(N-n)}{m}.$ Altogether,  
	\begin{align*}
	E^2_{N,n}&\leq 2k\lVert f\rVert_{\infty}^{} \sum_{m>0}^{}N s_m^{(N)} \Big\lvert 1-\frac{n^m}{N^m}-\frac{\faf{(N-n)}{\,m}}{N^m} \Big\rvert\\
	&\leq 2k\lVert f\rVert_{\infty}^{} \Big( N_1 \max_{m\in [N_1]} \Big\{N s_m^{(N)} \big\lvert 1-\frac{n^m}{N^m}-\frac{\faf{(N-n)}{\,m}}{N^m} \big\rvert\Big\} +2\sum_{m>N_1}Ns_m^{(N)}\Big).
	\end{align*}
	Since $\lvert 1-\frac{n^m}{N^m}-\frac{\faf{(N-n)}{\,m}}{N^m}\rvert\xrightarrow{N\to\infty}0$, also $\lim\sup_{N\to\infty}E^2_{N,n}\leq 2k\lVert f\rVert_{\infty}^{} \varepsilon$. Since $\varepsilon$ was again arbitrary, $E^2_{N,n}\xrightarrow{N\to\infty} 0$. Altogether we showed that for all $n\in[k]$, $\lim_{N\to\infty}(E^1_{N,n}+E^2_{N,n})=0$.
\end{proof}
Before we prove~Proposition~\ref{prop:convergencekASG}, we recall the Skorokhod topology in our setting, see also~\citep[App.~A]{Cordero17DL} or, more generally, \citep[Ch.~$3$]{Billingsley1999}. For each $t\in (0,\infty)$, let $\Db_{\R}[0,t]\coloneqq \{\omega:[0,t]\to\R,\ \omega\ \text{c{\`a}dl{\`a}g} \}$ and $\Cb_t^{\uparrow}$ denote the class of strictly increasing, continuous mappings of $[0,t]$ onto itself, and set $\lVert \lambda\rVert^{\circ}\coloneqq \sup_{0\leq r<s\leq t} \lvert \log(\frac{\lambda(s)-\lambda(r)}{s-r})\rvert.$ Then for $f,g\in \Db_{\R}[0,t]$, $$d_t^{\circ}(f,g)\coloneqq \inf_{\lambda\in \Cb_t^{\uparrow}}\{\lVert \lambda\rVert^{\circ} \vee \sup_{s\in [0,t]}\rvert f(s)- g\circ \lambda(s) \rvert\}$$ defines a metric that induces the Skorokhod topology on~$\Db_{\R}[0,t]$. Let $\Db_{\R}[0,\infty)\coloneqq\{\omega:[0,\infty)\to\R,\ \omega\ \text{c{\`a}dl{\`a}g} \}$ and $\Cb_\infty^{\uparrow}$ denote the class of strictly increasing, continuous mappings of $[0,\infty)$ onto itself. For $m\in \N$, set $\eta_m(s)=\1_{[0,m-1)}(s)+\1_{[m-1,m]}(s)\,(m-s)$ and define $$d_\infty^{\circ}(f,g)\coloneqq \sum_{m=1}^\infty \frac{1}{2^m} \big (1\wedge d_m^\circ(\eta_m\,f, \eta_m\,g) \big ).$$ Then $d_\infty^\circ$ defines a metric that induces the Skorokhod topology on~$\Db_{\R}[0,\infty)$. Consider the Euclidean metric on $\N_{0,\Delta}$ by letting $\Delta$ take the role of $-1$. Let $\Db_{\N_{0,\Delta}}[0,\infty)\coloneqq \{\omega\in \Db_{\R}[0,\infty): \omega_t\in \N_{0,\Delta}, \forall t\geq 0\}$ with metric $d_{\infty}^\circ$. Note that $\Db_{\N_{0,\Delta}}[0,\infty)$ is a closed subspace of~$\Db_{\R}[0,\infty)$.
	
\begin{proof}[Proof of Proposition~\ref{prop:convergencekASG}]
	We follow the line of argument of~\citet[Prop.~5.3]{Cordero17DL}. 
	By the Portmanteau theorem, it suffices to show that for any uniformly continuous, bounded function~$F:\Db_{\N_{0,\Delta}}[0,\infty) \to \R$, we have $\E[F(\bar{R}^{(N)})]\to \E[F(\Rs)]$ as $N\to\infty$.
	
	\smallskip
	
	For $k\in \N$ and $\omega\in \Db_{\N_{0,\Delta}}[0,\infty)$, define $T_k(\omega)\coloneqq \inf\{t\geq 0: \omega_t \in \{0, \Delta\} \cup [k, \infty) \}$, with the usual convention $\inf \varnothing =\infty$. 
	At first, we assume $\bar{R}^{(N)}_0$ and $\Rs_0$ share the same bounded support, i.e. there is $k \in \N$ such that $\P(\bar{R}^{(N)}_0 > k) = \P(\Rs_0 > k) = 0$. 
	
	Define $\bar{R}^{(N)}(k)\coloneqq (\bar{R}^{(N)}_{t\wedge T_k(\bar{R}^{(N)})})_{t\geq 0}$ and $\Rs(k)\coloneqq (\Rs_{t\wedge T_k(\Rs)})_{t\geq 0}$, i.e. the processes stopped the first time they hit  $\{0, \Delta\} \cup [k,\infty)$. Note that since the processes can reach the boundary before any finite time~$t$ with positive probability,  we can, by virtue of the Borel-Cantelli lemma and the Markov property, deduce that $T_k(\Rs(k))$ and $T_k(\bar{R}^{(N)}(k))$ are almost surely finite. Moreover, $T_k(\Rs(k))=T_k(\Rs)$ and $T_k(\bar{R}^{(N)}(k))=T_k(\bar{R}^{(N)})$. 
	\smallskip 
	
	Fix any uniformly continuous bounded function~$F$. Then, 
	$$\lvert \E[F(\bar{R}^{(N)})]-\E[F(\Rs)]\rvert\leq \lvert \E[F(\bar{R}^{(N)})-F(\bar{R}^{(N)}(k))]\rvert +\lvert \E[F(\bar{R}^{(N)}(k))]-\E[F(\Rs(k))]\rvert+\lvert \E[F(\Rs)-F(\Rs(k))]\rvert.$$ 
	Denote the first, second, and third summand on the right-hand side  by $D_1^N$, $D_2^N$, and $D_3^N$, respectively. We first deal with $D_2^N$. By Lemma~\ref{lem:kASGratesconvergence}, we have for any bounded function~$f:\N_{0,\Delta} \to\R$ that $\lim_{N\to\infty} \sup_{n\in[k]}\lvert N\As_{R^{(N)}(k)}f^{\lvert N}(n)- \As_{\Rs(k)}f(n)\rvert=0$, where $N\As_{R^{(N)}(k)}$ and $\As_{\Rs(k)}$ are the infinitesimal generators of $\bar{R}^{(N)}(k)$ and $\Rs(k)$, respectively. Since the closure of $\As_{\Rs(k)}$ generates a Feller semigroup and the core of $\As_{\Rs(k)}$ is contained in the bounded functions (e.g.~\citep[Cor.~8.3.2]{Ku86}), using \citep[Thms. 1.6.1 and 4.2.11]{Ku86}, we deduce that $\bar{R}^{(N)}(k)$ converges to $\Rs(k)$. In particular, $\lim_{N\to\infty}D_2^N=0$.
	
	\smallskip
	
	Next, we deal with $D_1^N$ and $D_3^N$. Because $F$ is uniformly continuous, for all $\varepsilon>0$ there is $n_F\in \N_0$ such that for all $\omega,\omega'\in \Db_{\N_{0,\Delta}}[0,\infty)$ with $d_{\infty}^\circ(\omega,\omega')\leq 2^{-n_F}$, we have $\lvert F(\omega)-F(\omega')\rvert \leq\varepsilon.$ One can show that for $\omega\in \Db_{\N_{0,\Delta}}[0,\infty)$ and $t\geq 0$, $d_\infty^\circ(\omega,\omega(\cdot\wedge t))\leq 2^{-\lfloor t\rfloor}$ (see \citep[Lem. A.1]{Cordero17DL}). Because of this and because $0$ and $\Delta$ are absorbing states for $\bar{R}^{(N)}$, 	\begin{align*}
	D_1^N&=\E[\lvert F(\bar{R}^{(N)})-F(\bar{R}^{(N)}(k))\rvert]\\
	&\leq 2\, \lVert F\rVert_\infty \P \big ( T_k(\bar{R}^{(N)}(k) \big )\leq n_F, \bar{R}^{(N)}_{n_F}(k) \in [k,\infty)) +\varepsilon\, \P \big (T_k(\bar{R}^{(N)}(k))>n_F \big ).
	\end{align*}
	A similar argument leads to $D_3^N\leq 2\, \lVert F\rVert_\infty\P(T_k(\Rs(k))\leq n_F, \Rs_{n_F}(k) \in [k,\infty) )+ \varepsilon\,\P(T_k(\Rs)>n_F).$ 
	
	\smallskip 
	
	By adapting the proof of~\citep[Lem. A.2]{Cordero17DL}, it can be shown that $T_k$ is continuous on the set $\{\omega\in \Db_{\N_{0,\Delta}}[0,\infty):T_k(\omega)<\infty\}$. Because $\bar{R}^{(N)}(k)$ and $\Rs(k)$ belong to this set almost surely, and  $\bar{R}^{(N)} (k)\overset{N\to\infty}{\longrightarrow}\Rs(k)$ in distribution, it follows from the continuous mapping theorem that also $T_k ( \bar{R}^{(N)}(k)) \overset{N\to\infty}{\longrightarrow} T_k(\Rs(k))$ and $(T_k(\bar{R}^N(k)), \bar{R}^{N}(k))\overset{N\to\infty}{\longrightarrow}(T_k(\Rs(k)), \Rs(k))$ in distribution.
	
	\smallskip
	
	Moreover, $\{\omega:\ T_k(\omega)\leq n_F, \omega^{}_{n_F}\in [k,\infty)\}$ is a continuity set. To see this, note that \begin{equation*}
		\partial\{\omega:\ T_k (\omega) \leq n_F,\ \omega_{n_F} \in[k,\infty)\} \subseteq \partial\{\omega :\ T_k(\omega) \leq n_F \}\cup \bigcup_{j\in[k,\infty)} \partial \{\omega :\ \omega_{n_F} = j\}.
	\end{equation*} The projection $\omega\mapsto \omega_{n_F}$ is continuous at $\omega$ if and only if $\omega$ is continuous at $n_F$ (see \citep[Thm. 16.6]{Billingsley1999}). This implies that $\partial\{\omega:\ \omega_{n_F} = j\} \subseteq \{\omega:\ \omega \text{ discontinuous at }n_F \}$. Therefore, by subadditivity
	\begin{equation*}
		\P (\Rs (k) \in \partial\{\omega:\ T_k (\omega) \leq n_F,\ \omega_{n_F}\in[k,\infty) \}) \leq \P ( \Rs(k) \text{ is discontinuous in } n_F) + \P ( T_k (\Rs (k) ) = n_F) = 0.
	\end{equation*}
	We can now use again the Portmanteau lemma to conclude that $$\P(T_k(\bar{R}^{(N)}(k))\leq n_F, \bar{R}^{(N)}_{n_F}(k) \in [k,\infty)) \overset{N \to \infty}{\longrightarrow} \P(T_k(\Rs(k))\leq n_F, \Rs_{n_F}(k) \in [k,\infty) ).$$
	
	\smallskip 
	
	Combining everything, we have
	$$\limsup_{N\to\infty}(D_1^N+D_3^N)\leq 2\varepsilon+ 4\,\lVert F\rVert_{\infty}^{}\P(T_k(\Rs(k))\leq n_F, \Rs_{n_F}(k) \in [k,\infty) ).$$
	$\Rs$ is non-explosive by~\eqref{eq:pLDnon-explosive} so that $\lim_{k\to\infty}\P(T_k(\Rs(k))\leq n_F, \Rs_{n_F}(k) \in [k,\infty) )=0$. Altogether, for all $\varepsilon>0$, $\limsup_{N\to\infty}\lvert \E[F(\bar{R}^{(N)})]-\E[F(\Rs)]\rvert\leq 2\varepsilon$ which proves the result.
	
	\smallskip 
	
	It remains to address the case in which $\Rs_0$ does not almost surely have bounded support. In this case, for all $\epsilon>0$ there exists $M \in \N$ such that $\P(\Rs_0 > M) < \epsilon$. Moreover, since $\bar{R}^{(N)}_0\xrightarrow[N\to\infty]{(d)} \Rs_0$, there exists $N_{\epsilon}\in \N$ such that for $N > N_{\epsilon}$, $\P(\bar{R}^{(N)}_0 > M) < \epsilon$. It follows that, for~$N > N_{\epsilon}$,
	\[
	\lvert \E [F(\bar{R}^{(N)})] - \E[F(\Rs)]\rvert  \leq \lvert \check{\E}[F(\bar{R}^{(N)})]  - \check{\E}[ F(\Rs) ]\rvert  + 2 \epsilon \lVert F \rVert_\infty
	\]	
	where $\check{\E}$ denotes the expectation under the original measure conditional on $\{\bar{R}_0^{(N)}\leq M\}$ and $\{\Rs_0 \leq M \}$, respectively. Applying the previous argument to the first summand yields $\limsup_{N\to\infty} \lvert \E [F(\bar{R}^{(N)})] - \E[F(\Rs)]\rvert \leq 2 \epsilon (1+\lVert F \rVert_\infty)$. The result follows.	
	
\end{proof}

Next, we prove that the factorial moment duality between the $R^{(N)}$ and $Y^{(N)}$ in Theorem~\ref{thm:duality_R} carries over to the diffusion limit in the form of a moment duality.

\begin{proof}[Proof of Theorem~\ref{thm:momentduality}]
	While a generator-based proof like for Theorems~\ref{thm:duality_R} and \ref{lem:bdduality} (or~\ref{thm:tildemomentduality} below) is possible, for the sake of variety we prove the theorem using the duality between $Y^{(N)}$ and $R^{(N)}$, and the convergence of both processes in the diffusion limit.
	Using the factorial moment duality~\eqref{eq:factmomentdualityYR} in the finite setting, we have for any $k \in [N]_0$ and $n \in [N]_{0,\Delta}$	\begin{equation}\label{duality_rescaled}
		\mathbb{E} \Bigg[ \frac{\faf{(Y_{Nt}^{(N)})}{n}}{N^{\underline{n}}} \mid  Y^{(N)}_0 = k \Bigg ] = 	\E \Bigg [\frac{{ k}^{\underline{ {R^{(N)}_{Nt}} } }}{N^{\underline{ {R^{(N)}_{Nt}} } }} \mid R^{(N)}_0 = n \Bigg]. 
	\end{equation}
	
	We show that there exists a sequence $(k_N)_{N\in \N}$ such that $k_N/N \to y$ as $N\to\infty$, and each  side of \eqref{duality_rescaled} converges to the respective side of \eqref{eq:momentdualityYR} as $N \to \infty$. 
	Starting from the left-hand side,  note first that for $n = 0, \Delta$, the convergence is trivial. Fix then $n \in \mathbb{N}$, and set for $y\in [0,1]$, $$F_n(y) \defeq y^n\quad \text{and for }N\geq n,\quad F^N_n(y) \defeq \frac{\faf{(Ny)}{n}}{\faf{N}{n}} = \prod_{j=0}^{n-1} \frac{N y- j}{N-j}.$$ Recall that $\bar{Y}^{(N)}_t \defeq Y^{(N)}_{Nt}/N$. We have
	\begin{align*}
		\Big \lvert \E&[F_n(\Ys_t) \mid \Ys_0 = y] - \E \Big[ F_n^N\Big (\frac{Y_{Nt}^{(N)}}{N} \Big ) \mid Y^{(N)}_0 = k_N \Big] \Big \rvert \\
		&= \Big\lvert \E[F_n(\Ys_t) \mid \Ys_0 = y] - \E [F_n^N (\bar{Y}_t^{(N)}) \mid \bar{Y}^{(N)}_0 = \frac{k_N}{N}] \Big \rvert \\
		&\leq  \Big \lvert \E[F_n(\Ys_t) \mid \Ys_0 = y] - \E \Big[ F_n(\bar{Y}^{(N)}_t) \mid \bar{Y}^{(N)}_0 = \frac{k_N}{N} \Big] \Big \rvert \\
		& \ \  + \Big \lvert \E \Big[ F_n(\bar{Y}^{(N)}_t) \mid \bar{Y}^{(N)}_0 = \frac{k_N}{N} \Big] - \mathbb{E} \Big[ F_n^N(\bar{Y}^{(N)}_t) \mid  {\bar{Y}^{(N)}}_0 = \frac{k_N}{N} \Big] \Big \rvert.
	\end{align*}
	Convergence of the first summand as $N\to\infty$ comes directly from Proposition~\ref{prop:convergenceWF} (Portmanteau lemma, $F_n$ is bounded and continuous), for any $(k_N)_{N\in \N}$ such that $k_N\to y$. For the second summand, it suffices to show that $F_n^N \to F_n$ uniformly in $[0,1]$. Because $F_n^N(y) \xrightarrow{N \to \infty} F_n(y)$ for all $y \in [0,1]$, the uniform convergence follows from Dini's theorem if we can show $F_n^{N+1} (y) \geq F_n^N (y)$ for all $y \in [0,1]$. To this end, note that for any $y \in [0,1]$, $N \in \mathbb{N}$, and $j\in[N-1]_0$, one has $((N+1)y - j)/((N+1)-j)\geq (Ny-j)/(N-j)$; in particular, for $N\geq n$ and $j\in [n-1]_0$. Hence, $F_n^N(y)\to F_n(y)$ uniformly.
	
	\smallskip

	As for the right-hand side of \eqref{duality_rescaled}, name now $G_y(n) \defeq y^n$, and for $N\geq n$, $G_{k_N}^N (n) \defeq \faf{(k_N)}{n}/\faf{N}{n}$ for $n \in \mathbb{N}_{0, \Delta}$. We have
	\begin{align*}
\big\lvert \E [G_y(\Rs_t) \mid \Rs_0 = n] & - \E [G_{k_N}^N (R^{(N)}_{Nt}) \mid R^{(N)}_0 = n] \big\lvert  \\
&\leq \big\lvert \E [G_y(\Rs_t) \mid \Rs_0 = n] - \E [G_y(R_{Nt}^{(N)}) \mid R_0^{(N)} = n ] \big\rvert \\
		&\textcolor{white}{\leq} + \big\lvert  \E [G_y(R_{Nt}^{(N)}) \mid R_0^{(N)} = n ] - \E [G_{k_N}^N(R^{(N)}_{Nt}) \mid R^{(N)}_0 = n] \big\lvert. 
	\end{align*}
	
	Again, for every $y \in [0,1]$, $G_y$ is bounded in $n$, so the first summand converges by the Portmanteau lemma and because the line-counting process of the ASG converges. As for the second, it is enough to show that $G_{k_N}^N$ converges uniformly to $G_y$. If $y = 1$, taking $k_N = N$ makes this trivial.
	If $y < 1$, take any sequence $(k_N)_{N\in \N}$ such that $k_N/N \to y$. Fix $\epsilon>0$ small. Note that (i) $G_y(n)\to 0$ as $n\to\infty$. Hence, there is $n'=n'(y)\in \N$ such that for all $n\geq n'$, $G_y(n)\leq \varepsilon/3$. Next (ii), because for fixed $\tilde{n}$, $G_{k_N}^N(\widetilde{n})\to G_y(\tilde{n})$ as $N\to\infty$, there is $N'(\tilde{n})\in \N$ such that for all $N\geq N'(\tilde{n})$, $\lvert G_{k_N}^N(\tilde{n})-G_y(\tilde{n})\rvert \leq \varepsilon/3$. In particular, combining (i) and (ii) yields (iii) that for $n'$, there is $N'(n')$ such that for all $N\geq N'(n')$, $\lvert G_{k_N}^N(n')\rvert \leq \lvert G_{k_N}^N(n')-G_y(n')\rvert+\lvert G_y(n')\rvert\leq 2\varepsilon/3$. Note that (iv), $G_{k_N}^N(n)\geq G_{k_N}^N(n+j)$ for all $j\in \N_0$, and likewise for $G_y$. Using (iii), (iv), and (i) for all $N\geq N'(n')$ and for all $n\geq n'$, $\lvert  G_{k_N}^N(n)-G_y(n)\rvert\leq  \lvert G_{k_N}^N(n)\rvert +\lvert G_y(n)\rvert\leq \lvert G_{k_N}^N(n')\rvert +\lvert G_y(n')\rvert\leq \varepsilon,$ giving in particular a uniform bound for $n\geq n'$. In addition, also by (ii), there is $N''\in\N$ such that for all $n\in [n']$ and $N\geq N''$, $\lvert G_{k_N}^N(n)-G_y(n)\rvert\leq\varepsilon$. In particular for all $N\geq \max\{N',N''\}$, $\sup_{n\in \N}\lvert G_{k_N}^N(n)-G_y(n)\rvert \leq \varepsilon$.
\end{proof} 

\begin{proof}[Proof of Corollary~\ref{cor:repr.absorpt.prob.diff}]
	Assume $\theta=0$. It is well known that in this case the Wright--Fisher diffusion absorbs in $0$ or~$1$ in finite time almost surely (see e.g.~\citep[Prop.~2.29]{cordero2019general}). Moreover, $\Rs$ is irreducible, non-explosive, and positive recurrent (see~\citep[Thm.~2.20]{cordero2019general}), and therefore converges to the unique stationary distribution $\pi_{\Rs}$ as $t\to\infty$~\citep[Thm. 3.5.3, Thm. 3.6.2]{norris1998markov}. Set~$n=1$ in the duality in Theorem~\ref{thm:momentduality} and let $t\to\infty$ to obtain the first result. For the second result, note that the time to absorption of~$\Rs$ in~$0$ or~$\Delta$ is dominated by an exponential random variable with parameter $\theta\nu_0$ (a single beneficial mutation already leads to absorption). Moreover, $\Ys$ has the stationary distribution $\pi_{\Ys}$ \eqref{piY}. Fix $y \in (0,1)$ in Theorem~\ref{thm:momentduality} and take $t\to\infty$ to obtain the second result.
\end{proof}

\subsection{Proofs connected to the pLD-ASG in the diffusion limit}\label{sec:pldASGdifflimit}
Recall the infinitesimal generator of $\Ls$ in Section~\ref{mr:pldASGdifflimit}. $\Ls$ is dominated by a non-explosive branching process in the same way as~$\Rs$ is dominated and therefore \begin{equation}
\Ls\text{ is non-explosive}.\label{eq:pLDnon-explosive}
\end{equation}

First, we sketch how to prove $\bar{L}^{(N)}\xRightarrow[]{N\to\infty} \Ls.$
\begin{proof}[Proof of Proposition~\ref{prop:convergencepLD}]
First one establishes the  result analogous to Lemma~\ref{lem:kASGratesconvergence} for $\As_{\Ls}$. Since the only delicate part concerns selection, the proof works analogously. Next, the same line of argument used to prove Proposition~\ref{prop:convergencekASG} also completes the proof of the current proposition.
\end{proof}

Recall the definition of the operator $\As_{\widetilde \Ys}$ around \eqref{eq:tildeYlimitgen}. The proof of existence of an associated process~$\widetilde{\Ys}$ and convergence of $\widetilde{Y}^{(N)}\xRightarrow[]{N\to\infty}\widetilde{\Ys}$ is essentially contained in~\citep[Eq. (11) ff.]{Taylor2007} and~\citep[Sects.~4 and 5]{barton2004}. We recall it here.
\begin{proof}[Proof (sketch) of existence of $\widetilde{\Ys}$ and convergence] For the proof it is convenient to embed~$[0,1]$ into the product space~$E=[0,1]\times \{0,1,2\}$. Our aim is to define a "finer" process $\widetilde{\Ys}^{(\mathrm{f})}$ via an auxiliary generator acting on functions $f:E\to\R$, twice continuously differentiable in the first component, defined as  $$\As_{\widetilde{\Ys}^{(\mathrm{f})}}f(y,2)=\As_{\Ys}f(y,2)+\frac{1-y}{y}\theta\nu_1 [f(y,0)-f(y,2)]+\frac{y}{1-y} \theta \nu_0[f(y,1)-f(y,0)]$$ for $y\in (0,1)$ and $\As_{\Ys}$ taking $f(y,2)$ as a function of just the first component; complemented by $\As_{\widetilde{\Ys}^{(\mathrm{f})}}f(y,i)=0$ for $(y,i)\in E\setminus\{(y,2):y\in (0,1)\}$. This process is finer because the absorbing states indicate how the process got there, as that the second component of~$E$ becomes~$0$ or~$1$ once a jump occurs. $\widetilde{Y}^{(N)}/N$ can also be embedded into the same state space, and we denote the embedded version by $\widetilde{Y}^{(N,\mathrm{f})}$. We first sketch how to prove existence of $\widetilde{\Ys}^{(\mathrm{f})}$ using a localisation argument. For $k\geq 3$, define $U_k\coloneqq (1/k,1-1/k)\times \{0,1,2\}$. As a first step one proves that the closure of $\As_{\widetilde{\Ys}^{(\mathrm{f})}}$ when restricted to $U_k$ generates a Feller semigroup (analogous to \citep[Lem.~4.1]{barton2004}). In a second step one verifies that the associated stopped (when exiting $U_k$) martingale problem is well-posed. It is proved in~\citep[Lem.~2.1]{Taylor2007} that the time the (stopped) processes are actually stopped is almost surely unbounded as $k\to\infty$. This result together with the fact that for each~$k$, $U_k$ is an open set in the extended space, is used to deduce that a unique Markov process $\widetilde{\Ys}^{(\mathrm{f})}$ with generator $\As_{\widetilde{\Ys}^{(\mathrm{f})}}$ exists (see~\citep[Thm.~4.2]{barton2004} for details). To prove convergence, one first shows that for each~$k$, $\widetilde{Y}^{(N,\mathrm{f})}$ stopped when leaving~$U_k$ converges to $\widetilde{\Ys}^{(\mathrm{f})}$ stopped when leaving $U_k$ as $N\to\infty$. This is true because the generator of $\widetilde{Y}^{(N,\mathrm{f})}$ converges uniformly to the generator of $\widetilde{\Ys}^{(\mathrm{f})}$ when restricted to $U_k$ as $N\to\infty$ (see also \citep[Thm.~5.2]{barton2004}). Moreover, $\widetilde{Y}^{(N, \mathrm{f})}$ stopped when exiting $U_k$ converges, as $N\to\infty$, to $\widetilde{\Ys}^{(\mathrm{f})}$ on events that are measurable with respect to the $\sigma$-algebra of the first-exit time out of~$U_k$. Finally, because the exit time of $\widetilde{\Ys}^{(\mathrm{f})}$ out of $U_k$ increases to $\infty$ as $k\to\infty$ almost surely, one deduces using~\citep[Lem.~5.3]{barton2004} that $\widetilde{Y}^{(N,\mathrm{f})}$ converges to $\widetilde{\Ys}^{(\mathrm{f})}$ as $N\to\infty$. The corresponding statement for $\widetilde{Y}^{(N)}$ and $\widetilde{\Ys}$ follows after identifying $(y,1)$ (resp. $(y,0)$) with~$1$ (resp. $0$) and considering the projection of the fine processes to the first component of~$E$.
\end{proof}

We close this section with the proof of the  moment duality between $\widetilde{\Ys}$ and $\Ls$, and its corollary.
\begin{proof}[Proof of Theorem~\ref{thm:tildemomentduality}]
	It is straightforward to show that for all $y\in[0,1]$ and $n\in \N$, $\As_{\widetilde{\Ys}}\mathcal{H}(\cdot,n)(y)=\As_{\Ls}\mathcal{H}(y,\cdot)(n)$. Moreover, $\mathcal{H}(y,n)\leq 1$. The result follows from~\citep[Cor.~4.4.13]{Ku86}. 
\end{proof}
\begin{proof}[Proof of Corollary~\ref{coro:hittingprobab}]
	The first assertion follows from Theorem~\ref{thm:tildemomentduality} and the definition of $\mathfrak{h}_r$ in \eqref{eq:diffancestraltyp}. It follows from~\citep[Lem. 2.1]{Taylor2007} that $\widetilde{T}_0\wedge \widetilde{T}_1$ is almost surely finite which yields the second part.
\end{proof}

\section{Proof of the fixation probability under moderate selection} \label{sec:applications}
\begin{proof}[Proof of Proposition~\ref{prop:haldane}]
The proof is a brute-force calculation. Denote the birth and death rates of $Y$ in the setting of the proposition by $\lambda_k = k \frac{N-k}{N}$ and $\mu_k = k(\frac{N-k}{N} + \frac{\sigma m}{N^\alpha} (1-(\frac{k}{N})^m))$, respectively. Set $q_k\defeq \lambda_k/\mu_k=(1+\frac{\sigma}{N^\alpha} \frac{1-(k/N)^m}{1-k/N})^{-1}$ and recall from Section~\ref{sec:mainresukt:subsec:asg} the definition of $h_{\infty}$. Clearly, $h_{\infty}(0)=0=1-h_{\infty}(N)$. A classic first-step analysis of $h_{\infty}$ yields for $k\in[N]$ that $$h_{\infty}(k)=\frac{1+\sum_{\ell=1}^{k-1}\prod_{j=1}^{\ell} q_j^{-1}}{1+\sum_{\ell=1}^{N-1}\prod_{j=1}^{\ell} q_j^{-1}}.$$
This  leads to \begin{equation} \label{eq:9}
1-h_{\infty}(N-1)=\frac{\prod_{k=1}^{N-1} q_k^{-1}}{1+\sum_{\ell=1}^{N-1}\prod_{k=1}^{\ell} q_k^{-1}}=\frac{1}{\sum_{\ell=1}^{N}\prod_{k=\ell}^{N-1} q_k}=\frac{1-q_{N-1}}{(1-q_{N-1})\sum_{\ell=1}^{N}\prod_{k=\ell}^{N-1} q_k}.\end{equation} 
We will now show that, as $N\to\infty$, \begin{enumerate}[label=(\roman*)]
	\item $1-q_{N-1}=\frac{\sigma}{N^{\alpha}}(m+\mathcal{O}(1/N))$,
	\item the denominator in the right-hand side of~\eqref{eq:9} is $1+\scO(1)$.
\end{enumerate}  
From (i) and (ii) the result follows. For (i), note that by the binomial theorem and properties of the geometric series, we have 
\begin{equation}\label{eq:1}
	 \sum^{m-1}_{j=0} \binom{m}{j} \Big( \frac{k}{N}\Big)^j \Big( \frac{N-k}{N}\Big)^{m-1-j}=\frac{1- \big(\frac{k}{N}\big)^m}{1-\frac{k}{N}}=\sum^{m-1}_{j=0} \left(\frac{k}{N}\right)^j.
\end{equation}
Specialising the first identity of~\eqref{eq:1} to $k=N-1$, we deduce \begin{equation}\label{eq:6}
	\frac{1-\big(\frac{N-1}{N}\big)^m}{1-\frac{N-1}{N}}  =  m \left( \frac{N-1}{N}\right)^{m-1} + \mathcal{O} \left(\frac{1}{N}\right) = m + \mathcal{O} \left( \frac{1}{N}\right)\, . \nonumber
\end{equation}
Thus, $q_{N-1}=(1+\frac{m \sigma }{N^\alpha} (1+\mathcal{O}(1/N)))^{-1}$ and (i) follows after some straightforward algebra.

\smallskip

For (ii), write the denominator as $1+A_N-B_N$, where
 \begin{equation}\label{eq:10}
      A_N \defeq \sum\limits^N_{\ell = 2} \prod\limits^{N-1}_{k=N-\ell +1} q^{}_k - q^{}_{N-1} \sum\limits^{N-1}_{\ell =1}\prod\limits^{N-1}_{k=N-\ell + 1} q^{}_k \quad \text{and } 
      B_N \defeq q^{}_{N-1} \prod\limits^{N-1}_{k=1} q^{}_k\, .
\end{equation}
We will show that $A_N,B_N\to 0$ as $N\to\infty$. First, we deal with $(B_N)_{N>0}$. Choose $\beta$ such that  $\alpha < \beta <1$. Define $n_N\defeq \lceil N^\beta\rceil$ and $\bar{q} \defeq  q^{}_{N-n_N}.$ The second identity of~\eqref{eq:1} implies that the denominator of $q_k$ is increasing in~$k$ (so $q_k$ is decreasing in $k$). Since $q_k<1$ for all~$k$, and by the definition of~$\bar{q}$, we have
\[
B_N  =  q^{}_{N-1} \Big ( \prod^{N-n_N}_{j=1} q^{}_j \Big )
\Big ( \prod^{N-1}_{k=N-n_N+1} q^{}_k \Big ) < q^{}_{N-1} \prod^{N-1}_{k=N-n_N+1} q^{}_k < \bar{q}^{n_N}.
\]
We now show that $\bar{q}^{n_N}\xrightarrow{N\to\infty} 0$, which implies the claim for $(B_N)_{N>0}$. By the first identity in~\eqref{eq:1}, 
\begin{equation*}\label{eq:11}
    \frac{1-\big(\frac{N-n_N}{N}\big)^m}{1- \frac{N-n_N}{N}} =  m \Big( 1-\frac{n_N}{N}\Big)^{m-1} + \mathcal{O} \left(\frac{n_N}{N}\right) =  m \big( 1+ \scO (1)\big)\, .
\end{equation*}
This leads to
\begin{equation}\label{eq:12}
\bar{q}= \big(1+m \frac{\sigma}{N^\alpha} \big( 1+ \scO(1)\big)\big)^{-1}.
\end{equation}
But
\[
 \bar{q}^{n_N}  =  \Big [  \Big ( \frac{1}{1+m\frac{\sigma}{N^\alpha}\big( 1+ \scO(1)\big )}  \Big )^{N^\alpha}\Big ]^{\frac{n_N}{N^\alpha}}  = \scO(1) \quad \text{as } \; N\to\infty.
\]
To see this, note that
\begin{equation}\label{eq:econv}
 \Big ( 1+m\frac{\sigma}{N^\alpha}\big( 1+ \scO(1)\big )  \Big )^{N^\alpha} = \ee^{m \sigma }(1+\sscO(1)) \quad \text{as } \;  N \to \infty, \quad \text{and } \;  \lim_{N \to \infty} \frac{n_N}{N^\alpha} = \infty.
\end{equation}

\smallskip

Next, we show that $A_N\xrightarrow{N\to\infty}0$. Shifting index in the first sum of \eqref{eq:10}, we rewrite
\begin{equation}\label{eq:14}
  A_N  =   \sum^{N-1}_{\ell =1} (q^{}_{N-\ell} - q^{}_{N-1}) \prod^{N-1}_{k=N-\ell +1} q^{}_k\, .
\end{equation}
We first bound $q^{}_{N-\ell} - q^{}_{N-1}$. Since $x\mapsto x + 1/x$ is monotone increasing for $x>0$ and the $q^{}_{k}$ are decreasing in~$k$, we can write
\[
  q^{}_{N-\ell} - q^{}_{N-1}  < \frac{1}{q^{}_{N-1}} - \frac{1}{q^{}_{N-\ell}} = \frac{\sigma}{N^\alpha}
 \Big (\frac{1-x_1^m}{1-x^{}_1} - \frac{1-x_2^m}{1-x^{}_2} \Big )  \leq \frac{\sigma d (\ell-1)}{N^{\alpha+1}}\, ,
\]
where $x^{}_1 \defeq (N-1)/N$, $x^{}_2 \defeq (N-\ell)/N$, and
$d:= \max_{x\in [0,1]} f'(x)$  with  $f(x) \defeq  (1-x^m)/(1-x)$ 
($\lim_{x\to 1} f(x)$ and $\lim_{x\to 1} f'(x)$ exist and are finite by l'Hopital's rule).
Next, let us bound the product in \eqref{eq:14}. Since the $q_k$ are decreasing, all $q_k<1$, and using the definition of~$\bar{q}$ (and $n_N$) from the previous step, we get $\prod^{N-1}_{k=N-\ell +1} q^{}_k \leqslant \bar{q}^{\min \{ \ell -1,n_N\}}$.
Thus,
\[
  A_N  \leq  \frac{\sigma d}{N^{1+\alpha}}  \sum^{N-1}_{\ell =1} (\ell -1) \, \bar{q}^{\min \{ \ell -1,n_N\}} \nonumber = \frac{\sigma d}{N^{1+\alpha}}  \sum^{N-2}_{\ell =0} \ell \, \bar{q}^{\min \{ \ell ,n_N\}} 
  = \sigma d ( A_N^{(1)} + A_N^{(2)} ),
  \]
where 
\[
  A_N^{(1)}  \defeq \frac{1}{N^{1+\alpha}} \sum^{n_N}_{\ell =0} \ell \, \bar{q}^\ell \quad \text{and } \; A_N^{(2)} \defeq \frac{1}{N^{1+\alpha}} \sum^{N-2}_{\ell = n_N+1} \ell \, \bar{q}^{n_N}.
\]
Use that $\sum_{\ell\geq 0}\ell x^\ell=x/(1-x)^2$ and~\eqref{eq:12} to bound $A_N^{(1)}$, 
\[
A_N^{(1)}  \leq \frac{1}{N^{1+\alpha}}\frac{\bar{q}}{(1-\bar{q})^2 }= \frac{1}{N^{1+\alpha}}  \frac{1+m\frac{\sigma}{N^\alpha}\big(1+\scO(1)\big)}{\big(m \frac{\sigma}{N^\alpha}(1+\scO(1))\big)^2} 
   = \frac{1}{m^2\sigma^2} N^{\alpha-1} \big( 1+\scO(1)\big) 
   = \scO(1) \quad \text{as } \; N \to \infty \, .
\]
For $A_N^{(2)}$, use $\sum^{N-2}_{\ell =1} \ \ell \leqslant N^2/2$ to obtain
\[
A_N^{(2)} \leq  \frac{\bar{q}^{n_N}}{N^{1+\alpha}}   \sum^{N-2}_{\ell =1} \ \ell \leq \frac{\bar{q}^{n_N}}{2} N^{1-\alpha}   = \frac{N^{1-\alpha}}{2}  \big ( \bar{q}^{N^\alpha}\big )^{\frac{n_N}{N^\alpha}}  = \frac{N^{1-\alpha}}{2}  \big [ e^{-m\sigma} (1+\sscO(1)) \big ]^{\frac{n_N}{N^\alpha}} = \scO(1),
\]
where in the last two steps the same argument as in~\eqref{eq:econv} is used. The statement is thus proved.
\end{proof}

\section*{Acknowledgements}
We are grateful to Fernando Cordero, Martin M\"ohle, Cornelia Pokalyuk, and Anton Wakolbinger for enlightening discussions and to two anonymous referees for insightful comments. We acknowledge funding by the German Research Foundation (DFG, Deutsche
Forschungsgemeinschaft) --- SFB 1283, project C1. 
\addtocontents{toc}{\protect\setcounter{tocdepth}{2}}
\bibliographystyle{abbrvnat}
%\bibliography{Literature}

\end{document}